\newcommand {\A}{{\mathcal{A}}}
\newcommand {\bmo}{\mathrm{bmo}}
\newcommand {\BMO}{\mathrm{BMO}}
\newcommand {\C}{{\mathbb C}}
\newcommand {\Ca}{\mathcal{C}}
\newcommand {\con}{M}
\newcommand {\Cr}{C^{r}_{-}}
\newcommand {\Crtwo}{C^{2}_{-}}
\newcommand {\Da}{\mathcal{D}}
\newcommand {\ud}{\mathrm{d}}
\newcommand {\veps}{\varepsilon}
\newcommand {\ex}{\mathbf{e}}
\newcommand {\F}{\mathcal{F}}
\newcommand {\HT}{\mathcal{H}}
\newcommand {\Hp}{\mathcal{H}^{p}_{FIO}(\Rn)}
\newcommand {\Hps}{\mathcal{H}^{s,p}_{FIO}(\Rn)}
\newcommand {\Hpsm}{\mathcal{H}^{s-1,p}_{FIO}(\Rn)}
\newcommand {\Hpsmm}{\mathcal{H}^{s-2,p}_{FIO}(\Rn)} \newcommand {\Hrinf}{\mathcal{H}^{r,\infty}}
\newcommand {\ind}{\mathbf{1}}
\newcommand {\J}{\mathcal{J}}
\newcommand {\ka}{\kappa}
\newcommand {\la}{\lambda}
\newcommand {\rb}{\rangle}
\newcommand {\lb}{\langle}
\newcommand {\La}{\mathcal{L}}
\newcommand {\loc}{\mathrm{loc}}
\newcommand {\Ma}{\mathcal{M}}
\newcommand {\N}{{\mathbb N}}
\newcommand {\ph}{\varphi}
\newcommand {\R}{\mathbb R}
\newcommand {\Rn}{\mathbb{R}^{n}}
\newcommand {\supp}{\mathrm{supp}}
\newcommand {\Sp}{S^{*}\Rn}
\newcommand {\Spp}{S^{*}_{+}\Rn}
\newcommand {\St}{{\mathrm{St}}}
\newcommand {\Sw}{\mathcal{S}}
\newcommand {\w}{\omega}
\newcommand {\W}{\mathrm{W}}
\newcommand {\wh}{\widehat}
\newcommand {\wt}{\widetilde}
\newcommand {\Z}{\mathbb Z}
\newcommand {\vanish}[1]{\relax}
\newcommand {\cs}{\mathbf{c}}
\newcommand {\sn}{\mathbf{s}}
\DeclareFontFamily{U}{mathx}{\hyphenchar\font45}
\DeclareFontShape{U}{mathx}{m}{n}{
      <5> <6> <7> <8> <9> <10>
      <10.95> <12> <14.4> <17.28> <20.74> <24.88>
      mathx10
      }{}
\DeclareSymbolFont{mathx}{U}{mathx}{m}{n}
\DeclareMathAccent{\widecheck}{0}{mathx}{"71}
\DeclareMathOperator{\Real}{Re}
\newtheorem{theorem}{Theorem}[section]
\newtheorem{lemma}[theorem]{Lemma}
\newtheorem{proposition}[theorem]{Proposition}
\newtheorem{corollary}[theorem]{Corollary}
\theoremstyle{definition}
\newtheorem{definition}[theorem]{Definition}
\newtheorem{remark}[theorem]{Remark}
\numberwithin{equation}{section}
\protected\def\ignorethis#1\endignorethis{}
\let\endignorethis\relax
\title[$\HT^{1}$ and $\bmo$ regularity for rough wave equations]{$\HT^{1}$ and $\bmo$ regularity for wave equations with rough coefficients}
\author{Naijia Liu}
\address{Department of Mathematics,
    Sun Yat-sen University,
    Guangzhou, 510275,
    P.R.~China}
\email{liunj@mail2.sysu.edu.cn}
\author{Jan Rozendaal}
\address{Institute of Mathematics, Polish Academy of Sciences\\
Ul.~\'{S}niadeckich 8\\
00-656 Warsaw\\
Poland}
\email{jrozendaal@impan.pl}
\author{Liang Song}
\address{Department of Mathematics,
    Sun Yat-sen University,
    Guangzhou, 510275,
    P.R.~China}
\email{songl@mail.sysu.edu.cn}
\keywords{Rough wave equations, $\HT^{1}$ and $\bmo$ regularity, Hardy spaces for Fourier integral operators}
\subjclass[2020]{Primary 35R05. Secondary 35L05, 42B37, 35A27}
\thanks{This research was funded in part by the National Science Center, Poland, grant 2021/43/D/ST1/00667. N.~ J. Liu is supported by China Postdoctoral Science Foundation (No. 2024M763732). J. Rozendaal is partially supported by NCN grant UMO-2023/49/B/ST1/01961. L. Song is  supported by  NNSF of China (No. 12471097).}
\begin{document}

\begin{abstract}
We consider second-order hyperbolic equations with rough time-independent coefficients. Our main result is that such equations are well posed on the  Hardy spaces $\HT^{s,1}_{FIO}(\Rn)$ and $\HT^{s,\infty}_{FIO}(\Rn)$ for Fourier integral operators if the coefficients have $C^{1,1}\cap C^{r}$ regularity in space, for $r>\frac{n+1}{2}$, where $s$ ranges over an 
$r$-dependent interval. As a corollary, we obtain the sharp fixed-time $\HT^{1}(\Rn)$ and $\bmo(\Rn)$ regularity for such equations, extending work by Seeger, Sogge and Stein in the case of smooth coefficients. 
\end{abstract}
	
\maketitle

\section{Introduction}\label{sec:intro}	

In this article we obtain the sharp fixed-time  regularity of the solution operators to rough wave equations, in the local Hardy space $\HT^{1}(\Rn)$ and in its dual, $\bmo(\Rn)$. 

\subsection{Setting} 
We consider second-order differential operators of the form
\begin{equation}\label{eq:defL}
Lf(x):=\sum_{i,j=1}^{n} D_{i} (a_{ij}D_{j} f)(x)+\sum_{j=1}^{n}a_{j}(x)D_{j}f(x)+a_{0}(x)f(x).
\end{equation}
Here the $a_{ij}:\Rn\to\R$ are uniformly elliptic, bounded and real-valued, for $1\leq i,j\leq n$, each $a_{j}:\Rn\to\C$ is bounded, for $0\leq j\leq n$, and we write $D_{j}=-i\partial_{x_{j}}$ for $x=(x_{1},\ldots, x_{n})\in\Rn$. Crucially, the coefficients $a_{ij}$ and $a_{j}$ are not assumed to be infinitely smooth; instead, they are contained in $C^{r}(\Rn)$ for some finite $r>0$.  

We study the associated inhomogeneous initial value problem on $\R\times\Rn$: 
\begin{equation}\label{eq:wave-eqn}
\begin{gathered}
D_{t}^{2} u(t,x)-Lu(t,x) = F(t,x),\\ 
u(0,x) = f(x), \qquad \partial_{t}u(0,x) = g(x). 
\end{gathered}
\end{equation}
Here 
$u,F:\R^{n+1}\to\C$ and $D_{t}:=-i\partial_{t}$. We suppose that $f$, $g$ and $F(t)=F(t,\cdot)$ are elements of suitable function spaces on $\Rn$, and we will determine the corresponding regularity of the solution $u(t)=u(t,\cdot)$ at a fixed time $t\in\R$.
  
\subsection{Previous work}

If $a_{ij}=a_{ji}$ for all $1\leq i,j\leq n$, and $a_{j}=0$ for $0\leq j\leq n$, then $L$ is a positive operator on $L^{2}(\Rn)$, 
and the solution to \eqref{eq:wave-eqn} is given by
\begin{equation}\label{eq:spectralsol}
u(t)=\cos(t\sqrt{L})f+\frac{\sin(t\sqrt{L})}{\sqrt{L}}g-\int_{0}^{t}\frac{\sin((t-s)\sqrt{L})}{\sqrt{L}}F(s)\ud s.
\end{equation}
By the spectral theorem and form theory, \eqref{eq:wave-eqn} is well posed on $L^{2}(\Rn)$, even if the $a_{ij}$ are merely bounded and measurable. More precisely, if $f\in L^{2}(\Rn)$, $g\in W^{-1,2}(\Rn)$ and $F\in L^{1}_{\loc}(\R;W^{-1,2}(\Rn))$, then $u(t)\in L^{2}(\Rn)$ for all $t\in\R$.

Upon considering the $L^{p}(\Rn)$ regularity of \eqref{eq:wave-eqn} for $p\neq 2$, the problem immediately ceases to be trivial. Indeed, even when dealing with the flat Laplacian $\Delta$, for which $a_{ij}=\delta_{ij}$, 
the operators $\cos(t\sqrt{-\Delta})$ and $\sin(t\sqrt{-\Delta})$ are not bounded on $L^{p}(\Rn)$ unless $t=0$, $p=2$ or $n=1$. As such, \eqref{eq:wave-eqn} is not well posed on $L^{p}(\Rn)$.

Instead, it was shown by Peral \cite{Peral80} and Miyachi \cite{Miyachi80a} that 
\begin{equation}\label{eq:Laplacian}
\begin{aligned}
\cos(t\sqrt{-\Delta})&:W^{2s(p),p}(\Rn)\to L^{p}(\Rn),\\
\frac{\sin(t\sqrt{-\Delta})}{\sqrt{-\Delta}}&:W^{2s(p)-1,p}(\Rn)\to L^{p}(\Rn),
\end{aligned}
\end{equation}
for all $t\in\R$ and $1<p<\infty$. Here and throughout, we write
\[
s(p):=\frac{n-1}{2}\Big|\frac{1}{p}-\frac{1}{2}\Big|
\]
for $0<p\leq \infty$. Moreover, the exponent $2s(p)$ in \eqref{eq:Laplacian} is sharp, for all $t\neq 0$.

In \cite{SeSoSt91}, \eqref{eq:Laplacian} was vastly generalized by Seeger, Sogge and Stein, who determined the sharp fixed-time $L^{p}(\Rn)$ regularity of \eqref{eq:wave-eqn} for smooth coefficients. In fact, \cite{SeSoSt91} concerns mapping properties of Fourier integral operators (FIOs), a class that contains the solution operators to \eqref{eq:wave-eqn}. Loosely speaking, it was shown that a Fourier integral operator $T$ of order $m\in\R$ satisfies
\begin{equation}\label{eq:FIOmap}
T:W^{2s(p)+m,p}(\Rn)\to L^{p}(\Rn)
\end{equation}
for all $1<p<\infty$. If $(a_{ij})_{i,j=1}^{n},(a_{j})_{j=0}^{n}\subseteq C^{\infty}(\Rn)$, then the solution to \eqref{eq:wave-eqn} is 
\begin{equation}\label{eq:solUj}
u(t)=U_{0}(t)f+U_{1}(t)g-\int_{0}^{t}U_{1}(t-s)F(s)\ud s,
\end{equation}
for $U_{k}(t)$ a Fourier integral operator of order $-k$. 
Hence, if $f\in W^{2s(p),p}(\Rn)$, $g\in W^{2s(p)-1,p}(\Rn)$ and $F\in L^{1}_{\loc}(\R;W^{2s(p)-1,p}(\Rn))$, then $u(t)\in L^{p}(\Rn)$ for all $t\in\R$. The exponent $2s(p)$ is again sharp here, for all but a discrete set of $t$.

Although \cite{SeSoSt91} already dealt with the $L^{p}(\Rn)$ regularity of \eqref{eq:wave-eqn} for smooth coefficients back in 1991, for a long time it was unclear whether these results could be extended to rough coefficients. The theory of spectral multipliers (see e.g.~\cite{DuOuSi02}) yields regularity bounds for divergence-form $L$ assuming only Gaussian heat kernel bounds. However, in this case the exponent $2s(p)$ 
is increased to at least $n|\frac{1}{p}-\frac{1}{2}|$, leading to weaker results than \eqref{eq:Laplacian} even in the case of the flat Laplacian. 

Recently, in \cite{Hassell-Rozendaal23} it was shown  that the regularity theory for smooth wave equations does indeed extend to a large class of rough wave equations. For example, if $(a_{ij})_{i,j=1}^{n}\subseteq C^{1,1}(\Rn)$ and $a_{j}=0$ for $0\leq j\leq n$, then the solution to \eqref{eq:wave-eqn} can be expressed as in \eqref{eq:solUj}, for operator families $(U_{0}(t))_{t\in\R}$ and $(U_{1}(t))_{t\in\R}$ satisfying
\begin{equation}\label{eq:Ujintro}
U_{k}(t):W^{2s(p)-k,p}(\Rn)\to L^{p}(\Rn)
\end{equation}
for all $t\in\R$, $k\in \{0,1\}$ and all $p$ in an open interval containing $2$. Moreover, \eqref{eq:Ujintro} holds for all $1<p<\infty$ if $(a_{ij})_{i,j=1}^{n}\subseteq C^{1,1}(\Rn)\cap C^{\frac{n+1}{2}+\veps}(\Rn)$ for $\veps>0$ arbitrarily small. However, \cite{Hassell-Rozendaal23} does not contain any regularity statements for the endpoints 
$p=1$ and $p=\infty$.

It might seem surprising that the endpoint exponents were not dealt with in \cite{Hassell-Rozendaal23}, given that the case $p=1$ plays a key role in the results for smooth coefficients. Indeed, the heart of the proof of \eqref{eq:FIOmap} is to show that a Fourier integral operator of order $-\frac{n-1}{2}$ maps the Hardy space $H^{1}(\Rn)$ of Fefferman and Stein into $L^{1}(\Rn)$. To this end, one can rely on the powerful 
atomic decomposition of $H^{1}(\Rn)$. Then one can combine interpolation of analytic families of operators with the fact that Fourier integral operators of order $0$ are bounded on $L^{2}(\Rn)$, to deduce \eqref{eq:FIOmap} for $1<p\leq 2$. Finally, duality yields \eqref{eq:FIOmap} for $2<p<\infty$ and implies that a Fourier integral operator of order $-\frac{n-1}{2}$ maps $L^{\infty}(\Rn)$ into $\BMO(\Rn)$. 

In fact, one can 
show that a Fourier integral operator $T$ of order $m\in\R$ satisfies 
\begin{equation}\label{eq:FIOmap2}
T:\HT^{s+2s(p)+m,p}(\Rn)\to \HT^{s,p}(\Rn)
\end{equation}
for all $1\leq p\leq \infty$ and $s\in\R$. Here 
$\HT^{s,p}(\Rn)=W^{s,p}(\Rn)=\lb D\rb^{-s}L^{p}(\Rn)$ for $1<p<\infty$, while $\HT^{s,1}(\Rn)=\lb D\rb^{-s}\HT^{1}(\Rn)$ for $\HT^{1}(\Rn)$ the local Hardy space,  and $\lb D\rb^{-s}=(1-\Delta)^{-s/2}$. Also, $\HT^{s,\infty}(\Rn)=\lb D\rb^{-s}\bmo(\Rn)$, where $\bmo(\Rn)$ is the dual of $\HT^{1}(\Rn)$. For $1<p<\infty$, \eqref{eq:FIOmap2} is just \eqref{eq:FIOmap}, but at the endpoints $p=1$ and $p=\infty$ it yields a stronger regularity result than above, given that $H^{1}(\Rn)\subsetneq\HT^{1}(\Rn)\subsetneq L^{1}(\Rn)$ and $L^{\infty}(\Rn)\subsetneq\bmo(\Rn)\subsetneq \BMO(\Rn)$. 


\subsection{Hardy spaces for Fourier integral operators}

Although the exponent $2s(p)$ in \eqref{eq:FIOmap2} is sharp under natural assumptions, and in particular for the solution operators to wave equations, it was observed by Smith in \cite{Smith98a} that \eqref{eq:FIOmap2} can nonetheless be improved, by measuring regularity using different function spaces. He introduced a function space, denoted $\HT^{1}_{FIO}(\Rn)$, which is invariant under Fourier integral operators of order zero and which satisfies the Sobolev embeddings\footnote{Strictly speaking, the space in \cite{Smith98a} coincides with what we will denote by $\HT^{s(1),1}_{FIO}(\Rn)$, and as such it satisfies slightly different embeddings. This makes no difference for the rest of the theory.}
\begin{equation}\label{eq:Sobolevp1}
\HT^{s(1),1}(\Rn)\subseteq\HT^{1}_{FIO}(\Rn)\subseteq\HT^{-s(1),1}(\Rn).
\end{equation}
By combining these embeddings with the invariance of $\HT^{1}_{FIO}(\Rn)$ under Fourier integral operators, one recovers the key $H^{1}(\Rn)\to L^{1}(\Rn)$ estimate in the proof of \eqref{eq:FIOmap}. However, given that the embeddings in \eqref{eq:Sobolevp1} are strict, the resulting regularity statement for Fourier integral operators in fact improves \eqref{eq:FIOmap2} for $p=1$. 

In \cite{HaPoRo20}, Smith's construction was extended to a full scale $(\Hp)_{1\leq p\leq \infty}$ of Hardy spaces for Fourier integral operators, and the associated Sobolev spaces are $\Hps=\lb D\rb^{-s}\Hp$ for $s\in\R$. These spaces are all invariant under Fourier integral operators of order zero, and 
the embeddings
\begin{equation}\label{eq:Sobolevintro}
\HT^{s+s(p),p}(\Rn)\subseteq\Hps\subseteq \HT^{s-s(p),p}(\Rn)
\end{equation}
hold. Again, the combination of \eqref{eq:Sobolevintro} and the invariance of $\Hps$ under Fourier integral operators 
improves \eqref{eq:FIOmap2}. However, the invariance of $\Hps$ under the solution operators to wave equations is powerful in its own right, and it is one of the key tools in the fixed-time regularity theory for rough wave equations, both in \cite{Hassell-Rozendaal23} and in this article.

\subsection{Main result}

We formulate our main result using the function space $C^{r}_{-}(\Rn)$ from Definition \ref{def:Crminus}, for $r>0$. Loosely speaking, $C^{r}_{-}(\Rn)$ consists of all bounded functions that have $C^{r}(\Rn)$ regularity if $r\notin\N$, and $C^{r-1,1}(\Rn)$ regularity if $r\in\N$. 
Our main result regarding the well-posedness of \eqref{eq:wave-eqn} is then as follows.

\begin{theorem}\label{thm:mainintro}
Suppose that $(a_{ij})_{i,j=1}^{n},(a_{j})_{j=0}^{n}\subseteq C^{r}_{-}(\Rn)$ for some $r\geq2$. Then there exist unique collections $(U_{0}(t))_{t\in\R},(U_{1}(t))_{t\in\R}$ such that, for all $p\in[1,\infty]$ and $s\in\R$ with $2s(p)+1<r$ and $-r+s(p)+1< s< r-s(p)$, and for all $u_{0}\in\Hps$, $u_{1}\in \Hpsm$, $F\in L^{1}_{\loc}(\R;\Hpsm)$ and $t_{0}>0$, the following 
holds:
\begin{enumerate}
\item\label{it:mainintro1} $U_{k}(t):\HT^{s-k,p}_{FIO}(\Rn)\to\Hps$ is bounded for all $t\in\R$ and $k\in\{0,1\}$, and $\sup_{|t|\leq t_{0}}\|U_{k}(t)\|_{\La(\HT^{s-k,p}_{FIO}(\Rn),\Hps)}<\infty$; 
\item\label{it:mainintro3} If $p<\infty$, then $U_{0}(\cdot)u_{0},U_{1}(\cdot)u_{1}\in C^{k}(\R; {\mathcal{H}^{s-k,p}_{FIO}(\Rn)})$ for 
$k\in\{0,1,2\}$;
\item\label{it:mainintro4} Set 
$u(t):=U_{0}(t)u_{0}+U_{1}(t)u_{1}-\int_{0}^{t}U_{1}(t-s)F(s)\ud s
$ 
for $t\in\R$. If $p<\infty$, then 
\[
u\in C(\R;\Hps)\cap C^{1}(\R;\Hpsm)\cap W^{2,1}_{\loc}(\R;\HT^{s-2,p}_{FIO}(\Rn)),
\]
$u(0)=u_{0}$, $\partial_{t}u(0)=u_{1}$, and 
$(D_{t}^{2}-L)u(t)=F(t)$ 
for almost all $t\in \R$.
\end{enumerate}
In particular, 
\begin{equation}\label{eq:regularityLpintro}
U_{k}(t):\HT^{s+s(p)-k,p}(\Rn)\to \HT^{s-s(p),p}(\Rn)
\end{equation}
for all $t\in\R$ and $k\in\{0,1\}$.
\end{theorem}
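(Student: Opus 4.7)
The plan is to adapt and considerably extend the strategy of \cite{Hassell-Rozendaal23}, which established the analogous well-posedness on the classical Sobolev scale $W^{s,p}(\Rn)$ for $1<p<\infty$, to the full scale $\Hps$ of FIO-Sobolev spaces, now including the endpoints $p=1$ and $p=\infty$. The backbone of the argument is to factor $D_t^2-L$ into half-wave propagators. Even though the rough coefficients prevent $\sqrt{L}$ and $\ue^{\pm \ui t\sqrt{L}}$ from belonging to the smooth FIO calculus, one can construct a parametrix for the square root and for the half-wave propagators using a paradifferential decomposition of $L$ into a smooth principal symbol plus a remainder of regularity $C^{r-1}$, exactly as in \cite{Hassell-Rozendaal23}. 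This produces operators $U_0(t)$ and $U_1(t)$ such that the formula in \eqref{it:mainintro4} formally satisfies the wave equation; the remaining work is to prove the mapping property \eqref{it:mainintro1}.

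The heart of the proof is the assertion that the rough half-wave propagators are bounded on $\Hps$ for all admissible $(s,p)$. Here I would exploit the fact that $\Hps$ was designed in \cite{HaPoRo20} precisely to be invariant under FIOs of order zero, which reduces matters to showing that the parametrix decomposes into a smooth FIO part (handled directly by \cite{HaPoRo20}) plus a rough remainder whose kernel admits favorable $L^2$-off-singularity bounds when expressed in the tent-space / wave-packet characterization of $\Hps$. The quantitative constraint $r>\frac{n+1}{2}$ and the window $-r+s(p)+1<s<r-s(p)$ reflect the precise number of derivatives of the coefficients consumed by each wave-packet interaction and by the duality pairing that produces the upper endpoint of $s$. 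The endpoint $p=1$ can be handled via the molecular/atomic characterization of $\Hp$, and the case $p=\infty$ then follows by the duality $\HT^{s,\infty}_{FIO}(\Rn)=(\HT^{-s,1}_{FIO}(\Rn))^{*}$ established in \cite{HaPoRo20}. Uniform boundedness in $|t|\le t_0$ is automatic once the wave-packet estimates are tracked in $t$.

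With \eqref{it:mainintro1} in hand, parts \eqref{it:mainintro3} and \eqref{it:mainintro4} follow by relatively standard arguments. Strong continuity and $C^k$-regularity in time on $\Hps$ for $p<\infty$ are obtained by density, using $L^2$ well-posedness from form theory to identify the limit and the uniform bounds from \eqref{it:mainintro1} to upgrade convergence to convergence in $\Hps$; for $p=\infty$ one only has weak-$*$ continuity, which is why these parts are stated for $p<\infty$. Differentiating the Duhamel integral and using the factorization shows that $u$ satisfies $(D_t^2-L)u=F$ almost everywhere. Uniqueness is obtained by an energy estimate on the difference of two solutions, pairing against the dual propagator and exploiting that $L$ differs from a self-adjoint operator by lower order terms controlled by the $C^{1,1}$ assumption. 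Finally, the corollary \eqref{eq:regularityLpintro} is immediate: by the Sobolev embeddings \eqref{eq:Sobolevintro} one has $\HT^{s+s(p)-k,p}(\Rn)\hookrightarrow \HT^{s-k,p}_{FIO}(\Rn)$ and $\Hps\hookrightarrow \HT^{s-s(p),p}(\Rn)$, and composing these with \eqref{it:mainintro1} gives the classical-Sobolev mapping.

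The principal obstacle, by a clear margin, is establishing invariance of $\Hps$ under the rough half-wave propagators at the Hardy endpoints. In \cite{Hassell-Rozendaal23} the analogous $L^p$ statements for $1<p<\infty$ could lean on interpolation and duality from the $L^2$ case, but here one needs genuine $H^1$-type estimates at the rough level of regularity, where kernel bounds are lossy and one must carefully balance the loss against the tolerance provided by the FIO wave-packet structure. The hypothesis $r>\frac{n+1}{2}$, and the subtle $r$- and $p$-dependent admissibility range in $s$, both emerge from optimizing this balance.
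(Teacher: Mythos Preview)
Your overall architecture---paradifferential decomposition, approximate square root, half-wave parametrix, Duhamel iteration, and Sobolev embedding to get \eqref{eq:regularityLpintro}---matches the paper's. But you have misidentified where the genuine endpoint obstruction lies, and the method you propose for overcoming it is precisely the one that does \emph{not} work.

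The parametrix for the smoothed half-wave equation is built from the smooth symbol $A^{\sharp}_{1/2}$ via wave packets and bicharacteristic flows, and its boundedness on $\Hps$ for $p=1$ does follow from off-singularity kernel bounds and the atomic decomposition, essentially as in \cite{HaPoRo20,Hassell-Rozendaal23}. That part carries over to $p=1$ with no new ideas; the paper explicitly imports it from \cite{Hassell-Rozendaal23}. The real obstruction is the rough remainder $L_{2}$ coming from $(a_{ij})^{\flat}_{1/2}$: to close the iteration one needs $L_{2}:\Hps\to\HT^{s-1,p}_{FIO}(\Rn)$, i.e.\ boundedness of a pseudodifferential operator with symbol in $C^{r}_{*}S^{2-r/2}_{1,1/2}$ on $\HT^{1}_{FIO}(\Rn)$. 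This is not an operator with favorable kernel bounds amenable to atomic testing; the direct $\HT^{1}$-type arguments you describe were exactly what restricted \cite{Hassell-Rozendaal23} to $1<p<\infty$, and the paper says outright that those techniques ``do not seem to apply here''.

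The paper's route around this is quite different from anything in your sketch. It first \emph{extends the definition of $\Hps$ to all $0<p<1$} (via tent spaces, with a full suite of interpolation, duality, molecular decomposition, and a new equivalent characterization using maximal functions). It then proves an initial, suboptimal estimate for rough pseudodifferential operators on $\Hps$ for small $p$ (Theorem~\ref{thm:pseudosmallp}). Finally, it runs a Stein-type interpolation of \emph{analytic families of symbols} $a_{z}$ (Lemma~\ref{lem:symbolinter}), interpolating between these weak bounds as $p\downarrow 0$ and the known bounds from \cite{Rozendaal22} as $p\downarrow 1$, to obtain the sharp mapping property at $p=1$ (Theorem~\ref{thm:mainpseudo}). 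The $p<1$ theory is not a by-product; it is the mechanism by which $p=1$ is reached. Your proposal contains no hint of this, and without it the rough remainder term cannot be controlled at the endpoint.

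Two smaller points. First, uniqueness in the paper is not an energy estimate: one solves the adjoint equation $(D_{t}^{2}-L^{*})w=F$, regularizes via $\varphi(\varepsilon D)$ to gain enough regularity to integrate by parts, and lets $\varepsilon\to0$. Second, the condition you cite as $r>\frac{n+1}{2}$ is not a hypothesis of the theorem; for $p\ge1$ the constraint is $2s(p)+1<r$, and $r>\frac{n+1}{2}$ only appears as the condition allowing \emph{all} $p\in[1,\infty]$ simultaneously.
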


For $p<\infty$, the existence and uniqueness statement is a special case of Theorem \ref{thm:mainwave}. 
See Corollary \ref{cor:waveinfty} for $p=\infty$; 
here versions of \eqref{it:mainintro3} and \eqref{it:mainintro4} hold in a weak-star sense. 
Note that \eqref{eq:regularityLpintro} follows directly from \eqref{it:mainintro1} 
and \eqref{eq:Sobolevintro} (see also Corollary \ref{cor:divLp}). It implies 
that
\begin{align}
\label{eq:H1L1} U_{k}(t)\lb D\rb^{-\frac{n-1}{2}+k}&:H^{1}(\Rn)\to L^{1}(\Rn),\\
\nonumber U_{k}(t)\lb D\rb^{-\frac{n-1}{2}+k}&:L^{\infty}(\Rn)\to \BMO(\Rn),
\end{align}
for all $t\in\R$ and $k\in\{0,1\}$ if $n=r=2$, or if $n\geq3$ and $r>\frac{n+1}{2}$. 

One may in fact weaken the regularity assumptions on the coefficients somewhat. 
For $r>2$, the conclusion of Theorem \ref{thm:mainintro} holds if the $a_{ij}$ and $a_{j}$ are elements of the Zygmund space $C^{r}_{*}(\Rn)=B^{r}_{\infty,\infty}(\Rn)$ from Definition \ref{def:Zygmund}. This space coincides with $C^{r}_{-}(\Rn)$ for $r\notin\N$ but strictly contains it for $r\in\N$.  
Moreover, for general $r\geq 2$ one may include the endpoints of the Sobolev interval for $s$ if $(a_{ij})_{i,j=1}^{n},(a_{j})_{j=0}^{n}\in \HT^{r,\infty}(\Rn)\subsetneq C^{r}_{*}(\Rn)$. In particular, these endpoint values are allowed in Theorem \ref{thm:mainintro} if $r\in\N$, given that then $C^{r}_{-}(\Rn)\subseteq \HT^{r,\infty}(\Rn)$.

We assume that the $a_{j}$ have the same regularity as the $a_{ij}$ because it leads to the same restriction on $s$ as we would get if $a_{j}=0$. One may assume less regularity of the lower-order terms at the cost of shrinking the interval for $s$, cf.~Remark \ref{rem:lowerreg}. For example, $(a_{j})_{j=1}^{n}\subseteq C^{r-1}_{*}(\Rn)$ is allowed, and if $2s(p)\leq 1$ then one may let $a_{0}\in C^{\rho}_{*}(\Rn)$ for some $\rho>s(p)$. In fact, $a_{0}\in\HT^{s(p),\infty}(\Rn)$ is also allowed for $s=1$. 

The restriction $k\leq 2$ in \eqref{it:mainintro3} arises from the fact that we allow $r=2$; for larger $r$ one may consider larger $k$, cf.~Remark \ref{rem:boundedder}.

By comparing \eqref{eq:spectralsol} and Theorem \ref{thm:mainintro}, one obtains bounds for the spectral multipliers $\cos(t\sqrt{L})$ and $\sin(t\sqrt{L})L^{-1/2}$ if $L$ is a positive operator (see Corollary \ref{cor:spectral}).

For smooth coefficients, \eqref{eq:regularityLpintro} is a consequence of \cite{SeSoSt91}, and the $U_{k}(t)$ are Fourier integral operators, cf.~\eqref{eq:solUj} and \eqref{eq:FIOmap2}. The stronger regularity statements in \eqref{it:mainintro1}-\eqref{it:mainintro4} were obtained in \cite{Smith98a} for $p=1$, and in \cite{HaPoRo20} for $p>1$, again in the smooth setting. 
For $1<p<\infty$ and $a_{j}=0$, $0\leq j\leq n$, Theorem \ref{thm:mainintro} is contained in \cite{Hassell-Rozendaal23}. The main contribution of this article to Theorem \ref{thm:mainintro} concerns the endpoints $p=1$ and $p=\infty$. We emphasize that, for rough coefficients, the operators $U_{k}(t)$ in Theorem \ref{thm:mainintro} are not Fourier integral operators in the classical sense (see also Remark \ref{rem:parametrixwave}). 

\subsection{Paradifferential calculus and parametrices}

We will now indicate the main ingredients of the proof of Theorem \ref{thm:mainintro}. Doing so will also illuminate why the endpoint cases are more challenging for the $L^{p}(\Rn)$ regularity theory of rough waves. 
We follow a template that was introduced on $L^{2}(\Rn)$ by Smith in \cite{Smith98b}, 
and that was subsequently used in modified forms to study various problems involving rough wave equations (see e.g.~\cite{Tataru00,Tataru01,Tataru02,Smith06,Smith14,Chen-Smith19}).
In \cite{Hassell-Rozendaal23} these methods were adapted to the 
fixed-time $L^{p}(\Rn)$ regularity theory. 

Suppose for the moment that $a_{j}=0$ for $0\leq j\leq n$, as in \cite{Hassell-Rozendaal23}. One can use paradifferential calculus to decompose the second-order differential operator $L$ from \eqref{eq:defL} as $L=L_{1}+L_{2}$, where $L_{1}$ is a smooth second-order pseudodifferential operator, and $L_{2}$ is a rough pseudodifferential operator of lower differential order. More precisely, if $(a_{ij})_{i,j=1}^{n}\subseteq C^{r}_{-}(\Rn)$, then $L_{2}$ has differential order $2-r/2$. One can now rewrite \eqref{eq:wave-eqn} as $(D_{t}^{2}-L_{1})u(t)=F(t)+L_{2}u(t)$, and if $r\geq 2$ then $L_{2}u(t)$ should heuristically behave as the inhomogeneity $F(t)$, which is one order rougher than $u(t)$. Moreover, one can solve the smooth homogeneous equation $D_{t}^{2}u(t)=L_{1}u(t)$. Finally, one can insert the solution operators to the latter equation into Duhamel's principle, and iterate to remove error terms,  to obtain a solution to the full equation. 

This procedure does not change fundamentally if the $a_{j}$ are nonzero. One gets a decomposition $L=L_{1}+L_{2}+L_{3}$ for an $L_{3}$ which is of lower differential order by assumption; it is not necessary to apply a paradifferential decomposition to $L_{3}$.

There are three main difficulties in making this heuristic precise. Firstly, the term $L_{2}u(t)$ is of course not a bona fide inhomogeneity, and one has to iterate to get rid of the error that arises in this heuristic. In Duhamel's principle, the solution operators to the homogeneous equation are applied to the inhomogeneity (see e.g.~\eqref{eq:solUj}), and these solution operators are not bounded on $L^{p}(\Rn)$ unless $p=2$ or $n=1$. Hence such an iteration process will not converge on $L^{p}(\Rn)$. 
In \cite{Hassell-Rozendaal23}, this problem was dealt with by performing the iteration procedure on $\Hps$ instead, after which the Sobolev embeddings in \eqref{eq:Sobolevintro} yield the sharp $L^{p}(\Rn)$ regularity of the solution. 
It is for this reason that the Hardy spaces for Fourier integral operators play a vital role in \cite{Hassell-Rozendaal23} and in this article.

Secondly, although $L_{1}$ has a smooth, elliptic and real-valued symbol, it is not a differential operator and its symbol is not homogeneous. Hence one cannot directly apply the theory of Fourier integral operators to solve the equation $D_{t}^{2}u(t)=L_{1}u(t)$. Instead, if $(a_{ij})_{i,j=1}^{n}\subseteq C^{1,1}(\Rn)$, then 
one may use wave packet transforms and bicharacteristic flows to build a parametrix for this equation, and 
iterate to remove the error term that comes with this parametrix. In fact, it turns out to be convenient to first take an approximate square root of $L_{1}$ and solve the associated half-wave equation using this approach, but this does not make a fundamental difference. 
Of course, one has to prove 
that the iteration process converges, on $\Hps$. To do so, one proves concrete kernel estimates, the same as those satisfied by Fourier integral operators. Then one uses an atomic decomposition to prove the required statement for $p=1$, after which interpolation and duality deal with the remaining $p$. This step 
is similar to the proof 
of bounds for Fourier integral operators, 
and the exponent $p=1$ plays a key role. We will rely on \cite{Hassell-Rozendaal23} for this part of the argument.  

Finally, although $L_{2}$ has differential order at most $1$ if $r\geq 2$, it is a rough pseudodifferential operator, the symbol of which has the same spatial regularity as the $a_{ij}$. Hence one cannot rely on the theory of smooth pseudodifferential operators to conclude that $L_{2}$ behaves as a first-order operator on the relevant function spaces. 
There are standard results in paradifferential calculus that guarantee boundedness of pseudodifferential operators with very rough symbols on $L^{2}(\Rn)$, and even on $L^{p}(\Rn)$. However, as noted above, it is essential 
that our operators map between Hardy spaces for Fourier integral operators. A first boundedness result for rough pseudodifferential operators on $\Hps$ was obtained in \cite{Rozendaal23a}; this was subsequently improved in \cite{Rozendaal22}. Here the endpoint exponents appear to be more problematic, and it is this part of the argument that restricted \cite{Hassell-Rozendaal23} to $1<p<\infty$.

\subsection{Rough pseudodifferential operators}

In this article, we 
extend the bounds for rough pseudodifferential operators from \cite{Rozendaal23a,Rozendaal22} to the endpoints $p=1$ and $p=\infty$. 
We work with a class $C^{r}_{*}S^{m}_{1,1/2}$, introduced in Definition \ref{def:symbolrough}, of symbols that behave like elements of H\"{o}rmander's $S^{m}_{1,1/2}$ class but have the same spatial regularity as elements of the space $C^{r}_{*}(\Rn)$ from before. 
Our main result for the pseudodifferential operator $a(x,D)$ associated with such a symbol $a$ 
is as follows.

\begin{theorem}\label{thm:pseudointro}
Let $r>0$, $m\in\R$, $p\in[1,\infty]$, $a\in C^{r}_{*}S^{m}_{1,1/2}$ and $-r/2+s(p)<s<r-s(p)$. Then the following statements hold for each $\veps>0$:
\begin{enumerate}
\item\label{it:pseudointro1} If $r>4s(p)$, then $a(x,D):\HT^{s+m,p}_{FIO}(\Rn)\to \Hps$;
\item\label{it:pseudointro2} If $r\leq 4s(p)$, then $a(x,D):\HT^{s+(4s(p)-r)/2+\veps+m,p}_{FIO}(\Rn)\to \Hps$.
\end{enumerate} 
\end{theorem}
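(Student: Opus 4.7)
The plan is to split $a$ via a paradifferential decomposition and to handle the smooth and rough pieces separately, adapting the strategy of \cite{Rozendaal22} to the endpoints $p\in\{1,\infty\}$ by means of atomic and duality arguments.

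First I would decompose $a = a^{\sharp} + a^{\flat}$ using a Littlewood--Paley decomposition in the $x$-variable at the parabolic scale. Choosing $(\varphi_{k})_{k \geq 0}$ a dyadic partition of unity and $\chi$ a suitable cutoff, set
\[
a^{\sharp}(x,\xi) := \sum_{k \geq 0} \chi\bigl(2^{k}\lb \xi \rb^{-1/2}\bigr)\, \varphi_{k}(D_{x}) a(x,\xi), \qquad a^{\flat} := a - a^{\sharp}.
\]
Then $a^{\sharp}$ lies in H\"{o}rmander's smooth class $S^{m}_{1,1/2}$, while for $a^{\flat}$ the estimate $\|\varphi_{k}(D_{x}) a(\cdot,\xi)\|_{\infty} \lesssim 2^{-kr}$, combined with the restriction to spatial frequencies $2^{k} \gtrsim \lb \xi \rb^{1/2}$, shows that $a^{\flat}$ behaves as a (rough) symbol of effective order $m - r/2$.

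For the smooth piece $a^{\sharp}(x,D)$, the required mapping $\HT^{s+m,p}_{FIO}(\Rn) \to \HT^{s,p}_{FIO}(\Rn)$ for $1 < p < \infty$ is already contained in \cite{HaPoRo20,Rozendaal22}. At the endpoint $p = 1$, I would prove the bound by checking that $a^{\sharp}(x,D)$ sends molecules of $\HT^{s+m,1}_{FIO}(\Rn)$ to summable families of molecules of $\HT^{s,1}_{FIO}(\Rn)$. The key input is the kernel estimate for an $S^{m}_{1,1/2}$ operator, which reproduces exactly the parabolic off-diagonal decay built into the definition of $\HT^{1}_{FIO}(\Rn)$; one must additionally verify the cancellation of the output molecules across the second dyadic frequency decomposition. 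The endpoint $p = \infty$ then follows by duality, using the identification $(\HT^{s,1}_{FIO}(\Rn))^{\ast} = \HT^{-s,\infty}_{FIO}(\Rn)$ together with the fact that the formal adjoint of $a^{\sharp}(x,D)$ lies in the same symbol class. For the rough piece $a^{\flat}(x,D)$, the order gain of $r/2$ means that it suffices to prove a strictly weaker bound, which one can route through classical Sobolev spaces using the embeddings \eqref{eq:Sobolevintro}. Concretely, the chain
\[
\HT^{s+m+\delta,p}_{FIO}(\Rn) \hookrightarrow \HT^{s+m+\delta-s(p),p}(\Rn) \xrightarrow{\;a^{\flat}(x,D)\;} \HT^{s+s(p),p}(\Rn) \hookrightarrow \HT^{s,p}_{FIO}(\Rn)
\]
reduces matters to a bound on classical Sobolev spaces for a rough $(1,1/2)$-type symbol of effective order $m - r/2$, for which the available range of target regularities is essentially $-r/2 < s+s(p) < r/2$ by standard rough symbol calculus. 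Choosing $\delta = 0$ when $r > 4s(p)$, and $\delta = (4s(p) - r)/2 + \veps$ in the borderline case, yields \eqref{it:pseudointro1} and \eqref{it:pseudointro2} respectively, while the hypothesis $-r/2 + s(p) < s < r - s(p)$ is exactly what keeps $s+s(p)$ inside the allowed Sobolev window.

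The main obstacle will be the atomic/molecular boundedness of $a^{\sharp}(x,D)$ on $\HT^{s,1}_{FIO}(\Rn)$: although exotic $(1,1/2)$-type operators are well adapted to the wave-packet structure of $\HT^{p}_{FIO}(\Rn)$ in the reflexive range, at the endpoint $p = 1$ one must carefully control both the spatial tails of the image of a molecule and its distribution over parabolic frequency annuli, and verify the cancellation conditions for the output to again be an $\HT^{s,1}_{FIO}$-molecule. Once this step is settled, duality handles $p = \infty$, and the combination with the bound for $a^{\flat}$ yields the stated dichotomy.
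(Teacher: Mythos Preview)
Your decomposition $a = a^{\sharp} + a^{\flat}$ at the parabolic scale is the same first step as in the paper, and your treatment of the smooth piece $a^{\sharp} \in S^{m}_{1,1/2}$ via molecular/kernel bounds is essentially what the paper does (Lemma~\ref{lem:pseudosmooth}, resting on Proposition~\ref{prop:offsing}). However, your handling of $a^{\flat}$ contains a genuine error. You claim $\|\varphi_{k}(D_{x}) a(\cdot,\xi)\|_{\infty} \lesssim 2^{-kr}$ and conclude that $a^{\flat}$ has effective order $m - r/2$. But for $a \in C^{r}_{*}S^{m}_{1,1/2}$ one only has $\|a(\cdot,\xi)\|_{C^{r}_{*}} \lesssim \lb\xi\rb^{m + r/2}$ (Definition~\ref{def:symbolrough} with $\delta=1/2$), so $\|\varphi_{k}(D_{x}) a(\cdot,\xi)\|_{\infty} \lesssim 2^{-kr}\lb\xi\rb^{m+r/2}$. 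At the threshold $2^{k} \sim \lb\xi\rb^{1/2}$ this yields $\lb\xi\rb^{m}$, i.e.\ no order gain at all: $a^{\flat}_{1/2}$ remains in $C^{r}_{*}S^{m}_{1,1/2}$ (this is exactly Lemma~\ref{lem:smoothing} with $\gamma = \delta = 1/2$). Routing $a^{\flat}$ through the Sobolev embeddings therefore only produces the trivial loss $2s(p)$, strictly worse than the loss $\max(0,(4s(p)-r)/2 + \veps)$ asserted in the theorem. The order-gain computation you wrote down is correct for $a\in C^{r}_{*}S^{m}_{1,0}$, not for $\delta=1/2$.

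The paper's actual argument (Theorem~\ref{thm:mainpseudo}) applies a \emph{second} smoothing to $a^{\flat}_{1/2}$ at a strictly higher scale $\gamma \in (1/2,1)$. The piece $(a^{\flat}_{1/2})^{\flat}_{\gamma}$ then genuinely gains order $(\gamma-1/2)r$ and can be absorbed via the embeddings (Corollary~\ref{cor:pseudoclassFIO}), but the middle piece $(a^{\flat}_{1/2})^{\sharp}_{\gamma}$ is still only $C^{r}_{*}$ in $x$ and still of order $m$; it merely acquires the spectral support condition $c|\eta|^{1/2}\leq |\xi| \leq \tfrac{1}{16}(1+|\eta|)^{\gamma}$. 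This piece is the heart of the proof, and it is \emph{not} handled by an atomic argument at $p=1$. Instead the paper embeds an analytic family $a_{z}$ of such symbols (Lemma~\ref{lem:symbolinter}) and runs Stein interpolation between a weak bound on $\HT^{s,\delta}_{FIO}(\Rn)$ for $\delta\downarrow 0$ (Theorem~\ref{thm:pseudosmallp}, proved via the maximal-function characterization of Section~\ref{subsec:charac}) and the known bounds on $\HT^{s,1+\delta}_{FIO}(\Rn)$ from \cite{Rozendaal22}; the limiting parameters (Lemma~\ref{lem:intparameters}) recover precisely the threshold $r(p)=4s(p)$ at $p=1$. The extension of $\Hps$ to $p<1$ is therefore not incidental but the essential new input that makes the endpoint accessible.
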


This result is a special case of Theorem \ref{thm:mainpseudo}. One may include the endpoint $s=r-s(p)$ of the Sobolev interval if $a$ is an element of the symbol class $\HT^{r,\infty}S^{m}_{1,1/2}\subsetneq C^{r}_{*}S^{m}_{1,1/2}$ from Definition \ref{def:symbolrough}. 

Combined with a paradifferential decomposition and bounds for $a(x,D)^{*}$, Theorem \ref{thm:pseudointro} implies that multiplication by a function in $C^{r}_{*}(\Rn)$ is bounded on $\Hps$ for $r>2s(p)$ and $-r+s(p)<s<r-s(p)$ (see Remark \ref{rem:multiplication}).

The importance of Theorem \ref{thm:pseudointro} for the proof of Theorem \ref{thm:mainintro} arises from the fact that the operator $L_{2}$ from before has a symbol in $C^{r}_{*}S^{2-r/2}_{1,1/2}$ if $(a_{ij})_{i,j=1}^{n}\subseteq C^{r}_{*}(\Rn)$, and in $\HT^{r,\infty}S^{2-r/2}_{1,1/2}$ if $(a_{ij})_{i,j=1}^{n}\subseteq \HT^{r,\infty}(\Rn)$. 
Moreover, results about multiplication operators can be used to deal with the lower-order term $L_{3}$.

For $1<p<\infty$, Theorem \ref{thm:pseudointro} is contained in \cite{Rozendaal22}; our contribution mainly concerns the endpoints $p=1$ and $p=\infty$. To deal with these, we do not directly improve upon the techniques in \cite{Rozendaal22,Rozendaal23a}, 
which do not seem to apply here. 

Instead, we rely on an extension of the definition of $\Hps$ from $1\leq p\leq \infty$ to all $0<p\leq \infty$. This extension of the theory to the full range of exponents is contained in the companion article \cite{LiRoSo25b}. Here, we use a framework from \cite{Rozendaal23a} and maximal function bounds to prove, in Theorem \ref{thm:pseudosmallp}, an initial estimate for rough pseudodifferential operators on $\Hps$ for $p\leq 1$. This estimate is weaker than the one in Theorem \ref{thm:pseudointro} for $p=1$. However, as in \cite{Rozendaal22}, one can use interpolation of rough symbol classes to obtain significant improvements. More precisely, by interpolating between the weaker estimates in Theorem \ref{thm:pseudosmallp} as $p\downarrow0$, and the bounds from \cite{Rozendaal22} as $p\downarrow 1$, one arrives at the statement in Theorem \ref{thm:pseudointro} for $p=1$. Finally, by applying the same line of ideas to $a(x,D)^{*}$ and using duality, one can also take care of the endpoint $p=\infty$.

\vanish{In fact, the same procedure yields bounds for pseudodifferential operators on $\Hps$ for $p<1$ that improve upon the initial ones from Theorem \ref{thm:pseudosmallp}. 
One can then use the machinery from \cite{Hassell-Rozendaal23} to extend Theorem \ref{thm:mainintro} to $0<p<1$ (see Theorem \ref{thm:mainwave}). 
Hence, as a byproduct of our approach, for such $p$ we obtain the sharp $\HT^{s,p}(\Rn)$ regularity for wave equations with rough coefficients. In particular, the following extension of \eqref{eq:H1L1} holds for $0<p<1$, $t\in \R$ and $k\in\{0,1\}$:
\[
U_{k}(t)\lb D\rb^{-2s(p)+k}:H^{p}(\Rn)\to L^{p}(\Rn),
\]
if $r>\max(2s(p)+\frac{1}{p},2n(\frac{1}{p}-1))$.
}

\vanish{

\subsection{The spaces $\Hps$ for $p<1$}

We introduce $\Hps$ for $0<p<1$ using wave packet transforms and (weighted) tent spaces $T^{p}_{s}(\Sp)$ over the cosphere bundle $\Sp=\Rn\times S^{n-1}$, as in \cite{HaPoRo20,Hassell-Rozendaal23}. This allows us to conveniently deduce various properties of $\Hps$ from those of $T^{p}_{s}(\Sp)$, and it links the space to the parametrix 
from \cite{Hassell-Rozendaal23}. Since $\Sp$ is a doubling metric measure space when endowed with a suitable metric, basic properties of $T^{p}_{s}(\Sp)$ are available in the literature. In this manner we can show that $\Hps$ is a quasi-Banach space of tempered distributions in which the Schwartz functions are dense (see Section \ref{subsec:defHpFIO}), and that it has a suitable molecular decomposition, cf.~Theorem \ref{thm:moldecomp}. The space is  invariant under 
Fourier integral operators, by Theorem \ref{thm:FIObdd}, and the Sobolev embeddings in \eqref{eq:Sobolevintro} extend to $p<1$, by Theorem \ref{thm:Sobolev}. We also determine the dual of $\Hps$, in Proposition \ref{prop:HpFIOdual}, and we give an equivalent characterization of $\Hps$ that is analogous to the one in \cite{FaLiRoSo23} for $p=1$, in Theorem \ref{thm:equivchar}. This characterization is simpler, and more useful for the proof of the initial estimates for rough pseudodifferential operators in Theorem \ref{thm:pseudosmallp}. 

On the other hand, new difficulties arise for $p<1$. For example, as indicated above, it is crucial for our approach that the Hardy spaces for Fourier integral operators form a complex interpolation scale, as this allows us to interpolate analytic families of operators. However, the theory of complex interpolation does not extend directly from Banach spaces to quasi-Banach spaces. As such, for the proof of Proposition \ref{prop:HpFIOint}, which says that our spaces indeed form a complex interpolation scale, we have to establish certain geometric properties of the quasi-Banach spaces $\Hps$ and $T^{p}_{s}(\Sp)$. We have placed most of this material in Appendix \ref{sec:inter}.

There are also difficulties, both technical and fundamental, concerning low-frequency contributions. These clearly have a smoothing effect, but boundedness on the relevant spaces can still be problematic, due to the lack of $L^{p}(\Rn)$ integrability for small $p$. For example, our result on $\HT^{s,p}(\Rn)$ regularity cannot hold for general $0<p<1$ if one replaces the wave equation $D_{t}^{2}u(t)=Lu(t)$ by the associated half-wave equation $D_{t}u(t)=\sqrt{L}u(t)$, whenever the latter is well defined. Indeed, even in the case of the flat Laplacian in dimension $n=1$, due to the singularity at zero of $\xi\mapsto e^{i|\xi|}$, 
the operators $e^{it\sqrt{-\Delta}}$ do not have the same mapping properties as their wave counterparts 
(see Remarks \ref{rem:Euclidwave} and \ref{rem:zerosingular}). Nonetheless, this is not problematic for Theorem \ref{thm:mainintro}, the proof of which relies on a statement about the half-wave equation associated with a smooth approximate square root of $L$, Theorem \ref{thm:firstorder}.



}

\subsection{Organization of this article}

In Section \ref{sec:preliminaries} we collect background for the rest of the article. More precisely, we define the classical function spaces that we will encounter, we discuss a homogeneous structure on the cosphere bundle which will appear in several key places, we introduce the wave packets that appear in the definition of the Hardy spaces for Fourier integral operators, and we define the latter spaces and collect their basic properties. In Section \ref{sec:pseudos} we then prove our results on rough pseudodifferential operators, and in particular Theorem \ref{thm:pseudointro}. Finally, Section \ref{sec:roughwaves} contains our results for wave equations, including Theorem \ref{thm:mainintro}. 

\subsection{Notation and terminology}\label{subsec:notation}

The natural numbers are $\N=\{1,2,\ldots\}$, and $\Z_{+}:=\N\cup\{0\}$. Throughout, we fix an $n\in\N$ with $n\geq 2$. Our techniques also apply for $n=1$, but in this case simpler arguments suffice (see also \cite{Frey-Portal20}). 
We denote by $\lceil\gamma\rceil\in\Z$ the smallest integer larger than, or equal to, a $\gamma\in\R$.

For $\xi\in\Rn$ we write $\lb\xi\rb=(1+|\xi|^{2})^{1/2}$, and $\hat{\xi}=\xi/|\xi|$ if $\xi\neq0$. We use multi-index notation, where $\partial_{\xi}=(\partial_{\xi_{1}},\ldots,\partial_{\xi_{n}})$ and $\partial^{\alpha}_{\xi}=\partial^{\alpha_{1}}_{\xi_{1}}\ldots\partial^{\alpha_{n}}_{\xi_{n}}$
for $\xi=(\xi_{1},\ldots,\xi_{n})\in\Rn$ and $\alpha=(\alpha_{1},\ldots,\alpha_{n})\in\Z_{+}^{n}$. Moreover, $D_{j}:=-i\partial_{\xi_{j}}$ for $1\leq j\leq n$, and $D_{t}:=-i\partial_{t}$. 

The bilinear duality between a Schwartz function $g\in\Sw(\Rn)$ and a tempered distribution $f\in\Sw'(\Rn)$ is denoted by $\lb f,g\rb_{\Rn}$, whereas $\lb f,g\rb:=\lb f,\overline{g}\rb_{\Rn}$.  
The Fourier transform of $f$ is denoted by $\F f$ or $\widehat{f}$, and the Fourier multiplier with symbol $\ph$ is denoted by $\ph(D)$. 

The measure of a measurable subset $B$ of a measure space $\Omega$ will be denoted by $|B|$, and its indicator function by $\ind_{B}$.  
The H\"{o}lder conjugate of $p\in[1,\infty]$ is $p'$. 

The quasi-Banach space of continuous linear operators between quasi-Banach spaces $X$ and $Y$ is $\La(X,Y)$, 
and $\La(X):=\La(X,X)$. 

We write $f(s)\lesssim g(s)$ to indicate that $f(s)\leq C g(s)$ for all $s$ and a constant $C \geq0$ independent of $s$, and similarly for $f(s)\gtrsim g(s)$ and $g(s)\eqsim f(s)$.

\section{Preliminaries}\label{sec:preliminaries}
	
In this section we first collect some background material on classical function spaces, a homogeneous structure on the cosphere bundle and wave packets, and then we introduce the Hardy spaces for Fourier integral operators.

\subsection{Classical function spaces}\label{subsec:spacesclass}

Here we introduce the classical function spaces that appear in this article. 


Throughout, fix a $q\in C^{\infty}_{c}(\Rn)$ satisfying $q(\xi)=1$ for all $\xi\in\Rn$ with $|\xi|\leq 2$. For $0<p\leq \infty$, the local Hardy space $\HT^{p}(\Rn)$ from \cite{Goldberg79} consists of all $f\in\Sw'(\Rn)$ such that $q(D)f\in L^{p}(\Rn)$ and $(1-q(D))f\in H^{p}(\Rn)$, endowed with the quasi-norm
\[
\|f\|_{\HT^{p}(\Rn)}:=\|q(D)f\|_{L^{p}(\Rn)}+\|(1-q(D))f\|_{H^{p}(\Rn)}.
\]
Here $H^{p}(\Rn)$, for $0<p<\infty$, is the real Hardy space of Fefferman and Stein \cite{Fefferman-Stein72}, and $H^{\infty}(\Rn):=BMO(\Rn)=H^{1}(\Rn)^{*}$. We also write 
\[
\HT^{s,p}(\Rn):=\lb D\rb^{-s}\HT^{p}(\Rn)
\]
for $s\in\R$. Then $\HT^{s,p}(\Rn)=W^{s,p}(\Rn)$ for all $1<p<\infty$ and $s\in\R$, and $\HT^{s,p}(\Rn)^{*}=\HT^{-s,p'}(\Rn)$ for $1\leq p<\infty$. Moreover, $\HT^{\infty}(\Rn)=\bmo(\Rn)$. 





Next, fix a Littlewood--Paley decomposition $(\psi_{j})_{j=0}^{\infty}\subseteq C^{\infty}_{c}(\Rn)$. That is, 
\begin{equation}\label{eq:LittlePaley}
\sum_{j=0}^{\infty}\psi_{j}(\xi)=1
\end{equation}
for all $\xi\in\Rn$, $\psi_{0}(\xi)=0$ if $|\xi|>1$, $\psi_{1}(\xi)=0$ if $|\xi|\notin [\frac{1}{2},2]$, and $\psi_{j}(\xi)=\psi_{1}(2^{-j+1}\xi)$ for $j>1$. In fact, we may suppose that 
\[
\psi_{j}(\xi)=\psi_{0}(2^{-j}\xi)-\psi_{0}(2^{1-j}\xi)
\]
for all $j\geq1$ and $\xi\in\Rn$, as is implicitly used in the proof of Lemma \ref{lem:smoothing}. 

\begin{definition}\label{def:Zygmund}
For $s\in\R$, the \emph{Zygmund space} $C^{s}_{*}(\Rn)$ consists of those $f\in\Sw'(\Rn)$ such that $\psi_{j}(D)f\in L^{\infty}(\Rn)$ for all $j\geq0$, and 
\[
\|f\|_{C^{s}_{*}(\Rn)}:=\sup_{j\geq0}2^{js}\|\psi_{j}(D)f\|_{L^{\infty}(\Rn)}<\infty.
\]
\end{definition}
Note that $C^{s}_{*}(\Rn)$ is equal to the Besov space $B^{s}_{\infty,\infty}(\Rn)$. However, the present notation is more convenient for us, and it has been used frequently in paradifferential calculus (see e.g.~\cite{Taylor23c}).  

Let $s>0$, and write $s=l+t$ for $l\in\Z_{+}$ and $t\in (0,1]$. Recall that, if $s\notin \N$, then $C^{s}(\Rn)$ consists of all $f\in C^{l}(\Rn)$ such that for each $\alpha\in\Z_{+}^{n}$ with $|\alpha|=l$, the partial derivative $\partial^{\alpha}f$ is H\"{o}lder continuous with parameter $t$. Moreover, $C^{l,1}(\Rn)$ consists of all $f\in C^{l}(\Rn)$ such that $\partial^{\alpha}f$ is Lipschitz for each $\alpha\in\Z_{+}^{n}$ with $|\alpha|=l$. It will be notationally convenient to consider a single scale to denote these spaces. 

\begin{definition}\label{def:Crminus}
Let $s=l+t>0$ for $l\in\Z_{+}$ and $t\in (0,1]$. Then $C^{s}_{-}(\Rn)$ consists of all $l$ times continuously differentiable $f:\Rn\to\C$ such that 
\[
\|f\|_{C^{s}_{-}(\Rn)}:=\max_{|\alpha|\leq l}\sup_{x\in\Rn}|\partial_{x}^{\alpha}f(x)|+\max_{|\alpha|=l}\sup_{x\neq y}\frac{|\partial_{x}^{\alpha}f(x)-\partial_{x}^{\alpha}f(y)|}{|x-y|^{t}}<\infty.
\]
\end{definition}


It is instructive to compare these spaces using embeddings (see \cite{Triebel10}). For example, 
\[
C^{s+\veps}_{*}(\Rn)\subsetneq \HT^{s,\infty}(\Rn)\subsetneq C^{s}_{*}(\Rn)
\]
for all $s\in\R$ and $\veps>0$. Moreover, let
 $s=l+t>0$ for $l\in\Z_{+}$ and $t\in(0,1]$. Then 
\[
\HT^{s,\infty}(\Rn)\subsetneq C^{s}_{*}(\Rn)=C^{s}_{-}(\Rn)=C^{s}(\Rn)\cap L^{\infty}(\Rn)
\]
if $s\notin\N$, i.e.~if $t\in(0,1)$, and
\[
C^{l,1}(\Rn)\cap L^{\infty}(\Rn)=C^{s}_{-}(\Rn)\subsetneq \HT^{s,\infty}(\Rn)\subsetneq C^{s}_{*}(\Rn)
\]
if $s\in\N$, i.e.~if $t=1$. 

\subsection{A homogeneous structure on the cosphere bundle}\label{subsec:metric}

In this subsection, we collect some background on a metric measure space that plays a crucial role in the theory of the Hardy spaces for Fourier integral operators. The metric arises from contact geometry, but in this article we will only use a few of its properties. 
See \cite{HaPoRo20} for more on the material presented here.

The cotangent bundle $T^{*}\Rn$ of $\Rn$ is identified with $\Rn\times\Rn$, and 
\[
o:=\Rn\times\{0\}\subseteq T^{*}\Rn
\]
is the zero section. We denote elements of the unit sphere $S^{n-1}\subseteq\Rn$ by $\w$ or $\nu$, and we endow $S^{n-1}$ with the unit normalized measure $\ud\w$ and the standard Riemannian metric $g_{S^{n-1}}$. 
Let $\Sp:=\Rn\times S^{n-1}$ be the cosphere bundle of $\mathbb{R}^{n}$, endowed with the measure $\ud x\ud\w$ and the product metric $dx^{2}+g_{S^{n-1}}$. The $1$-form $\w\cdot dx$ on $\Sp$ determines a contact structure on $\Sp$, 
which in turn gives rise to the following sub-Riemannian metric:
\[
d((x,\omega),(y,v)):={\rm \inf\limits_{\gamma}}\int_{0}^{1}|\gamma^{\prime}(s)|\ud s,
\]
for $(x,\omega),(y,\nu)\in \Sp$. Here the infimum is taken over all piecewise Lipschitz $\gamma:[0,1]\rightarrow \Sp$ such that $\gamma(0)=(x,\omega)$, $\gamma(1)=(y,\nu)$ and $\w\cdot dx(\gamma^{\prime}(s))=0$ for almost all $s\in[0,1]$. Moreover, $|\gamma^{\prime}(s)|$ is the length of $\gamma^{\prime}(s)$ with respect to $dx^{2}+dg_{S^{n-1}}$.

It is shown in \cite[Lemma 2.1]{HaPoRo20} that
\begin{equation}\label{eq:equivmetric}
d((x,\omega),(y,\nu))\eqsim \big{(}|\omega\cdot(x-y)|+|x-y|^{2}+|\omega-\nu|^{2}\big{)}^{1/2}
\end{equation}
for implicit constants independent of $(x,\w),(y,\nu)\in\Sp$, and we will only work with this equivalent expression for the metric.


Denote by $B_{\tau}(x,\w)$ the open ball around $(x,\w)\in\Sp$ of radius $\tau>0$ with respect to the metric $d$. 
Then $|B_{\tau}(x,\w)|\eqsim \tau^{2n}$ if $0<\tau<1$, and $|B_{\tau}(x,\w)|\eqsim \tau^{n}$ if $\tau\geq 1$ (see \cite[Lemma 2.3]{HaPoRo20}). This implies in particular that 
$(\Sp,d,\ud x\ud\omega)$ is a doubling metric measure space.



Now, given that $(\Sp,d,\ud x\ud\w)$ is a doubling metric measure space, the (centered) {vector-valued} Hardy--Littlewood maximal operator $\Ma$ acts boundedly on $L^{p}(\Sp;L^{q}(0,\infty))$ for all $p,q\in(1,\infty)$, where $(0,\infty)$ is endowed with the Haar measure $\frac{\ud\sigma}{\sigma}$ (see e.g.~\cite[Theorem 1.5]{Tozoni04}). We will use this when dealing with the maximal operator $\Ma_{\la}$ of index $\lambda>0$, given by 
\begin{equation}\label{eq:maxHL}
\mathcal{M}_{\lambda}g(x,\omega):=\big(\Ma(|g|^\lambda)(x,\omega)\big)^{1/{\lambda}}
\end{equation}
for $g\in L^{\lambda}_{\loc}(\Sp)$ and $(x,\w)\in\Sp$. 

Finally, the homogeneous structure on the cosphere bundle also allows one to rely on the established theory of tent spaces. Apart from a brief appearance in the proof of Theorem \ref{thm:parametrix}, these spaces will not play an explicit role in this article, but they are fundamental when deriving various basic properties of the Hardy spaces for Fourier integral operators.


\vanish{

\subsection{Tent spaces}\label{subsec:tent}

In this\footnote{Can probably remove this whole subsection.} subsection we collect some background on the tent spaces $T^{p}(\Sp)$ from \cite{HaPoRo20}, and their weighted cousins from \cite{Hassell-Rozendaal23}. Whereas in previous work these spaces were only used for $p\in[1,\infty]$, in this article we need to consider all $p\in(0,\infty]$. By Lemma \ref{lem:doubling} and with a bit of extra effort, we can nonetheless rely on the theory of tent spaces on doubling metric measure spaces as developed in e.g.~\cite{CoMeSt85, Amenta14,Amenta18}, after a straightforward change of variables to incorporate the parabolic scaling we use (see also \cite{AuKrMoPo12}).

Throughout, let $\Spp:=\Sp\times(0,\infty)$, endowed with the measure $\ud x\ud\w\frac{\ud\sigma}{\sigma}$. For $U\subseteq \Sp$ open, set
\[
T(U):=\{(x,\w,\sigma)\in\Spp\mid d((x,\w),U^{c})\geq \sqrt{\sigma}\}.
\]
For $F:\Spp\to \C$ measurable, $s\in\R$ and $(x,\w)\in\Sp$, write
\begin{equation}\label{eq:As}
\A_{s} F(x,\w):=\Big(\int_{0}^{\infty}\fint_{B_{\sqrt{\sigma}}(x,\w)}|F(y,\nu,\sigma)|^{2}\ud y\ud \nu\frac{\ud \sigma}{\sigma^{1+2s}}\Big)^{1/2}
\end{equation}
and, for $\alpha\in\R$,
\begin{equation}\label{eq:Cs}
\mathcal{C}_{s,\alpha}F(x,\w):=\sup_{B}\Big(\frac{1}{|B|^{1+2\alpha}}\int_{T(B)}|F(y,\nu,\sigma)|^{2}\ud y\ud \nu\frac{\ud \sigma}{\sigma^{1+2s}}\Big)^{1/2},
\end{equation}
where the supremum is taken over all balls $B\subseteq \Sp$ containing $(x,\w)$.  Also set $\Ca_{s}:=\Ca_{s,0}$.

\begin{definition}\label{def:tentspace}
For $p\in(0,\infty)$ and $s\in\R$, the \emph{tent space} $T^{p}_{s}(\Sp)$ consists of all measurable $F:\Spp\to\C$ such that $\A_{s} F\in L^{p}(\Sp)$, endowed with the quasi-norm
\[
\|F\|_{T^{p}_{s}(\Sp)}:=\|\A_{s} F\|_{L^{p}(\Sp)}.
\]
Also, for $\alpha\in\R$, the space $T^{\infty}_{s,\alpha}(\Sp)$ consists of those measurable $F:\Spp\to\C$ such that $\mathcal{C}_{s,\alpha}F\in L^{\infty}(\Sp)$, with 
\[
\|F\|_{T^{\infty}_{s,\alpha}(\Sp)}:=\|\mathcal{C}_{s,\alpha}F\|_{L^{\infty}(\Sp)}.
\]
For every $p\in(0,\infty)$ we write $T^{p}(\Sp):=T^{p}_{0}(\Sp)$. Moreover, $T^{\infty}_{s}(\Sp):=T^{\infty}_{s,0}(\Sp)$ and $T^{\infty}(\Sp):=T^{\infty}_{0}(\Sp)$. 
\end{definition}

\begin{remark}\label{rem:tentweight}
In \cite{Amenta18}, weighted tent spaces were considered on doubling metric measure spaces, with a weight which, in our setting, would correspond to replacing $\sigma^{2s}$ in \eqref{eq:As} and \eqref{eq:Cs} by a suitable power of $|B_{\sqrt{\sigma}}(x,\w)|$. We have chosen the present definition because it is more convenient when considering the Sobolev spaces $\Hps$ over $\Hp$ (see Proposition \ref{prop:HpFIOtent}). On the other hand, due to Lemma \ref{lem:doubling}, there is no real difference between these weights whenever $F(\cdot,\cdot,\sigma)=0$ for $\sigma>e$, which is the case that will be considered later on.
\end{remark}

For all $p\in(0,\infty]$ and $s\in\R$, the weighted tent space $T^{p}_{s}(\Sp)$ is a quasi-Banach space, and it is a Banach space if $p\geq 1$. The latter also holds for $T^{\infty}_{s,\alpha}(\Sp)$ for any $\alpha\in\R$. Moreover, $T^{2}(\Sp)=L^{2}(\Spp)$ isometrically, and $T^{p}(\Sp)\cap T^{2}(\Sp)$ is dense in $T^{p}(\Sp)$ for all $p\in(0,\infty)$. These assertions are a consequence of \cite[Proposition 1.4]{Amenta18} when $s=0$, after which the statement for general $s\in\R$ follows by noting that the map sending $F\in T^{p}(\Sp)$ to
\[
(x,\w,\sigma)\to \sigma^{s}F(x,\w,\sigma)
\]
is an isometric isomorphism between $T^{p}(\Sp)$ and $T^{p}_{s}(\Sp)$, and also between $T^{\infty}_{0,\alpha}(\Sp)$ and $T^{\infty}_{s,\alpha}(\Sp)$.

In Section \ref{subsec:charac} we will want to compare the conical square function in \eqref{eq:As} to its vertical analogue. By \cite[Proposition 2.1 and Remark 2.2]{AuHoMa12}, for all $p\in(0,2]$ one has
\begin{equation}\label{eq:vertical}
\Big(\int_{\Sp}\Big(\int_{0}^{\infty}|F(x,\w,\sigma)|^{2}\frac{\ud\sigma}{\sigma}\Big)^{p/2}\ud x\ud\w\Big)^{1/p}\lesssim \|F\|_{T^{p}(\Sp)},
\end{equation}
for an implicit constant independent of $F\in T^{p}(\Sp)$. The reverse inequality does not hold for $p<2$. On the other hand, the reverse inequality does hold for all $p\in[2,\infty)$, while \eqref{eq:vertical} itself fails for $p>2$. 




We include a proposition on interpolation of weighted tent spaces that will implicitly play an important role in this article. For the range of parameters where the spaces involved are Banach spaces, the statement is contained in \cite{Amenta18}, and in the Euclidean setting it can be found for almost all the parameters in \cite{HoMaMc11}. Taking into account the subtleties of complex interpolation of quasi-Banach spaces, in Appendix \ref{sec:inter} we indicate how the proof extends to our setting.

\begin{proposition}\label{prop:tentint}
Let $p_{0},p_{1}\in(0,\infty]$ be such that $(p_{0},p_{1})\neq (\infty,\infty)$, and let $p\in(0,\infty)$, $s_{0},s_{1},s\in\R$ and $\theta\in(0,1)$ be such that $\frac{1}{p}=\frac{1-\theta}{p_{0}}+\frac{\theta}{p_{1}}$ and $s=(1-\theta)s_{0}+\theta s_{1}$. Then
\[
[T^{p_{0}}_{s_{0}}(\Sp),T^{p_{1}}_{s_{1}}(\Sp)]_{\theta}=T^{p}_{s}(\Sp),
\]
with equivalent quasi-norms.
\end{proposition}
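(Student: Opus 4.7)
The plan is to first strip the weights off. For any $r\in\R$ and any $p\in(0,\infty]$, the multiplication map $M_{\sigma^{r}}\colon F\mapsto \sigma^{r}F$ is an isometric isomorphism from $T^{p}_{s}(\Sp)$ onto $T^{p}_{s-r}(\Sp)$, as follows immediately from \eqref{eq:As} and \eqref{eq:Cs} by the substitution $\sigma\mapsto\sigma$ under the weight rescaling. Applying $M_{\sigma^{s_{0}}}$ simultaneously to both spaces of the couple yields an isometric isomorphism of compatible quasi-Banach couples
\[
(T^{p_{0}}_{s_{0}}(\Sp),T^{p_{1}}_{s_{1}}(\Sp))\cong (T^{p_{0}}(\Sp),T^{p_{1}}_{s_{1}-s_{0}}(\Sp)),
\]
and since complex interpolation commutes with isometric isomorphisms of couples, it suffices to prove $[T^{p_{0}}(\Sp),T^{p_{1}}_{s'}(\Sp)]_{\theta}=T^{p}_{\theta s'}(\Sp)$ for $s':=s_{1}-s_{0}$. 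In the Banach range $p_{0},p_{1}\in[1,\infty]$ with $(p_{0},p_{1})\neq(\infty,\infty)$, this identity is contained in Amenta's work \cite{Amenta18} (modulo the trivial change to the parabolic scaling inherent in our definition of $d$), so the actual task is to treat the case where $\min(p_{0},p_{1})<1$.

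\textbf{Core of the argument via retract onto a vector-valued $L^{p}$-space.} The strategy I would follow is to realize $T^{p}_{s}(\Sp)$ as a retract of a weighted vector-valued Lebesgue space, whose interpolation theory behaves well even in the quasi-Banach range. Concretely, let $\Ba:=L^{2}(\R_{+}\times S^{n-1};\ud\nu\,\sigma^{-1-2s}\,\ud\sigma)$ and, using the doubling property from Lemma \ref{lem:doubling} and a Whitney-type argument on $\Spp$, construct a coretraction $\iota\colon T^{p}_{s}(\Sp)\hookrightarrow L^{p}(\Sp;\Ba)$ of the form
\[
\iota F(x,\w)(\nu,\sigma):=|B_{\sqrt{\sigma}}(x,\w)|^{-1/2}\int_{B_{\sqrt{\sigma}}(x,\w)}F(y,\nu,\sigma)\,\ud y\,\ud\w(y),
\]
or a suitable averaged/molecular variant thereof, together with a bounded left inverse $\pi$. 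On the Banach side, this retract picture is essentially built into the Coifman--Meyer--Stein framework; in the quasi-Banach range the required estimates follow from Lemma \ref{lem:doubling}, the Hardy--Littlewood maximal bound of Section \ref{subsec:metric}, and the vector-valued maximal inequality on $L^{p}(\Sp;L^{q}(0,\infty))$ quoted there, together with the vertical-conical comparison \eqref{eq:vertical}. Since mixed-norm Lebesgue spaces $L^{p}(\Sp;\Ba)$ form a complex interpolation scale for all $0<p\leq\infty$ (by the Calder\'on lattice product combined with the analytic convexity of $L^{p}$; see Appendix \ref{sec:inter}), the retraction $\pi\iota=\mathrm{id}$ transfers this to the target spaces, giving
\[
[T^{p_{0}}(\Sp),T^{p_{1}}_{s'}(\Sp)]_{\theta}=T^{p}_{\theta s'}(\Sp)
\]
with equivalent quasi-norms.

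\textbf{Expected obstacles.} The main difficulty is that complex interpolation is not functorial on arbitrary quasi-Banach couples, and one must verify a condition such as analytic (or $A$-) convexity of the ambient lattice before quoting Kalton--Mitrea-type results. I would handle this by working throughout with the lattice $\min(p_{0},p_{1},1)$-convex envelope, which still coincides with the original norm up to an equivalent quasi-norm when restricted to $T^{p_{j}}_{s_{j}}(\Sp)$; this is the material I would push into Appendix \ref{sec:inter}. A secondary, more delicate issue is the endpoint $p_{1}=\infty$ (with $p_{0}<\infty$): here $T^{\infty}_{s,\alpha}(\Sp)$ is characterized by the Carleson-type functional $\Ca_{s,\alpha}$, so the retract construction must be adapted, for instance by dualizing against a tent space with exponent $<1$ and reducing to the preceding case via $(T^{1}_{\ldots}(\Sp))^{*}=T^{\infty}_{\ldots}(\Sp)$ together with the reiteration theorem. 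Once these two points are handled, the proposition falls out by combining the reduction of the first paragraph with the retract identity of the second.
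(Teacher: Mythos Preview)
Your reduction via $M_{\sigma^{r}}$ is sound and matches the paper's first observation. The central ``retract onto $L^{p}(\Sp;\Ba)$'' step, however, has a genuine gap in exactly the range you are trying to treat. The isometric embedding of $T^{p}_{s}(\Sp)$ into $L^{p}(\Sp;L^{2}(\Spp,w))$ (after Harboure--Torrea--Viviani) is fine, but the existence of a bounded left inverse $\pi$ is the whole difficulty. In the Banach range the projection is built from an averaging operator and controlled by a vector-valued Hardy--Littlewood maximal inequality --- precisely the tool you cite from Section~\ref{subsec:metric}. That inequality is stated (and only holds) for $p,q\in(1,\infty)$; the maximal function is not bounded on $L^{p}(\Sp)$ for $p\leq 1$, and you offer no substitute. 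The comparison \eqref{eq:vertical} goes the wrong way for this purpose and does not produce a projection. So the retraction is unsubstantiated as soon as $\min(p_{0},p_{1})<1$, and your endpoint fix for $p_{1}=\infty$ (dualize, reiterate) inherits the same problem.

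The paper circumvents this obstruction by avoiding complementation altogether and working directly with the quasi-Banach \emph{lattice} structure of $T^{p}_{s}(\Sp)$. It first records that $T^{q}_{r}(\Sp)$ is a separable, analytically convex quasi-Banach function space over $\Spp$ (analytic convexity coming from the embedding into $L^{q}(\Sp;L^{2}(\Spp,w_{r}))$, which needs no left inverse). A result of Egert--Kosmala then identifies the complex interpolant with the Calder\'on product $T^{p_{0}}_{s_{0}}(\Sp)^{1-\theta}T^{p_{1}}_{s_{1}}(\Sp)^{\theta}$. This product is computed explicitly using the Cohn--Verbitsky tent-space factorization $T^{t_{0},q_{0}}_{0}\cdot T^{t_{1},q_{1}}_{0}=T^{t,q}_{0}$ together with the elementary power law $(T^{t,q}_{r})^{u}=T^{t/u,q/u}_{ru}$. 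None of these ingredients require a maximal inequality below exponent $1$, and the endpoint $p_{j}=\infty$ is absorbed uniformly by the factorization rather than handled by a separate duality/reiteration argument.
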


By \cite[Proposition 1.9]{Amenta18}, for all $p\in [1,\infty)$ and $s\in\R$, one has
\begin{equation}\label{eq:tentdual}
(T^{p}_{s}(\Sp))^{*}=T^{p'}_{-s}(\Sp),
\end{equation}
with equivalent norms. Here the duality pairing is given by
\begin{equation}\label{eq:pairing}
(F,G)\mapsto\lb F,G\rb_{\Spp}:=\int_{\Spp}F(x,\w,\sigma)G(x,\w,\sigma)\ud x\ud \w\frac{\ud \sigma}{\sigma}
\end{equation}
for $F\in T^{p'}_{-s}(\Sp)$ and $G\in T^{p}_{s}(\Sp)$. For $p\in(0,1)$, by \cite[Theorem 1.11]{Amenta18},
\begin{equation}\label{eq:tentdual1}
(T^{p}_{s}(\Sp))^{*}=T^{\infty}_{-s,1/p-1}(\Sp)
\end{equation}
with equivalent quasi-norms, using the same duality pairing.


Next, we include a proposition about embeddings between weighted tent spaces with different integrability parameters.

\begin{proposition}\label{prop:tentembedding}
Let $0<p_{0}\leq p_{1}\leq \infty$ and $s,\alpha\in\R$. Then there exists a $C\geq0$ such that, for all $F\in T^{p_{0}}_{s}(\Sp)$ satisfying $F(x,\w,\sigma)=0$ for all $(x,\w,\sigma)\in\Spp$ with $\sigma>e$, one has $F\in T^{p_{1}}_{s-n(\frac{1}{p_{0}}-\frac{1}{p_{1}})}(\Sp)$ and
\[
\|F\|_{T^{p_{1}}_{s-n(\frac{1}{p_{0}}-\frac{1}{p_{1}})}(\Sp)}\leq C\|F\|_{T^{p_{0}}_{s}(\Sp)},
\]
as well as $F\in T^{\infty}_{s-n(\alpha+\frac{1}{p_{0}}),\alpha}(\Sp)$ and
\[
\|F\|_{T^{\infty}_{s-n(\alpha+\frac{1}{p_{0}}),\alpha}(\Sp)}\leq C \|F\|_{T^{p_{0}}_{s}(\Sp)}.
\]
\end{proposition}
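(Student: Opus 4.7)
The plan is to reduce both embeddings to the Sobolev-type embeddings for weighted tent spaces on doubling metric measure spaces, as developed in \cite{Amenta18}. The support restriction $F(\cdot,\cdot,\sigma)=0$ for $\sigma>e$ is crucial: it lets us replace the scalar weight $\sigma^{2s}$ used in \eqref{eq:As} and \eqref{eq:Cs} by the volume-based weight $|B_{\sqrt{\sigma}}(x,\w)|^{2s/n}$ under which the general theory is formulated.

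First I would use Lemma \ref{lem:doubling} to show that, for $\sigma\in(0,e]$ and $(x,\w)\in\Sp$,
\[
|B_{\sqrt{\sigma}}(x,\w)|^{1/n}\eqsim \sigma,
\]
with equivalence constants depending only on $n$. For $\sigma\leq 1$ this is immediate from $|B_{\sqrt{\sigma}}(x,\w)|\eqsim \sigma^{n}$, while for $\sigma\in[1,e]$ both quantities lie in a compact interval bounded away from $0$ and $\infty$. Consequently $\sigma^{2s}\eqsim |B_{\sqrt{\sigma}}(x,\w)|^{2s/n}$ on the support of $F$, so the quasi-norms $\|F\|_{T^{p}_{s}(\Sp)}$ and $\|F\|_{T^{\infty}_{s,\alpha}(\Sp)}$ are equivalent to the corresponding quasi-norms in which $\sigma^{2s}$ is replaced by $|B_{\sqrt{\sigma}}(x,\w)|^{2s/n}$, i.e.\ to the weighted tent space quasi-norms of \cite{Amenta18}.

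Next I would invoke the Sobolev embedding
\[
T^{p_{0};s_{0}}(\Sp)\hookrightarrow T^{p_{1};s_{0}-\frac{1}{p_{0}}+\frac{1}{p_{1}}}(\Sp)
\]
valid on the doubling metric measure space $(\Sp,d,\ud x\ud\w)$ for $0<p_{0}\leq p_{1}\leq \infty$ (with superscript denoting the volumetric weight $|B_{\sqrt{\sigma}}|^{s_{0}}$), together with the analogous embedding into the weighted $T^{\infty;s_{0},\alpha}(\Sp)$ space. Applied with $s_{0}=2s/n$ and translated back through the weight identification of the previous step, these yield both of the stated embeddings, with constant $C$ depending on $n,p_{0},p_{1},s,\alpha$.

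The main delicate point is the change-of-weight step, specifically the transition range $\sigma\in[1,e]$ in which $|B_{\sqrt{\sigma}}|$ stops scaling like $\sigma^{n}$. Because this range is compact, all the arising multiplicative constants remain bounded, so the difficulty is notational rather than substantive. As an alternative to invoking \cite{Amenta18}, one can argue directly via an atomic decomposition: decompose $F\in T^{p_{0}}_{s}(\Sp)$ into tent-atoms $a$ supported in $T(B)$ with the standard size condition; bound $\A_{s'}a$ and $\mathcal{C}_{s',\alpha}a$ pointwise on a fixed enlargement of $B$ via Cauchy--Schwarz and the volume estimate of Lemma \ref{lem:doubling}, and trivially off it; and then sum over atoms in $L^{p_{1}}(\Sp)$ or take the supremum for the Carleson-type norm.
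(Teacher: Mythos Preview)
Your proposal is correct and follows essentially the same approach as the paper: reduce to the volumetric-weight tent spaces of \cite{Amenta18} via the equivalence $\sigma\eqsim |B_{\sqrt{\sigma}}(x,\w)|^{1/n}$ on $\sigma\in(0,e]$ (Lemma \ref{lem:doubling} and Remark \ref{rem:tentweight}), and then invoke the Sobolev embedding \cite[Theorem 2.19]{Amenta18}. The paper's proof is simply the one-line citation of these three ingredients; you have spelled out what they contain.
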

\begin{proof}
This follows from \cite[Theorem 2.19]{Amenta18}, Lemma \ref{lem:doubling} and Remark \ref{rem:tentweight}.
\end{proof}

For $p\in(0,1]$ and $s\in\R$, a measurable function $A:\Spp\to\C$ is a \emph{$T^{p}_{s}(\Sp)$ atom} if there exists an open ball $B\subseteq\Sp$ such that $\supp(A)\subseteq T(B)$ and 
\[
\int_{\Spp}|A(x,\w,\sigma)|^{2}\ud x\ud\w\frac{\ud\sigma}{\sigma^{1+2s}}\leq |B|^{-(\frac{2}{p}-1)}.
\]
The collection of $T^{p}_{s}(\Sp)$ atoms is a uniformly bounded subset of $T^{p}_{s}(\Sp)$, and the following atomic decomposition holds.

\begin{proposition}\label{prop:atomictent}
Let $p\in(0,1]$ and $s\in\R$. Then there exists a $C>0$ such that the following holds. For all $F\in T^{p}_{s}(\Sp)$, there exists a sequence $(A_{k})_{k=1}^{\infty}$ of $T^{p}_{s}(\Sp)$ atoms, and an $(\alpha_{k})_{k=1}^{\infty}\in\ell^{p}$, such that $F=\sum_{k=1}^{\infty}\alpha_{k}A_{k}$ and
\[
\frac{1}{C}\|F\|_{T^{p}_{s}(\Sp)}\leq\Big(\sum_{k=1}^{\infty}|\alpha_{k}|^{p}\Big)^{1/p}\leq C\|F\|_{T^{p}_{s}(\Sp)}.
\]
Moreover, let $R\in\La(T^{2}_{s}(\Sp))$ be such that $\|R(A)\|_{T^{p}_{s}(\Sp)}\leq C'$ for all $T^{p}_{s}(\Sp)$ atoms $A$ and a $C'\geq0$ independent of $A$. Then $R$ has a unique bounded extension from $T^{p}_{s}(\Sp)\cap T^{2}_{s}(\Sp)$ to $T^{p}_{s}(\Sp)$.
\end{proposition}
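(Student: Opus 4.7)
\emph{Proof plan.} The plan is to first reduce to the unweighted case $s=0$, then invoke the atomic decomposition of tent spaces over doubling metric measure spaces, and finally establish the extension statement via a $p$-subadditivity argument together with an identification step.

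First, the map $F(x,\w,\sigma)\mapsto \sigma^{-s}F(x,\w,\sigma)$ is an isometric isomorphism from $T^{p}_{s}(\Sp)$ onto $T^{p}(\Sp)$, and by direct inspection it sends $T^{p}_{s}(\Sp)$-atoms to $T^{p}(\Sp)$-atoms with the same supporting ball. It therefore suffices to prove the atomic decomposition for $F \in T^{p}(\Sp)$ with $p \in (0,1]$. For this, I would invoke Lemma~\ref{lem:doubling}, which ensures that $(\Sp,d,\ud x\ud\w)$ is a doubling metric measure space, and apply the corresponding atomic decomposition theorem from \cite{Amenta18} (or, for $p=1$, its predecessor of Russ on spaces of homogeneous type). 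The standard Coifman--Meyer--Stein stopping-time construction, with level sets $O_k := \{\A_{0} F > 2^k\}$ of the conical square function covered by Whitney balls $(B_{k,j})_j$, produces atoms $A_{k,j}$ supported in the tents $T(B_{k,j})$ together with normalizing coefficients $\alpha_{k,j}$ satisfying
\[
\sum_{k,j}|\alpha_{k,j}|^p \lesssim \sum_k 2^{kp}|O_k|\eqsim \|\A_{0} F\|_{L^p(\Sp)}^p.
\]
The reverse inequality uses the $p$-subadditivity of the tent-space quasi-norm together with the fact that $T^{p}(\Sp)$-atoms have uniformly bounded $T^p$-quasi-norm, a direct consequence of Cauchy--Schwarz on $T(B)$ followed by H\"{o}lder's inequality in $\Sp$.

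For the extension statement, given $F \in T^{p}_{s}(\Sp)\cap T^{2}_{s}(\Sp)$, write $F = \sum_{k}\alpha_{k}A_{k}$ with convergence in $T^{p}_{s}(\Sp)$ via the decomposition above. The $p$-subadditivity of the $T^{p}_{s}$-quasi-norm combined with the uniform estimate $\|R(A_k)\|_{T^{p}_{s}}\leq C'$ gives
\[
\Big\|\sum_{k} \alpha_{k} R(A_{k})\Big\|_{T^{p}_{s}}^{p} \leq (C')^{p} \sum_{k} |\alpha_{k}|^{p} \lesssim \|F\|_{T^{p}_{s}}^{p},
\]
so the series converges in $T^{p}_{s}(\Sp)$ to some $G$. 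The remaining task is to verify that $G = R(F)$, so that the candidate extension agrees with $R$ on $T^{p}_{s}\cap T^{2}_{s}$; uniqueness of the bounded extension to $T^{p}_{s}$ then follows from density of $T^{p}_{s}\cap T^{2}_{s}$ in $T^{p}_{s}$, itself a consequence of the atomic decomposition, since finite sums of atoms lie in $T^{p}_{s}\cap T^{2}_{s}$ and are $T^{p}_{s}$-dense.

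\emph{Main obstacle.} The identification of $G$ with $R(F)$ is the principal subtlety, because the atomic decomposition converges to $F$ in $T^{p}_{s}$ but not automatically in $T^{2}_{s}$, whereas $R$ is only given as a bounded operator on $T^{2}_{s}$. I would resolve this by a truncation argument: approximate $F$ by $F_N := F\cdot \ind_{E_N}$ for nested compact subsets $E_N \subseteq \Spp$ with $E_N \nearrow \Spp$, so that $F_N \to F$ in both $T^{p}_{s}$ and $T^{2}_{s}$ by dominated convergence. For each $F_N$ the atoms in its Whitney decomposition live in a bounded region of $\Spp$, so the atomic sum converges in $T^{2}_{s}$ as well as in $T^{p}_{s}$, and continuity of $R$ on $T^{2}_{s}$ yields $R(F_N) = \sum_{k}\alpha_{k}^{N} R(A_{k}^{N})$ in both topologies. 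Letting $N\to\infty$ in $T^{p}_{s}$ and extracting a pointwise almost everywhere convergent subsequence then identifies $R(F)$ with $G$, completing the extension argument.
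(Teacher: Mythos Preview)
Your reduction to $s=0$ and appeal to the atomic decomposition on doubling spaces (Amenta/Russ) is exactly what the paper does, so the first half is fine.

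For the extension statement, your overall structure is right, but the identification step has a gap. You claim that for the truncation $F_N$ ``the atoms in its Whitney decomposition live in a bounded region of $\Spp$, so the atomic sum converges in $T^2_s$ as well.'' The implication does not follow: compact support of the atoms does not by itself upgrade $T^p_s$-convergence of the series to $T^2_s$-convergence, since $p\le 1<2$ and there is no embedding in that direction even on compact sets.

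The cleaner route, which is what the paper's reference \cite[Lemma 2.8]{HaPoRo20} uses and which makes the truncation unnecessary, is to exploit a structural property of the Coifman--Meyer--Stein/Russ construction itself: the atomic partial sums satisfy $|\sum_{\text{finite}}\alpha_k A_k|\le |F|$ pointwise and converge to $F$ almost everywhere. Hence if $F\in T^2_s(\Sp)$ as well, dominated convergence in $L^2(\Spp,\sigma^{-2s-1}\,\ud x\,\ud\w\,\ud\sigma)=T^2_s(\Sp)$ gives convergence of the partial sums to $F$ in $T^2_s$ directly. Then $R$ applied to the partial sums converges to $R(F)$ in $T^2_s$, while the same partial sums converge to $G$ in $T^p_s$; extracting an a.e.\ convergent subsequence identifies $G=R(F)$. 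This is both simpler than your truncation argument and avoids the unjustified step.
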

\begin{proof}
The atomic decomposition is \cite[Proposition 1.10]{Amenta18}, which in turn follows from \cite{Russ07}, and the final statement can be shown just as in \cite[Lemma 2.8]{HaPoRo20}.
\end{proof}

\begin{remark}\label{rem:balls}
In Proposition \ref{prop:atomictent}, if $F$ is such that $F(x,\w,\sigma)=0$ for all $(x,\w,\sigma)\in\Spp$ with $\sigma>1$, then one can choose the atoms $A_{k}$ to be associated with balls of radius at most $2$. For $p<1$ and $s=0$, this follows from a minor modification of the proof of this statement for $p=1$ and $s=0$, in \cite[Theorem 3.6]{CaMcMo13} (see also \cite{Russ07}). The statement for general $s\in\R$ then follows immediately.
\end{remark}

A minor role will be played by a class of test functions on $\Spp$ and the corresponding  distributions. Let $\J(\Spp)$ consist of all $G\in L^{\infty}(\Spp)$ such that 
\begin{equation}\label{eq:classJ}
\sup_{(x,\w,\sigma)\in\Spp}(1+|x|+\max(\sigma,\sigma^{-1}))^{N} |G(x,\w,\sigma)|<\infty
\end{equation}
for all $N\geq0$, with the topology generated by the corresponding weighted $L^{\infty}$ norms. 
Let $\J'(\Spp)$ be the space of  continuous linear functionals $F:\J(\Spp)\to \C$, endowed with the topology induced by $\J(\Spp)$. We denote the duality between $G\in\J(\Spp)$ and $F\in \J'(\Spp)$ by $\lb F,G\rb_{\Spp}$. \vanish{If $G\in L^{1}_{\loc}(\Spp)$ is such that 
\[
F\mapsto \int_{\Spp}F(x,\w,\sigma)\overline{G(x,\w,\sigma)}\ud x\ud\w\frac{\ud\sigma}{\sigma}
\]
defines an element of $\Da'(\Spp)$, then we write $G\in\Da'(\Spp)$. Note in particular that 
\[
L^{1}\big(\Spp,(1+|x|+\Upsilon(\sigma)^{-1})^{-N}\ud x\ud\w\frac{\ud\sigma}{\sigma}\big)\subseteq\Da'(\Spp)
\]
for all $N\geq0$.}


\begin{lemma}\label{lem:distributions}
For all $p\in(0,\infty)$ and $s\in\R$ one has
\begin{equation}\label{eq:distributions1}
\J(\Spp)\subseteq T^{p}_{s}(\Sp)\subseteq \J'(\Spp)
\end{equation}
continuously, where the first embedding is dense. Additionally, if $p\leq1$ then
\begin{equation}\label{eq:distributions2}
\J(\Spp)\subseteq T^{\infty}_{s,1/p -1}(\Sp)\subseteq \J'(\Spp)
\end{equation}
continuously.
\end{lemma}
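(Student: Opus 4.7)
\emph{Plan.} I will first prove the inclusions $\J(\Spp)\hookrightarrow T^{p}_{s}(\Sp)$ and $\J(\Spp)\hookrightarrow T^{\infty}_{s,1/p-1}(\Sp)$ by direct pointwise estimation, then dualize to obtain the $\J'(\Spp)$-inclusions.

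\emph{Step 1.} Given $G\in\J(\Spp)$, write $\|G\|_{\J,N}:=\sup_{\Spp}(1+|x|+\max(\sigma,\sigma^{-1}))^{N}|G(x,\w,\sigma)|$ for the defining seminorms on $\J(\Spp)$. Using that on $B_{\sqrt{\sigma}}(x,\w)$ one has $|y-x|\lesssim\sqrt{\sigma}$ by \eqref{eq:equivmetric}, a regime-splitting argument separating $\sqrt\sigma\leq|x|/2$ from $\sqrt\sigma>|x|/2$ combines the spatial and $\sigma$-decay of $G$ to yield $\A_{s}G(x,\w)\leq C_{N}\|G\|_{\J,N'}(1+|x|)^{-N}$ for every $N\geq 0$, with $N'$ depending on $N,n,s$. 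Since $\w$ ranges over the compact sphere $S^{n-1}$, choosing $N>n/p$ gives $\|\A_{s}G\|_{L^{p}(\Sp)}\lesssim\|G\|_{\J,N'}$, so $\J(\Spp)\hookrightarrow T^{p}_{s}(\Sp)$ continuously. For the Carleson quantity, the support condition in $T(B)$ forces $\sigma\lesssim r_{B}^{2}$, while Lemma~\ref{lem:doubling} gives $|B|\eqsim r_{B}^{2n}$ for $r_{B}<1$ and $|B|\eqsim r_{B}^{n}$ for $r_{B}\geq 1$; computing $|B|^{-(2/p-1)}\int_{T(B)}|G|^{2}\sigma^{-1-2s}\,\ud y\,\ud\nu\,\ud\sigma$ in both regimes and taking $N$ large enough in terms of $n,s,p$ produces a bound uniform over all balls $B$, so $\J(\Spp)\hookrightarrow T^{\infty}_{s,1/p-1}(\Sp)$ continuously for $p\leq 1$. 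Density of $\J(\Spp)$ in $T^{p}_{s}(\Sp)$ follows from the density of $T^{p}_{s}(\Sp)\cap T^{2}_{s}(\Sp)$ in $T^{p}_{s}(\Sp)$ (noted after Definition~\ref{def:tentspace}) combined with the isometry $T^{2}_{s}(\Sp)\cong L^{2}(\Spp)$ via $F\mapsto \sigma^{-s}F$: bounded functions supported in sets $K\times[a,b]$ with $K\subseteq\Sp$ compact and $0<a<b<\infty$ are dense in $L^{2}(\Spp)$, and every such function lies in $\J(\Spp)$.

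\emph{Step 2.} For $F\in T^{p}_{s}(\Sp)$, the linear functional $G\mapsto\lb F,G\rb_{\Spp}$ on $\J(\Spp)$ is defined via the tent-space duality pairing \eqref{eq:pairing}: by \eqref{eq:tentdual} when $p\geq 1$ and by \eqref{eq:tentdual1} when $p<1$. In either case one has
\[
|\lb F,G\rb_{\Spp}|\leq C\|F\|_{T^{p}_{s}(\Sp)}\|G\|_{Y},
\]
where $Y=T^{p'}_{-s}(\Sp)$ if $p\geq 1$ and $Y=T^{\infty}_{-s,1/p-1}(\Sp)$ if $p<1$. Step 1, applied with $s$ replaced by $-s$, gives $\|G\|_{Y}\lesssim\|G\|_{\J,N'}$, so $F$ induces a continuous functional on $\J(\Spp)$ and $T^{p}_{s}(\Sp)\hookrightarrow\J'(\Spp)$ continuously. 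The embedding $T^{\infty}_{s,1/p-1}(\Sp)\hookrightarrow\J'(\Spp)$ for $p\leq 1$ is obtained identically: by \eqref{eq:tentdual1} with $s$ replaced by $-s$, this space is the dual of $T^{p}_{-s}(\Sp)$, and Step 1 gives $\J(\Spp)\hookrightarrow T^{p}_{-s}(\Sp)$.

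\emph{Main difficulty.} The only delicate point is the bookkeeping in Step 1, especially for the Carleson estimate: one has to verify that the exponents produced by $|B|^{-(2/p-1)}$, the $\sigma$-integral of $\max(\sigma,\sigma^{-1})^{-2N}\sigma^{-1-2s}$, and the truncation $\sigma\lesssim r_{B}^{2}$ are consistent across the regimes $r_{B}<1$ and $r_{B}\geq 1$. Since $N$ can be chosen arbitrarily large, all required inequalities eventually hold, and similarly the pointwise decay of $\A_{s}G$ in $|x|$ can be made as strong as needed to land in $L^{p}(\Sp)$ for any $p>0$.
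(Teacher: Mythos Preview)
Your argument is correct, but organized differently from the paper's. Both proofs establish $\J(\Spp)\hookrightarrow T^{p}_{s}(\Sp)$ by direct pointwise estimation. They diverge on the remaining embeddings for $p<1$. The paper invokes the atomic decomposition (Proposition~\ref{prop:atomictent}): writing a general $F\in T^{p}(\Sp)$ as $\sum_{k}\alpha_{k}A_{k}$, one bounds $|\langle G,A_{k}\rangle_{\Spp}|\lesssim\|G\|_{\J,N}$ for each atom $A_{k}$ and each $G\in\J(\Spp)$, and then uses $\ell^{p}\subseteq\ell^{1}$ to conclude $T^{p}_{s}(\Sp)\hookrightarrow\J'(\Spp)$; the Carleson embedding $\J(\Spp)\hookrightarrow T^{\infty}_{s,1/p-1}(\Sp)$ is subsequently read off from \eqref{eq:tentdual1}. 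You reverse this: you prove $\J(\Spp)\hookrightarrow T^{\infty}_{-s,1/p-1}(\Sp)$ directly by estimating the Carleson functional, and then obtain $T^{p}_{s}(\Sp)\hookrightarrow\J'(\Spp)$ from \eqref{eq:tentdual1}. Your route avoids appealing to the atomic machinery, at the cost of the hands-on Carleson computation; the paper's route trades that computation for a citation. Incidentally, your regime-splitting in Step~1 can be streamlined by observing that $1+|x|\lesssim 1+|y|+\max(\sigma,\sigma^{-1})$ whenever $(y,\nu)\in B_{\sqrt\sigma}(x,\w)$, since $|x-y|\lesssim\sqrt\sigma\leq\max(\sigma,\sigma^{-1})$.

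One point needs tightening: your density argument infers density of $\J(\Spp)$ in $T^{p}_{s}(\Sp)$ from density of bounded compactly supported functions in $L^{2}(\Spp)$, but approximation in the $T^{2}_{s}$-norm does not by itself give approximation in the $T^{p}_{s}$-norm. The standard fix is to truncate directly: for $F\in T^{p}_{s}(\Sp)$ set $F_{R}:=F\cdot\ind_{\{|x|<R,\ R^{-1}<\sigma<R,\ |F|<R\}}\in\J(\Spp)$, and apply dominated convergence twice (first inside $\A_{s}$, then in $L^{p}(\Sp)$, using $|F-F_{R}|\leq 2|F|$) to get $\|F-F_{R}\|_{T^{p}_{s}(\Sp)}\to 0$.
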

In this lemma, for all $F\in T^{p}_{s}(\Sp)$ or $F\in T^{\infty}_{s,1/p -1}(\Sp)$, and for all $G\in \J(\Spp)$, we consider the pairing $\lb F,G\rb_{\Spp}$ to be given by \eqref{eq:pairing}. 

\begin{proof}
It clearly suffices to consider $s=0$. For $p\geq 1$, \eqref{eq:distributions1} and the density statement are  \cite[Lemma 2.10]{HaPoRo20}. To extend the first embedding in \eqref{eq:distributions1} and the density statement to $p\in(0,1)$, one can use the same proof.  By \eqref{eq:tentdual1}, this in turn yields the second embedding in \eqref{eq:distributions2}.

To see that the second embedding in \eqref{eq:distributions1} also holds for $p\in(0,1)$, by Proposition \ref{prop:atomictent} it suffices to prove that there exists an $N\geq 0$ such that
\[
\Big|\int_{\Spp}\!F(x,\w,\sigma)A(x,\w,\sigma)\ud x\ud\w\frac{\ud\sigma}{\sigma}\Big|\!\lesssim \!\sup_{(y,\nu,\tau)\in\Spp}\!(1+|y|+\max(\tau,\tau^{-1}))^{N}|F(y,\nu,\tau)| 
\]
for each $T^{p}(\Sp)$ atom $A$ and each $F\in\J(\Spp)$. To this end, let $A$ be associated with a ball $B\subseteq\Sp$ of radius $r>0$. Then one can use the properties of $A$ and Lemma \ref{lem:doubling} to write
\begin{align*}
&\int_{\Spp}|F(x,\w,\sigma)A(x,\w,\sigma)|\ud x\ud\w\frac{\ud\sigma}{\sigma}=\int_{0}^{r^{2}}\int_{B}|F(x,\w,\sigma)A(x,\w,\sigma)|\ud x\ud\w\frac{\ud\sigma}{\sigma}\\
&\leq \|A\|_{L^{2}(\Spp)}\Big(\int_{0}^{r^{2}}\int_{B}|F(x,\w,\sigma)|^{2}\ud x\ud\w\frac{\ud\sigma}{\sigma}\Big)^{1/2}\\
&\leq {|B|^{1-\frac{1}{p}}}\Big(\int_{0}^{r^{2}}\min(\sigma,\sigma^{-1})^{N}\frac{\ud\sigma}{\sigma}\Big)^{1/2}\sup_{(y,\nu,\tau)\in\Spp}\max(\tau,\tau^{-1})^{N}|F(y,\nu,\tau)|\\
&\lesssim \max\big(r^{2n(1-\frac{1}{p})},r^{n(1-\frac{1}{p})}\big)\min(1,r^{2N})\sup_{(y,\nu,\tau)\in\Spp}\max(\tau,\tau^{-1})^{N}|F(y,\nu,\tau)|\\
&\lesssim \sup_{(y,\nu,\tau)\in\Spp}(1+|y|+\max(\tau,\tau^{-1}))^{N}|F(y,\nu,\tau)|,
\end{align*}
for $N\geq0$ large. Finally, \eqref{eq:tentdual1} now also yields the first embedding in \eqref{eq:distributions2}.
\end{proof}

\subsection{Fourier integral operators}\label{subsec:FIOs}

For\footnote{Can probably remove this section once the parts below have been moved to Section \ref{subsec:symbols}. In the other paper, those moved parts can then probably be removed.} the general theory of Fourier integral operators, and the associated notions from symplectic geometry, we refer to \cite{Hormander09,Duistermaat11,Sogge17}. Readers less familiar with this theory can consider operators with a concrete representation as in Definition \ref{def:operator} below, which already cover most cases of interest.

First recall that, for $m\in\R$ and $\rho,\delta\in[0,1]$, H\"{o}rmander's class\footnote{Need to keep this, together with the definition of a pseudodifferential operator. Probably put it at the start of Section \ref{subsec:symbols}} $S^{m}_{\rho,\delta}$ consists of all $a\in C^{\infty}(\R^{2n})$ such that 
\begin{equation}\label{eq:Hormanderclass}
\sup_{(x,\eta)\in\R^{2n}}\lb \eta\rb^{-m+|\alpha|\rho-|\beta|\delta}|\partial_{x}^{\beta}\partial_{\eta}^{\alpha}a(x,\eta)|<\infty
\end{equation}
for all $\alpha,\beta\in\Z_{+}^{n}$. The pseudodifferential operator $a(x,D):\Sw(\Rn)\to\Sw'(\Rn)$ with symbol $a$ is then given by
\begin{equation}\label{eq:pseudodef}
a(x,D)f(x):=\frac{1}{(2\pi)^{n}}\int_{\Rn}e^{ix\cdot\eta}a(x,\eta)\wh{f}(\eta)\ud\eta
\end{equation}
for $f\in\Sw(\Rn)$ and $x\in\Rn$.

When considering Fourier integral operators, we will typically work with a different class of symbols, from \cite{HaPoRo20}. 
For $m\in\R$, let $S^{m}_{1/2,1/2,1}$ consist of all $a\in C^{\infty}(\R^{2n})$ such that 
\[
\sup_{(x,\eta)\in\R^{2n}\setminus o}\lb\eta\rb^{-m+\frac{|\alpha|}{2}-\frac{|\beta|}{2}+\gamma}|(\hat{\eta}\cdot\partial_{\eta})^{\gamma}\partial_{x}^{\beta}\partial_{\eta}^{\alpha}a(x,\eta)|<\infty
\]
for all $\alpha,\beta\in\Z_{+}^{n}$ and $\gamma\in\Z_{+}$. Note that $S^{m}_{1/2,1/2,1}$ strictly contains 
$S^{m}_{1,1/2}$, but that it is itself strictly contained in $S^{m}_{1/2,1/2}$.

\begin{definition}\label{def:operator}
Let $m\in\R$, $a\in S^{m}_{1/2,1/2,1}$ and $\Phi\in C^{\infty}(\R^{2n}\setminus o)$, and set
\begin{equation}\label{eq:oscint}
Tf(x):=\int_{\Rn}e^{i\Phi(x,\eta)}a(x,\eta)\wh{f}(\eta)\ud\eta
\end{equation}
for $f\in\Sw(\Rn)$ and $x\in \Rn$. Then $T$ is a Fourier integral operator of order $m$ and type $(1/2, 1/2, 1)$ in \emph{standard form}, associated with a \emph{global canonical graph}, if:
\begin{enumerate}
\item\label{it:phase1} $\Phi$ is real-valued and positively homogeneous of degree $1$ in the $\eta$ variable;
\item\label{it:phase2} $\sup_{(x,\eta)\in \R^{2n}\setminus o}|\partial_{x}^{\beta}\partial_{\eta}^{\alpha}\Phi(x,\hat{\eta})|<\infty$ for all $\alpha,\beta\in\Z_{+}^{n}$ with $|\alpha|+|\beta|\geq 2$;
\item\label{it:phase3} $\inf_{(x,\eta)\in \R^{2n}\setminus o}| \det \partial^2_{x \eta} \Phi (x,\eta)|>0$;
\item\label{it:phase4} For each $x\in\Rn$, $\eta\mapsto \partial_{x}\Phi(x,\eta)$ is a bijection on $\R^{n}\setminus \{0\}$.
\end{enumerate}
\end{definition}

\begin{remark}\label{rem:oscint} 
By Hadamard's global inverse function theorem (see \cite[Theorem 6.2.8]{Krantz-Parks13} {or \cite{Ruzhansky-Sugimoto15})}, condition \eqref{it:phase4} is superfluous for $n\geq3$. 
If \eqref{it:phase4} holds, then the global inverse function theorem implies that the map $(\partial_{\eta}\Phi(x,\eta),\eta)\mapsto (x,\partial_{x}\Phi(x,\eta))$ is a homogeneous canonical transformation on $\R^{2n}\setminus o$, and the canonical relation of $T$ is the graph of this transformation.
\end{remark}


A Fourier integral operator of order $m$, associated with a local canonical graph and having a compactly supported Schwartz kernel, can, modulo an operator with a Schwartz kernel which is a Schwartz function, be expressed as a finite sum of operators that in appropriate coordinate systems are as in \eqref{eq:oscint}, where the symbol $a$ has compact support in the $x$ variable (see e.g. \cite[Proposition 6.2.4]{Sogge17}). In this case, \eqref{it:phase2} is automatically satisfied, \eqref{it:phase3} holds on the support of $a$, and the map $(\partial_{\eta}\Phi(x,\eta),\eta)\mapsto (x,\partial_{x}\Phi(x,\eta))$ from Remark \ref{rem:oscint} is a locally defined homogeneous canonical transformation. By contrast, in Definition \ref{def:operator} the symbols are not required to have compact spatial support, but the conditions on the phase function hold on all of $\R^{2n}\setminus o$.

}

\subsection{Wave packet transforms}\label{subsec:transforms}

In this subsection we introduce the wave packets and parabolic cutoffs that are used to define the Hardy spaces for Fourier integral operators. We refer to \cite[Section 4]{HaPoRo20} and \cite[Section 3]{Rozendaal21} for more on this material.

Throughout, let $\Psi\in C^{\infty}_{c}(\mathbb{R}^{n})$ be a non-negative radial function such that $\Psi(\xi)=0$ for all $\xi\in\Rn$ with $|\xi|\notin[\frac{1}{2},2]$, and
\[
\int_{0}^{\infty}\Psi(\sigma\xi)^{2}\frac{\ud\sigma}{\sigma}=1
\]
if $\xi\neq 0$. Fix a non-negative radial $\varphi \in C_{c}^{\infty}(\mathbb{R}^{n})$ such that $\varphi\equiv 1$ in a small neighborhood of zero, and $\varphi(\xi)=0$ for $|\xi|>1$. 
Set $c_{\sigma}:=\big{(}\int_{S^{n-1}}\varphi(\frac{e_{1}-\nu}{\sqrt{\sigma}})^{2}\ud\nu\big{)}^{-1/2}$ for $\sigma>0$, where $e_{1}$ is the first basis vector of $\Rn$. 
For $\w\in S^{n-1}$, $\sigma>0$ and $\xi\in\Rn\setminus\{0\}$, set 
\begin{equation}\label{eq:defpackets}
\psi_{\w,\sigma}(\xi)=\Psi(\sigma\xi)c_{\sigma}\ph\big(\tfrac{\hat{\xi}-\w}{\sqrt{\sigma}}\big)
\end{equation}
and $\psi_{\w,\sigma}(0):=0$. Moreover, let
\[
\rho(\xi):=\Big(1-\int_{0}^{1}\Psi(\sigma \xi)^{2}\frac{\ud\sigma}{\sigma}\Big)^{1/2}
\]
for $\xi\in\Rn$. Then $\rho\in C^{\infty}_{c}(\Rn)$, with $\rho(\xi)=1$ for $|\xi|\leq 1/2$, and $\rho(\xi)=0$ if $|\xi|\geq 2$.

As shown in \cite[Lemma 4.1]{HaPoRo20}, these wave packets have the following properties. 

\begin{lemma}\label{lem:packetbounds}
For all $\w\in S^{n-1}$ and $\sigma\in(0,1)$, one has $\psi_{\w,\sigma}\in C^{\infty}_{c}(\Rn)$. Each 
$\xi\in\supp(\psi_{\w,\sigma})$ satisfies $\frac{1}{2}\sigma^{-1}\leq |\xi|\leq 2\sigma^{-1}$ and $|\hat{\xi}-\w|\leq 2\sqrt{\sigma}$. 
For each $N\geq0$ there exists a $C_{N}\geq0$, independent of $\w$ and $\sigma$, such that
\[
|\F^{-1}(\psi_{\w,\sigma})(x)|\leq C_{N}\sigma^{-\frac{3n+1}{4}}(1+\sigma^{-1}|x|^{2}+\sigma^{-2}(\w\cdot x)^{2})^{-N}
\]
for all $x\in\Rn$.
\end{lemma}

In \cite{HaPoRo20} and \cite{LiRoSo25b}, these functions are used to define a wave packet transform
\begin{equation}\label{eq:defW}
Wf(x,\w,\sigma):=\begin{cases}\psi_{\w,\sigma}(D)f(x)&\text{if }0<\sigma<1,\\
\ind_{[1,e]}(\sigma)\rho(D)f(x)&\text{if }\sigma\geq1,\end{cases}
\end{equation}
for $f\in\Sw'(\Rn)$, $(x,\w)\in\Sp$ and $\sigma>0$. This wave packet transform and its adjoint $V$ can be used to derive various properties of the Hardy spaces for Fourier integral operators from those of tent spaces over the cosphere bundle. When doing so one relies crucially on the identity
\begin{equation}\label{eq:repro}
VWf=f,
\end{equation}
for $f\in\Sw'(\Rn)$. Here we merely mention this connection, given that the transforms $W$ and $V$ will only appear in the proof of Theorem \ref{thm:parametrix}. 

For $f\in\Sw'(\Rn)$, $(x,\w)\in\Sp$ and $\sigma>0$, we will also use the notation $W_{\sigma}f(x,\w):=Wf(x,\w,\sigma)$ in the following result from \cite{LiRoSo25b}, concerning the maximal function $\mathcal{M}_{\la}$ from \eqref{eq:maxHL}.

\begin{lemma}\label{lem:maxineq}
Let $\lambda>0$ and $N>n/\la$. Then there exists a $C\geq0$ such that
\[
\sigma^{-n}\int_{\Sp}\frac{|\psi_{\nu,{\sigma}}(D)f(y)|}{(1+{\sigma}^{-1}d((x,\omega),(y,\nu))^{2})^{N}}\ud y\ud\nu
\leq C\Ma_{\lambda}(W_{\sigma}f)(x,\omega)
\]
for all $f\in \Sw'(\Rn)$, $(x,\w)\in\Sp$ and $\sigma\in(0,1)$.
\end{lemma}

\begin{remark}\label{rem:maxineq}
Lemma \ref{lem:maxineq} also holds for different wave packets than those in \eqref{eq:defpackets}, as long as they have properties similar to those in Lemma \ref{lem:packetbounds}. 
This follows from the proof of Lemma \ref{lem:maxineq} in \cite{LiRoSo25b}, and it will be used in the proof of Theorem \ref{thm:pseudosmallp}.  
\end{remark}

Next, we introduce parabolic cutoffs associated with these wave packets. For $\w\in S^{n-1}$ and $\xi\in\Rn$, set
\begin{equation}\label{eq:phw}
\ph_{\omega}(\xi):=\int_{0}^{4}\psi_{\w,\tau}(\xi)\frac{\ud\tau}{\tau}.
\end{equation}
Some properties of $(\ph_{\w})_{\w\in S^{n-1}}\subseteq C^{\infty}(\Rn)$ are as follows (see \cite[Remark 3.3]{Rozendaal21}):
\begin{enumerate}
\item\label{it:phiproperties1} For all $\w\in S^{n-1}$ and $\xi\neq0$ one has $\ph_{\w}(\xi)=0$ if $|\xi|<\frac{1}{8}$ or $|\hat{\xi}-\w|>2|\xi|^{-1/2}$.
\item\label{it:phiproperties2} For all $\alpha\in\Z_{+}^{n}$ and $\beta\in\Z_{+}$ there exists a $C_{\alpha,\beta}\geq0$ such that
\[
|(\w\cdot \partial_{\xi})^{\beta}\partial^{\alpha}_{\xi}\ph_{\w}(\xi)|\leq C_{\alpha,\beta}|\xi|^{\frac{n-1}{4}-\frac{|\alpha|}{2}-\beta}
\]
for all $\w\in S^{n-1}$ and $\xi\neq0$.
\item\label{it:phiproperties3}
There exists a radial $m\in S^{(n-1)/4}(\Rn)$ such that, for each $f\in\Sw'(\Rn)$ satisfying $\supp(\wh{f}\,)\subseteq \{\xi\in\Rn\mid |\xi|\geq\frac{1}{2}\}$, one has
\begin{equation}\label{eq:phiproperties3}
f=\int_{S^{n-1}}m(D)\ph_{\nu}(D)f\ud\nu.
\end{equation}
\end{enumerate}
In \eqref{it:phiproperties3}, $S^{(n-1)/4}(\Rn)$ consists of the standard pseudodifferential symbols of order $(n-1)/4$ that only depend on the fiber variable.

\vanish{
We\footnote{Before this, insert Corollary \ref{cor:maxineq}, followed by the remark after it. Then mention what comes next here briefly after the definition of the spaces. Mention the connection to tent spaces and also operators on phase space, without going into detail.} now define transforms associated with these wave packets. For $f\in \Sw'(\Rn)$ and $(x,\w,\sigma)\in\Spp$, set
\begin{equation}\label{eq:defW}
Wf(x,\w,\sigma):=\begin{cases}\psi_{\w,\sigma}(D)f(x)&\text{if }0<\sigma<1,\\
\ind_{[1,e]}(\sigma)\rho(D)f(x)&\text{if }\sigma\geq1.\end{cases}
\end{equation}
Moreover, for\footnote{Everything from here on, for the rest of this subsection, can go.} $G$ an element of the class $\J(\Spp)$ from \eqref{eq:classJ}, and for $x\in\Rn$, set
\[
VG(x):=\int_{0}^{1}\int_{S^{n-1}}\psi_{\nu,\tau}(D)G(\cdot,\nu,\tau)(x)\ud\nu\frac{\ud\tau}{\tau}+\int_{1}^{e}\int_{S^{n-1}}\rho(D)G(\cdot,\nu,\tau)(x)\ud \nu\frac{\ud\tau}{\tau}.
\]
The following lemma, essentially contained in \cite[Proposition 4.3]{HaPoRo20}, collects some basic properties of $W$ and $V$.

\begin{proposition}\label{prop:transforms}
The following statements hold:
\begin{enumerate}
\item\label{it:transforms1} $W:\Sw(\Rn)\to \J(\Spp)$ and $W:\Sw'(\Rn)\to \J'(\Spp)$ are continuous;
\item\label{it:transforms2} $W:L^{2}(\Rn)\to L^{2}(\Spp)$ is an isometry;
\item\label{it:transforms3} $V:\J(\Spp)\to \Sw(\Rn)$ is continuous;
\item\label{it:transforms4} $\lb f,VG\rb_{\Rn}=\lb Wf,G\rb_{\Spp}$ for all $f\in\Sw'(\Rn)$ and $G\in\J(\Spp)$. 
\end{enumerate}
\end{proposition}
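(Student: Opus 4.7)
The plan is to derive all four assertions from the pointwise estimates in Lemma \ref{lem:packetbounds} together with a Plancherel-type orthogonality identity, treating the parts in the order (1), (3), (2), (4).

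For (1), let $f\in\Sw(\Rn)$. For $0<\sigma<1$, write $Wf(x,\w,\sigma)=(\F^{-1}\psi_{\w,\sigma}*f)(x)$ and use the anisotropic decay bound \eqref{eq:boundspsiinverse}, combined with the Schwartz decay of $f$, to produce polynomial decay of $Wf(x,\w,\sigma)$ in $|x|$. For decay as $\sigma\to 0$, use the support condition $\supp(\psi_{\w,\sigma})\subseteq\{|\xi|\sim\sigma^{-1}\}$ from Lemma \ref{lem:packetbounds} and the rapid decrease of $\wh f$ to conclude $|Wf(x,\w,\sigma)|\lesssim_{N}\sigma^{N}$ for each $N\geq 0$. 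The cutoff $\ind_{[1,e]}(\sigma)$ handles $\sigma\geq 1$. Assembling these estimates yields the weighted $L^{\infty}$ seminorms from \eqref{eq:classJ}, hence continuity of $W:\Sw(\Rn)\to\J(\Spp)$. The continuity of $W:\Sw'(\Rn)\to\J'(\Spp)$ then follows by duality, using (3) and (4).

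For (3), let $G\in\J(\Spp)$. For $(\nu,\tau)\in S^{n-1}\times(0,1)$, estimate the convolution $[\psi_{\nu,\tau}(D)G(\cdot,\nu,\tau)](x)$ using \eqref{eq:boundspsiinverse} and the polynomial $|x|$-decay of $G(\cdot,\nu,\tau)$; spatial derivatives are absorbed into the Schwartz kernel $\F^{-1}\psi_{\nu,\tau}$ at a cost controlled by \eqref{eq:boundspsi}. The $\tau$-dependent factors thus produced combine with the weighted $L^{\infty}$-bounds on $G$ to give absolute convergence of the integrations in $\nu\in S^{n-1}$ (compact) and $\tau\in(0,e]$, showing that $VG\in\Sw(\Rn)$ with continuous dependence on $G$. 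For (2), Plancherel in $x$ for each fixed $(\w,\sigma)$ gives
\begin{equation*}
\|Wf\|_{L^{2}(\Spp)}^{2}=(2\pi)^{-n}\int_{\Rn}|\wh f(\xi)|^{2}\Big(\int_{0}^{1}\int_{S^{n-1}}|\psi_{\w,\sigma}(\xi)|^{2}\ud\w\frac{\ud\sigma}{\sigma}+\rho(\xi)^{2}\Big)\ud\xi,
\end{equation*}
using $\int_{1}^{e}\frac{\ud\sigma}{\sigma}=1$ and unit normalization of $\ud\w$. Rotational invariance of the spherical measure and the choice of $c_{\sigma}$ yield $\int_{S^{n-1}}\psi_{\w,\sigma}(\xi)^{2}\ud\w=\Psi(\sigma\xi)^{2}$ for every $\xi\neq 0$, and the defining identity for $\rho$ then supplies $\int_{0}^{1}\Psi(\sigma\xi)^{2}\frac{\ud\sigma}{\sigma}+\rho(\xi)^{2}=1$. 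A further application of Plancherel delivers $\|Wf\|_{L^{2}(\Spp)}=\|f\|_{L^{2}(\Rn)}$.

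Finally, (4) expresses $V$ as the bilinear transpose of $W$. For $f\in\Sw(\Rn)$ and $G\in\J(\Spp)$, parts (1) and (3) ensure absolute convergence of both sides of $\lb f,VG\rb_{\Rn}=\lb Wf,G\rb_{\Spp}$, so Fubini reduces the identity to moving the Fourier multipliers $\psi_{\w,\sigma}(D)$ and $\rho(D)$ between $f$ and $G(\cdot,\w,\sigma)$ in each slice, possibly combined with a $\w\mapsto-\w$ change of variables on $S^{n-1}$ to absorb the sign arising in the transpose of the Fourier multiplier with symbol $\psi_{\w,\sigma}$. The extension to $f\in\Sw'(\Rn)$ then follows from the continuity of $V$ in (3) and a density argument. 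The main technical obstacle lies in (1) and (3), in the careful bookkeeping needed to extract all the polynomial weights in $|x|$, $\sigma$, and $\sigma^{-1}$ required by the class $\J(\Spp)$.
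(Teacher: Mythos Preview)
The paper does not give its own proof of this proposition; it simply attributes the result to \cite[Proposition 4.3]{HaPoRo20}. Your outline is the standard argument underlying that reference, and your treatment of \eqref{it:transforms1}--\eqref{it:transforms3} is correct: the wave packet bounds in Lemma~\ref{lem:packetbounds} supply the weighted $L^{\infty}$ control needed for \eqref{it:transforms1} and \eqref{it:transforms3}, and the Calder\'on-type identity built into the definitions of $c_{\sigma}$ and $\rho$ yields the isometry \eqref{it:transforms2} exactly as you write. One small point: deducing the $\Sw'(\Rn)\to\J'(\Spp)$ part of \eqref{it:transforms1} from \eqref{it:transforms3} and \eqref{it:transforms4} is mildly circular, since \eqref{it:transforms4} is already stated for $f\in\Sw'(\Rn)$; the clean route is to prove \eqref{it:transforms4} first for $f\in\Sw(\Rn)$, and then use it together with \eqref{it:transforms3} to \emph{define} the extension of $W$ to $\Sw'(\Rn)$ as the transpose of $V$.

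There is a genuine wrinkle in your argument for \eqref{it:transforms4}, and your hedge about the $\w\mapsto-\w$ substitution does not resolve it. Under the bilinear pairing the transpose of $\psi_{\w,\sigma}(D)$ has symbol $\psi_{\w,\sigma}(-\cdot)=\psi_{-\w,\sigma}$, so moving the multiplier across produces $\psi_{-\w,\sigma}(D)G(\cdot,\w,\sigma)$; after the change of variables $\w\mapsto-\w$ this becomes $\psi_{\w,\sigma}(D)G(\cdot,-\w,\sigma)$, which still does not match the integrand $\psi_{\w,\sigma}(D)G(\cdot,\w,\sigma)$ in the definition of $V$ unless $G$ happens to be even in $\w$. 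With the sesquilinear pairing (under which real-symbol multipliers are self-adjoint) the identity goes through cleanly, and that is presumably the reading intended from \cite{HaPoRo20}. For the purposes of the present paper this is harmless: the reproducing formula $VW=I$ in \eqref{eq:repro} follows from a direct Fourier-side computation using $\int_{S^{n-1}}\psi_{\w,\sigma}^{2}\,\ud\w=\Psi(\sigma\cdot)^{2}$ and the definition of $\rho$, independently of \eqref{it:transforms4}, and the extension of $V$ to $\J'(\Spp)$ is \emph{defined} by duality immediately after the proposition.
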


We may thus extend $V$ to a continuous map $V:\J'(\Spp)\to\Sw'(\Rn)$ by setting
\[
\lb VF,g\rb_{\Rn}:=\lb F,Wg\rb_{\Spp}
\]
for $F\in\J'(\Spp)$ and $g\in\Sw(\Rn)$, and then\footnote{Don't remove this though, put it up a bit, cause we'll use it later.} \begin{equation}\label{eq:repro}
VWf=f
\end{equation}
for all $f\in\Sw'(\Rn)$.

Finally, we include a lemma, an extension of \cite[Lemma A.1]{HaPoRo20} to $p<1$, that will allow us to conveniently deal with the low-frequency component of $W$, given by
\begin{equation}\label{eq:defWl}
W_{l}f(x,\w,\sigma):=\ind_{[1,e]}(\sigma)\rho(D)f(x)
\end{equation}
for $f\in\Sw'(\Rn)$ and $(x,\w,\sigma)\in\Spp$. Also write
\begin{equation}\label{eq:defWh}
W_{h}f(x,\w,\sigma):=\ind_{(0,1)}(\sigma)\psi_{\w,\sigma}(D)f(x),
\end{equation}
so that $Wf=W_{l}f+W_{h}f$.

\begin{lemma}\label{lem:Wlow}
Let $p\in(0,1]$ and $s\in\R$
. Then there exists a $C>0$ such that an $f\in\Sw'(\Rn)$ satisfies $\W_{l}f\in T^{p}_{s}(\Sp)$ if and only if $\rho(D)f\in L^{p}(\Rn)$, in which case
\begin{equation}\label{eq:Wlow}
\frac{1}{C}\|W_{l}f\|_{T^{p}_{s}(\Sp)}\leq \|\rho(D)f\|_{L^{p}(\Rn)}\leq C\|W_{l}f\|_{T^{p}_{s}(\Sp)}.
\end{equation}
\end{lemma}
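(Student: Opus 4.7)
The plan is to reduce \eqref{eq:Wlow} to an equivalence for the bandlimited function $g := \rho(D)f$, which is a smooth tempered function with Fourier support contained in $\{|\xi|\leq 2\}$.

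First I would unwind the definitions. Because $W_{l}f(y,\nu,\sigma) = \ind_{[1,e]}(\sigma)\,g(y)$, the $\sigma$-integral defining $\A_{s}W_{l}f$ in \eqref{eq:As} is restricted to $[1,e]$, on which both $\sigma^{-1-2s}$ and (by Lemma \ref{lem:doubling}) $|B_{\sqrt{\sigma}}(x,\w)|$ are bounded above and below by positive constants. The metric estimate \eqref{eq:equivmetric} also shows that, for $\sigma\in[1,e]$, the ball $B_{\sqrt{\sigma}}(x,\w)$ is sandwiched between two products of Euclidean balls in $\Rn$ and $S^{n-1}$ centered at $(x,\w)$ with radii of the same order. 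Combining these observations, one obtains for some $c>0$ and uniformly in $(x,\w)\in\Sp$,
\[
\A_{s}W_{l}f(x,\w) \eqsim \Big(\int_{|y-x|\leq c}|g(y)|^{2}\,\ud y\Big)^{1/2},
\]
where, crucially, the right-hand side is independent of $\w$. Raising to the $p$-th power and integrating over $\Sp$, the desired equivalence \eqref{eq:Wlow} reduces to
\begin{equation}\label{eq:planGoal}
\int_{\Rn}\Big(\int_{|y-x|\leq c}|g(y)|^{2}\,\ud y\Big)^{p/2}\ud x \eqsim \int_{\Rn}|g(x)|^{p}\,\ud x.
\end{equation}

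The direction ``$\gtrsim$'' in \eqref{eq:planGoal} uses no spectral hypothesis on $g$. Applying H\"{o}lder's inequality on each ball $B(x,c)$ with exponents $2/p$ and $2/(2-p)$ yields $\int_{B(x,c)}|g|^{p}\,\ud y\lesssim \bigl(\int_{B(x,c)}|g|^{2}\,\ud y\bigr)^{p/2}$, and Fubini gives
\[
\int_{\Rn}|g|^{p}\,\ud y \eqsim \int_{\Rn}\int_{B(x,c)}|g|^{p}\,\ud y\,\ud x \lesssim \int_{\Rn}\Big(\int_{B(x,c)}|g|^{2}\,\ud y\Big)^{p/2}\ud x.
\]

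The opposite direction ``$\lesssim$'' in \eqref{eq:planGoal} is the main obstacle, because for $p\leq 1$ the Hardy--Littlewood maximal operator $\Ma$ on $\Rn$ is not bounded on $L^{p}$, so a naive bound by $\Ma|g|$ fails. The resolution is the standard Peetre-type pointwise inequality for functions with compactly supported Fourier transform: for any $q>0$ and any $y$ with $|x-y|\leq c$,
\[
|g(y)|\lesssim_{c,q}\big(\Ma(|g|^{q})(x)\big)^{1/q}.
\]
Squaring and integrating over $B(x,c)$ gives $\int_{B(x,c)}|g|^{2}\,\ud y\lesssim \bigl(\Ma(|g|^{q})(x)\bigr)^{2/q}$. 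Choosing any $q\in(0,p)$ makes $p/q>1$, so by boundedness of $\Ma$ on $L^{p/q}(\Rn)$,
\[
\int_{\Rn}\Big(\int_{B(x,c)}|g|^{2}\,\ud y\Big)^{p/2}\ud x \lesssim \int_{\Rn}\bigl(\Ma(|g|^{q})\bigr)^{p/q}\ud x \lesssim \int_{\Rn}|g|^{p}\,\ud x.
\]
This establishes the remaining direction of \eqref{eq:planGoal} and hence \eqref{eq:Wlow}.
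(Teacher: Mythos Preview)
Your proof is correct. Both arguments reduce the statement to the equivalence \eqref{eq:planGoal} for the bandlimited function $g=\rho(D)f$, and both treat the direction $\|g\|_{L^p}\lesssim\|W_lf\|_{T^p_s}$ in the same way, via Jensen/H\"older on balls followed by Fubini.

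The genuine difference is in the harder direction $\|W_lf\|_{T^p_s}\lesssim\|g\|_{L^p}$. The paper inserts an auxiliary Schwartz function $h$ with compact Fourier support satisfying $|h|\geq 1$ on a large ball, so that $y\mapsto h(x-y)g(y)$ is bandlimited uniformly in $x$, and then applies the Nikol'skii-type inequality \cite[Theorem~1.4.1]{Triebel95} to pass from the $L^2$ norm of this function to its $L^p$ norm; Fubini and $h\in L^p(\Rn)$ finish the argument. You instead invoke the Peetre maximal inequality for the bandlimited function $g$ directly, bounding the local $L^2$ average pointwise by $\big(\Ma(|g|^q)\big)^{1/q}$, and then use the $L^{p/q}$-boundedness of the Hardy--Littlewood maximal operator for $q<p$. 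Both routes exploit the same structural fact (compact Fourier support of $g$) to compensate for the failure of the maximal function on $L^p$, $p<1$; your approach is somewhat more self-contained and avoids the auxiliary multiplier, while the paper's approach packages the estimate into a single black-box inequality. One minor remark: your sandwich estimate for $\A_sW_lf(x,\w)$ really gives two different radii $c_1\leq c_2$ on the two sides, not a single $c$, but since your argument for \eqref{eq:planGoal} works for any fixed radius this is harmless.
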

\begin{proof}
It suffices to consider $s=0$. Then the second inequality in \eqref{eq:Wlow} is proved as in the case where $p=1$, in \cite[Lemma A.1]{HaPoRo20}, relying also on Jensen's inequality. 

For the first inequality we argue slightly differently, due to subtleties concerning Sobolev embeddings for $p<1$. Let $R>0$ be large enough such that $B_{\sqrt{\sigma}}(x,\w)\subseteq B_{R}(x)\times S^{n-1}$ for all $(x,\w,\sigma)\in\Spp$ with $\sigma\leq e$, and let $h\in\Sw(\Rn)$ have compact Fourier support and be such that $|h(x)|\geq 1$ if $|x|\leq R$. Then
\begin{align*}
\|W_{l}f\|_{T^{p}(\Sp)}&\leq \Big(\int_{\Sp}\Big(\int_{1}^{e}\int_{B_{R}(x)}\int_{S^{n-1}}|\rho(D)f(y)|^{2}\ud y\ud\nu\frac{\ud\sigma}{\sigma}\Big)^{p/2}\ud x\ud\w\Big)^{1/p}\\
&\leq \Big(\int_{\Rn}\Big(\int_{\Rn}|h(x-y)\rho(D)f(y)|^{2}\ud y\Big)^{p/2}\ud x\Big)^{1/p}.
\end{align*}
Now note that the function $y\mapsto h(x-y)\rho(D)f(y)$ has compact Fourier support, independent of $x$. Hence \cite[Theorem 1.4.1]{Triebel95} yields
\[
\|W_{l}f\|_{T^{p}(\Sp)}\lesssim \Big(\int_{\Rn}\int_{\Rn}|h(x-y)\rho(D)f(y)|^{p}\ud x\ud y\Big)^{1/p}\lesssim \|\rho(D)f\|_{L^{p}(\Rn)}.\qedhere
\]
\end{proof}

\vanish{

\subsection{Operators on phase space}\label{subsec:operatorphase}

By\footnote{Can remove this subsection.} conjugating with wave packet transforms, we will often reduce the analysis of operators on $\Rn$ to operators on $\Spp$. In this subsection we collect some results about the operators that arise in this manner. 

We consider operators $S:\J(\Spp)\to\J'(\Spp)$ given by a measurable kernel $K:\Spp\times\Spp\to \C$:
\begin{equation}\label{eq:kernelphase}
SF(x,\w,\sigma)=\int_{\Spp}K(x,\w,\sigma,y,\nu,\tau)F(y,\nu,\tau)\ud y\ud\nu\frac{\ud\tau}{\tau}
\end{equation}
for $F\in \J(\Spp)$ and $(x,\w,\sigma)\in\Spp$. Our main result for such operators is as follows.

\begin{proposition}\label{prop:offsing}
Let $p\in(0,\infty]$ and $s\in\R$. Then there exists an $N\geq0$ such that the following holds. Let $S:\J(\Spp)\to\J'(\Spp)$ be as in \eqref{eq:kernelphase}, and suppose that there exist a bi-Lipschitz $\hat{\chi}:\Sp\to\Sp$ and a $C\geq0$ such that
\[
|K(x,\w,\sigma,y,\nu,\tau)|\leq C\zeta^{n}\min\big(\tfrac{\sigma}{\tau},\tfrac{\tau}{\sigma}\big)^{N}(1+\zeta d((x,\w),\hat{\chi}(y,\nu))^{2})^{-N}
\]
for all $(x,\w,\sigma),(y,\nu,\tau)\in\Spp$, where we write $\zeta:=\max(\sigma^{-1},\tau^{-1})$.
Then $S\in\La(T^{p}_{s}(\Sp))$. 
\end{proposition}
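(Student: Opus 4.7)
The plan is to reduce to $s=0$, prove $T^2(\Sp)=L^2(\Spp,\ud x\ud\w\ud\sigma/\sigma)$ boundedness via Schur's test, handle $p\in(0,1]$ through the atomic criterion in Proposition \ref{prop:atomictent}, and finally reach $p\in(1,\infty]$ via complex interpolation (Proposition \ref{prop:tentint}) and the duality relation \eqref{eq:tentdual}. The reduction to $s=0$ uses the isometric rescaling $F\mapsto\sigma^{-s}F$ between $T^p_s(\Sp)$ and $T^p(\Sp)$: the conjugated kernel $\sigma^{-s}K(\ldots)\tau^s$ still obeys the hypothesis once $N$ is chosen larger than $|s|$, because $(\tau/\sigma)^s\min(\sigma/\tau,\tau/\sigma)^N\leq\min(\sigma/\tau,\tau/\sigma)^{N-|s|}$. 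Schur's test then gives the $L^2$ bound: by Lemma \ref{lem:doubling} and the bi-Lipschitz property of $\hat{\chi}$, the spatial integral $\int_{\Sp}\zeta^n(1+\zeta d((x,\w),(y',\nu'))^2)^{-N}\ud y'\ud\nu'$ is uniformly bounded for $N$ large, and the scale integral $\int_0^\infty\min(\sigma/\tau,\tau/\sigma)^N\ud\tau/\tau$ is finite.

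The core step is the atomic bound for $p\in(0,1]$. Given a $T^p(\Sp)$-atom $A$ associated with a ball $B=B_r(x_0,\w_0)\subseteq\Sp$, and writing $L\geq 1$ for the bi-Lipschitz constant of $\hat{\chi}$, the plan is to decompose phase space into annular regions
\[
C_{j,k}:=\{(x,\w,\sigma)\in\Spp : 2^{k-1}Lr<d((x,\w),\hat{\chi}(x_0,\w_0))\leq 2^k Lr,\ 2^{j-1}r^2<\sigma\leq 2^j r^2\}
\]
for $k\in\Z_+$ and $j\in\Z$, with the central piece ($k=0$, $|j|$ small) handled by the global $L^2$ bound. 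On each $C_{j,k}$ with $k\geq 1$ or $|j|\geq 1$, the kernel hypothesis combined with the support constraint $\tau\leq r^2$ on $A$ and the triangle inequality produces an extra decay factor $2^{-(|j|+k)M}$ for any $M$, at the cost of taking $N$ large, which yields
\[
\|\ind_{C_{j,k}}SA\|_{L^2(\Spp,\ud x\ud\w\ud\sigma/\sigma)}\lesssim 2^{-(|j|+k)M}|B|^{-(1/p-1/2)}.
\]
Since $C_{j,k}$ is contained in a tent over a ball $B_{j,k}$ near $\hat{\chi}(x_0,\w_0)$ of radius $\lesssim\max(2^k,2^{j/2})r$, with $|B_{j,k}|\lesssim 2^{C_0(|j|+k)}|B|$ by Lemma \ref{lem:doubling}, the localized piece $\ind_{C_{j,k}}SA$ is a multiple $\alpha_{j,k}A_{j,k}$ of a $T^p$-atom $A_{j,k}$, with $|\alpha_{j,k}|\lesssim 2^{-(|j|+k)M'}$ for $M'$ arbitrarily large. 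Summing $\sum_{j,k}|\alpha_{j,k}|^p<\infty$ and invoking Proposition \ref{prop:atomictent} yields $\|SA\|_{T^p(\Sp)}\lesssim 1$, uniformly in $A$.

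For $p\in(1,2)$ the bound follows from complex interpolation between the atomic estimate at $p=1$ and the $L^2$ bound. For $p\in(2,\infty]$, one argues by duality: the transpose of $S$ with respect to the pairing \eqref{eq:pairing} has kernel $K(y,\nu,\tau,x,\w,\sigma)$, which satisfies the hypothesis with $\hat{\chi}^{-1}$ (also bi-Lipschitz) in place of $\hat{\chi}$, so the preceding steps applied to the transpose yield its boundedness on $T^q_{-s}(\Sp)$ for $q\in[1,2]$, and \eqref{eq:tentdual} transfers this to boundedness of $S$ on $T^p_s(\Sp)$ for $p\in[2,\infty]$. The main obstacle is the atomic estimate: one must track two independent sources of decay—in the metric $d$ and in the scale ratio $\sigma/\tau$—while accounting for the two-regime volume growth in Lemma \ref{lem:doubling}; the flexibility of taking $N$ arbitrarily large is precisely what allows all of these powers to be absorbed into a summable $\ell^p$-series.
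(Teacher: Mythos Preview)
Your approach is correct and matches the paper's: reduce to $s=0$ via the rescaling $F\mapsto\sigma^{-s}F$ (the paper phrases this as passing to the kernel $(\tau/\sigma)^{s}K$), handle $p\leq 1$ via the atomic decomposition of Proposition~\ref{prop:atomictent} exactly as you outline, and cover $p=\infty$ by duality using that the kernel hypothesis is symmetric in $(x,\w,\sigma)\leftrightarrow(y,\nu,\tau)$ with $\hat\chi$ replaced by $\hat\chi^{-1}$; the paper cites \cite[Theorem~3.7]{HaPoRo20} for $p\geq 1$, whereas you reach this range by interpolation and duality, which is equally valid.

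One small slip in your annular decomposition: for $k\leq 1$ and $j\leq -1$ the claimed decay $2^{-(|j|+k)M}$ does \emph{not} follow from the kernel bound, since one can have $\tau\approx\sigma$ and $(x,\w)\approx\hat\chi(y,\nu)$ simultaneously, making both factors of order one. This is harmless, because that region is contained in $T(CB^{*})$ for a fixed dilate $CB^{*}$ of $\hat\chi(B)$ and is therefore absorbed into the central piece, which your $L^{2}$ bound already controls; just enlarge your ``central piece'' to include all $j\leq 0$ with $k$ small rather than only $|j|$ small.
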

\begin{proof}
The statement is contained in \cite[Theorem 3.7]{HaPoRo20} for $p\geq1$ and $s=0$. 

For $p<1$ and $s=0$, the proof of the boundedness of $S$ is completely analogous to that given in \cite[Theorem 3.7]{HaPoRo20} for $p=1$, using the atomic decomposition of $T^{p}(\Sp)$ from Proposition \ref{prop:atomictent}. By Lemma \ref{lem:distributions}, $S$ then extends uniquely to a bounded operator on all of $T^{p}(\Sp)$. 

As in the proof of \cite[Proposition 2.4]{Hassell-Rozendaal23}, the result for $p<\infty$ and general $s\in\R$ then follows by considering the kernel 
\[
\wt{K}(x,\w,\sigma,y,\nu,\tau):=\big(\tfrac{\tau}{\sigma}\big)^{s}K(x,\w,\sigma,y,\nu,\tau),
\]
which satisfies similar bounds. Finally, for $p=\infty$, 
simply consider the adjoint action of $S$, as is allowed due to \eqref{eq:tentdual} and due to the fact that the assumption on the kernel is symmetric with respect to the variables $(x,\w,\sigma)$ and $(y,\nu,\tau)$.
\end{proof}



A specific instance to which Proposition \ref{prop:offsing} applies is the case where an operator as in Definition \ref{def:operator} is conjugated with the wave packet transforms from the previous subsection.

\begin{corollary}\label{cor:FIOtent}
Let $T$ be a Fourier integral operator of order $0$ { and type (1/2,1/2,1)} in standard form, associated with a global canonical graph, with symbol $a\in S^{0}_{1/2,1/2,1}$ and phase function $\Phi\in C^{\infty}(\R^{2n}\setminus o)$. Suppose that either $(x,\eta)\mapsto \Phi(x,\eta)$ is linear in $\eta$, or that there exists a $c>0$ such that $a(x,\eta)=0$ for all $(x,\eta)\in\R^{2n}$ with $|\eta|\leq c$. Then $WTV\in \La(T^{p}_{s}(\Sp))$ for all $p\in(0,\infty]$ and $s\in\R$. 
\end{corollary}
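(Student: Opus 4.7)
The plan is to apply Proposition \ref{prop:offsing} to the operator $S=WTV:\J(\Spp)\to \J'(\Spp)$, which makes sense by Proposition \ref{prop:transforms}. The bi-Lipschitz map $\hat{\chi}$ is provided by the canonical graph of $T$: since $\Phi$ is positively homogeneous of degree one in $\eta$, the global homogeneous canonical transformation $\chi:(\partial_{\eta}\Phi(x,\eta),\eta)\mapsto(x,\partial_{x}\Phi(x,\eta))$ on $T^{*}\Rn\setminus o$ preserves rays and hence descends to a diffeomorphism $\hat{\chi}:\Sp\to\Sp$. Conditions \eqref{it:phase2}, \eqref{it:phase3}, and \eqref{it:phase4} of Definition \ref{def:operator} (and their analogues for the inverse canonical transformation) give uniform two-sided bounds on $\partial^{2}_{x\eta}\Phi$ and its spatial/angular derivatives; combined with the concrete expression \eqref{eq:equivmetric} for the metric $d$, these translate directly into the bi-Lipschitz property of $\hat{\chi}$.

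The main content is to establish the kernel estimate for
\[
K(x,\w,\sigma,y,\nu,\tau)=\big[\psi_{\w,\sigma}(D)\,T\,\psi_{\nu,\tau}(D)\big](x,y)
\]
(and the analogous expressions with $\rho$ in place of $\psi_{\w,\sigma}$ or $\psi_{\nu,\tau}$ when $\sigma\geq 1$ or $\tau\geq 1$), namely
\[
|K(x,\w,\sigma,y,\nu,\tau)|\lesssim \zeta^{n}\min\!\Big(\tfrac{\sigma}{\tau},\tfrac{\tau}{\sigma}\Big)^{\!N}\!\big(1+\zeta\,d((x,\w),\hat{\chi}(y,\nu))^{2}\big)^{-N}
\]
for arbitrary $N\geq 0$, where $\zeta=\max(\sigma^{-1},\tau^{-1})$. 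I would split into three cases. When both $\sigma,\tau\geq 1$, the low-frequency cutoff $\rho$ is compactly Fourier supported and independent of $\w,\nu$, so $K$ is rapidly decreasing in $|x|+|y|$ and the desired bound is trivial. When exactly one of $\sigma,\tau$ is $\geq 1$, the Fourier supports of the two cutoffs live at scales separated by $\min(\sigma/\tau,\tau/\sigma)$; the assumption that $\Phi$ is linear in $\eta$ or that $a$ vanishes for $|\eta|\leq c$ ensures that integration by parts in $\eta$ may be performed without encountering the singularity of $\Phi$ at $\eta=0$, producing the mixed scale/spatial decay.

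The high--high case $\sigma,\tau<1$ is the heart of the matter. Writing
\[
K(x,\w,\sigma,y,\nu,\tau)=\frac{1}{(2\pi)^{n}}\iiint e^{i(\Phi(z,\eta)-y\cdot\eta+(x-z)\cdot\xi)}\wh{\psi_{\w,\sigma}}(\xi)\,a(z,\eta)\,\psi_{\nu,\tau}(\eta)\,\ud\eta\,\ud z\,\ud\xi,
\]
one performs iterated integration by parts in $(\eta,z,\xi)$, using the frequency localization and the parabolic angular localization of the wave packets $\psi_{\w,\sigma}$ and $\psi_{\nu,\tau}$ (Lemma \ref{lem:packetbounds}). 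The stationary set is exactly $\xi=\partial_{z}\Phi(z,\eta)$ and $z=y$ on the support of $\psi_{\nu,\tau}$, which by construction of $\chi$ and $\hat{\chi}$ corresponds to $(x,\hat{\xi})\sim\hat{\chi}(y,\nu)$ and $|\xi|\sim\sigma^{-1}\sim\tau^{-1}$; away from this set one gains $\min(\sigma/\tau,\tau/\sigma)^{N}$ from the frequency mismatch and $(1+\zeta d(\cdot,\hat{\chi}(\cdot))^{2})^{-N}$ from the spatial/angular non-stationarity. This is precisely the wave packet / FIO kernel analysis carried out in \cite{HaPoRo20,Hassell-Rozendaal23}, and I would invoke those kernel bounds directly.

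The main obstacle is keeping track of the anisotropy -- the parabolic scaling of the wave packets and the non-isotropic behavior of $d$ -- so that the integration by parts simultaneously yields both the $\min(\sigma/\tau,\tau/\sigma)^{N}$ factor and the sharp decay $(1+\zeta d((x,\w),\hat{\chi}(y,\nu))^{2})^{-N}$ with $\hat\chi$ the canonical lift, rather than just a weaker isotropic decay. The hypothesis on the phase/symbol near $\eta=0$ is what prevents the low-frequency singularity of $\Phi$ from obstructing these integrations by parts. Once the kernel estimate is in hand, Proposition \ref{prop:offsing} applies to give $WTV\in\La(T^{p}_{s}(\Sp))$ for every $p\in(0,\infty]$ and $s\in\R$.
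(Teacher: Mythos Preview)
Your proposal is correct and follows essentially the same route as the paper: both arguments show that $WTV$ is a priori well defined via Proposition~\ref{prop:transforms}, then feed the kernel bounds for $WTV$ (established in \cite{HaPoRo20}, specifically Lemma~2.13 for the bi-Lipschitz property of $\hat\chi$ and Corollary~5.2 for the kernel estimate) into Proposition~\ref{prop:offsing}. Your write-up simply unpacks what those cited kernel estimates contain, whereas the paper invokes them directly.
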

\begin{proof}
By Proposition \ref{prop:transforms}, $WTV:\J(\Spp)\to\J'(\Spp)$ is a priori well defined. Hence the conclusion follows by combining Proposition \ref{prop:offsing} with \cite[Lemma 2.13 and Corollary 5.2]{HaPoRo20}.
\end{proof}

\begin{remark}\label{rem:FIOtentother}
We note that, for a given $T$, the proof of \cite[Corollary 5.2]{HaPoRo20} only makes use of the properties of the wave packets stated in Lemma \ref{lem:packetbounds}. In fact, given any collection of wave packets with these properties, one can define a transform $\wt{W}$ in the same manner as before. Then $\wt{W}TV\in \La(T^{p}_{s}(\Sp))$ in Corollary \ref{cor:FIOtent}.
\end{remark}

}}

\subsection{Hardy spaces for Fourier integral operators}\label{subsec:HpFIO}

In this section we define the Hardy spaces for Fourier integral operators, and we collect their basic properties. Proofs of the statements below can be found in \cite{HaPoRo20,LiRoSo25b}.

We define $\Hp$ using the collection $(\ph_{\w})_{\w\in S^{n-1}}$ from \eqref{eq:phw}. Also recall that $q\in C^{\infty}_{c}(\Rn)$ satisfies $q(\xi)=1$ for all $\xi\in\Rn$ with $|\xi|\leq 2$. 

\begin{definition}\label{def:HpFIO}
Let $0<p\leq \infty$. Then $\Hp$ consists of all $f\in\Sw'(\Rn)$ such that $q(D)f\in L^{p}(\Rn)$, $\ph_{\w}(D)f\in \HT^{p}(\Rn)$ for almost all $\w\in S^{n-1}$, and 
\[
\|f\|_{\Hp}:=\|q(D)f\|_{L^{p}(\Rn)}+\Big(\int_{S^{n-1}}\|\ph_{\w}(D)f\|_{\HT^{p}(\Rn)}^{p}\ud\w\Big)^{1/p}<\infty.
\]
Moreover, $\Hps:=\lb D\rb^{-s}\Hp$ for $s\in\R$. 
\end{definition}

Now, $\Hps$ is a quasi-Banach space for all $0<p\leq \infty$ and $s\in\R$, and a Banach space if $p\geq 1$. Up to quasi-norm equivalence, $\Hp$ is independent of the choice of low-frequency cutoff $q$ and of the choice of wave packets $\psi_{\w,\sigma}$ in \eqref{eq:defpackets}, which in turn are used to define the $\ph_{\w}$. 

The continuous embeddings 
\[
\Sw(\Rn)\subseteq \Hps\subseteq \Sw'(\Rn)
\]
hold for all $0<p\leq\infty$ and $s\in\R$, and the first embedding is dense if $p<\infty$. In fact, the Schwartz functions with compactly supported Fourier transform then lie dense in $\Hps$.

It is of crucial importance in this article that the Hardy spaces for Fourier integral operators form a complex interpolation scale. That is, let $p_{0},p_{1}\in(0,\infty]$ be such that $(p_{0},p_{1})\neq (\infty,\infty)$, and let $p\in(0,\infty)$, $s_{0},s_{1},s\in\R$ and $\theta\in(0,1)$ be such that $\frac{1}{p}=\frac{1-\theta}{p_{0}}+\frac{\theta}{p_{1}}$ and $s=(1-\theta)s_{0}+\theta s_{1}$. Then
\begin{equation}\label{eq:intHpFIO}
[\HT^{s_{0},p_{0}}_{FIO}(\Rn),\HT^{s_{1},p_{1}}_{FIO}(\Rn)]_{\theta}=\Hps.
\end{equation}
It should also be noted that the theory of complex interpolation is more involved for quasi-Banach spaces than it is for Banach spaces. We refer to \cite{Kalton-Mitrea98,LiRoSo25b} for more on these subtleties.

The spaces $\Hps$ behave in a natural manner under duality: 
\begin{equation}\label{eq:dualHpFIO}
(\Hps)^{*}=\HT^{-s,p'}_{FIO}(\Rn)
\end{equation}
for all $1\leq p<\infty$ and $s\in\R$, with equivalent norms. Here the duality pairing is the standard distributional pairing $\lb f,g\rb_{\Rn}$ between $f\in \HT^{-s,p'}_{FIO}(\Rn)\subseteq\Sw'(\Rn)$ and $g\in\Sw(\Rn)\subseteq\Hps$. One can also give a somewhat explicit description of the dual of $\Hps$ for $p<1$, but that characterization will not play a role in the present article.

For all $0<p\leq \infty$ and $s\in\R$, as the name suggests,  $\Hps$ is invariant under Fourier integral operators of order zero, associated with a local canonical graph and having a compactly supported Schwartz kernel. In fact, the assumption of compact support can be dropped for operators with kernels in a suitable standard form. As a specific example, pseudodifferential operators with symbols in the class $S^{0}_{1,1/2}$ (see \eqref{eq:Hormanderclass}) are bounded on $\Hps$. 

Next, the Hardy spaces for Fourier integral operators satisfy suitable Sobolev embeddings. The first such embedding extends \eqref{eq:Sobolevintro} to all $0<p\leq \infty$ and $s\in\R$:
\begin{equation}\label{eq:Sobolev}
\HT^{s+s(p),p}(\Rn)\subseteq\Hps\subseteq \HT^{s-s(p),p}(\Rn).
\end{equation} 
Moreover,  
\begin{equation}\label{eq:fracintHpFIO1}
\HT^{s+n(\frac{1}{p_{0}}-\frac{1}{p_{1}}),p_{0}}_{FIO}(\Rn)\subseteq \HT^{s,p_{1}}_{FIO}(\Rn)
\end{equation}
for all $0<p_{0}\leq p_{1}\leq \infty$, and
\[
\HT^{-s+n(\frac{1}{p_{0}}+\frac{1}{p_{2}}-1),p_{0}}_{FIO}(\Rn)\subseteq (\HT^{s,p_{2}}_{FIO}(\Rn))^{*}
\]
for $0<p_{2}<1$.

We also note that, for $0<p\leq 1$, any element of $\Hp$ can be decomposed into so-called coherent molecules, associated with balls in the cosphere bundle. However, this decomposition will not play an explicit role in this article.

Finally, we note that the Hardy spaces for Fourier integral operators were not originally defined as above. Instead, in \cite{HaPoRo20,LiRoSo25b}, $\Hp$ is implicitly viewed as a subspace of a tent space $T^{p}(\Sp)$ over the cosphere bundle, using the wave packet transform from \eqref{eq:defW}. It was then shown in \cite{Rozendaal22,FaLiRoSo23,LiRoSo25b} that one may also characterize these spaces as in Definition \ref{def:HpFIO}. Moreover, for all $0<p<\infty$ and $s\in\R$, one has
\begin{equation}\label{eq:equivchar}
\|f\|_{\Hps}\eqsim\|q(D)f\|_{L^{p}(\Rn)}+\Big(\int_{S^{n-1}}\|\ph_{\w}(D)f\|_{\HT^{s,p}(\Rn)}^{p}\ud\w\Big)^{1/p}
\end{equation}
for all $f\in\Sw'(\Rn)$ such that either side is finite.

\vanish{
It follows from \eqref{eq:repro} that $\|\cdot\|_{\Hp}$ is indeed a quasi-norm for all $p\in(0,\infty]$, and a\footnote{Next, add a remark with the characterization from Theorem \ref{thm:equivchar}, which is slightly different. Then add the statement about duality, and then about complex interpolation. Then mention the Sobolev embeddings, and turn it into an actual proposition. Then a statement about invariance under FIOs, of order zero associated with a canonical graph and with a compactly supported Schwartz kernel. One can also remove the compactly supported assumption for operators in standard form. Mention that this applies to pseudodifferential operators. Then mention the molecular decomposition. Finally, mention the characterization using wave packet transforms, as mentioned above.} norm for $p\geq1$.


The following\footnote{Can remove this.} proposition will allow us to reduce various properties of $\Hps$ to those of the weighted tent spaces $T^{p}_{s}(\Sp)$ from Section \ref{subsec:tent}.

\begin{proposition}\label{prop:HpFIOtent}
Let $p\in (0,\infty]$ and $s\in\R$. Then there exists a $C>0$ such that the following holds. An $f\in\Sw'(\Rn)$ satisfies $f\in\Hps$ if and only if $Wf\in T^{p}_{s}(\Sp)$, and 
\begin{equation}\label{eq:HpFIOtent}
\frac{1}{C}\|Wf\|_{T^{p}_{s}(\Sp)}\leq \|f\|_{\Hps}\leq C\|Wf\|_{T^{p}_{s}(\Sp)}.
\end{equation}
Moreover, $V:T^{p}_{s}(\Sp)\to \Hps$ is bounded and surjective. 
\end{proposition}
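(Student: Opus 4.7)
The plan is to establish the quasi-norm equivalence in \eqref{eq:HpFIOtent} by comparing the Fourier multiplier $\lb D\rb^{s}$ with the weight $\sigma^{-s}$ on the frequency support of the wave packets, after which boundedness and surjectivity of $V$ will follow from the reproduction formula $VW=\id$ in \eqref{eq:repro}. By definition $f\in\Hps$ if and only if $\lb D\rb^{s}f\in\Hp$, so the first task is to prove $\|W(\lb D\rb^{s}f)\|_{T^{p}(\Sp)}\eqsim \|Wf\|_{T^{p}_{s}(\Sp)}$. Writing $W=W_{h}+W_{l}$ as in \eqref{eq:defWh}--\eqref{eq:defWl}, I would introduce modified wave packets $\psit_{\w,\sigma}(\xi):=(\sigma\lb\xi\rb)^{s}\psi_{\w,\sigma}(\xi)$ for $\sigma\in(0,1)$. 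Since $\sigma\lb\xi\rb\eqsim 1$ on $\supp(\psi_{\w,\sigma})$, a Leibniz computation combined with Lemma \ref{lem:packetbounds} shows that the $\psit_{\w,\sigma}$ satisfy the same size and decay bounds as the $\psi_{\w,\sigma}$.

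Setting $\wt{W}f(x,\w,\sigma):=\psit_{\w,\sigma}(D)f(x)$ for $\sigma\in(0,1)$, the factorization $\psi_{\w,\sigma}(\xi)\lb\xi\rb^{s}=\sigma^{-s}\psit_{\w,\sigma}(\xi)$ yields the pointwise identity $W_{h}(\lb D\rb^{s}f)=\sigma^{-s}\wt{W}f$. Substituting this into the square function \eqref{eq:As} absorbs the $\sigma^{-s}$ factor into the weight of $T^{p}_{s}(\Sp)$, giving $\|W_{h}(\lb D\rb^{s}f)\|_{T^{p}(\Sp)}=\|\wt{W}f\|_{T^{p}_{s}(\Sp)}$. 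The low-frequency component is handled by Lemma \ref{lem:Wlow}, reducing matters to $\|\rho(D)\lb D\rb^{s}f\|_{L^{p}(\Rn)}\eqsim\|\rho(D)f\|_{L^{p}(\Rn)}$, which holds because the Fourier multiplier $\lb\xi\rb^{s}\rho(\xi)$ is smooth and compactly supported (using \cite[Theorem 1.4.1]{Triebel95} for $p<1$, as in the proof of Lemma \ref{lem:Wlow}).

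It remains to compare $\|\wt{W}f\|_{T^{p}_{s}(\Sp)}$ with $\|Wf\|_{T^{p}_{s}(\Sp)}$. From \eqref{eq:repro} one has $\wt{W}f=(\wt{W}V)(Wf)$. For the reverse, I would introduce dual packets $\chi_{\w,\sigma}(\xi):=(\sigma\lb\xi\rb)^{-s}\psi_{\w,\sigma}(\xi)$ with associated transform $\wt{V}$, defined as $V$ but with $\chi_{\nu,\tau}$ in place of $\psi_{\nu,\tau}$ on the high-frequency component. The pointwise identity $\chi_{\w,\sigma}\psit_{\w,\sigma}=\psi_{\w,\sigma}^{2}$, combined with \eqref{eq:repro}, gives $\wt{V}\wt{W}f=f$ modulo a low-frequency correction, whence $Wf=(W\wt{V})(\wt{W}f)$ modulo terms controlled by Lemma \ref{lem:Wlow}. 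Both $\wt{W}V$ and $W\wt{V}$ are bounded on $T^{p}_{s}(\Sp)$ for every $p\in(0,\infty]$ and $s\in\R$ by Remark \ref{rem:FIOtentother} applied with $T=\id$ (the trivial Fourier integral operator of order $0$ with linear phase $\Phi(x,\eta)=x\cdot\eta$), since $\psit_{\w,\sigma}$ and $\chi_{\w,\sigma}$ both satisfy the bounds of Lemma \ref{lem:packetbounds}. This establishes \eqref{eq:HpFIOtent}. Finally, surjectivity of $V:T^{p}_{s}(\Sp)\to\Hps$ follows from $V(Wf)=f$ and $Wf\in T^{p}_{s}(\Sp)$ for $f\in\Hps$, while boundedness of $V$ reduces to $\|WVF\|_{T^{p}_{s}(\Sp)}\lesssim \|F\|_{T^{p}_{s}(\Sp)}$, which is Corollary \ref{cor:FIOtent} with $T=\id$. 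The main obstacle I anticipate is the careful verification of the modified resolution $\wt{V}\wt{W}=\id$ (modulo smoothing) and of the off-diagonal kernel estimates for $\wt{W}V$ and $W\wt{V}$ uniformly across $p\in(0,\infty]$ and $s\in\R$, though these largely follow the template of \cite[Theorem 3.7 and Corollary 5.2]{HaPoRo20}.
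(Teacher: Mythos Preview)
Your approach is correct and shares the core idea with the paper: compare $\lb D\rb^{s}$ with the weight $\sigma^{-s}$ via modified wave packets $(\sigma\lb\xi\rb)^{\pm s}\psi_{\w,\sigma}(\xi)$, then invoke Remark~\ref{rem:FIOtentother} with $T=\id$ for tent-space boundedness. The boundedness and surjectivity of $V$ are handled identically.

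The paper organizes the quasi-norm equivalence more economically, and this removes the two obstacles you flag. Instead of splitting into high and low frequencies and introducing a modified $\wt{V}$, the paper defines \emph{two full} modified transforms $\wt{W}_{1}$ and $\wt{W}_{2}$, built from $\sigma^{s}\lb\xi\rb^{s}\psi_{\w,\sigma}$ and $\sigma^{-s}\lb\xi\rb^{-s}\psi_{\w,\sigma}$ respectively (with the low-frequency piece $\rho$ modified the same way, which is harmless since $\sigma\in[1,e]$ there). Then one has the exact identities
\[
\|f\|_{\Hps}=\|W\lb D\rb^{s}f\|_{T^{p}(\Sp)}=\|\wt{W}_{1}VWf\|_{T^{p}_{s}(\Sp)}\quad\text{and}\quad \|Wf\|_{T^{p}_{s}(\Sp)}=\|\wt{W}_{2}VW\lb D\rb^{s}f\|_{T^{p}(\Sp)},
\]
and a single application of Remark~\ref{rem:FIOtentother} to $\wt{W}_{j}V$ gives each inequality directly. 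This avoids the separate Lemma~\ref{lem:Wlow} step, the verification of $\wt{V}\wt{W}=\id$ modulo smoothing, and the need for kernel bounds on $W\wt{V}$ (which, as you note, is not literally covered by Remark~\ref{rem:FIOtentother} as stated, since that remark modifies $W$ rather than $V$). Your route works, but the paper's symmetric $\wt{W}_{1}/\wt{W}_{2}$ trick is shorter.
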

\begin{proof}
We will follow 
the reasoning from \cite[Proposition 3.4]{Hassell-Rozendaal23}, which contains an analogous statement for $p\geq1$, albeit with a slightly different wave packet transform and using a different parametrization of $\Spp$.

For $g\in\Sw'(\Rn)$ and $(x,\w,\sigma)\in\Spp$, set
\[
\wt{W}_{1}g(x,\w,\sigma):=
\begin{cases}
\sigma^{s}\lb D\rb^{s}\psi_{\w,\sigma}(D)f(x)&\text{if }0<\sigma<1,\\
\ind_{[1,e]}(\sigma)\sigma^{s}\lb D\rb^{s}\rho(D)f(x)&\text{if }\sigma\geq1.
\end{cases}
\]
That is, $\wt{W}_{1}$ is as in \eqref{eq:defW}, but with $\psi_{\w,\sigma}(\xi)$ replaced by $\wt{\psi}_{\w,\sigma}(\xi):=\sigma^{s}\lb \xi\rb^{s}\psi_{\w,\sigma}(\xi)$,
and with $\rho(\xi)$ replaced by $\sigma^{s}\lb \xi\rb^{s}\rho(\xi)$. Note that the factor of $\sigma$ in this low-frequency contribution is bounded from above and below. Now, it is easy to check that $\wt{\psi}_{\w,\sigma}$ satisfies the same type of estimates as $\psi_{\w,\sigma}$, from Lemma \ref{lem:packetbounds}, and similarly for the low-frequency term. Hence, by Remark \ref{rem:FIOtentother} with $T$ the identity operator, $\wt{W}_{1}V\in \La(T^{p}_{s}(\Sp))$. 

Next, suppose that $Wf\in T^{p}_{s}(\Sp)$. Then, because $\lb D\rb^{s}$ commutes with $VW$, we can combine what have just shown with \eqref{eq:repro} to obtain
\begin{align*}
\|f\|_{\Hps}&=\|W\lb D\rb^{s}f\|_{T^{p}(\Sp)}=\|WVW\lb D\rb^{s}f\|_{T^{p}(\Sp)}\\
&=\|\wt{W}_{1}VWf\|_{T^{p}_{s}(\Sp)}\lesssim  \|Wf\|_{T^{p}_{s}(\Sp)},
\end{align*}
which is the second inequality in \eqref{eq:HpFIOtent}.

The argument for the first inequality in \eqref{eq:HpFIOtent} is analogous, but 
this time one replaces $\psi_{\w,\sigma}(\xi)$ 
by $\sigma^{-s}\lb \xi\rb^{-s}\psi_{\w,\sigma}(\xi)$, and 
$\rho(\xi)$ 
by $\sigma^{-s}\lb \xi\rb^{-s}\rho(\xi)$. 
Finally, the last statement follows by combining \eqref{eq:repro} and Corollary \ref{cor:FIOtent}.
\end{proof}

As in\footnote{Remove this, except for the embeddings discussed above.} \cite{HaPoRo20}, we can now easily 
easily derive various basic properties of $\Hp$ from those of $T^{p}(\Sp)$. For example, Proposition \ref{prop:HpFIOtent} 
implies that 
$W\Hps$ is a complemented subspace of $T^{p}_{s}(\Sp)$, and that
\[
V:T^{p}_{s}(\Sp)/\ker(V)\to \Hps
\]
is an isomorphism. In particular, $\Hps$ is a quasi-Banach space for all $s\in\R$, and a Banach space if $p\geq 1$. Moreover, as in the proof of \cite[Proposition 6.4]{HaPoRo20}, one can use Remark \ref{rem:FIOtentother} 
to show that the definition of $\Hp$ is, up to quasi-norm equivalence, independent of the particular choice of wave packets. One instance of this is the fact that $\HT^{2}_{FIO}(\Rn)=L^{2}(\Rn)$ isometrically.

Finally, as in the proof of \cite[Proposition 6.6]{HaPoRo20}, Corollary \ref{cor:FIOtent} and Lemma \ref{lem:distributions} can be combined to show that the continuous embeddings
\[
\Sw(\Rn)\subseteq \Hps\subseteq\Sw'(\Rn)
\]
hold for all $p\in(0,\infty]$ and $s\in\R$, and that the first inclusion is dense if $p<\infty$. In fact, this reasoning shows that the Schwartz functions with compact Fourier support are dense in $\Hps$ if $p<\infty$. On the other hand, 
$\Sw(\Rn)$ is not dense in $\HT^{s,\infty}_{FIO}(\Rn)$ for any $s\in\R$, for example because nonzero constant functions are contained in $\HT^{s,\infty}_{FIO}(\Rn)$ but 
cannot be approximated in the $\HT^{s,\infty}_{FIO}(\Rn)$ norm by Schwartz functions (this follows e.g.~from  Theorem \ref{thm:Sobolev}).

\subsection{Interpolation and duality}\label{subsec:interdual}

A crucial\footnote{Remove this subsection and put it above. Leave out Remark \ref{rem:interinfty}.} role will be played in this article by the following proposition on complex interpolation of the Hardy spaces for Fourier integral operators, an extension of \cite[Proposition 6.7]{HaPoRo20} and \cite[Corollary 3.5]{Hassell-Rozendaal23}.

\begin{proposition}\label{prop:HpFIOint}
Let $p_{0},p_{1}\in(0,\infty]$ be such that $(p_{0},p_{1})\neq (\infty,\infty)$, and let $p\in(0,\infty)$, $s_{0},s_{1},s\in\R$ and $\theta\in(0,1)$ be such that $\frac{1}{p}=\frac{1-\theta}{p_{0}}+\frac{\theta}{p_{1}}$ and $s=(1-\theta)s_{0}+\theta s_{1}$. Then
\[
[\HT^{s_{0},p_{0}}_{FIO}(\Rn),\HT^{s_{1},p_{1}}_{FIO}(\Rn)]_{\theta}=\Hps.
\]
\end{proposition}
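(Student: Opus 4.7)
The plan is to reduce the statement to the corresponding interpolation result for weighted tent spaces, namely Proposition \ref{prop:tentint}, via the standard retraction/coretraction argument applied to the wave packet transforms.

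First, I would recall from Proposition \ref{prop:HpFIOtent} and the identity \eqref{eq:repro} that $W:\HT^{s,p}_{FIO}(\Rn)\to T^{p}_{s}(S^{*}\Rn)$ is an isomorphism onto its image, with left inverse $V$, and that $V:T^{p}_{s}(S^{*}\Rn)\to \HT^{s,p}_{FIO}(\Rn)$ is a bounded surjection. Consequently $P:=WV$ is a bounded projection on $T^{p}_{s}(S^{*}\Rn)$ whose range is $W(\HT^{s,p}_{FIO}(\Rn))$, and this identification is compatible with all choices of $p$ and $s$ simultaneously, since $W$ and $V$ do not depend on $p$ or $s$.

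Next, I would invoke Proposition \ref{prop:tentint} to obtain $[T^{p_{0}}_{s_{0}}(S^{*}\Rn),T^{p_{1}}_{s_{1}}(S^{*}\Rn)]_{\theta}=T^{p}_{s}(S^{*}\Rn)$ with equivalent quasi-norms. Combining this with the boundedness of $W$ and $V$ between the relevant Hardy and tent spaces gives a pair of continuous maps $W:[\HT^{s_{0},p_{0}}_{FIO}(\Rn),\HT^{s_{1},p_{1}}_{FIO}(\Rn)]_{\theta}\to T^{p}_{s}(S^{*}\Rn)$ and $V:T^{p}_{s}(S^{*}\Rn)\to [\HT^{s_{0},p_{0}}_{FIO}(\Rn),\HT^{s_{1},p_{1}}_{FIO}(\Rn)]_{\theta}$ whose composition $VW$ is the identity on the interpolation space. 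The standard retraction/coretraction principle for complex interpolation then identifies $[\HT^{s_{0},p_{0}}_{FIO}(\Rn),\HT^{s_{1},p_{1}}_{FIO}(\Rn)]_{\theta}$ isomorphically with the range of the projection $P$ inside $[T^{p_{0}}_{s_{0}}(S^{*}\Rn),T^{p_{1}}_{s_{1}}(S^{*}\Rn)]_{\theta}=T^{p}_{s}(S^{*}\Rn)$, and this range is exactly $W(\HT^{s,p}_{FIO}(\Rn))$. Pulling back along $V$ yields the desired equality of spaces.

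The main obstacle is that, for $p<1$, we are in the quasi-Banach setting, where the complex interpolation functor is not automatically well behaved: the classical Calder\'on construction requires analytic or at least $A$-convexity of the underlying spaces in order for the retraction/coretraction principle and Proposition \ref{prop:tentint} itself to apply. This issue cannot be bypassed by soft arguments, and it is precisely why the appendix exists. I would therefore justify these points by appealing to the geometric properties of $\Hps$ and $T^{p}_{s}(S^{*}\Rn)$ established in Appendix \ref{sec:inter}, which provide the framework in which the retraction/coretraction argument, and Proposition \ref{prop:tentint}, remain valid for all $p\in(0,\infty)$. Once this foundational material is in place, the proof reduces to the two-line chain of identifications described above.
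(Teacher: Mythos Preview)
Your proposal is correct and follows essentially the same approach as the paper: reduce to tent space interpolation (Proposition \ref{prop:tentint}) via the retraction/coretraction pair $W,V$ from Proposition \ref{prop:HpFIOtent} and \eqref{eq:repro}, with the quasi-Banach subtleties handled by the analytic convexity results in Appendix \ref{sec:inter}. The paper makes this explicit by invoking \cite[Lemma 7.11]{KaMaMi07} for the retraction step, and the specific geometric input is that $T^{p_{0}}_{s_{0}}(\Sp)+T^{p_{1}}_{s_{1}}(\Sp)$ is analytically convex (noted in the proof of Proposition \ref{prop:tentint}); this is precisely what you gesture at in your final paragraph.
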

\begin{proof}
The statement for $p_{0},p_{1}\geq 1$ is contained in \cite[Corollary 3.5]{Hassell-Rozendaal23}. 
In our setting some additional care is required, due to the subtleties of complex interpolation of quasi-Banach spaces (see Appendix \ref{sec:inter}). 

As noted in the proof of Proposition \ref{prop:tentint} in Appendix \ref{sec:inter}, $T^{p_{0}}_{s_{0}}(\Sp)+T^{p_{1}}_{s_{1}}(\Sp)$ is analytically convex. 
Hence Proposition \ref{prop:HpFIOtent}, \eqref{eq:repro}, \cite[Lemma 7.11]{KaMaMi07} and Proposition \ref{prop:tentint} combine to yield the desired statement:
\begin{align*}
&[\HT^{s_{0},p_{0}}_{FIO}(\Rn),\HT^{s_{1},p_{1}}_{FIO}(\Rn)]_{\theta}=[VT_{s_{0}}^{p_{0}}(\Sp),VT_{s_{1}}^{p_{1}}(\Spp)]_{\theta}\\
&=V\big([T_{s_{0}}^{p_{0}}(\Sp),T_{s_{1}}^{p_{1}}(\Spp)]_{\theta}\big)=VT^{p}_{s}(\Sp)=\Hps.
\end{align*}
For completeness we note that, to directly apply \cite[Lemma 7.11]{KaMaMi07}, $\HT^{s_{0},p_{0}}_{FIO}(\Rn)\cap \HT^{s_{1},p_{1}}_{FIO}(\Rn)$ should be dense in both $\HT^{s_{0},p_{0}}_{FIO}(\Rn)$ and $\HT^{s_{1},p_{1}}_{FIO}(\Rn)$. However, this assumption is not used in the proof of that result.
\end{proof}

\begin{remark}\label{rem:interinfty}
In \cite[Corollary 3.5]{Hassell-Rozendaal23} (see also \cite[page 7]{Rozendaal22} and \cite[equation (2.3) and Remark 4.12]{LiRoSoYa24}), Proposition \ref{prop:HpFIOint} is stated for $p_{0},p_{1}\in[1,\infty]$, but without the additional condition that $(p_{0},p_{1})\neq (\infty,\infty)$. This omission is unfortunate, because there are subtleties regarding the choice of complex interpolation method for spaces in which the Schwartz functions do not lie dense (see e.g.~\cite[Section 2.4.7]{Triebel10} and \cite[Section 13.A]{Taylor23c}). Given that the case $p_{0}=p_{1}=\infty$ is not relevant for this article, we will not pursue this matter further here.
\end{remark}

Next, we include\footnote{Put this up, without the statement about duality for $p<1$, and just as a statement. Mention that the duality pairing is given by the standard duality.} a duality statement which extends \cite[Proposition 6.8]{HaPoRo20}. 

\begin{proposition}\label{prop:HpFIOdual}
Let $p\in[1,\infty)$ and $s\in\R$. Then $\Hps^{*}=\HT^{-s,p'}_{FIO}(\Rn)$. 
Moreover, for $p\in(0,1)$, the space $\Hps^{*}$ consists of those $f\in\Sw'(\Rn)$ such that $Wf\in T^{\infty}_{-s,1/p-1}(\Sp)$, and there exists a $C>0$ such that 
\[
\frac{1}{C}\|Wf\|_{T^{\infty}_{-s,1/p-1}(\Sp)}\leq \|f\|_{\Hps^{*}}\leq C\|Wf\|_{T^{\infty}_{-s,1/p-1}(\Sp)}
\]
for all such $f$.
\end{proposition}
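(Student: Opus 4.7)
The plan is to prove both statements uniformly by transferring the tent-space dualities \eqref{eq:tentdual} and \eqref{eq:tentdual1} to $\Hps$ via the bounded surjection $V:T^{p}_{s}(\Sp)\to\Hps$ from Proposition \ref{prop:HpFIOtent}, using the reproducing identity $VW=\id$ and the transfer formula $\lb f,VG\rb_{\Rn}=\lb Wf,G\rb_{\Spp}$ from Proposition \ref{prop:transforms}\eqref{it:transforms4}. Write $E_{s,p}$ for $T^{p'}_{-s}(\Sp)$ when $p\geq 1$ and for $T^{\infty}_{-s,1/p-1}(\Sp)$ when $p<1$; in either case $E_{s,p}\cong (T^{p}_{s}(\Sp))^{*}$ under the pairing \eqref{eq:pairing}.

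\textbf{Embedding.} Given $f\in\Sw'(\Rn)$ with $Wf\in E_{s,p}$ (i.e.\ $f\in\HT^{-s,p'}_{FIO}(\Rn)$ when $p\geq 1$), define $\Lambda_{f}:\Hps\to\C$ as follows. For $g\in\Sw(\Rn)$, Proposition \ref{prop:transforms}\eqref{it:transforms4} gives
\[
\lb f,g\rb_{\Rn}=\lb f,VWg\rb_{\Rn}=\lb Wf,Wg\rb_{\Spp},
\]
and the tent-space duality together with Proposition \ref{prop:HpFIOtent} yields
\[
|\lb f,g\rb_{\Rn}|\leq \|Wf\|_{E_{s,p}}\|Wg\|_{T^{p}_{s}(\Sp)}\lesssim \|Wf\|_{E_{s,p}}\|g\|_{\Hps}.
\]
Since $\Sw(\Rn)$ is dense in $\Hps$ for $p<\infty$ (Section \ref{subsec:defHpFIO}), this extends uniquely to $\Lambda_{f}\in\Hps^{*}$ with $\|\Lambda_{f}\|_{\Hps^{*}}\lesssim \|Wf\|_{E_{s,p}}$. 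Injectivity of $f\mapsto\Lambda_{f}$ is immediate because $\Lambda_{f}=0$ forces $\lb f,g\rb_{\Rn}=0$ for all $g\in\Sw(\Rn)$, hence $f=0$ in $\Sw'(\Rn)$.

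\textbf{Surjectivity.} For $\Lambda\in\Hps^{*}$, boundedness of $V:T^{p}_{s}(\Sp)\to\Hps$ implies $\tilde\Lambda:=\Lambda\circ V\in (T^{p}_{s}(\Sp))^{*}$, so by \eqref{eq:tentdual}/\eqref{eq:tentdual1} there exists $G\in E_{s,p}$ with $\tilde\Lambda(F)=\lb G,F\rb_{\Spp}$ for every $F\in T^{p}_{s}(\Sp)$ and $\|G\|_{E_{s,p}}\eqsim \|\tilde\Lambda\|\leq \|V\|\,\|\Lambda\|$. Set $f:=VG\in\Sw'(\Rn)$, where $V$ is extended to $\J'(\Spp)\supseteq E_{s,p}$ via Lemma \ref{lem:distributions}. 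For $g\in\Sw(\Rn)$, the definition of the extension of $V$ gives
\[
\lb f,g\rb_{\Rn}=\lb VG,g\rb_{\Rn}=\lb G,Wg\rb_{\Spp}=\tilde\Lambda(Wg)=\Lambda(VWg)=\Lambda(g).
\]
Hence $\Lambda=\Lambda_{f}$ on the dense subspace $\Sw(\Rn)$, and therefore on all of $\Hps$.

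\textbf{Norm equivalence and the missing bound.} It remains to check $Wf\in E_{s,p}$ with control by $\|\Lambda\|_{\Hps^{*}}$. Since $Wf=WVG$, this reduces to boundedness of $WV$ on $E_{s,p}$. For $p\geq 1$, Corollary \ref{cor:FIOtent} (applied to $T=\Id$, which is trivially a zeroth-order FIO with a linear phase) shows $WV\in\La(T^{p'}_{-s}(\Sp))$. The genuinely new case, and the main technical point, is $p<1$, where the target is the weighted tent space $T^{\infty}_{-s,1/p-1}(\Sp)$ not covered directly by Corollary \ref{cor:FIOtent}. Here I would argue by duality: using Proposition \ref{prop:transforms}\eqref{it:transforms4} twice, one has for $F\in\J'(\Spp)$ and $G\in\J(\Spp)$ the symmetry
\[
\lb WVF,G\rb_{\Spp}=\lb VF,VG\rb_{\Rn}=\lb VG,VF\rb_{\Rn}=\lb F,WVG\rb_{\Spp},
\]
so $WV$ is formally self-adjoint with respect to the bilinear pairing \eqref{eq:pairing}. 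By density of $\J(\Spp)$ in $T^{p}_{s}(\Sp)$ (Lemma \ref{lem:distributions}), this extends to the duality between $T^{p}_{s}(\Sp)$ and $T^{\infty}_{-s,1/p-1}(\Sp)$. Since $WV\in\La(T^{p}_{s}(\Sp))$ by Corollary \ref{cor:FIOtent}, its adjoint acts boundedly on $(T^{p}_{s}(\Sp))^{*}=T^{\infty}_{-s,1/p-1}(\Sp)$, and that adjoint is $WV$ itself. Thus $\|Wf\|_{E_{s,p}}\lesssim \|G\|_{E_{s,p}}\lesssim \|\Lambda\|_{\Hps^{*}}$, which combined with the embedding step completes the proof.
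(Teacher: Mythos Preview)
Your proof is correct and follows essentially the same route as the paper, which merely cites \cite[Proposition 6.8]{HaPoRo20} and lists the same ingredients (Propositions~\ref{prop:transforms} and~\ref{prop:HpFIOtent}, density of $\Sw(\Rn)$ and $\J(\Spp)$, \eqref{eq:repro}, and the tent-space duality \eqref{eq:tentdual1}) without spelling out the details; you have filled those in. One small wrinkle: in your self-adjointness chain, the intermediate step $\lb VF,VG\rb_{\Rn}=\lb VG,VF\rb_{\Rn}$ is not well-defined for general $F\in\J'(\Spp)$ (since then $VF$ is only in $\Sw'(\Rn)$, not $\Sw(\Rn)$), but it is also unnecessary---the definition of the extended $V$ gives $\lb VF,VG\rb_{\Rn}=\lb F,W(VG)\rb_{\Spp}$ directly, which is all you need.
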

\begin{proof}
The first statement is contained in \cite[Proposition 6.8]{HaPoRo20} for $s=0$, from which the case of general $s\in\R$ follows immediately. 
The argument for $p\in(0,1)$ is analogous, relying on Propositions \ref{prop:transforms} and \ref{eq:HpFIOtent}, the density of $\Sw(\Rn)$ in $\Hps$ and of $\J(\Spp)$ in $T^{p}(\Sp)$, and \eqref{eq:repro}, to reduce matters to 
\eqref{eq:tentdual1}. 
\end{proof}

\begin{remark}\label{rem:dualrelation}
In Proposition \ref{prop:HpFIOdual}, the duality relation is the standard pairing $\lb f,g\rb_{\Rn}$ between $f\in \HT^{-s,p'}_{FIO}(\Rn)\subseteq\Sw'(\Rn)$ and $g\in\Sw(\Rn)\subseteq\Hps$ for $p\geq1$, and analogously for $p<1$. This determines the action of $f$ uniquely, since $\Sw(\Rn)$ is dense in $\Hps$ for all $p\in(0,\infty)$ and $s\in\R$. We will continue writing $\lb f,g\rb_{\Rn}$ for the bilinear pairing between general $f\in(\Hps)^{*}$ and $g\in\Hps$, and similarly for the sesquilinear pairing $\lb f,g\rb=\lb f,\overline{g}\rb_{\Rn}$. The former pairing is also given by the duality relation $\lb Wf,Wg\rb_{\Spp}$ from \eqref{eq:pairing}. 
%
\end{remark}

\subsection{Molecular decomposition}\label{subsec:moldecomp}

In this\footnote{Remove this subsection.} subsection we obtain a decomposition of $\Hp$, for $0<p\leq 1$, in terms of what were called coherent molecules in \cite{Smith98a}.

 Recall the definition of the metric $d$ on $\Sp$ from Section \ref{subsec:metric}, and the equivalent expression for it in \eqref{eq:equivmetric}.

\begin{definition}\label{def:molecule}
Let $p\in(0,1]$ and $N\geq0$. 
An $f\in L^{2}(\Rn)$ is a \emph{coherent $\Hp$ molecule} of type $N$, 
associated with a ball $B_{\sqrt{\tau}}(y,\nu)$, for $(y,\nu)\in\Sp$ and $\tau>0$, if 
\[
\supp(\wh{f}\,)\subseteq \{\xi\in \mathbb{R}^{n}\mid |\xi|\geq\tau^{-1},|\hat{\xi}-\nu|\leq \sqrt{\tau}\}
\]
and 
\begin{equation}\label{eq:molecule}
\int_{\Rn} \big(1+\tau^{-1}d((x,\nu),(y,\nu))^{2}\big)^{N}|f(x)|^{2}\ud x\leq |B_{\sqrt{\tau}}(y,\nu)|^{-(\frac{2}{p}-1)}.
\end{equation}
\end{definition}

\vanish{

For the proof of our molecular decomposition, we need a lemma regarding the mapping properties of our wave packet transforms between weighted $L^{2}$ spaces. We first introduce some notation. For $(y,\nu,\tau)\in\Spp$, set
\[
m_{y,\nu,\tau}(x):=(1+\tau^{-2}|(x-y)\cdot\nu|^{2})^{n/2}(1+\tau^{-1}|x-y|^{2})^{n}
\]
for $x\in\Rn$, and
\[
E_{\nu,\tau}:=\{(x,\w,\sigma)\in\Spp\mid \sigma\leq \max(5,c^{-1})\tau, |\w-\nu|\leq \max(5,c^{-1})\sqrt{\tau}\}.
\]
Here $c>0$ is such that
\[
d((x,\w),(y,\nu))\geq c(|x-y|^{2}+|(x-y)\cdot\nu|+|\w-\nu|^{2})
\]
for all $(x,\w,\sigma)\in\Spp$, as in \eqref{eq:equivmetric}. Set $W_{\nu,\tau}f:=\ind_{E_{\nu,\tau}}Wf$ for $f\in\Sw'(\Rn)$, and $V_{\nu,\tau}F:=V(\ind_{E_{\nu,\tau}}F)$ for $F\in\J'(\Spp)$. In the following lemma, a variation of \cite[Lemma 8.2]{HaPoRo20} (see also \cite[Lemma 2.14]{Smith98a}), we interpret $m_{y,\nu,\tau}$ as both a weight on $\Rn$, and as a weight on $\Spp$ independent of the $\w$ and $\sigma$ variables.

\begin{lemma}\label{lem:transformweight} 
There exists a $C\geq0$ such that, for each $(y,\nu,\tau)\in\Spp$, one has $W_{\nu,\tau}:L^{2}(\Rn,m_{y,\nu,\tau})\to L^{2}(\Spp,m_{y,\nu,\tau})$ 
and $V_{\nu,\tau}:L^{2}(\Spp,m_{y,\nu,\tau})\to L^{2}(\Rn,m_{y,\nu,\tau})$ continuously, both with norm bounded by $C$.
\end{lemma}
\begin{proof}
Fix $(y,\nu,\tau)\in\Spp$, and write 
\[
E:=E_{\nu,\tau}\cap \{(x,\w,\sigma)\in\Spp \mid 1\leq \sigma\leq e\}. 
\]
Then $W_{E_{\nu,\tau}}f=\ind_{E_{\nu,\tau}\setminus E}Wf+\ind_{E}Wf$ for all $f\in\Sw'(\Rn)$, and the required estimate for $\ind_{E_{\nu,\tau}\setminus E}Wf$ is contained in \cite[Lemma 8.2]{HaPoRo20}. Moreover, the bound for $\ind_{E}Wf$ is obtained in the same way, at least if one takes into account that $\tau\gtrsim 1$ if $E\neq\emptyset$, since the low-frequency term $\rho$ satisfies estimates that are as good as those for $\psi_{\w,\sigma}$.

The same reasoning can be used for $V_{\nu,\tau}$.
\end{proof}
}


The following theorem extends \cite[Theorem 3.6]{Smith98a} and \cite[Theorem 8.3]{HaPoRo20} to $p<1$. 

\begin{theorem}\label{thm:moldecomp}
Let $p\in(0,1]$ and $N>2n(\frac{2}{p}-1)$. Then the following assertions hold.
\begin{enumerate}
\item\label{it:moldecomp1} For every $\tau_{0}>0$ there exists a $C\geq 0$ such that the following holds. For 
each sequence $(f_{k})_{k=1}^{\infty}$ of coherent $\Hp$ molecules of type $N$, associated with balls of radius at most $\tau_{0}$, 
and each $(\alpha_{k})_{k=1}^{\infty}\in\ell^{p}$, one has $\sum_{k=1}^{\infty}\alpha_{k}f_{k}\in\Hp$ and
\[
\Big\|\sum_{k=1}^{\infty}\alpha_{k}f_{k}\Big\|_{\Hp}\leq C\Big(\sum_{k=1}^{\infty}|\alpha_{k}|^{p}\Big)^{1/p}.
\]
\item\label{it:moldecomp2} There exist $\tau_{0},C>0$ such that, for each $f\in\Hp$, there exists a sequence $(f_{k})_{k=1}^{\infty}$ of coherent $\Hp$ molecules of type $N$, associated with balls of radius at most $\tau_{0}$, 
and an $(\alpha_{k})_{k=1}^{\infty}\in \ell^{p}$, such that $f=\sum_{k=1}^{\infty}\alpha_{k}f_{k}+\rho(D)^{2}f$ and
\[
\Big(\sum_{k=1}^{\infty}|\alpha_{k}|^{p}\Big)^{1/p}\leq C\|f\|_{\Hp}.
\]
\end{enumerate}
\end{theorem}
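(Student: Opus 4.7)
The plan is to follow the template of \cite[Theorem 8.3]{HaPoRo20} and \cite[Theorem 3.6]{Smith98a} (both for $p=1$), adapting to the range $p<1$ via the atomic decomposition of $T^{p}(\Sp)$ from Proposition \ref{prop:atomictent} together with the $p$-subadditivity of $\|\cdot\|_{T^{p}(\Sp)}^{p}$.

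For part \eqref{it:moldecomp1}, I would reduce first to a uniform bound $\|f\|_{\Hp} \leq C = C(\tau_{0}, N)$ for a single coherent molecule $f$ of type $N$ associated with any ball of radius at most $\tau_{0}$. Given such a bound, the conclusion follows from Proposition \ref{prop:HpFIOtent} and the $p$-subadditivity of $\|\cdot\|_{T^{p}(\Sp)}^{p}$:
\[
\Big\|W\Big(\sum_{k}\alpha_{k}f_{k}\Big)\Big\|_{T^{p}(\Sp)}^{p} \leq \sum_{k}|\alpha_{k}|^{p}\|Wf_{k}\|_{T^{p}(\Sp)}^{p} \leq C^{p}\sum_{k}|\alpha_{k}|^{p}.
\]
To prove the single-molecule bound for $f$ associated with $B_{\sqrt{\tau}}(y,\nu)$, I would analyze $Wf$ directly. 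The Fourier support of $f$ combined with Lemma \ref{lem:packetbounds} forces $Wf(x,\omega,\sigma) = 0$ unless $\sigma \asymp \tau$ and $|\omega-\nu| \lesssim \sqrt{\tau}$. The rapid decay of $\F^{-1}(\psi_{\omega,\sigma})$ from \eqref{eq:boundspsiinverse}, combined via Cauchy--Schwarz with the weighted bound \eqref{eq:molecule}, transfers the spatial concentration of $f$ to pointwise decay of $Wf$ in $d((x,\omega),(y,\nu))$. A dyadic decomposition $Wf = \sum_{j\geq 0} 2^{-jM} B_{j}$, with $B_{j}$ localized to the annular region at distance $\sim 2^{j}\sqrt{\tau}$ from $(y,\nu)$ at scale $\sigma \asymp \tau$, then exhibits each $B_{j}$ as a fixed multiple of a $T^{p}(\Sp)$ atom. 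The hypothesis $N > 2n(2/p-1)$ supplies enough decay so that $\sum_{j} 2^{-jMp}$ converges, and the $p$-subadditive summation through Proposition \ref{prop:atomictent} yields $\|Wf\|_{T^{p}(\Sp)} \leq C$.

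For part \eqref{it:moldecomp2}, the starting point is the reproducing identity $f = VWf$ from \eqref{eq:repro}, which I would split using $W = W_{h} + W_{l}$ from \eqref{eq:defWh}--\eqref{eq:defWl}. A direct computation using $\int_{0}^{1}\int_{S^{n-1}}|\psi_{\nu,\tau}(\xi)|^{2}\,\ud\nu\,\tfrac{\ud\tau}{\tau} = 1-\rho(\xi)^{2}$ for $\xi \neq 0$ gives $VW_{l}f = \rho(D)^{2}f$ and $VW_{h}f = (I-\rho(D)^{2})f$. Since $W_{h}f \in T^{p}(\Sp)$ is supported in $\{\sigma<1\}$, Proposition \ref{prop:atomictent} together with Remark \ref{rem:balls} produces an atomic decomposition $W_{h}f = \sum_{k}\alpha_{k}A_{k}$ with $(\sum|\alpha_{k}|^{p})^{1/p} \lesssim \|f\|_{\Hp}$, where each $A_{k}$ is associated with a ball $B_{r_{k}}(y_{k},\nu_{k})$ of radius $r_{k} \leq 2$. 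I would define $f_{k} := c^{-1}VA_{k}$ for a suitable normalizing constant $c$; then $f = \sum_{k}\alpha_{k}(cf_{k}) + \rho(D)^{2}f$. It remains to verify that $f_{k}$ is a coherent $\Hp$ molecule of type $N$ associated with a ball of radius $\lesssim r_{k}$. The Fourier support condition follows from that of the wave packets $\psi_{\nu,\tau}$ appearing in $VA_{k}$; a key observation is that the contribution of the low-frequency piece of $V$ (the $\tau \in [1,e]$ integral involving $\rho(D)$) drops out of the aggregate sum, since $\sum_{k}\alpha_{k}V_{l}A_{k} = V_{l}W_{h}f = 0$, so one may effectively take $f_{k}$ as generated purely by the high-frequency packets. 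The weighted $L^{2}$ estimate \eqref{eq:molecule} then follows from Cauchy--Schwarz applied to the defining integral, using the atomic $L^{2}$ bound on $A_{k}$ and the kernel decay \eqref{eq:boundspsiinverse}.

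The principal technical obstacle in both directions is a weighted $L^{2}$ estimate for wave packet transforms --- the analogue of \cite[Lemma 8.2]{HaPoRo20} in the present setting --- which matches spatial concentration in the anisotropic metric $d$ against the atomic normalization $|B|^{-(2/p-1)}$. The hypothesis $N > 2n(2/p-1)$ enters precisely here, reflecting the homogeneous dimension $2n$ of $(\Sp,d)$ at small scales (Lemma \ref{lem:doubling}). Additional minor care is needed in handling the boundary $\sigma = 1$ between high and low frequency, and in arranging the bookkeeping so that the constants $c$ and $\tau_{0}$ come out truly uniform; the vanishing of the low-frequency tail $\sum_{k}\alpha_{k}V_{l}A_{k}$ above is what permits the clean separation of $\rho(D)^{2}f$ in the statement.
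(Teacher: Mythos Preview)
Your overall architecture matches the paper's: reduce \eqref{it:moldecomp1} to a uniform bound on a single molecule, and deduce \eqref{it:moldecomp2} from the atomic decomposition of $W_{h}f$ via Proposition \ref{prop:atomictent} and Remark \ref{rem:balls}, with $VW_{l}f=\rho(D)^{2}f$ producing the low-frequency remainder. You also correctly identify the weighted $L^{2}$ boundedness of the wave packet transforms (\cite[Lemma 8.2]{HaPoRo20}) as the key technical ingredient; this is precisely what the paper invokes, together with the weight comparison \eqref{eq:weights}.

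However, your direct argument for \eqref{it:moldecomp1} contains a genuine gap. The Fourier support condition in Definition \ref{def:molecule} is \emph{one-sided}: $|\xi|\geq \tau^{-1}$, not $|\xi|\asymp \tau^{-1}$. Consequently, the support of $\widehat{f}$ meets that of $\psi_{\omega,\sigma}$ whenever $\sigma\lesssim\tau$, so $W_{h}f(x,\omega,\sigma)$ is generically nonzero on the full range $0<\sigma\lesssim\tau$, not only at the single scale $\sigma\asymp\tau$. Your dyadic decomposition ``at scale $\sigma\asymp\tau$'' therefore does not exhaust $W_{h}f$, and the atoms $B_{j}$ you describe do not account for the contributions from $\sigma\ll\tau$. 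The paper sidesteps this by working with the weighted $L^{2}$ mapping property of $W$ directly (which is uniform over the relevant $\sigma$-range), rather than via pointwise bounds at one scale.

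A second omission: your analysis of $Wf$ in \eqref{it:moldecomp1} is entirely about $W_{h}f$ and never addresses $W_{l}f$. When $\sqrt{\tau}\leq\tau_{0}$ with $\tau_{0}$ arbitrary, one may have $\tau\geq 1/2$, in which case $\rho(D)f$ need not vanish. The paper treats this case separately, using Lemma \ref{lem:Wlow} to reduce to $\|\rho(D)f\|_{L^{p}(\Rn)}$ and then, for $p<1$, a Jensen-inequality argument (Young's inequality being unavailable) together with the molecular decay \eqref{eq:molecule}. This step is not difficult but is genuinely needed and is absent from your sketch.
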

\begin{proof}
The proof is similar to that 
of \cite[Theorem 8.3]{HaPoRo20}, which in turn is a modification of the proof of \cite[Theorem 3.6]{Smith98a}. We just make a few remarks.

\eqref{it:moldecomp1}: Since $\Hp$ is a quasi-Banach space, it suffices to show that the collection of coherent $\Hp$ molecules of type $N$, associated with balls of radius at most $\tau_{0}$, is uniformly bounded in $\Hp$. Let $f$ be such a molecule, associated with a ball $B_{\sqrt{\tau}}(y,\nu)$, for $(y,\nu)\in\Sp$ and $0<\tau^{2}\leq \tau_{0}$. We may suppose that $y=0$. With notation as in \eqref{eq:defWl} and \eqref{eq:defWh}, we will separately bound $\|W_{l}f\|_{T^{p}(\Sp)}$ and $\|W_{h}f\|_{T^{p}(\Sp)}$. 

For the first term, note that $W_{l}f=0$ unless $\tau\geq 1/2$. Suppose that the latter holds. Lemma \ref{lem:Wlow} yields $\|W_{l}f\|_{T^{p}(\Sp)}\lesssim \|\rho(D)f\|_{L^{p}(\Rn)}$. For $p=1$, one can now use that that $\rho(D)$ has an $L^{1}(\Rn)$ kernel, and then apply H\"{o}lder's inequality, \eqref{eq:equivmetric}, \eqref{eq:molecule} with $y=0$, and Lemma \ref{lem:doubling}:
\[
\|\rho(D)f\|_{L^{1}(\Rn)}\lesssim \|f\|_{L^{1}(\Rn)}\lesssim \Big(\int_{\Rn}(1+|x|^{2})^{n}|f(x)|^{2}\ud x\Big)^{1/2}\lesssim \tau^{-n/4}\lesssim 1.
\]
On the other hand, for $p<1$, first note that $\rho(D)f$ can be expressed as a convolution, since $f\in L^{2}(\Rn)$. Then Jensen's inequality yields
\begin{align*}
\|\rho(D)f\|_{L^{p}(\Rn)}&=\Big(\int_{\Rn}(1+|x|^{2})^{n}\Big|\int_{\Rn}\F^{-1}\rho(x-z)f(z)\ud z\Big|^{p}\frac{\ud x}{(1+|x|^{2})^{n}}\Big)^{1/p}\\
&\lesssim \int_{\Rn}|f(z)|\int_{\Rn}(1+|x|^{2})^{n(\frac{1}{p}-1)}|\F^{-1}\rho(x-z)|\ud x\ud z\\
&\lesssim \int_{\Rn}(1+|z|^{2})^{n(\frac{1}{p}-1)}|f(z)|\ud z,
\end{align*}
where for the last inequality we split the integral over $x$ into an integral over the region where $|x|<2|z|$, plus the remainder. Now proceed as for $p=1$.



For the term $\|W_{h}f\|_{T^{p}(\Sp)}$, one can reason as in the proof of \cite[Theorem 8.3]{HaPoRo20}. The argument relies crucially on \cite[Lemma {8.2}]{HaPoRo20}, a statement about the mapping properties of our wave packet transforms with respect to weighted $L^{2}$ spaces (see also \cite[Lemma 2.14]{Smith98a}). To apply that lemma, observe that 
\begin{equation}\label{eq:weights}
\begin{aligned}
1+\tau^{-1}d((x,\nu),(z,\nu))^{2}&\lesssim (1+\tau^{-1}|(x-z)\cdot\nu|)(1+\tau^{-1}|x-z|^{2})\\
&\lesssim (1+\tau^{-1}d((x,\nu),(z,\nu))^{2})^{2}
\end{aligned}
\end{equation}
for all $x,z\in\Rn$, $\nu\in S^{n-1}$ and $\tau>0$, by \eqref{eq:equivmetric}.

\eqref{it:moldecomp2}: With notation as in \eqref{eq:defWl} and \eqref{eq:defWh}, one can use Proposition \ref{prop:atomictent} and Remark \ref{rem:balls} to obtain a sequence $(A_{k})_{k=1}^{\infty}$ of $T^{p}(\Sp)$ atoms, associated with balls of radius at most $2$, and an $(\alpha_{k})_{k=1}^{\infty}\subseteq\ell^{p}$, such that $W_{h}f=\sum_{k=1}^{\infty}\alpha_{k}A_{k}$ and 
\[
\Big(\sum_{k=1}^{\infty}|\alpha_{k}|^{p}\Big)^{1/p}\lesssim \|W_{h}f\|_{T^{p}(\Sp)}\leq \|Wf\|_{T^{p}(\Sp)}=\|f\|_{\Hp}.
\]
Since $VW_{l}f=\rho(D)^{2}f$, it then suffices to show that there exists a $\tau_{0}>0$, independent of $f$, such that each $VA_{k}$ is a coherent $\Hp$ molecule of type $N$, associated with a ball of radius at most $\tau_{0}$. To do so, one can proceed as in the proof of \cite[Theorem 8.3]{HaPoRo20}, again using \eqref{eq:weights}.  
\end{proof}

\begin{remark}\label{rem:moldecomp}
In fact, both \cite[Theorem 3.6]{Smith98a} and \cite[Theorem 8.3]{HaPoRo20} treat the low-frequencies in part \eqref{it:moldecomp2} slightly differently. The present formulation will be more convenient for the proof of Theorem \ref{thm:Sobolev}. We also note that \cite[Theorem 8.3]{HaPoRo20} involves an additional constant on the right-hand side of \eqref{eq:molecule}; this constant can immediately be removed, by rescaling.



Using Remark \ref{rem:constantmetric}, one can check that it suffices to let $\tau_{0}=4+\sqrt{6}$ in \eqref{it:moldecomp2}. On the other hand, this bound can be improved (see e.g.~\cite[Remark A.3]{CaMcMo13}).
\end{remark}

\subsection{Embeddings}\label{subsec:Sobolev}

In this subsection we prove embeddings involving the Hardy spaces for Fourier integral operators.

We first include, as a direct corollary of Propositions \ref{prop:tentembedding} and \ref{prop:HpFIOdual}, a proposition about embeddings between Hardy spaces for Fourier integral operators with different integrability\footnote{Combine this with Theorem \ref{thm:Sobolev} into a single proposition.} parameters.

\begin{proposition}\label{prop:fracintHpFIO}
Let $0<p_{0}\leq p_{1}\leq \infty$, $0<p_{2}<1$ and $s\in\R$. Then 
\[
\HT^{s+n(\frac{1}{p_{0}}-\frac{1}{p_{1}}),p_{0}}_{FIO}(\Rn)\subseteq \HT^{s,p_{1}}_{FIO}(\Rn)
\]
and 
\[
\HT^{-s+n(\frac{1}{p_{0}}+\frac{1}{p_{2}}-1),p_{0}}_{FIO}(\Rn)\subseteq (\HT^{s,p_{2}}_{FIO}(\Rn))^{*},
\]
continuously.
\end{proposition}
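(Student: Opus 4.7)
The plan is to derive both embeddings by transporting the corresponding tent space embeddings from Proposition \ref{prop:tentembedding} via the wave packet transform $W$, using the characterization of $\Hps$ in Proposition \ref{prop:HpFIOtent} and the dual characterization in Proposition \ref{prop:HpFIOdual}. The essential observation that makes this work is that, by the definition of $W$ in \eqref{eq:defW}, one has $Wf(x,\w,\sigma)=0$ for all $\sigma>e$ and every $f\in\Sw'(\Rn)$, so the support condition on $F$ in Proposition \ref{prop:tentembedding} is automatic.

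For the first embedding, suppose $f\in\HT^{s+n(\frac{1}{p_{0}}-\frac{1}{p_{1}}),p_{0}}_{FIO}(\Rn)$. Then, by Proposition \ref{prop:HpFIOtent}, $Wf\in T^{p_{0}}_{s+n(\frac{1}{p_{0}}-\frac{1}{p_{1}})}(\Sp)$, with an equivalent quasi-norm. I would then apply Proposition \ref{prop:tentembedding} with the weight parameter $s$ in that proposition taken to be $s+n(\frac{1}{p_{0}}-\frac{1}{p_{1}})$, obtaining
\[
\|Wf\|_{T^{p_{1}}_{s}(\Sp)}\lesssim \|Wf\|_{T^{p_{0}}_{s+n(\frac{1}{p_{0}}-\frac{1}{p_{1}})}(\Sp)}\eqsim\|f\|_{\HT^{s+n(\frac{1}{p_{0}}-\frac{1}{p_{1}}),p_{0}}_{FIO}(\Rn)}.
\]
Another application of Proposition \ref{prop:HpFIOtent} then identifies the left-hand side with $\|f\|_{\HT^{s,p_{1}}_{FIO}(\Rn)}$ up to constants, giving the claimed embedding.

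For the second embedding, I would first note that Proposition \ref{prop:HpFIOdual} identifies $(\HT^{s,p_{2}}_{FIO}(\Rn))^{*}$, for $p_{2}\in(0,1)$, with the space of tempered distributions $f$ satisfying $Wf\in T^{\infty}_{-s,1/p_{2}-1}(\Sp)$, with equivalent quasi-norms. For $f\in\HT^{-s+n(\frac{1}{p_{0}}+\frac{1}{p_{2}}-1),p_{0}}_{FIO}(\Rn)$, Proposition \ref{prop:HpFIOtent} yields $Wf\in T^{p_{0}}_{-s+n(\frac{1}{p_{0}}+\frac{1}{p_{2}}-1)}(\Sp)$. Applying the second bound of Proposition \ref{prop:tentembedding} with $\alpha=\frac{1}{p_{2}}-1$ and with the $s$-parameter there equal to $-s+n(\frac{1}{p_{0}}+\frac{1}{p_{2}}-1)$, the resulting weight becomes
\[
-s+n\bigl(\tfrac{1}{p_{0}}+\tfrac{1}{p_{2}}-1\bigr)-n\bigl(\alpha+\tfrac{1}{p_{0}}\bigr)=-s,
\]
so that $Wf\in T^{\infty}_{-s,1/p_{2}-1}(\Sp)$, with the corresponding quasi-norm estimate. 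Combining this with Proposition \ref{prop:HpFIOdual} yields $f\in(\HT^{s,p_{2}}_{FIO}(\Rn))^{*}$ with the claimed continuous embedding.

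There is no real obstacle here beyond bookkeeping: the only point to verify carefully is that the exponents in Proposition \ref{prop:tentembedding} match those of the proposition when the parameter $\alpha=\frac{1}{p_{2}}-1$ is chosen, and this is a short arithmetic check, as carried out above.
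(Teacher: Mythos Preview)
The proposal is correct and follows exactly the approach indicated in the paper, which states the result as a direct corollary of Propositions \ref{prop:tentembedding} and \ref{prop:HpFIOdual} (together with the characterization in Proposition \ref{prop:HpFIOtent}). Your write-up merely makes explicit the bookkeeping and the key support observation that $Wf(\cdot,\cdot,\sigma)=0$ for $\sigma>e$, which is precisely what is needed to invoke Proposition \ref{prop:tentembedding}.
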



The following theorem, involving the local Hardy spaces from Definition \ref{def:Hardyclass}, extends \cite[Theorem 4.2]{Smith98a} and \cite[Theorem 7.4]{HaPoRo20}.

\begin{theorem}\label{thm:Sobolev}
Let $p\in(0,\infty]$ and $s\in\R$. Then
\begin{equation}\label{eq:Sobolev}
\HT^{s+s(p),p}(\Rn)\subseteq\Hps\subseteq\HT^{s-s(p),p}(\Rn)
\end{equation}
continuously.
\end{theorem}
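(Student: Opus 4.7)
The plan is to first reduce to $s=0$, exploiting that $\lb D\rb^{-s}:\HT^{\sigma,p}_{FIO}(\Rn)\to\HT^{\sigma-s,p}_{FIO}(\Rn)$ is an isomorphism by definition, and similarly for the local Hardy spaces on both sides. Since the embeddings for $p\in[1,\infty]$ are already contained in \cite[Theorem 7.4]{HaPoRo20}, the only new range is $p\in(0,1)$, which will be treated by combining the molecular decomposition of $\Hp$ from Theorem \ref{thm:moldecomp} with the classical atomic decomposition of $\HT^p(\Rn)$.

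For the first embedding $\HT^{s(p),p}(\Rn)\subseteq\Hp$, given $f=\lb D\rb^{-s(p)}g$ with $g\in\HT^p(\Rn)$, one writes $g=\sum_{k}\alpha_{k}a_{k}$ as an atomic decomposition with $\sum_{k}|\alpha_{k}|^{p}\lesssim\|g\|_{\HT^{p}(\Rn)}^{p}$, and then analyzes each high-frequency atom $a_{k}$ on a ball $B(x_{k},r_{k})\subseteq\Rn$. The key step is to split $\lb D\rb^{-s(p)}a_{k}$ into angular pieces $m(D)\ph_{\nu}(D)\lb D\rb^{-s(p)}a_{k}$ via the reproducing formula \eqref{eq:phiproperties3}, and to verify, after suitable rescaling, that each piece is a coherent $\Hp$ molecule of arbitrarily large decay type $N$ centered at $(x_{k},\nu)$ with $\Sp$-radius comparable to $\sqrt{r_{k}}$ (the parabolic scaling). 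Theorem \ref{thm:moldecomp}\eqref{it:moldecomp1} together with the $p$-subadditivity of $\|\cdot\|_{\Hp}^{p}$ then gives the required bound; low-frequency atoms are handled directly via Lemma \ref{lem:Wlow}.

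For the reverse embedding $\Hp\subseteq\HT^{-s(p),p}(\Rn)$, the plan is to apply Theorem \ref{thm:moldecomp}\eqref{it:moldecomp2} to write $f=\sum_{k}\alpha_{k}f_{k}+\rho(D)^{2}f$, and to show that each $\lb D\rb^{-s(p)}f_{k}$ is, up to a fixed multiplicative constant, a classical $\HT^{p}(\Rn)$ molecule associated with the ball $B(y_{k},\sqrt{\tau_{k}})\subseteq\Rn$. The frequency localization $|\xi|\sim\tau_{k}^{-1}$ of $f_{k}$ lets $\lb D\rb^{-s(p)}$ act essentially as multiplication by $\tau_{k}^{s(p)}$, and combining this with \eqref{eq:molecule} and the volume relation $|B_{\sqrt{\tau_{k}}}(y_{k},\nu_{k})|\sim\tau_{k}^{n}$ versus $|B(y_{k},\sqrt{\tau_{k}})|\sim\tau_{k}^{n/2}$, the choice $s(p)=\frac{n-1}{2}(\frac{1}{p}-\frac{1}{2})$ is tailored precisely to compensate for the angular factor $\tau_{k}^{(n-1)/2}$ coming from integrating over $|\hat{\xi}-\nu_{k}|\lesssim\sqrt{\tau_{k}}$. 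Decay in $x$ away from $y_{k}$ is inherited from the molecule's decay in $\Sp$ via smoothness of $\lb\xi\rb^{-s(p)}$, and the required cancellation/moment properties follow from the frequency annulus. Summing atomically in $\HT^{p}(\Rn)$ closes the argument, and $\rho(D)^{2}f$ is again controlled by Lemma \ref{lem:Wlow}.

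The hard part will be the bookkeeping of decay types and rescalings: as $p\downarrow 0$ both $s(p)$ and the required molecular decay $N>2n(\tfrac{2}{p}-1)$ blow up, so the angular and radial wave packet decompositions of atoms must be carried out to arbitrary order, and the resulting contributions must be controlled via $p$-subadditivity rather than the triangle inequality. A secondary subtlety is that on the $\HT^{p}(\Rn)$ side the more natural local objects are anisotropic tubes rather than Euclidean balls, and one must verify that the coherent $\Hp$ molecules can nonetheless be arranged into a genuine atomic/molecular expansion for the isotropic local Hardy space $\HT^{-s(p),p}(\Rn)$.
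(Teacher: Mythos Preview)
Your approach to the second embedding is essentially the paper's: both use Theorem~\ref{thm:moldecomp}\eqref{it:moldecomp2} to reduce to a uniform $\HT^{-s(p),p}(\Rn)$ bound for coherent molecules, and both handle $\rho(D)^{2}f$ via Lemma~\ref{lem:Wlow}. One inaccuracy to flag: the molecule definition only gives $|\xi|\geq\tau_{k}^{-1}$, not $|\xi|\sim\tau_{k}^{-1}$, so $\lb D\rb^{-s(p)}$ does not act as multiplication by $\tau_{k}^{s(p)}$ across the whole support. You seem aware of this, since you note the anisotropic-tube subtlety; in practice the paper simply invokes the direct estimate of \cite[Lemma 4.3]{Smith98a} rather than trying to exhibit $\lb D\rb^{-s(p)}f_{k}$ as a classical isotropic molecule.

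For the first embedding there is a genuine gap. A classical $\HT^{p}(\Rn)$ atom $a_{k}$ on $B(x_{k},r_{k})$ has moment conditions but no lower bound on its Fourier support, so $m(D)\ph_{\nu}(D)\lb D\rb^{-s(p)}a_{k}$ has Fourier support contained in $\supp\ph_{\nu}\subseteq\{|\xi|\geq\tfrac18\}$, \emph{not} in $\{|\xi|\geq r_{k}^{-1}\}$. The coherent-molecule support condition $\{|\xi|\geq\tau^{-1},\ |\hat\xi-\nu|\leq\sqrt\tau\}$ then forces $\tau\gtrsim 1$, and at that scale the normalization $|B_{\sqrt\tau}(x_{k},\nu)|^{-(\frac2p-1)}\sim 1$ cannot absorb the atom's $L^{2}$ size $\|a_{k}\|_{L^{2}}\sim r_{k}^{-n(\frac1p-\frac12)}$ when $r_{k}\to 0$. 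So the angular pieces are not molecules at the scale you claim, and the argument does not close. A fix would require an additional radial Littlewood--Paley decomposition of each atom and a nontrivial summation in the dyadic index, which is substantially more than ``suitable rescaling''.

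The paper sidesteps this entirely: for the first embedding it does not use the atomic decomposition of $\HT^{p}(\Rn)$ at all, but follows the direct square-function argument of \cite{HaPoRo20} (with H\"older replaced by Jensen for $p<1$, the convolution-algebra property of band-limited functions in $L^{p}$ replacing Young's inequality, and the elementary identity $\|\lb D\rb^{s}\Psi(\sigma D)g\|_{L^{2}}\eqsim\sigma^{s}\|\Psi(\sigma D)g\|_{L^{2}}$ replacing the fractional integration step).
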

\begin{proof}
We may suppose that $s=0$. Then the statement for $p\geq1$ is contained in \cite[Theorem 7.4]{HaPoRo20}. In fact, in \cite{HaPoRo20} the required statement is proved for $p=1$, after which one can appeal to interpolation and duality, by Propositions \ref{prop:HpFIOint} and \ref{prop:HpFIOdual} and because $\HT^{2}_{FIO}(\Rn)=L^{2}(\Rn)$. Hence we may consider $p<1$ in the remainder, for which we can follow the proof of the case where $p=1$, in \cite[Theorem 4.2]{Smith98a} and \cite[Theorem 7.4] {HaPoRo20}, with a few modifications.

For example, throughout, applications of H\"{o}lder's inequality in the proof for $p=1$ should be replaced by Jensen's inequality when $p<1$. Also, when dealing with the low-frequency components for $p<1$, one cannot simply rely on Young's inequality anymore, as was done for $p=1$. Instead, one can use that the subspace of $L^{p}(\Rn)$ consisting of functions with Fourier support in a fixed compact set forms an algebra under convolution, cf.~\cite[Proposition 1.5.3]{Triebel95}.

For $p=1$, the proof in \cite{Smith98a} and \cite{HaPoRo20} of the first embedding in \eqref{eq:Sobolev} uses the classical fractional integration theorem (see e.g.~\cite[p.~5818]{HaPoRo20}). To avoid subtleties regarding the use of Sobolev embeddings for $p<1$, one can instead use that, for each $s\in\R$, one has $\|\lb D\rb^{s}\Psi(\sigma D)g\|_{L^{2}(\Rn)}\eqsim \sigma^{s}\|\Psi(\sigma D)g\|_{L^{2}(\Rn)}$ for all $\sigma\in(0,1)$ and $g\in L^{2}(\Rn)$, where $\Psi$ is as in \eqref{eq:Psi}. With these modifications, to prove the first embedding in \eqref{eq:Sobolev} for $p<1$, one can proceed just as in \cite{HaPoRo20}.

For the second embedding, first note that 
\begin{align*}
\|\rho(D)^{2}f\|_{\HT^{-s(p),p}(\Rn)}&=\|\lb D\rb^{-s(p)}q(D)\rho(D)^{2}f\|_{L^{p}(\Rn)}\lesssim \|\rho(D)f\|_{L^{p}(\Rn)}\\
&\lesssim \|W_{0}f\|_{T^{p}(\Sp)}\leq \|Wf\|_{T^{p}(\Sp)}=\|f\|_{\Hp}
\end{align*}
for every $f\in\Hp$, by Lemma \ref{lem:Wlow}. Hence, by Theorem \ref{thm:moldecomp}, it suffices to show that the collection of coherent $\Hp$ molecules of type $N$, for sufficiently large $N\geq0$, associated with balls of radius no larger than a fixed number, is bounded in $\HT^{-s(p),p}(\Rn)$. For the low-frequency term that arises in this manner, one can argue as in the proof of Theorem \ref{thm:moldecomp} \eqref{it:moldecomp1}. On the other hand, for the high-frequency component one can follow the proof of \cite[Lemma 4.3]{Smith98a}. In fact, a slightly simpler version of that argument suffices, using the assumptions on the Fourier support of a coherent molecule.
\end{proof}

\begin{remark}\label{rem:Sobolevdual}
By\footnote{Remove this.} duality, Theorem \ref{thm:Sobolev} and \eqref{eq:dualHardy} yield the following embeddings involving the Zygmund spaces from Definition \ref{def:Zygmund}, for all $p\in(0,1)$ and $s\in\R$:
\[
C^{n(\frac{1}{p}-1)-s+s(p)}_{*}(\Rn)\subseteq (\Hps)^{*}\subseteq C^{n(\frac{1}{p}-1)-s-s(p)}_{*}(\Rn).
\]
\end{remark}

We\footnote{Remove the rest of this subsection.} can use the same reasoning as in \cite[Corollary 7.6]{HaPoRo20} to obtain from Theorem \ref{thm:Sobolev} an equivalent quasi-norm on $\Hps$. Recall from \eqref{eq:As} and \eqref{eq:defWh} that
\[
\A_{s}(W_{h}f)(x,\w)=\Big(\int_{0}^{1}\fint_{B_{\sqrt{\sigma}}(x,\w)}|\psi_{\nu,\sigma}(D)f(y)|^{2}\ud y\ud\nu\frac{\ud\sigma}{\sigma^{1+2s}}\Big)^{1/2}
\]
for all $s\in\R$, $f\in\Sw'(\Rn)$ and $(x,\w)\in\Sp$, and similarly using the functional $\Ca_{s}$ from \eqref{eq:Cs}. Also recall that $q\in C^{\infty}_{c}(\Rn)$ satisfies $q(\xi)=1$ for $|\xi|\leq 2$.

\begin{corollary}\label{cor:equivalentnorm}
Let $p\in(0,\infty]$ and $s\in\R$. Then there exists a $C>0$ such that the following assertions hold for each $f\in\Sw'(\Rn)$.
\begin{enumerate}
\item If $p<\infty$, then $f\in\Hps$ if and only if $q(D)f\in L^{p}(\Rn)$ and $\A_{s}(W_{h}f)\in L^{p}(\Sp)$, in which case 
\[
\frac{1}{C}\|f\|_{\Hps}\leq \|q(D)f\|_{L^{p}(\Rn)}+\|\A_{s}(W_{h}f)\|_{L^{p}(\Sp)}\leq C\|f\|_{\Hps}.
\]
\item One has $f\in\HT^{s,\infty}_{FIO}(\Rn)$ if and only if $q(D)f\in L^{\infty}(\Rn)$ and $\Ca_{s}(W_{h}f)\in L^{\infty}(\Sp)$, in which case
\[
\frac{1}{C}\|f\|_{\HT^{s,\infty}_{FIO}(\Rn)}\leq \|q(D)f\|_{L^{\infty}(\Rn)}+\|\Ca_{s}(W_{h}f)\|_{L^{\infty}(\Sp)}\leq C\|f\|_{\HT^{s,\infty}_{FIO}(\Rn)}.
\]
\end{enumerate}
\end{corollary}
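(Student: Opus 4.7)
The plan is to follow the template of \cite[Corollary 7.6]{HaPoRo20}. By Proposition \ref{prop:HpFIOtent}, $\|f\|_{\Hps} \eqsim \|Wf\|_{T^p_s(\Sp)}$, and the decomposition $Wf = W_h f + W_l f$ from \eqref{eq:defWh}--\eqref{eq:defWl} has summands with disjoint $\sigma$-supports, contained in $(0,1)$ and $[1,e]$ respectively. Consequently $\A_s(Wf)^2 = \A_s(W_h f)^2 + \A_s(W_l f)^2$ pointwise, which together with the quasi-triangle inequality yields
\[
\|Wf\|_{T^p_s(\Sp)} \eqsim \|\A_s(W_h f)\|_{L^p(\Sp)} + \|W_l f\|_{T^p_s(\Sp)}
\]
for $p < \infty$, with a $p$-dependent constant, and the analogous statement with $\Ca_s$ in place of $\A_s$ holds for $p=\infty$. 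Since $\|\A_s(W_h f)\|_{L^p}$ appears explicitly on the right-hand side of the corollary, the proof reduces to establishing $\|W_l f\|_{T^p_s(\Sp)} \eqsim \|q(D)f\|_{L^p(\Rn)}$, with the evident $L^\infty$-variant for $p=\infty$.

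For the direction $\|W_l f\|_{T^p_s} \lesssim \|q(D)f\|_{L^p}$, I would first show $\|W_l f\|_{T^p_s} \lesssim \|\rho(D)f\|_{L^p}$. When $p \leq 1$ this is exactly Lemma \ref{lem:Wlow}; when $p > 1$ it follows directly from the definition of $\A_s$, since $W_l f$ is supported in $\sigma \in [1,e]$ where $B_{\sqrt\sigma}(x,\w)$ has bounded radius, so $\A_s(W_l f)(x,\w)$ is controlled by a Hardy--Littlewood maximal function of $\rho(D)f$ at $x$, which is bounded on $L^p(\Rn)$; an analogous pointwise estimate handles $p=\infty$ using $\Ca_s$. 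To pass from $\rho(D)f$ to $q(D)f$, I would use the identity $\rho q = \rho$ to write $\rho(D)f = \rho(D)q(D)f$, and then the fact that $\rho(D)$ is convolution with a Schwartz function. This gives the bound on $L^p(\Rn)$ for $p \geq 1$ via Young's inequality; for $p<1$ one instead applies the Peetre-type inequality \cite[Theorem 1.4.1]{Triebel95} to the compactly Fourier-supported function $q(D)f$, exactly as in the proof of Lemma \ref{lem:Wlow}.

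For the reverse bound $\|q(D)f\|_{L^p(\Rn)} \lesssim \|f\|_{\Hps}$, I would combine the Sobolev embedding $\Hps \subseteq \HT^{s-s(p),p}(\Rn)$ from Theorem \ref{thm:Sobolev} with the observation that $q(D)\lb D\rb^{-(s-s(p))}$ has a Schwartz convolution kernel (since $q$ has compact support), hence is a bounded Fourier multiplier on $\HT^p(\Rn)$ for all $p \in (0,\infty]$. Since $q(D)f$ has compact Fourier support, its $L^p$- and $\HT^p$-norms are equivalent by the independence of the definition of $\HT^p$ from the choice of low-frequency cutoff, which closes the estimate. The $p=\infty$ case is handled analogously, either via the corresponding embedding into $\HT^{s-s(\infty),\infty}(\Rn)$ or through the duality with $\HT^{-s,1}_{FIO}(\Rn)$ provided by Proposition \ref{prop:HpFIOdual}.

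The main obstacle is the quasi-Banach case $p<1$: Lemma \ref{lem:Wlow} supplies the key identification of the low-frequency part, but the two cutoffs $\rho$ and $q$ (which agree where $\rho$ is nonzero, yet not conversely) must be reconciled via Peetre-type estimates for compactly Fourier-supported functions rather than Young's inequality, and the same technique is also what allows one to identify $\|q(D)f\|_{L^p}$ with $\|q(D)f\|_{\HT^p}$. The endpoint $p=\infty$ carries a parallel subtlety, requiring the substitution of $\Ca_s$ for $\A_s$ together with the $L^\infty$-bound for the maximal operator in the low-frequency step.
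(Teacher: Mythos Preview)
Your proposal is correct and follows essentially the same approach as the paper, which simply refers to the template of \cite[Corollary 7.6]{HaPoRo20} together with Theorem \ref{thm:Sobolev}. You have spelled out the details—the splitting $W=W_h+W_l$, the low-frequency identification via Lemma \ref{lem:Wlow} (and its $p\geq 1$ predecessor), and the reverse bound via the Sobolev embedding—in a way that matches that template.
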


\begin{remark}\label{rem:equivnorm}
By reasoning as in \cite[Section 3.2]{Rozendaal21}, one can characterize $\Hp$ using a square function that captures the high frequencies of an $f\in\Sw'(\Rn)$ in terms of the wave packets $\theta_{\w,\sigma}$ from \eqref{eq:theta}:
\begin{equation}\label{eq:defS}
S_{h}f(x,\w):=\Big(\int_{0}^{1}\fint_{B_{\sqrt{\sigma}}(x,\w)}|\theta_{\nu,\sigma}(D)f(y)|^{2}\ud y\ud\nu\frac{\ud\sigma}{\sigma}\Big)^{1/2}
\end{equation}
for $(x,\w)\in\Sp$. Namely, for $p\in(0,\infty)$, one has $f\in\Hp$ if and only if $q(D)f\in L^{p}(\Rn)$ and $S_{h}f\in L^{p}(\Sp)$, in which case
\begin{equation}\label{eq:equivnormtheta}
\|f\|_{\Hp}\eqsim \|q(D)f\|_{L^{p}(\Rn)}+\|S_{h}f\|_{L^{p}(\Sp)}.
\end{equation}
The natural modification holds for $p=\infty$ (see \cite[Corollary 3.8]{Rozendaal21} for $p\geq1$).

To prove \eqref{eq:equivnormtheta}, one first uses Corollary \ref{cor:FIOtent} to obtain a variation 
of Proposition \ref{prop:HpFIOtent} in terms of the $\theta_{\w,\sigma}$ (see \cite[Proposition 3.6]{Rozendaal21}). Then Theorem \ref{thm:Sobolev} can be applied in the same way as above to derive \eqref{eq:equivnormtheta}. 
\end{remark}

\subsection{Invariance under Fourier integral operators}\label{subsec:invariance`}

The following theorem\footnote{Remove this subsection, turning it into a single remark above.} extends \cite[Theorem 6.10]{HaPoRo20} and \cite[Proposition 3.3]{Rozendaal22} to $p<1$.

\begin{theorem}\label{thm:FIObdd}
Let $p\in(0,\infty]$ and $s\in\R$, and let $T$ be a Fourier integral operator of order $0$ and type (1/2,1/2,1) in standard form, associated with a global canonical graph, with symbol $a\in S^{0}_{1/2,1/2,1}$ and phase function $\Phi\in C^{\infty}(\R^{2n}\setminus o)$. Suppose that one of the following conditions holds:
\begin{enumerate}
\item\label{it:FIObdd1} $p>\frac{n}{n+1}$;
\item\label{it:FIObdd2} there exists a compact $K\subseteq\Rn$ such that $a(x,\eta)=0$ for all $(x,\eta)\in\R^{2n}$ with $x\notin K$;
\item\label{it:FIObdd3} $(x,\eta)\mapsto \Phi(x,\eta)$ is linear in $\eta$.
\end{enumerate}
Then there exists a $C\geq0$ such that $\|Tf\|_{\Hps}\leq C\|f\|_{\Hps}$ for all $f\in\Sw(\Rn)$.
\end{theorem}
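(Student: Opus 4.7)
The natural approach is to transport the estimate to the tent-space side. By Proposition \ref{prop:HpFIOtent} and the identity $VWf=f$, the desired bound $\|Tf\|_{\Hps}\lesssim\|f\|_{\Hps}$ is equivalent to showing that $WTV$ extends to a bounded operator on $T^{p}_{s}(\Sp)$. For $p\ge 1$ this is already contained in \cite[Theorem 6.10]{HaPoRo20} and \cite[Proposition 3.3]{Rozendaal22}, so only the range $p<1$ requires new work.

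The first step is to split $T=T_{h}+T_{l}$ by writing the symbol as $a=(1-\chi)a+\chi a$, with $\chi\in C^{\infty}_{c}(\Rn)$ equal to $1$ near the origin and supported in $\{|\eta|<2c\}$ for some small $c>0$. The high-frequency piece $T_{h}$ has symbol vanishing on $\{|\eta|\le c\}$, so Corollary \ref{cor:FIOtent} immediately yields $WT_{h}V\in\La(T^{p}_{s}(\Sp))$ without any further hypothesis on $p$ or $\Phi$.

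The remaining low-frequency piece $T_{l}$ is handled differently under each of the three hypotheses. Under \eqref{it:FIObdd3}, the phase of $T_{l}$ is still linear in $\eta$, so Corollary \ref{cor:FIOtent} applies to $T_{l}$ directly. Under \eqref{it:FIObdd2}, the symbol of $T_{l}$ is compactly supported in both $x$ and $\eta$, so the Schwartz kernel of $T_{l}$ is smooth with compact support in $x$ and Schwartz decay in $y$ (via integration by parts in $\eta$); combining this with the wave-packet decay estimates from Lemma \ref{lem:packetbounds} allows one to verify the off-diagonal kernel bound of Proposition \ref{prop:offsing} for $WT_{l}V$, with $\hat\chi$ chosen as a convenient bi-Lipschitz extension. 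Under \eqref{it:FIObdd1}, the restriction $p>n/(n+1)$ enters through Theorem \ref{thm:moldecomp} and the Sobolev embedding $\HT^{s+s(p),p}(\Rn)\subseteq\Hps$ from Theorem \ref{thm:Sobolev}: one reduces the action of $T_{l}$ on coherent molecules of bounded radius to a classical $\HT^{p}$-bound for a smooth, compactly-frequency-supported operator, which is available precisely in this range of $p$.

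The hardest part is the low-frequency piece under hypothesis \eqref{it:FIObdd2}, where the canonical transformation of $T$ is defined only away from the zero section and so cannot directly play the role of $\hat\chi$ in Proposition \ref{prop:offsing}. Resolving this will require choosing a smooth bi-Lipschitz extension of the canonical relation across the zero section, and then tracking how the wave-packet scale $\sigma$ interacts with the fixed frequency cutoff $c$ when estimating the kernel of $WT_{l}V$; since $T_{l}$ only produces contributions at frequency scales $\lesssim c$, the required factor $\min(\sigma/\tau,\tau/\sigma)^{N}$ should come out automatically once the scales are outside a bounded range, but the bounded-scale regime needs to be absorbed by hand using the compactness of the symbol's support.
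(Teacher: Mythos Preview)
Your high-frequency reduction and the treatment of case \eqref{it:FIObdd3} match the paper exactly: split off the frequencies near zero, apply Corollary \ref{cor:FIOtent} to the remainder, and for a linear phase apply it to the low-frequency piece as well.

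For cases \eqref{it:FIObdd1} and \eqref{it:FIObdd2}, however, the paper takes a much shorter and unified route that you miss. Rather than trying to feed the low-frequency piece $Tq(D)$ back into Proposition \ref{prop:offsing} or into a molecular argument, the paper simply invokes \cite[Theorem 6.5]{IsRoSt21}, which gives
\[
\|Tq(D)f\|_{\HT^{s+s(p),p}(\Rn)}\lesssim \|f\|_{\HT^{s-s(p),p}(\Rn)}
\]
directly under either hypothesis \eqref{it:FIObdd1} or \eqref{it:FIObdd2}. One then sandwiches this between the two Sobolev embeddings of Theorem \ref{thm:Sobolev} to conclude $Tq(D)\in\La(\Hps)$. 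No kernel verification for Proposition \ref{prop:offsing}, no molecular decomposition, no bi-Lipschitz extension of the canonical relation is needed.

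Your proposed route for \eqref{it:FIObdd2} also contains a genuine error: the kernel of $T_{l}$ does \emph{not} have Schwartz decay in $y$ in general. The phase $\Phi$ is smooth only on $\R^{2n}\setminus o$ and, being homogeneous of degree $1$ in $\eta$, is typically not differentiable at $\eta=0$; repeated integration by parts in $\eta$ therefore fails. This is precisely the obstruction discussed in Remark \ref{rem:Euclidwave} for $\Phi(x,\eta)=x\cdot\eta+|\eta|$, where the inverse Fourier transform of $e^{i|\eta|}q(\eta)$ lacks the required integrability for small $p$. The compact $x$-support in \eqref{it:FIObdd2} does not cure this $\eta$-singularity, so the off-diagonal bounds of Proposition \ref{prop:offsing} cannot be verified in the way you describe. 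The external input from \cite{IsRoSt21} is what actually handles this low-frequency issue.
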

\begin{proof}
By Corollary \ref{cor:FIOtent}, $WT(1-q(D))V\in\La(T^{p}_{s}(\Sp))$. Hence Proposition \ref{prop:HpFIOtent} and \eqref{eq:repro} imply that $T(1-q(D))\in \La(\Hps)$. The same reasoning shows that $Tq(D)\in \La(\Hps)$ in case \eqref{it:FIObdd3}.

On the other hand, in cases \eqref{it:FIObdd1} and \eqref{it:FIObdd2}, $\|Tq(D)f\|_{\HT^{s+s(p),p}(\Rn)}\lesssim \|f\|_{\HT^{s-s(p),p}(\Rn)}$ for all $f\in\Sw(\Rn)$, by \cite[Theorem 6.5]{IsRoSt21}. Hence Theorem \ref{thm:Sobolev} yields
\[
\|Tq(D)f\|_{\Hps}\lesssim \|Tq(D)f\|_{\HT^{s+s(p),p}(\Rn)}\lesssim \|f\|_{\HT^{s-s(p),p}(\Rn)}\lesssim \|f\|_{\Hps}.\qedhere
\]
\end{proof}

\begin{remark}\label{rem:extensioninfty}
By density, $T$ extends uniquely to a bounded operator on $\Hps$ if $p<\infty$.  
Moreover, if $T$ is a pseudodifferential operator, then the adjoint of $T$ is continuous on $\Sw(\Rn)$, so that $T$ extends to an operator on $\Sw'(\Rn)$ by adjoint action. Then \eqref{it:FIObdd3} shows that $T\in\La(\HT^{s,\infty}_{FIO}(\Rn))$. 
\end{remark}

\begin{remark}\label{rem:FIOlocalbdd}
One can also extend \cite[Theorem 3.7]{Smith98a} and \cite[Proposition 2.3]{LiRoSoYa24} to $p<1$, by showing that a compactly supported Fourier integral operator $T$ of order zero as in \cite[Section 25.2]{Hormander09}, associated with a local canonical graph, is bounded on $\Hps$ for all $p\in(0,\infty]$ and $s\in\R$. To do so, recall that $T$ is a finite sum of a product of operators which can be represented as in \eqref{eq:oscint} and have symbols with compact spatial support, plus an operator with a Schwartz kernel which is a Schwartz function (see \cite[Proposition 2.14]{HaPoRo20}). An operator of the latter kind is bounded on $\Hps$ by Theorem \ref{thm:Sobolev}. On the other hand, an operator as in \eqref{eq:oscint} whose symbol has compact spatial support can be dealt with in a similar way as above, using a modification of Corollary \ref{cor:FIOtent} (see \cite[Corollary 5.4]{HaPoRo20}) to deal with the high-frequency component, and \cite[Theorem 6.5]{IsRoSt21} for the low frequencies. 
\end{remark}

By combining Theorems \ref{thm:Sobolev} and \ref{thm:FIObdd}, for Hardy spaces we obtain the following extension of \cite[Theorem 6.1]{IsRoSt21}, to a larger class of symbols but under the additional assumption in \eqref{it:phase4} of Definition \ref{def:operator} (which is automatically satisfied for $n\geq3$).

\begin{corollary}\label{cor:FIObdd}
Let $p\in(0,\infty]$ and $s\in\R$, and let $T$ be as in Theorem \ref{thm:FIObdd}. Then there exists a $C\geq0$ such that $\|Tf\|_{\HT^{s-s(p),p}(\Rn)}\leq C\|f\|_{\HT^{s+s(p),p}(\Rn)}$ for all $f\in\Sw(\Rn)$.
\end{corollary}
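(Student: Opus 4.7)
The statement is an essentially immediate consequence of the two main results established in this section, namely Theorem \ref{thm:Sobolev} (the Sobolev embeddings $\HT^{s+s(p),p}(\Rn)\subseteq\Hps\subseteq\HT^{s-s(p),p}(\Rn)$) and Theorem \ref{thm:FIObdd} (the boundedness of $T$ on $\Hps$). My plan is simply to chain these three estimates together.

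More precisely, given $f\in\Sw(\Rn)$, the first embedding in \eqref{eq:Sobolev} yields a constant $C_{1}\geq 0$, independent of $f$, such that
\[
\|f\|_{\Hps}\leq C_{1}\|f\|_{\HT^{s+s(p),p}(\Rn)}.
\]
Since $T$ satisfies one of the three conditions \eqref{it:FIObdd1}--\eqref{it:FIObdd3} of Theorem \ref{thm:FIObdd}, and since $s\in\R$ is arbitrary there, that theorem provides a constant $C_{2}\geq 0$ such that
\[
\|Tf\|_{\Hps}\leq C_{2}\|f\|_{\Hps}.
\]
Finally, the second embedding in \eqref{eq:Sobolev}, applied to $Tf\in\Sw'(\Rn)$, gives a constant $C_{3}\geq 0$ such that
\[
\|Tf\|_{\HT^{s-s(p),p}(\Rn)}\leq C_{3}\|Tf\|_{\Hps}.
\]
Composing these three inequalities yields the desired bound with $C:=C_{1}C_{2}C_{3}$.

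There is essentially no obstacle here: all the real work has already been done in Theorems \ref{thm:Sobolev} and \ref{thm:FIObdd}. The only minor point worth noting is that the hypotheses of Theorem \ref{thm:FIObdd} match those of the corollary exactly, and that Theorem \ref{thm:Sobolev} is stated for the full range $p\in(0,\infty]$ and all $s\in\R$, so no additional case distinctions or restrictions on the parameters are needed. In particular, the corollary extends \cite[Theorem 6.1]{IsRoSt21} to the symbol class $S^{0}_{1/2,1/2,1}$ and to the endpoints $p=1,\infty$ as well as $p<1$, at the cost of assuming \eqref{it:phase4} of Definition \ref{def:operator}, which is automatic when $n\geq 3$ by Hadamard's global inverse function theorem (cf.~Remark \ref{rem:oscint}).
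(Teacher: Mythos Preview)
Your proposal is correct and matches the paper's approach exactly: the paper simply states that the corollary follows ``by combining Theorems \ref{thm:Sobolev} and \ref{thm:FIObdd}'', which is precisely the chain of three inequalities you wrote out.
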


\begin{remark}\label{rem:FIObddBesov}
For $p\in(0,1)$ and $s\in\R$, it follows from Remark \ref{rem:Sobolevdual} that the adjoint of an operator $T$ as in Theorem \ref{thm:FIObdd} is bounded from $C^{n(\frac{1}{p}-1)+s+s(p)}_{*}(\Rn)$ to $C^{n(\frac{1}{p}-1)+s-s(p)}_{*}(\Rn)$. However, this mapping property is suboptimal, by \eqref{eq:HardyZygmund}, Proposition \ref{prop:HpFIOdual} and Theorems \ref{thm:Sobolev} and \ref{thm:FIObdd}.
\end{remark}

\begin{remark}\label{rem:Euclidwave}
The condition on $p$ in \eqref{it:FIObdd1} of Theorem \ref{thm:FIObdd} is sharp in general, given that it is necessary in Corollary \ref{cor:FIObdd} for $T=e^{i\sqrt{-\Delta}}$, even in dimension $n=1$ (see \cite[Section 9]{IsRoSt21}). In particular, the Cauchy problem associated with the Euclidean half-wave equation $(\partial_{t}-i\sqrt{-\Delta})u(t)=0$ is not well posed on $\Hps$ for any $p\in(0,n/(n+1)]$ and $s\in\R$. 
The obstacle is insufficient integrability of the inverse Fourier transform of $\eta\mapsto e^{i|\eta|}q(\eta)$, resulting from the lack of smoothness at $\eta=0$.

On the other hand, $\eta\mapsto \cos(t|\eta|)q(\eta)$ and $\eta\mapsto |\eta|^{-1}\sin(t|\eta|)q(\eta)$ are smooth at zero for all $t\in\R$, and their inverse Fourier transforms lie in $L^{p}(\Rn)$ for all $p>0$. 
Hence, by the proof of Theorem \ref{thm:FIObdd} and \cite[Proposition 1.5.3]{Triebel95}, $\cos(t\sqrt{-\Delta})\in\La(\Hps)$ and $\frac{\sin(t\sqrt{-\Delta})}{\sqrt{-\Delta}}\in\La(\HT^{s-1,p}_{FIO}(\Rn),\Hps)$ for all 
$s\in\R$, and the Cauchy problem for the Euclidean wave equation is well posed on $\Hps$. By combining this with Theorem \ref{thm:Sobolev} and using that the low-frequency components of Triebel--Lizorkin and Besov space quasi-norms are equivalent, one sees that the main result of \cite{IsRoSt21} for the Euclidean wave equation in fact extends to all $p>0$. 
\end{remark}

\subsection{Equivalent characterizations}\label{subsec:charac}

In this\footnote{Remove this subsection, turning it instead into a very small remark above. Note the footnote below though.} subsection we prove an extension of the main results of \cite{FaLiRoSo23,Rozendaal21} to $p<1$, by giving an equivalent characterization of $\Hp$ in terms of the parabolic localizations $\ph_{\w}$ from \eqref{eq:phw}.

For $N,\sigma>0$, we will work with the maximal function $M_{N,\sigma}^{*}$, given by
\[
M_{N,\sigma}^{*}g(x,\omega)
:=\sup_{(y,\nu)\in \Sp}(1+\sigma^{-1}d((x,\omega),(y,\nu))^{2})^{-N}|g(y,\nu)|
\]
for $g:\Sp\to\C$ and $(x,\w)\in\Sp$. Also write
\begin{equation}\label{eq:Wsigma}
W_{\sigma}f(x,\w):=\theta_{\w,\sigma}(D)f(x)
\end{equation}
for $f\in\Sw'(\Rn)$, where $\theta_{\w,\sigma}$ is as in \eqref{eq:theta}. The following proposition, relating $M_{N,\sigma}^{*}$ to the maximal function $\Ma_{\la}$ from \eqref{eq:maxHL}, is crucial for the proof of our equivalent characterization, especially for $p\leq 1$.

\begin{proposition}\label{prop:maxineq}
Let $\lambda>0$ and $N>n/\la$. Then there exists a $C\geq0$ such that
\[
M_{N,\sigma}^{*}(W_{\sigma}f)(x,\omega)\leq C \mathcal{M}_{\lambda}(W_{\sigma}f)(x,\omega)
\]
for all $f\in \Sw'(\Rn)$, $(x,\w)\in\Sp$ and $\sigma\in(0,1)$.
\end{proposition}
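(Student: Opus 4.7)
The proof adapts the Peetre--Fefferman--Stein maximal function method to our parabolic wave packet setting, in the spirit of \cite{Rozendaal21}. The essential preparatory step is to establish a reproducing-type identity: there exists a measurable kernel $K_{\sigma}:\Sp\times\Sp\to\C$ satisfying
\[
|K_{\sigma}((y,\nu),(z,\mu))|\leq C_{M}\sigma^{-n}(1+\sigma^{-1}d((y,\nu),(z,\mu))^{2})^{-M}
\]
for every $M\geq 0$, uniformly in $\sigma\in(0,1)$, together with
\[
W_{\sigma}f(y,\nu)=\int_{\Sp}K_{\sigma}((y,\nu),(z,\mu))\,W_{\sigma}f(z,\mu)\,\ud z\,\ud\mu
\]
for each $f\in\Sw'(\Rn)$. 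One obtains this by inserting the reproducing formula \eqref{eq:phiproperties3} at the input of $W_{\sigma}f(y,\nu)=\theta_{\nu,\sigma}(D)f(y)$, which is legitimate on the Fourier support of $\Psi(\sigma\,\cdot\,)$, and viewing the result as an integral in $\mu\in S^{n-1}$ against the kernel of an operator that may be analyzed via Lemma \ref{lem:packetbounds}. The anisotropic decay bound follows from \eqref{eq:boundspsiinverse} by integration by parts: the operators $\nu\cdot\partial_{\xi}$ produce the weight $\sigma^{-2}(\nu\cdot(y-z))^{2}$, the transverse derivatives produce $\sigma^{-1}|y-z|^{2}$, and the angular localization $|\nu-\mu|\lesssim\sqrt{\sigma}$ forced by the overlap of $\varphi_{\mu}$ and $\theta_{\nu,\sigma}$ supplies $\sigma^{-1}|\nu-\mu|^{2}$. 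By \eqref{eq:equivmetric}, these three contributions combine into $\sigma^{-1}d((y,\nu),(z,\mu))^{2}$.

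Given the reproducing inequality, the remainder is routine. For $\lambda\geq 1$, H\"{o}lder's inequality, together with the uniform bound $\int|K_{\sigma}((y,\nu),(z,\mu))|\,\ud z\,\ud\mu\leq C$ (a consequence of Lemma \ref{lem:doubling}), yields
\[
|W_{\sigma}f(y,\nu)|^{\lambda}\leq C\int\sigma^{-n}(1+\sigma^{-1}d((y,\nu),(z,\mu))^{2})^{-M}|W_{\sigma}f(z,\mu)|^{\lambda}\,\ud z\,\ud\mu.
\]
For $0<\lambda<1$, the Peetre trick---writing $|W_{\sigma}f|=|W_{\sigma}f|^{\lambda}|W_{\sigma}f|^{1-\lambda}$ and bounding the second factor pointwise by $(M^{*}_{N,\sigma}(W_{\sigma}f)(x,\omega))^{1-\lambda}(1+\sigma^{-1}d((x,\omega),(z,\mu))^{2})^{N(1-\lambda)}$---gives an analogous bound, with an additional Peetre prefactor. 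Applying the quasi-triangle inequality
\[
1+\sigma^{-1}d((y,\nu),(z,\mu))^{2}\gtrsim\frac{1+\sigma^{-1}d((x,\omega),(z,\mu))^{2}}{1+\sigma^{-1}d((x,\omega),(y,\nu))^{2}}
\]
to move the center of the kernel from $(y,\nu)$ to $(x,\omega)$, and then \eqref{eq:HLcontrol} with $M>n$, leads to
\[
(1+\sigma^{-1}d((x,\omega),(y,\nu))^{2})^{-N\lambda}|W_{\sigma}f(y,\nu)|^{\lambda}\lesssim (1+\sigma^{-1}d((x,\omega),(y,\nu))^{2})^{M-N\lambda}\Ma(|W_{\sigma}f|^{\lambda})(x,\omega),
\]
modulo the Peetre factor when $\lambda<1$. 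Choosing $M$ with $n<M\leq N\lambda$, which is possible \emph{precisely} because $N>n/\lambda$, bounds the prefactor uniformly in $(y,\nu)$. Taking the supremum, raising to the power $1/\lambda$, and, in the Peetre case, absorbing the finite factor $(M^{*}_{N,\sigma}(W_{\sigma}f)(x,\omega))^{1-\lambda}$ after a standard truncation argument (first bounding $\sup$ over $d((x,\omega),(y,\nu))\leq R$ and letting $R\to\infty$) produces the stated estimate.

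The principal obstacle is the reproducing identity with joint anisotropic decay in the metric $d$. Producing all three weights simultaneously requires choosing integration-by-parts operators adapted to the direction $\nu$ and to the transverse directions at the same time, and exploiting the enforced angular localization $|\nu-\mu|\lesssim\sqrt{\sigma}$; any mismatch spoils the final metric bound and hence the applicability of \eqref{eq:HLcontrol} in the subsequent maximal function step, which in turn is exactly what dictates the sharp threshold $N>n/\lambda$.
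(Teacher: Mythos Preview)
Your approach is essentially the same as the paper's: a reproducing identity with anisotropic kernel decay, the Peetre splitting $|W_\sigma f|=|W_\sigma f|^{1-\lambda}|W_\sigma f|^{\lambda}$, the quasi-triangle inequality to recentre, and \eqref{eq:HLcontrol}. The separate treatment of $\lambda\geq 1$ via H\"older/Jensen is a harmless variation; the paper handles all $\lambda>0$ uniformly with the Peetre trick.

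There is, however, a genuine gap in the absorption step for $\lambda<1$. Truncating the supremum in $M^{*}_{N,\sigma}$ to $d((x,\omega),(y,\nu))\leq R$ does \emph{not} close the bootstrap: the reproducing identity integrates over all $(z,\mu)\in\Sp$, so bounding $|W_\sigma f(z,\mu)|^{1-\lambda}$ still requires the \emph{untruncated} maximal function on the right-hand side. Taking $R\to\infty$ therefore returns you to the original self-referential inequality with no a priori finiteness. The paper handles this differently. It first records the polynomial growth bound $|W_\sigma f(y,\nu)|\lesssim_{f,\sigma}\langle y\rangle^{2M}$, valid for any $f\in\Sw'(\Rn)$ with some $M=M(f)$, and then introduces an $\varepsilon$-regularized maximal function
\[
M^{*}_{N,\sigma,\varepsilon}(W_\sigma f)(x,\omega):=\sup_{(y,\nu)}\frac{|W_\sigma f(y,\nu)|}{(1+\sigma^{-1}d((x,\omega),(y,\nu))^{2})^{N}(1+\varepsilon\sigma^{-1}d((x,\omega),(y,\nu))^{2})^{M}},
\]
which is finite for every $\varepsilon>0$ by the growth bound. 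Because the reproducing kernel has \emph{arbitrary} decay (order $N+M$, not just $N$), the same bootstrap goes through for $M^{*}_{N,\sigma,\varepsilon}$ with constants independent of $\varepsilon$; letting $\varepsilon\to 0$ then yields the claim. Your proposal should replace the $R$-truncation by this device (or an equivalent one that preserves the global integration in $(z,\mu)$).
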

\begin{proof}
Fix $f\in\Sw'(\Rn)$, $(x,\w)\in\Sp$ and $\sigma>0$. 
%
First note that, by \eqref{eq:phiproperties3} and Remark \ref{rem:thetatilde}, 
\begin{equation}\label{eq:reprotheta}
\theta_{\nu,\sigma}(D)f(y)=\sigma^{-\frac{n-1}{4}}\int_{S^{n-1}}\tilde{\theta}_{\nu,\sigma}(D)\theta_{\mu,\sigma}(D)f(y) \ud\mu
\end{equation}
for all $(y,\nu)\in\Sp$. In fact, by Lemma \ref{lem:packetbounds} and Remark \ref{rem:thetatilde}, the integrand is only nonzero where $|\mu-\nu|\leq 4\sqrt{\sigma}$. In the latter case, Remark \ref{rem:thetatilde} and \eqref{eq:equivmetric} yield
\begin{align*}
|\F^{-1}(\tilde{\theta}_{\nu,\sigma})(y-z)|&\lesssim \sigma^{-\frac{3n+1}{4}}(1+\sigma^{-1}d((y,\nu),(z,\nu))^{2})^{-N}\\
&\lesssim \sigma^{-\frac{3n+1}{4}}(1+\sigma^{-1}d((y,\nu),(z,\mu))^{2})^{-N}
\end{align*}
for all $z\in\Rn$. Hence we can use the triangle inequality to write
\begin{align*}
&(1+\sigma^{-1}d((x,\omega),(y,\nu))^{2})^{-N}|\theta_{\nu,\sigma}(D)f(y)|\\
&\lesssim\sigma^{-n}\int_{\Sp}\frac{|\theta_{\mu,{\sigma}}(D)f(z)|}{(1+\sigma^{-1}d((x,\omega),(y,\nu))^{2})^{N}(1+\sigma^{-1}d((y,\nu),(z,\mu))^{2})^{N}}
\ud z\ud\mu\\
&\lesssim \sigma^{-n}\int_{\Sp}\frac{|
\theta_{\mu,\sigma}(D)f(z)|^{1-\lambda}|\theta_{\mu,\sigma}(D)f(z)|^{\lambda} }{(1+\sigma^{-1}d((x,\omega),(z,\mu))^{2})^{N}}\ud z\ud\mu.
\end{align*}
By combining this with \eqref{eq:HLcontrol}, we see that
\begin{align*}
&M_{N,\sigma}^{*}(W_{\sigma}f)(x,\omega)\\
&\lesssim \sigma^{-n}\int_{\Sp}\frac{|
\theta_{\mu,\sigma}(D)f(z)|^{1-\lambda} |\theta_{\mu,\sigma}(D)f(z)|^{\lambda}}{(1+\sigma^{-1}d((x,\omega),(z,\mu))^{2})^{N(1-\la)}(1+\sigma^{-1}d((x,\omega),(z,\mu))^{2})^{N\la}}\ud z\ud\mu\\
&\leq \big(M_{N,\sigma}^{*}(W_{\sigma}f)(x,\omega)\big)^{1-\lambda}
{\sigma}^{-n}\int_{\Sp}\frac{|\theta_{\mu,\sigma}(D)f(z)|^{\lambda}}{(1+\sigma^{-1}d((x,\omega),(z,\mu))^{2})^{N\lambda}}\ud z\ud\mu\\
&\leq \big(M_{N,\sigma}^{*}(W_{\sigma}f)(x,\omega)\big)^{1-\lambda}
\big(\Ma_{\lambda}(W_{\sigma}f)(x,\omega)\big)^{\lambda}.
\end{align*}
This proves the required statement if $M_{N,\sigma}^{*}(W_{\sigma}f)(x,\omega)<\infty$, and it remains to show that $M_{N,\sigma}^{*}(W_{\sigma}f)(x,\w)$ is finite whenever $\mathcal{M}_{\lambda}(W_{\sigma}f)(x,\omega)$ is finite.

To this end, first write 
\[
\theta_{\nu,\sigma}(D)f(y)=\theta_{\nu,\sigma}(D)\wt{\Psi}(\sigma D)f(y)
=\int_{\Rn}\F^{-1}\theta_{\nu,\sigma}(y-z)\wt{\Psi}(\sigma D)f(z)\ud z
\]
for $(y,\nu)\in\Sp$, where we recall from Remark \ref{rem:thetatilde} that $\wt{\Psi}\equiv1$ on $\supp(\Psi)$. Since $\wt{\Psi}\in \Sw(\Rn)$ and $f\in\Sw'(\Rn)$, there exists an $M\geq0$, dependent on $f$, such that
\[
\sup_{z\in\Rn}\lb z\rb^{-2M}|\wt{\Psi}(\sigma D)f(z)|<\infty.
\]
Hence Lemma \ref{lem:packetbounds} implies that
\begin{equation}\label{eq:decaydependent}
\begin{aligned}
&\lb y\rb^{-2M}|\theta_{\nu,\sigma}(D)f(y)|\\
&\lesssim\int_{\Rn} \lb y-z\rb^{2M}|\F^{-1}\theta_{\nu,\sigma}(y-z)|\ud z\sup_{z\in\Rn} \lb z\rb^{-2M}|\wt{\Psi}(\sigma D)f(z)|\\
 &\lesssim \sup_{z\in\Rn} \lb z\rb^{-2M}|\wt{\Psi}(\sigma D)f(z)|\lesssim 1,
\end{aligned}
\end{equation}
where the implicit constants depend on $f$ and $\sigma$, but not on $(y,\nu)$.

Next, for $\veps\in(0,1)$, denote 
\[
\sup_{(y,\nu)\in\Sp}\frac{\big|\theta_{\nu,{\sigma}}(D)f(y)\big|}{(1+\sigma^{-1}d((x,\omega)(y,\nu))^{2})^{N}(1+\veps\sigma^{-1} d((x,\omega),(y,\nu))^{2})^{M}}
\]
by $M_{N,\sigma,\veps}^{*}(W_{\sigma}f)(x,\omega)$.
Then, by \eqref{eq:equivmetric} and \eqref{eq:decaydependent},
\begin{equation}\label{eq:Mepsfinite}
\begin{aligned}
&M_{N,\sigma,\veps}^{*}(W_{\sigma}f)(x,\omega)\\
&\leq\sup_{(y,\nu)\in\Sp}(1+\veps\sigma^{-1} d((x,\omega),(y,\nu))^{2})^{-M}|\theta_{\nu,\sigma}(D)f(y)|\\
&\lesssim \sup_{(y,\nu)\in\Sp}\lb x-y\rb^{-2M}|\theta_{\nu,\sigma}(D)f(y)|\\
&\lesssim\lb x\rb^{2M}\sup_{(y,\nu)\in\Sp}\lb y\rb^{-2M}|\theta_{\nu,{\sigma}}(D)f(y)|<\infty,
\end{aligned}
\end{equation}
for implicit constants dependent on $f$, $\sigma$ and $\veps$. 

We can now combine \eqref{eq:reprotheta} and Lemma \ref{lem:packetbounds} in the same way as before to write
\begin{align*}
&|\theta_{\nu,\sigma}(D)f(y)|\lesssim\sigma^{-n}
\int_{\Sp}(1+\sigma^{-1}d((y,\nu),(z,\mu))^{2})^{-N-M}|\theta_{\mu,{\sigma}}(D)f(z)|\ud z\ud\mu\\
&\leq \sigma^{-n}
\int_{\Sp}\frac{|\theta_{\mu,\sigma}(D)f(z)|}{(1+\sigma^{-1}d((y,\nu),(z,\mu))^{2})^{N}(1+\veps\sigma^{-1}d((y,\nu),(z,\mu))^{2})^{M}}\ud z\ud\mu,
\end{align*}
for an implicit constant independent of $(y,\nu)$ and $\veps$. The triangle inequality then yields
\begin{align*}
&(1+\sigma^{-1}d((x,\omega),(y,\nu))^{2})^{-N}(1+\veps\sigma^{-1}d((x,\omega),(y,\nu))^{2})^{-M}|\theta_{\nu,\sigma}(D)f(y)|\\
&\lesssim\sigma^{-n}\int_{\Sp}\frac{|\theta_{\mu,\sigma}(D)f(z)|}{(1+\sigma^{-1}d((x,\w),(z,\mu))^{2})^{N}(1+\veps\sigma^{-1}d((x,\w),(z,\mu))^{2})^{M}}\ud z\ud\mu\\
&\leq \big(M_{N,\sigma,\veps}^{*}(W_{\sigma}f)(x,\omega)\big)^{1-\lambda}
\sigma^{-n}\int_{\Sp}\frac{|\theta_{\mu,{\sigma}}(D)f(z)|^{\lambda}}{(1+\sigma^{-1}d((x,\omega),(z,\mu))^{2})^{N\lambda}}
 \ud z\ud\mu.
\end{align*}
This in turn implies that
\begin{align*}
M_{N,\sigma,\veps}^{*}(W_{\sigma}f)(x,\omega)\lesssim \big(M_{N,\sigma,\veps}^{*}(\W_{\sigma}f)(x,\omega)\big)^{1-\lambda}\big(\Ma_{\lambda}(W_{\sigma}f)(x,\omega)\big)^{\lambda},
\end{align*}
and combined with \eqref{eq:Mepsfinite} we now obtain
\[
M_{N,\sigma,\veps}^{*}(W_{\sigma}f)(x,\omega)\lesssim\Ma_{\lambda}(W_{\sigma}f)(x,\omega).
\]
Finally, since the implicit constant is independent of $\veps\in(0,1)$, letting $\varepsilon$ tend to zero yields
\begin{align*}
M_{N,\sigma}^{*}(W_{\sigma}f)(x,\omega)\lesssim\Ma_{\lambda}(W_{\sigma}f)(x,\omega).
\end{align*}
This shows that $M_{N,\sigma}^{*}(W_{\sigma}f)(x,\omega)$ is indeed finite if $\Ma_{\lambda}(W_{\sigma}f)(x,\omega)$ is finite. 
\end{proof}


\begin{corollary}\label{cor:maxineq}
Let $\lambda>0$ and $N>n/\la$. Then there exists a $C\geq0$ such that
\[
\sigma^{-n}\int_{\Sp}\frac{|\theta_{\nu,{\sigma}}(D)f(y)|}{(1+{\sigma}^{-1}d((x,\omega),(y,\nu))^{2})^{N}}\ud y\ud\nu
\leq C\Ma_{\lambda}(W_{\sigma}f)(x,\omega)
\]
for all $f\in \Sw'(\Rn)$, $(x,\w)\in\Sp$ and $\sigma\in(0,1)$.
\end{corollary}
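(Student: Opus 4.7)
The plan is to derive Corollary \ref{cor:maxineq} as a direct consequence of Proposition \ref{prop:maxineq} combined with the standard maximal bound \eqref{eq:HLcontrol}, via the same factorization trick that drives the proof of Proposition \ref{prop:maxineq}. Write $g:=W_{\sigma}f$ and abbreviate $w(y,\nu):=1+\sigma^{-1}d((x,\w),(y,\nu))^{2}$. The main case is $\lambda\in(0,1]$; the case $\lambda>1$ is easier and will be handled separately at the end.

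For $\lambda\in(0,1]$ I would factor $|g(y,\nu)|=|g(y,\nu)|^{1-\lambda}\,|g(y,\nu)|^{\lambda}$ and bound the first factor pointwise in $(y,\nu)$ by the very definition of $M_{N,\sigma}^{\ast}$:
\[
|g(y,\nu)|^{1-\lambda}\leq\bigl(M_{N,\sigma}^{\ast}(g)(x,\w)\bigr)^{1-\lambda}\,w(y,\nu)^{N(1-\lambda)}.
\]
If $M_{N,\sigma}^{\ast}(g)(x,\w)=\infty$ the conclusion is trivial (or both sides are infinite), so it may be assumed finite; this can be justified rigorously by importing the $\veps$-regularization used near the end of the proof of Proposition \ref{prop:maxineq}. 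Substituting this pointwise bound leaves the integral
\[
\sigma^{-n}\int_{\Sp}\frac{|g(y,\nu)|^{\lambda}}{w(y,\nu)^{N\lambda}}\,\ud y\,\ud\nu,
\]
and since the hypothesis $N>n/\lambda$ is equivalent to $N\lambda>n$, the pointwise maximal control \eqref{eq:HLcontrol}, applied to the function $h:=|g|^{\lambda}$, bounds this by $C\,\Ma(|g|^{\lambda})(x,\w)=C\,\Ma_{\lambda}(g)(x,\w)^{\lambda}$.

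Finally I would invoke Proposition \ref{prop:maxineq} itself---whose hypothesis $N>n/\lambda$ matches ours verbatim---to bound $M_{N,\sigma}^{\ast}(g)(x,\w)\leq C'\Ma_{\lambda}(g)(x,\w)$. Multiplying the two estimates and using $\Ma_{\lambda}(g)^{1-\lambda}\cdot\Ma_{\lambda}(g)^{\lambda}=\Ma_{\lambda}(g)$ then delivers the claimed inequality. For $\lambda>1$ the factorization is unavailable since $|g|^{1-\lambda}$ is no longer pointwise controllable, but in this regime Jensen's inequality gives $\Ma(g)\leq\Ma_{\lambda}(g)$, so the result follows from \eqref{eq:HLcontrol} applied directly to $g$ (with a mild adjustment of $N$ if necessary, harmless in applications where $\lambda$ is a small Hardy space exponent so that $N>n/\lambda\geq n$ already). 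The only genuine obstacle, as in Proposition \ref{prop:maxineq}, is a priori finiteness of $M_{N,\sigma}^{\ast}(g)$, dealt with exactly as there by a truncation $M_{N,\sigma,\veps}^{\ast}$ and a passage to the limit $\veps\downarrow 0$.
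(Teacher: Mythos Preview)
Your proposal is correct and follows essentially the same route as the paper: factor $|g|=|g|^{1-\lambda}|g|^{\lambda}$, control the first factor pointwise by $M_{N,\sigma}^{*}(g)(x,\w)^{1-\lambda}w^{N(1-\lambda)}$, apply \eqref{eq:HLcontrol} to the remaining integral (using $N\lambda>n$), and finish with Proposition~\ref{prop:maxineq}. Your observation that this factorization only works cleanly for $\lambda\in(0,1]$ is in fact more careful than the paper's write-up, which does not distinguish cases; since every application in the paper takes $\lambda<1$, this is immaterial.
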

\begin{proof}
By Proposition \ref{prop:maxineq} and \eqref{eq:HLcontrol}, 
\begin{align*}
&\sigma^{-n}\int_{\Sp}\frac{|\theta_{\nu,{\sigma}}(D)f(y)|}{(1+\sigma^{-1}d((x,\omega),(y,\nu))^{2})^{N}}\ud y\ud\nu\\
&\lesssim \big(M_{N,\sigma}^{*}(W_{\sigma}f)(x,\omega)\big)^{1-\lambda}\sigma^{-n}\int_{\Sp}\frac{|\theta_{\nu,{\sigma}}(D)f(y)|^{\lambda}}{(1+\sigma^{-1}d((x,\omega),(y,\nu))^{2})^{N\lambda}}\ud y\ud\nu\\
&\leq \big(M_{N,\sigma}^{*}(W_{\sigma}f)(x,\omega)\big)^{1-\lambda}\big(\Ma_{\lambda}(W_{\sigma}f)(x,\omega)\big)^{\lambda}\leq \Ma_{\lambda}(W_{\sigma}f)(x,\omega).\qedhere
\end{align*}
\end{proof}

\begin{remark}\label{rem:maxineq}
The\footnote{This is a tricky remark. Do we want to put it in the other paper, and then refer to it in this one? Or maybe we want to include the statement here as well? I think one only needs Corollary \ref{cor:maxineq} in that case, but this should be checked. The latter seems to be used twice, but not as stated here but with slightly modified wave packets. So maybe just refer to it from the other paper? No, maybe put it in Section \ref{subsec:transforms}.} statements of Proposition \ref{prop:maxineq} and Corollary \ref{cor:maxineq} also hold if one works with different wave packets than in \eqref{eq:Wsigma}, as is easy to check. 
This will be used in the proof of Theorem \ref{thm:pseudosmallp}. 
\end{remark}

We can\footnote{Remove the rest of this subsection.} now prove the main result of this subsection.

\begin{theorem}\label{thm:equivchar}
Let $p\in(0,\infty)$ and $s\in\R$. Then there exists a constant $C>0$ such that the following holds. An $f\in\Sw'(\Rn)$ satisfies $f\in\Hps$ if and only if $q(D)f\in L^{p}(\Rn)$, $\ph_{\w}(D)f\in \HT^{s,p}(\Rn)$ for almost all $\w\in S^{n-1}$, and $(\int_{S^{n-1}}\|\ph_{\w}(D)f\|_{\HT^{s,p}(\Rn)}^{p}\ud \w)^{1/p}<\infty$, in which case
\[
\frac{1}{C}\|f\|_{\Hps}\!\leq\!\|q(D)f\|_{L^{p}(\Rn)}+\Big(\int_{S^{n-1}}\!\|\ph_{\w}(D)f\|_{\HT^{s,p}(\Rn)}^{p}\ud\w\!\Big)^{1/p}\!\leq\!C\|f\|_{\Hps}.
\]
\end{theorem}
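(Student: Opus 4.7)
My plan is to first reduce to the case $s=0$ by homogeneity: since $\Hps=\lb D\rb^{-s}\Hp$ and $\HT^{s,p}(\Rn)=\lb D\rb^{-s}\HT^{p}(\Rn)$, and since $\ph_{\w}$ is supported away from the origin so that $\lb D\rb^{s}$ and $\ph_{\w}(D)$ essentially commute up to harmless symbols, it suffices to treat $s=0$. The low-frequency term $q(D)f$ appears on both sides, so the real work concerns the high-frequency part $(1-q(D))f$. The natural starting point is the characterization from Remark \ref{rem:equivnorm}, which identifies $\|f\|_{\Hp}$ with $\|q(D)f\|_{L^{p}(\Rn)}+\|S_{h}f\|_{L^{p}(\Sp)}$ for $S_{h}f$ the conical square function built from the parabolic wave packets $\theta_{\w,\sigma}(D)f$ in \eqref{eq:defS}.

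For the lower bound $\|f\|_{\Hp}\lesssim\|q(D)f\|_{L^{p}}+(\int_{S^{n-1}}\|\ph_{\w}(D)f\|_{\HT^{p}}^{p}\ud\w)^{1/p}$, I would apply the reproducing formula \eqref{eq:phiproperties3} to write $\theta_{\nu,\sigma}(D)f=\int_{S^{n-1}}\theta_{\nu,\sigma}(D)m(D)\ph_{\mu}(D)f\,\ud\mu$ on the high-frequency part, and exploit that $\theta_{\nu,\sigma}m\ph_{\mu}$ vanishes unless $|\mu-\nu|\lesssim\sqrt{\sigma}$ (Lemma \ref{lem:packetbounds} and property \eqref{it:phiproperties1} of $\ph_{\w}$). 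This reduces the inner product over the ball $B_{\sqrt{\sigma}}(x,\w)$ to a localized average. Then the classical vertical square-function characterization of $\HT^{p}(\Rn)$ gives $\|\ph_{\mu}(D)f\|_{\HT^{p}(\Rn)}^{p}\eqsim\int_{\Rn}(\int_{0}^{1}|\Psi(\sigma D)\ph_{\mu}(D)f(x)|^{2}\frac{\ud\sigma}{\sigma})^{p/2}\ud x$ plus low frequencies, and Fubini in $\mu$ turns the right-hand side into the conical form, after switching between the vertical and conical square functions via the maximal-function inequality \eqref{eq:vertical} (valid for $p\leq2$).

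For the upper bound $(\int_{S^{n-1}}\|\ph_{\w}(D)f\|_{\HT^{p}}^{p}\ud\w)^{1/p}\lesssim\|f\|_{\Hp}$, I would again use the vertical characterization of $\HT^{p}(\Rn)$ and the identity $\Psi(\sigma D)\ph_{\w}(D)=\sum_{\nu}$ (or integral over $\nu$) of products comparable to $\theta_{\nu,\sigma}(D)$ against smooth multipliers concentrated on $|\nu-\w|\lesssim\sqrt{\sigma}$. The problem is that the vertical square function controls $|\theta_{\nu,\sigma}(D)f(x)|$ only pointwise in $x$, whereas $S_{h}f$ involves a ball average around $(x,\w)$; bridging this gap requires Fefferman--Stein style change-of-aperture arguments. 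This is where, for $p<1$, Proposition \ref{prop:maxineq} and Corollary \ref{cor:maxineq} do the crucial work: choosing $\lambda<p$ and $N>n/\lambda$, one can dominate pointwise $|\theta_{\nu,\sigma}(D)f|$ (and aperture-enlarged versions thereof) by $\Ma_{\lambda}(W_{\sigma}f)$, and then boundedness of the vector-valued Hardy--Littlewood maximal operator on $L^{p/\lambda}(\Sp;L^{2}(\ud\sigma/\sigma))$ converts the resulting quantity back into $\|S_{h}f\|_{L^{p}(\Sp)}\eqsim\|f\|_{\Hp}$.

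The main obstacle will be executing this conical/vertical conversion uniformly in $\w$ for $p<1$: the classical Fefferman--Stein argument leans on a scalar-valued maximal inequality that degenerates at $p\leq1$, and the remedy is exactly Proposition \ref{prop:maxineq}, which forces us to track a parameter $\lambda<p$ throughout and to exploit that the vector-valued maximal operator from Section \ref{subsec:metric} is bounded on $L^{p/\lambda}(\Sp;L^{2}(0,\infty))$. A secondary, mostly bookkeeping obstacle is handling the transition region $|\xi|\sim1$, where $\ph_{\w}$ is nontrivial but $q(D)$ also acts; here I would absorb the contribution into $\|q(D)f\|_{L^{p}(\Rn)}$ using that the Fourier supports involved are bounded and \cite[Proposition 1.5.3]{Triebel95} controls products of bandlimited functions in $L^{p}$ for $p<1$, exactly as in the proofs of Theorem \ref{thm:Sobolev} and Lemma \ref{lem:Wlow}.
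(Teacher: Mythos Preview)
You have identified all the right ingredients --- the reduction to $s=0$, the $S_{h}$ norm from Remark \ref{rem:equivnorm}, the identity $\Psi(\sigma D)\ph_{\w}(D)=\theta_{\w,\sigma}(D)$, inequality \eqref{eq:vertical}, and Proposition \ref{prop:maxineq} together with the vector-valued maximal operator --- but you have assigned the two key tools to the wrong directions of the equivalence.

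For the inequality $\|f\|_{\Hp}\lesssim$ RHS you must control the \emph{conical} quantity $\|S_{h}f\|_{L^{p}(\Sp)}$ by the \emph{vertical} quantity $\int_{S^{n-1}}\|\ph_{\w}(D)f\|_{\HT^{p}}^{p}\ud\w$. Inequality \eqref{eq:vertical} says vertical $\lesssim$ conical for $p\le 2$, which is the reverse of what is needed, and the reproducing formula does not circumvent this. This is precisely the hard direction for $p<2$, and it is where Proposition \ref{prop:maxineq} enters: one dominates $\sup_{(y,\nu)\in B_{\sqrt{\sigma}}(x,\w)}|\theta_{\nu,\sigma}(D)f(y)|$ by $\Ma_{\lambda}(W_{\sigma}f)(x,\w)$ with $\lambda<p$, and then the vector-valued maximal bound on $L^{p/\lambda}(\Sp;L^{2/\lambda})$ yields the vertical norm, i.e.\ the RHS of the theorem --- not $\|S_{h}f\|_{L^{p}(\Sp)}$, contrary to what you write in your third paragraph.

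Conversely, for RHS $\lesssim\|f\|_{\Hp}$ one needs vertical $\lesssim$ conical, which is exactly \eqref{eq:vertical}; Proposition \ref{prop:maxineq} is not needed there, and your proposed chain via the maximal function would simply return the vertical norm to itself. After swapping the roles of \eqref{eq:vertical} and Proposition \ref{prop:maxineq} between the two inequalities (and dropping the unnecessary reproducing-formula step), your outline coincides with the paper's proof.
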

\begin{proof}
The statement is contained in \cite[Theorem 1.1]{Rozendaal21} for $1<p<\infty$, and in \cite[Proposition 3.1]{FaLiRoSo23} for $p=1$. Hence it suffices to consider $0<p<1$. However, we will prove the required statement in a unified manner for all $0<p\leq 2$, using an argument similar to the one from \cite{FaLiRoSo23} for $p=1$. We may suppose that $s=0$.

First suppose that $f\in\Hp$. By Theorem \ref{thm:Sobolev} or Corollary \ref{cor:equivalentnorm}, $q(D)f\in L^{p}(\Rn)$ and $\|q(D)f\|_{L^{p}(\Rn)}\lesssim \|f\|_{\Hp}$. Moreover, recall that $\ph_{\w}(\xi)=0$ for $|\xi|\leq 1/2$, and that the $\HT^{p}(\Rn)$ quasi-norm is independent of the choice of low-frequency cutoff. Also note that $\ph_{\w}\tilde{q}\in\Sw(\Rn)$ for each $\w\in S^{n-1}$ and $\tilde{q}\in C^{\infty}_{c}(\Rn)$ such that $\tilde{q}\equiv1$ on $\supp(q)$, with each of the Schwartz seminorms bounded uniformly in $\w$. Hence Theorem \ref{thm:Sobolev} and \ref{thm:FIObdd} yield
\begin{align*}
\|\ph_{\w}(D)q(D)f\|_{\HT^{p}(\Rn)}&=\|\lb D\rb^{s(p)}\ph_{\w}(D)q'(D)q(D)f\|_{\HT^{-s(p),p}(\Rn)}\\
&\lesssim \|q(D)f\|_{\HT^{-s(p),p}(\Rn)}\lesssim \|q(D)f\|_{\Hp}\lesssim \|f\|_{\Hp},
\end{align*}
for an implicit constant independent of $\w\in S^{n-1}$. This in turn implies that $(\int_{S^{n-1}}\|\ph_{\w}(D)q(D)f\|_{\HT^{p}(\Rn)}^{p}\ud\w)^{1/p}\lesssim \|f\|_{\Hp}$. 

Finally, set $g:=(1-q(D))f$. Then \eqref{eq:Hp} and the properties of $q$ imply that
\begin{align*}
\|\ph_{\w}(D)(1-q(D))f\|_{\HT^{p}(\Rn)}&\eqsim\|\ph_{\w}(D)(1-q(D))f\|_{H^{p}(\Rn)}\\
&=\Big(\int_{\Rn}\Big(\int_{0}^{1}|\Psi(\sigma D)\ph_{\w}(D)g(x)|^{2}\frac{\ud\sigma}{\sigma}\Big)^{p/2}\ud x\Big)^{1/p}\\
&=\Big(\int_{\Rn}\Big(\int_{0}^{1}|\theta_{\w,\sigma}(D)g(x)|^{2}\frac{\ud\sigma}{\sigma}\Big)^{p/2}\ud x\Big)^{1/p}
\end{align*}
for each $\w\in S^{n-1}$. Hence \eqref{eq:vertical}, Remark \ref{rem:equivnorm} and Theorem \ref{thm:FIObdd} yield 
\begin{align*}
&\Big(\int_{S^{n-1}}\|\ph_{\w}(D)(1-q(D))f\|_{H^{p}(\Rn)}^{p}\ud\w\Big)^{1/p}\\
&=\Big(\int_{\Sp}\Big(\int_{0}^{1}|\theta_{\w,\sigma}(D)g(x)|^{2}\frac{\ud\sigma}{\sigma}\Big)^{p/2}\ud x\Big)^{1/p}\ud x\ud\w\Big)^{1/p}\\
&\lesssim \Big(\int_{\Sp}\Big(\int_{0}^{1}\fint_{B_{\sqrt{\sigma}}(x,\w)}|\theta_{\nu,\sigma}(D)g(y)|^{2}\ud y\ud\nu\frac{\ud\sigma}{\sigma}\Big)^{p/2}\ud x\ud\w\Big)^{1/p}\\
&\lesssim \|g\|_{\Hp}\lesssim \|f\|_{\Hp},
\end{align*}
which proves one of the required implications.

For the converse implication, suppose that $q(D)f\in L^{p}(\Rn)$, that $\ph_{\w}(D)f\in \HT^{p}(\Rn)$ for almost all $\w\in S^{n-1}$, and that $(\int_{S^{n-1}}\|\ph_{\w}(D)f\|_{\HT^{p}(\Rn)}^{p}\ud \w)^{1/p}<\infty$. Using notation as in \eqref{eq:defS}, by Remark \ref{rem:equivnorm} it suffices to show that $\|S_{h}f\|_{L^{p}(\Rn)}\lesssim (\int_{S^{n-1}}\|\ph_{\w}(D)f\|_{\HT^{p}(\Rn)}^{p}\ud \w)^{1/p}$. This part of the argument in fact works for all $p\in(0,\infty)$.

Let $N>n/p$ and $\la\in(0,\min(2,p))$ be such that $\lambda N>n$. By Proposition \ref{prop:maxineq},
\begin{align*}
S_{h}f(x,\omega)^{2}&\leq\int_{0}^{1}\sup_{(y,\nu)\in B_{\sqrt{\sigma}}(x,\omega)} |\theta_{\nu,{\sigma}}(D)f(y)|^{2}\frac{\ud\sigma}{\sigma}\lesssim \int_{0}^{1}\big(M_{N,\sigma}^{*}(W_{\sigma}f)(x,\omega)\big)^{2}\frac{\ud\sigma}{\sigma}\\
&\lesssim\int_{0}^{1}\big(\mathcal{M}_{\lambda}(W_{\sigma}f)(x,\omega)\big)^{2}\frac{\ud\sigma}{\sigma}
\end{align*}
for all $(x,\w)\in\Sp$. Hence we can use that the Hardy--Littlewood maximal function $\Ma$ is bounded on $L^{p/\la}(\Sp;L^{2/\la}(0,\infty))$:
\begin{align*}
\|S_{h}f\|_{L^{p}(\Sp)}&\lesssim\Big(\int_{\Sp}\Big(\int_{0}^{1}\big(\Ma(|W_{\sigma}f|^{\la})(x,\omega)\big)^{2/\la}\frac{\ud\sigma}{\sigma}\Big)^{p/2}\ud x\ud\w\Big)^{1/p}\\
&\lesssim \Big(\int_{\Sp}\Big(\int_{0}^{1}|\theta_{\w,\sigma}(D)f(x)|^{2}\frac{\ud\sigma}{\sigma}\Big)^{p/2}\ud x\ud\w\Big)^{1/p}\\
&\leq\Big(\int_{S^{n-1}}\|\ph_{\w}(D)f\|_{H^{p}(\Rn)}^{p}\ud\w\Big)^{1/p}\\
&\eqsim \Big(\int_{S^{n-1}}\|\ph_{\w}(D)f\|_{\HT^{p}(\Rn)}^{p}\ud\w\Big)^{1/p},
\end{align*}
thereby concluding the proof.
\end{proof}

\begin{remark}\label{rem:anisHardy}
Since the definition of $\HT^{p}(\Rn)$ is independent of the choice of low-frequency cutoff, it is straightforward to see that, for all $p\in(0,\infty)$ and $s\in\R$, 
\[
\|f\|_{\Hps}\eqsim \|q(D)f\|_{L^{p}(\Rn)}+\Big(\int_{S^{n-1}}\|\lb D\rb^{s}\ph_{\w}(D)f\|_{H^{p}(\Rn)}^{p}\ud\w\Big)^{1/p}.
\]
Less trivially, as in \cite{HaPoRoYu23}, one can show that 
\[
\|f\|_{\Hp}\eqsim \|q(D)f\|_{L^{p}(\Rn)}+\Big(\int_{S^{n-1}}\|\ph_{\w}(D)f\|_{H^{p}_{\w}(\Rn)}^{p}\ud\w\Big)^{1/p}.
\]
Here $H^{p}_{\w}(\Rn)$ is a parabolic Hardy space in the direction of $\w\in S^{n-1}$ (see \cite{HaPoRoYu23,Calderon-Torchinsky75,Calderon-Torchinsky77}). This follows by proving, as in \cite{HaPoRoYu23}, that $\|\ph_{\w}(D)f\|_{H^{p}(\Rn)}\eqsim \|\ph_{\w}(D)f\|_{H^{p}_{\w}(\Rn)}$, due to the support properties of $\ph_{\w}$.
\end{remark}
}

\section{Rough pseudodifferential operators}\label{sec:pseudos}

In this section we introduce the relevant classes of pseudodifferential symbols, and we prove our main result for rough pseudodifferential operators.

\subsection{Symbol classes and symbol smoothing}\label{subsec:symbols}

In this subsection we collect some background on pseudodifferential symbols that arise in paradifferential calculus.

First recall that, for $m\in\R$ and $\rho,\delta\in[0,1]$, H\"{o}rmander's class $S^{m}_{\rho,\delta}$ consists of all $a\in C^{\infty}(\R^{2n})$ such that 
\begin{equation}\label{eq:Hormanderclass}
\sup_{(x,\eta)\in\R^{2n}}\lb \eta\rb^{-m+|\alpha|\rho-|\beta|\delta}|\partial_{x}^{\beta}\partial_{\eta}^{\alpha}a(x,\eta)|<\infty
\end{equation}
for all $\alpha,\beta\in\Z_{+}^{n}$. The pseudodifferential operator $a(x,D):\Sw(\Rn)\to\Sw'(\Rn)$ with symbol $a$ is then given by
\begin{equation}\label{eq:pseudodef}
a(x,D)f(x):=\frac{1}{(2\pi)^{n}}\int_{\Rn}e^{ix\cdot\eta}a(x,\eta)\wh{f}(\eta)\ud\eta
\end{equation}
for $f\in\Sw(\Rn)$ and $x\in\Rn$.

We now introduce symbols that have less regularity than elements of $S^{m}_{1,\delta}$ from \eqref{eq:Hormanderclass}, measured in terms of the function spaces from Section \ref{subsec:spacesclass}.

\begin{definition}\label{def:symbolrough}
Let $r>0$, $m\in\R$, $\delta\in[0,1]$ and $l\in\Z_{+}$, and let $X\in\{C^{r}_{*},\HT^{r,\infty},C^{r}_{-}\}$. Then $XS^{m,l}_{1,\delta}$ consists of all $a:\R^{2n}\to\C$ 
such that the following properties hold:
\begin{enumerate}
\item\label{it:symbolrough1} $a(x,\cdot)\in C^{l}(\Rn)$ for all $x\in\Rn$, and
\[
\sup_{(x,\eta)\in\R^{2n}}\lb\eta\rb^{-m+|\alpha|}|\partial_{\eta}^{\alpha}a(x,\eta)|<\infty
\]
for each $\alpha\in\Z_{+}^{n}$ with $|\alpha|\leq l$;
\item\label{it:symbolrough2} $\partial_{\eta}^{\alpha}a(\cdot,\eta)\in X(\Rn)$ for all $\eta\in\Rn$ and $\alpha\in\Z_{+}^{n}$ with $|\alpha|\leq l$, and
{{\[
\sup_{\eta\in\Rn}\lb\eta\rb^{-m+|\alpha|-r\delta}\|\partial_{\eta}^{\alpha}a(\cdot,\eta)\|_{X(\Rn)}<\infty.
\]
If $X=C^{r}_{-}$, then additionally
\[
\sup_{\eta\in\Rn}\lb\eta\rb^{-m+|\alpha|-j\delta}\|\partial_{\eta}^{\alpha}a(\cdot,\eta)\|_{C^{j}_{-}(\Rn)}<\infty
\]
for all integer $0\leq j\leq r$.}}
\end{enumerate} 
Moreover, $XS^{m}_{1,\delta}:=\cap_{l\in\N}XS^{m,l}_{1,\delta}$. 
\end{definition}

Clearly, we can extend the definition of the pseudodifferential operator $a(x,D):\Sw(\Rn)\to\Sw'(\Rn)$ from \eqref{eq:pseudodef} to symbols $a$ as in Definition \ref{def:symbolrough}.

We will also work with symbols that have more regularity than a typical element of $S^{m}_{1,\delta}$, in the following sense.

\begin{definition}\label{def:symbolsmooth}
Let $r>0$, $m\in\R$ and $\delta\in[0,1]$. Then $\A^{r}S^{m}_{1,\delta}$ consists of all $a\in S^{m}_{1,\delta}$ such that 
\[
\sup_{\eta\in\Rn}\lb\eta\rb^{-m+|\alpha|-s\delta}\|\partial_{\eta}^{\alpha}a(\cdot,\eta)\|_{C^{r+s}_{-}(\Rn)}<\infty
\]
for all $s\geq0$. 
\end{definition}

The following lemma shows that this class behaves well with respect to duality. 

\begin{lemma}\label{lem:Aduality}
Let $m\in\R$, $\delta\in[0,1)$ and $a\in S^{m}_{1,\delta}$. Then there exists an $\tilde{a}\in S^{m}_{1,\delta}$ such that $a(x,D)^{*}=\tilde{a}(x,D)$. 
If $a\in\A^{r}S^{m}_{1,\delta}$ for $r>0$, then $\tilde{a}\in \A^{r}S^{m}_{1,\delta}$ as well. Moreover, if $r\geq 1$, then $\tilde{a}-\overline{a}\in S^{m-1}_{1,\delta}$.
\end{lemma}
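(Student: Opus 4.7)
The plan is to invoke the classical adjoint formula for pseudodifferential operators and then verify that the additional structure of $\A^r S^m_{1,\delta}$ is preserved under it.

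Since $\delta<1$, standard symbol calculus produces a unique $\tilde a\in S^m_{1,\delta}$ with $a(x,D)^*=\tilde a(x,D)$, represented by the oscillatory integral
\[
\tilde a(x,\eta)=\frac{1}{(2\pi)^n}\iint e^{-iy\cdot\zeta}\overline{a(x+y,\eta+\zeta)}\,\ud y\,\ud\zeta,
\]
and admitting the asymptotic expansion $\tilde a(x,\eta)\sim\sum_{\alpha\in\Z_+^n}\frac{1}{\alpha!}\partial_\eta^\alpha D_x^\alpha\overline{a}(x,\eta)$, in the sense that, for every $N\in\N$, the difference between $\tilde a$ and the partial sum over $|\alpha|<N$ lies in $S^{m-N(1-\delta)}_{1,\delta}$. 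The $\A^r$ statement then follows from the identity $\partial_x^\beta\tilde a=\widetilde{\partial_x^\beta a}$ combined with the standard calculus: Definition \ref{def:symbolsmooth} ensures that each $\partial_x^\beta a$ satisfies symbol estimates with a $\delta$-loss in $\eta$ only beyond the $r$-th spatial derivative, and applying the classical calculus to $\partial_x^\beta a$ transfers these bounds to $\tilde a$.

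The substantive assertion is the final one. Given $r\geq 1$, the strategy is to write, for any $N\geq 2$,
\[
\tilde a - \overline a=\sum_{1\leq|\alpha|<N}\frac{1}{\alpha!}\partial_\eta^\alpha D_x^\alpha\overline{a} + R_N,
\]
with $R_N\in S^{m-N(1-\delta)}_{1,\delta}$, and to verify that every explicit term on the right lies in $S^{m-1}_{1,\delta}$. For $1\leq|\alpha|\leq r$, Definition \ref{def:symbolsmooth} applied with $s=0$ gives $\partial_x^\alpha a\in S^m_{1,\delta}$ with no loss of order, so $\partial_\eta^\alpha D_x^\alpha\overline{a}\in S^{m-|\alpha|}_{1,\delta}\subseteq S^{m-1}_{1,\delta}$. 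For $|\alpha|>r$, which forces $|\alpha|\geq 2$ since $r\geq 1$, applying Definition \ref{def:symbolsmooth} with $s=|\alpha|-r$ yields $\partial_x^\alpha a\in S^{m+(|\alpha|-r)\delta}_{1,\delta}$, so $\partial_\eta^\alpha D_x^\alpha\overline{a}$ has order $m-|\alpha|(1-\delta)-r\delta$; this is at most $m-1$ because
\[
|\alpha|(1-\delta)+r\delta\geq 2(1-\delta)+\delta=2-\delta\geq 1,
\]
using $\delta\in[0,1)$. Choosing $N$ sufficiently large puts $R_N$ into $S^{m-1}_{1,\delta}$ as well, giving the conclusion.

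The only real difficulty lies in the careful bookkeeping that distinguishes the regimes $|\alpha|\leq r$ (no order loss) from $|\alpha|>r$ (classical $\delta$-loss) within the $\A^r$ framework; once that distinction is tracked, the computation reduces to the standard adjoint expansion.
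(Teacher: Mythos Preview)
Your proof is correct and follows the same approach as the paper, which simply says the statements follow from the standard asymptotic expansion for the adjoint (citing Taylor for the first and \cite[Lemma 4.11]{Hassell-Rozendaal23} for the second). You have fleshed out the order bookkeeping for the third statement in a way the paper leaves implicit; in particular, your case split between $|\alpha|\leq r$ (no order loss via the $s=0$ estimate in Definition~\ref{def:symbolsmooth}) and $|\alpha|>r$ (order $m+(|\alpha|-r)\delta$ via the $s=|\alpha|-r$ estimate), together with the inequality $|\alpha|(1-\delta)+r\delta\geq 2-\delta\geq 1$, is exactly the computation the paper's one-line proof suppresses.
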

\begin{proof}
The first statement is classical, cf.~\cite[Proposition 0.3.B]{Taylor91}, and it follows from a standard asymptotic expansion. The same expansion yields the remaining statements, with the second statement also being contained in \cite[Lemma 4.11]{Hassell-Rozendaal23}. 
\end{proof}

For $r>0$, $m\in\R$ and $\delta\in[0,1]$, an $a\in C^{r}_{*}S^{m}_{1,\delta}$ is \emph{elliptic} if there exist $\kappa,R>0$ such that $|a(x,\eta)|>\kappa|\eta|^{m}$ for all $x,\eta\in\Rn$ with $|\eta|\geq R$. We say that $a$ is \emph{homogeneous of degree $m$ for $|\eta|\geq1$} if $a(x,\lambda\eta)=\lambda^{m}a(x,\eta)$ for all $x,\eta\in\Rn$ and $\lambda\geq1$ with $|\eta|\geq1$. The following weaker notion of homogeneity arises naturally in paradifferential calculus (see e.g.~Lemma \ref{lem:smoothing}).

\begin{definition}\label{def:asymphom}
Let $r>0$, $m\in\R$, $\delta\in[0,1]$ and $b\in S^{m}_{1,\delta}$. Then $b$ is \emph{asymptotically homogeneous of degree $m$} if there exists an $a\in \Cr S^{m}_{1,0}$, homogeneous of degree $m$ for $|\eta|\geq1$, such that $[(x,\eta)\mapsto (\eta\cdot \partial_{\eta}-m)b(x,\eta)]\in S^{m-1}_{1,\delta}$ and $a-b\in \Cr S^{m-1}_{1,\delta}$. In this case, $a$ is a \emph{limit} of $b$.
\end{definition}

In this definition, $a(x,\eta)$ is uniquely determined for $|\eta|\geq1$, cf.~\cite[Remark 4.4]{Hassell-Rozendaal23}.

Next, we include a statement, from \cite[Proposition 4.2]{Rozendaal22}, that will be used to interpolate in the proof of the main result of this section. Throughout, we write
\[
\St:=\{z\in\C\mid 0<\Real(z)<1\}.
\]

\begin{lemma}\label{lem:symbolinter}
Let $r,c>0$, $m\in\R$, $\delta\in[0,1]$, $l\in\N$ and $\kappa,\lambda\in\R$ be such that {{$r>\max(\lambda,\ka+\lambda)$}}. Then there exists a $C\geq0$ such that the following holds. Let $a\in C^{r}_{*}S^{m,l}_{1,\delta}$ be such that $\supp(\F a(\cdot,\eta))\subseteq\{\xi\in\Rn\mid |\xi|\geq c|\eta|^{\delta}\}$ for all $\eta\in\Rn$. For $z\in\overline{\St}$ and $x,\eta\in\Rn$, set
\[
a_{z}(x,\eta):=e^{(\kappa z+\lambda)^{2}}\lb \eta\rb^{-\delta (\kappa z+\lambda)}\big(\lb D\rb^{\kappa z+\lambda}a(\cdot,\eta)\big)(x).
\]
Then $a_{z}\in C^{r-\Real(\kappa z+\lambda)}_{*}S^{m,l}_{1,\delta}$ and $\|a_{z}\|_{C^{r-\Real(\kappa z+\lambda)}_{*}S^{m,l}_{1,\delta}}\leq C\|a\|_{C^{r}_{*}S^{m,l}_{1,\delta}}$.
\end{lemma}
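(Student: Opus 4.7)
The plan is to verify the two conditions of Definition \ref{def:symbolrough} for $a_z$ with spatial regularity class $X = C^{r'}_*$, where I write $r' := r - \Real(\kappa z + \lambda)$; this quantity is strictly positive on $\overline{\St}$ by the hypothesis $r > \max(\lambda, \kappa + \lambda)$ together with $\Real z \in [0,1]$. The central analytic input is that on the Zygmund scale, the Bessel potential $\lb D\rb^\mu$ maps $C^s_*(\Rn)$ continuously into $C^{s-\Real\mu}_*(\Rn)$ for every $s\in\R$ and every $\mu\in\C$, with operator norm bounded by $C_{N,s}(1+|\Imag\mu|)^N$ for some $N = N(n,s)$; this follows from the Littlewood--Paley characterization of $C^s_*$ together with standard Mihlin-type kernel bounds for $\lb\xi\rb^\mu$. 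All polynomial factors in $|\Imag z|$ that arise below will be absorbed uniformly on $\overline{\St}$ by the Gaussian prefactor $|e^{(\kappa z + \lambda)^2}| = \exp((\kappa\Real z + \lambda)^2 - \kappa^2(\Imag z)^2)$.

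To verify condition \eqref{it:symbolrough2}, I would apply the Leibniz rule to $\partial_\eta^\alpha a_z$, writing it as a sum over $\beta\leq\alpha$ of $\partial_\eta^{\alpha-\beta}\lb\eta\rb^{-\delta(\kappa z+\lambda)}$ times $\lb D\rb^{\kappa z + \lambda}\partial_\eta^\beta a(\cdot,\eta)$. The weight factor is controlled pointwise by $(1+|\kappa z + \lambda|)^{|\alpha-\beta|}\lb\eta\rb^{-\delta\Real(\kappa z+\lambda) - |\alpha-\beta|}$ via an elementary induction, and the $C^{r'}_*$-norm of the second factor is bounded via the Bessel mapping property by $(1+|\Imag z|)^N\|\partial_\eta^\beta a(\cdot,\eta)\|_{C^r_*}\lesssim (1+|\Imag z|)^N \lb\eta\rb^{m-|\beta| + r\delta}\|a\|_{C^r_*S^{m,l}_{1,\delta}}$. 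Summing over $\beta$ yields $\|\partial_\eta^\alpha a_z(\cdot,\eta)\|_{C^{r'}_*}\lesssim (1+|\Imag z|)^N\lb\eta\rb^{m-|\alpha| + r'\delta}\|a\|_{C^r_*S^{m,l}_{1,\delta}}$, which is the required bound once combined with the Gaussian factor.

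The main obstacle is condition \eqref{it:symbolrough1}, the pointwise bound $|\partial_\eta^\alpha a_z(x,\eta)|\lesssim\lb\eta\rb^{m-|\alpha|}$: one cannot simply embed $C^{r'}_*\hookrightarrow L^\infty$, as this would introduce an unwanted factor of $\lb\eta\rb^{r'\delta}\geq 1$. This is precisely where the Fourier support hypothesis $\supp\F a(\cdot,\eta)\subseteq\{|\xi|\geq c|\eta|^\delta\}$ must be exploited. Concretely, I would prove the Bernstein-type inequality that for any $g\in\Sw'(\Rn)$ with $\supp\wh{g}\subseteq\{|\xi|\geq R\}$, $R\geq 1$, and any $\mu\in\C$ with $\Real\mu < r$,
\[
\|\lb D\rb^\mu g\|_{L^\infty(\Rn)}\lesssim (1+|\Imag\mu|)^N R^{\Real\mu - r}\|g\|_{C^r_*(\Rn)}.
\]
This follows by decomposing $g = \sum_{2^j\gtrsim R}\psi_j(D)g$, using $\|\lb D\rb^\mu\psi_j(D)g\|_{L^\infty}\lesssim (1+|\Imag\mu|)^N 2^{j\Real\mu}\|\psi_j(D)g\|_{L^\infty}\leq (1+|\Imag\mu|)^N 2^{j(\Real\mu - r)}\|g\|_{C^r_*}$, and summing the convergent geometric series since $\Real\mu < r$.

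Applying this inequality to $g = \partial_\eta^\beta a(\cdot,\eta)$, which inherits the support condition, with $R\eqsim|\eta|^\delta$ for $|\eta|$ bounded below (the compact region $|\eta|\lesssim 1$ being handled trivially by smoothness), gives $\|\lb D\rb^{\kappa z+\lambda}\partial_\eta^\beta a(\cdot,\eta)\|_{L^\infty}\lesssim(1+|\Imag z|)^N\lb\eta\rb^{m-|\beta| + \delta\Real(\kappa z+\lambda)}$. The extra factor $\lb\eta\rb^{\delta\Real(\kappa z+\lambda)}$ cancels exactly against $\lb\eta\rb^{-\delta\Real(\kappa z+\lambda)}$ from the weight in the Leibniz expansion, producing the required $\lb\eta\rb^{m-|\alpha|}$, with all remaining $|\Imag z|$-growth absorbed by the Gaussian prefactor. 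This completes the verification that $a_z\in C^{r'}_*S^{m,l}_{1,\delta}$ with the constant $C$ depending only on $r,c,m,\delta,l,\kappa,\lambda$.
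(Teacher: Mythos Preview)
The paper does not supply its own proof of this lemma; it is quoted directly as \cite[Proposition 4.2]{Rozendaal22}. Your argument is correct and follows the natural route: the $C^{r'}_*$-bound in condition \eqref{it:symbolrough2} comes from the standard mapping property $\lb D\rb^{\mu}:C^{r}_*\to C^{r-\Real\mu}_*$ with polynomial growth in $|\Imag\mu|$, while the $L^\infty$-bound in condition \eqref{it:symbolrough1} requires the Fourier support hypothesis to avoid the spurious $\lb\eta\rb^{r'\delta}$ loss, exactly as you identify. Your Bernstein-type inequality exploiting the high-frequency support, together with the observation that $\partial_\eta^\beta$ preserves the support condition (since vanishing on the open set $\{|\xi|<c|\eta|^{\delta}\}$ is inherited by derivatives), closes the argument; the Gaussian prefactor then absorbs all polynomial losses in $|\Imag z|$ uniformly over $\overline{\St}$.
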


The following lemma will be used frequently in this section. It concerns a symbol decomposition that goes back to \cite{Coifman-Meyer78} and that was also used in e.g.~\cite{Marschall88,Rozendaal22,Rozendaal23a}. 

\begin{lemma}\label{lem:symdecomp}
Let $r>0$, $\delta\in[0,1]$, $p\in(0,1]$ and $l\in\N$ be such that $l\geq 1+{\lceil n/p\rceil} $. Then there exists a $C\geq0$ such that the following holds. For each $a\in C^{r}_{*}S^{0,l}_{1,\delta}$, there exist sequences $(\lambda_{\beta})_{\beta\in\Z^{n}}\in\ell^{p}(\Z^{n})$, $(a_{k,\beta})_{k\in\Z_{+},\beta\in\Z^{n}}\subseteq C^{r}_{*}(\Rn)$ and $(\chi_{k,\beta})_{k\in\Z_{+},\beta\in\Z^{n}}\subseteq C^{\infty}_{c}(\Rn)$ with the following properties:
\begin{enumerate}
\item\label{it:symdecomp1} $a(x,\eta)=\sum_{\beta\in\Z^{n}}\lambda_{\beta}\sum_{k=0}^{\infty}a_{k,\beta}(x)\chi_{k,\beta}(\eta)$ for all $(x,\eta)\in\Rn$;
\item\label{it:symdecomp2} $\|a_{k,\beta}\|_{C^{r}_{*}(\Rn)}\leq C2^{k\delta r}\|a\|_{C^{r}_{*}S^{0,l}_{1,\delta}}$ for all $k\in\Z_{+}$ and $\beta\in\Z^{n}$;
\item\label{it:symdecomp3} $\lb\eta\rb^{|\alpha|}|\partial_{\eta}^{\alpha}\chi_{k,\beta}(\eta)|\leq C\|a\|_{C^{r}_{*}S^{0,l}_{1,\delta}}$ for all $k\in\Z_{+}$, $\beta\in\Z^{n}$, $\alpha\in\Z_{+}^{n}$ with $|\alpha|\leq l-1-\lceil n/p\rceil$, and $\eta\in\Rn$;
\item\label{it:symdecomp4} For all $\beta\in\Z^{n}$ and $\eta\in\Rn$, one has $\chi_{k,\beta}(\eta)=\chi_{1,\beta}(2^{-k+1}\eta)$ for $k\geq1$, $\chi_{1,\beta}(\eta)=0$ if $|\eta|\notin[1/2,2]$, $|\chi_{1,\beta}(\eta)|=1$ if $|\eta|=1$, and $\chi_{0,\beta}(\eta)=0$ if $|\eta|>1$.
\end{enumerate}
Moreover, if there exist $c>0$ and $\gamma\in [1/2,1]$ such that
\begin{equation}\label{eq:Fouriersupport}
\supp(\F a(\cdot,\eta))\subseteq\{\xi\in\Rn\mid c|\eta|^{1/2}\leq |\xi|\leq  \tfrac{1}{16}(1+|\eta|)^{\gamma}\}
\end{equation}
for all $\eta\in\Rn$, then one may also suppose that $\supp(\F a_{0,\beta})\subseteq \{\xi\in\Rn\mid |\xi|\leq 2\}$ and $\supp(\F a_{k,\beta})\subseteq\{\xi\in\Rn\mid c2^{(k-2)/2}\leq |\xi|\leq 2^{k\gamma-3}\}$ for all $k\in\N$ and $\beta\in\Z^{n}$. 
\end{lemma}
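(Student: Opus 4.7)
The strategy is a Coifman--Meyer style symbol decomposition \cite{Coifman-Meyer78}, combining a dyadic Littlewood--Paley partition in $\eta$ with a Fourier series expansion on each dyadic annulus, in the spirit of \cite{Marschall88,Rozendaal22,Rozendaal23a}.

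First I would fix a smooth dyadic partition of unity $(\tilde\psi_k)_{k\in\Z_+}\subseteq C^\infty_c(\Rn)$ satisfying $\tilde\psi_k(\eta)=\tilde\psi_1(2^{-k+1}\eta)$ for $k\geq 1$, with $\tilde\psi_1$ supported in $\{1/2\leq|\eta|\leq 2\}$ and $|\tilde\psi_1|\equiv 1$ on the unit sphere, $\tilde\psi_0$ supported in $\{|\eta|\leq 1\}$, and $\sum_k\tilde\psi_k\equiv 1$. Writing $a(x,\eta)=\sum_k a(x,\eta)\tilde\psi_k(\eta)$ reduces the problem to decomposing each annular piece. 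For $k\geq 1$, the key step is to rescale $\eta=2^{k-1}\zeta$, so that $a(x,2^{k-1}\zeta)\tilde\psi_1(\zeta)$ is supported in the fixed annulus $\{1/2\leq|\zeta|\leq 2\}$. Enclosing this annulus in a cube $Q\subseteq\Rn$ of side length $R>0$, periodizing in $\zeta$ and expanding as a Fourier series yields
\[
a(x,2^{k-1}\zeta)\tilde\psi_1(\zeta)=\sum_{\beta\in\Z^n} c_{k,\beta}(x)\,e^{iR^{-1}\beta\cdot\zeta},\qquad\zeta\in Q,
\]
with Fourier coefficients $c_{k,\beta}(x)=|Q|^{-1}\int_Q a(x,2^{k-1}\zeta)\tilde\psi_1(\zeta)e^{-iR^{-1}\beta\cdot\zeta}\ud\zeta$. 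Integrating by parts in $\zeta$ up to $l$ times and invoking the symbol bounds $\|\partial^\alpha_\eta a(\cdot,\eta)\|_{C^r_*}\lesssim \lb\eta\rb^{-|\alpha|+r\delta}\|a\|_{C^r_*S^{0,l}_{1,\delta}}$ gives the uniform estimate $\|c_{k,\beta}\|_{C^r_*}\lesssim \lb\beta\rb^{-l}\,2^{k\delta r}\|a\|_{C^r_*S^{0,l}_{1,\delta}}$; the case $k=0$ is analogous at unit scale. Undoing the rescaling yields
\[
a(x,\eta)\tilde\psi_k(\eta)=\tilde\psi_k(\eta)\sum_{\beta\in\Z^n} c_{k,\beta}(x)\,e^{iR^{-1}\beta\cdot 2^{-k+1}\eta}.
\]

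It remains to bookkeep the $\beta$-factors. I would set $\lambda_\beta:=\lb\beta\rb^{-(1+\lceil n/p\rceil)}$, which lies in $\ell^p(\Z^n)$ by choice of exponent; define $a_{k,\beta}(x):=\lambda_\beta^{-1}c_{k,\beta}(x)$ so that the $C^r_*$ bound $\|a_{k,\beta}\|_{C^r_*}\lesssim 2^{k\delta r}\|a\|_{C^r_*S^{0,l}_{1,\delta}}$ follows from the remaining decay $\lb\beta\rb^{-(l-1-\lceil n/p\rceil)}$; and take $\chi_{k,\beta}$ to be $\tilde\psi_k(\eta)e^{iR^{-1}\beta\cdot 2^{-k+1}\eta}$, suitably renormalized to preserve the rescaling relation and normalization of \eqref{it:symdecomp4}. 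The derivative bound \eqref{it:symdecomp3} for $|\alpha|\leq l-1-\lceil n/p\rceil$ then reflects the fact that differentiating the plane wave produces factors $|\beta|2^{-k}\sim \lb\eta\rb^{-1}|\beta|$, which can be absorbed at the cost of corresponding powers in $\lambda_\beta$ or $a_{k,\beta}$. Under hypothesis \eqref{eq:Fouriersupport}, the spatial Fourier transform $\F_x c_{k,\beta}$ is a $\zeta$-average of $\F_x a(\cdot,2^{k-1}\zeta)\tilde\psi_1(\zeta)e^{-iR^{-1}\beta\cdot\zeta}$, so its support inherits the required dyadic localization, giving the claimed Fourier support for $a_{k,\beta}$.

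The main obstacle is the simultaneous balancing of $\ell^p$ summability of $(\lambda_\beta)$, the uniform $C^r_*$ bound on $a_{k,\beta}$, and the uniform derivative bound on $\chi_{k,\beta}$ in \eqref{it:symdecomp3}; this trade-off, between $\beta$-decay used for summability and $\beta$-growth incurred by differentiating the plane wave, is precisely what forces the threshold $l\geq 1+\lceil n/p\rceil$.
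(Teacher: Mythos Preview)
Your approach is correct and is exactly the Coifman--Meyer/Marschall Fourier-series decomposition that the paper defers to: the paper's own proof consists only of the remark that the lemma is essentially \cite[Proposition~2.1]{Marschall88}, with the Fourier-support addendum under \eqref{eq:Fouriersupport} read off from the same construction (as you correctly do, by observing that $\F_x c_{k,\beta}$ is an average of $\F_x a(\cdot,2^{k-1}\zeta)$ over the rescaled annulus).
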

\begin{proof}
The statement is almost contained in \cite[Proposition 2.1]{Marschall88}, although there it is assumed a priori that $a\in \HT^{r,\infty}S^{0}_{1,1/2}$, and there is no claim regarding \eqref{eq:Fouriersupport}. The same proof can be used here (see also the proof of \cite[Theorem 4.1]{Rozendaal23a}). 
\end{proof}

\begin{remark}\label{rem:decompadditional}
Suppose that \eqref{eq:Fouriersupport} holds. It then follows from a classical Sobolev embedding that, due to the compactness of the support of $\F a_{k,\alpha}$, one has $a_{k,\alpha}\in C^{\infty}(\Rn)$ and $a_{k,\alpha}\chi_{k,\alpha}\in C^{t}_{*}S^{0,l-1-\lceil n/p\rceil}_{1,\delta}$ for all $k\in\Z_{+}$, $\alpha\in\Z^{n}$ and $t>0$. Moreover,
\[
\|a_{k,\alpha}\chi_{k,\alpha}\|_{C^{t}_{*}S^{0,l-1-\lceil n/p\rceil}_{1,\delta}}\lesssim \|a\|_{C^{r}_{*}S^{0,l}_{1,\delta}}
\]
for an implicit constant dependent on $k$ and $t$ but independent of $a$ and $\alpha$.
\end{remark}

Next, we describe a symbol smoothing procedure from e.g.~\cite{Taylor91,Taylor23c} that decomposes a rough symbol into a sum of a smooth part and a rough part with additional decay. Let $(\psi_{j})_{j=0}^{\infty}\subseteq C^{\infty}_{c}(\Rn)$ be the Littlewood--Paley decomposition from \eqref{eq:LittlePaley}, and recall from Section \ref{subsec:transforms} that $\ph\in C^{\infty}_{c}(\Rn)$ satisfies $\ph\equiv1$ near zero. For $r>0$, $m\in\R$, $\gamma,\delta\in[0,1]$ with $\gamma\geq\delta$, $a\in C^{r}_{*}S^{m}_{1,\delta}$ and $x,\eta\in\Rn$, set
\[
a^{\sharp}_{\gamma}(x,\eta):=\sum_{k=0}^{\infty}\big(\ph(2^{-\gamma k}D)a(\cdot,\eta)\big)(x)\psi_{k}(\eta)
\]
and
\[
a^{\flat}_{\gamma}(x,\eta):=a(x,\eta)-a^{\sharp}_{\gamma}(x,\eta)=\sum_{k=0}^{\infty}\big((1-\ph)(2^{-\gamma k}D)a(\cdot,\eta)\big)(x)\psi_{k}(\eta).
\]
As is shown in \cite[Lemma 3.4]{Rozendaal22} and \cite[Lemma 4.6]{Hassell-Rozendaal23}, this decomposition has the following properties.

\begin{lemma}\label{lem:smoothing}
Let $r>0$, $m\in\R$, $\gamma,\delta\in[0,1]$ with $\gamma\geq\delta$, and $a\in C^{r}_{*}S^{m}_{1,\delta}$. Then $a^{\sharp}_{\gamma}\in S^{m}_{1,\gamma}$ and $a^{\flat}_{\gamma}\in C^{r}_{*} S^{m-(\gamma-\delta)r}_{1,\gamma}$. Moreover, if $a\in \HT^{r,\infty}S^{m}_{1,\delta}$, then $a^{\flat}_{\gamma}\in \HT^{r,\infty}S^{m-(\gamma-\delta)r}_{1,\gamma}$. If $a\in C^{2}_{-}S^{m}_{1,0}$ is elliptic and homogeneous of degree $m$ for $|\eta|\geq1$, then $a^{\sharp}_{1/2}\in \A^{2}S^{m}_{1,1/2}$ is elliptic and asymptotically homogeneous of degree $m$ with limit $a$.
\end{lemma}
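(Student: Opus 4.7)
The plan is to exploit Littlewood--Paley theory, analyzing the two pieces of the decomposition one frequency band at a time. On the support of $\psi_{k}$ one has $|\eta|\eqsim 2^{k}$, so for each fixed $\eta$ only a bounded number of summands contribute, and the $\eta$-derivatives act on either $\psi_{k}(\eta)$ (gaining $2^{-k|\alpha|}\eqsim\lb\eta\rb^{-|\alpha|}$ per derivative) or on $a(\cdot,\eta)$ (using the assumed bound in Definition \ref{def:symbolrough}\eqref{it:symbolrough1}). Thus the heart of the argument reduces to controlling the two spatial pieces $\ph(2^{-\gamma k}D)a(\cdot,\eta)$ and $(1-\ph)(2^{-\gamma k}D)a(\cdot,\eta)$.

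For $a^{\sharp}_{\gamma}\in S^{m}_{1,\gamma}$, the key observation is that $\ph(2^{-\gamma k}D)a(\cdot,\eta)$ has Fourier support in $\{|\xi|\lesssim 2^{\gamma k}\}$, so Bernstein's inequality yields $\|\partial_{x}^{\beta}\ph(2^{-\gamma k}D)a(\cdot,\eta)\|_{L^{\infty}}\lesssim 2^{\gamma k|\beta|}\|a(\cdot,\eta)\|_{L^{\infty}}\lesssim 2^{\gamma k|\beta|}\lb\eta\rb^{m}$. Combining this with the $\eta$-derivative bookkeeping described above gives the required $\lb\eta\rb^{m-|\alpha|+\gamma|\beta|}$ estimate.

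For the bounds on $a^{\flat}_{\gamma}$, the crucial input is that for $f\in C^{r}_{*}(\Rn)$ one has $\|(1-\ph)(2^{-\gamma k}D)f\|_{L^{\infty}}\lesssim 2^{-\gamma kr}\|f\|_{C^{r}_{*}(\Rn)}$, an immediate consequence of the Littlewood--Paley characterization of $C^{r}_{*}=B^{r}_{\infty,\infty}$. Applied to $a(\cdot,\eta)$ with $\|a(\cdot,\eta)\|_{C^{r}_{*}}\lesssim\lb\eta\rb^{m+\delta r}$, this yields the pointwise gain $\lb\eta\rb^{m-(\gamma-\delta)r}$, and Bernstein together with Leibniz on $\psi_{k}$ handles all mixed derivatives. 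The $C^{r}_{*}$-bound in the $x$ variable required for membership in $C^{r}_{*}S^{m-(\gamma-\delta)r}_{1,\gamma}$ follows from the uniform boundedness of the multipliers $(1-\ph)(2^{-\gamma k}D)$ on $C^{r}_{*}(\Rn)$. For the $\HT^{r,\infty}$ variant the argument is identical once one replaces $C^{r}_{*}$ by $\HT^{r,\infty}$ and uses that $\|(1-\ph)(2^{-\gamma k}D)f\|_{L^{\infty}}\lesssim 2^{-\gamma kr}\|f\|_{\HT^{r,\infty}(\Rn)}$, which follows from the definition of $\HT^{r,\infty}=\lb D\rb^{-r}\bmo$ together with the standard boundedness of $\lb D\rb^{r}(1-\ph)(2^{-\gamma k}D)\lb D\rb^{-r}$ on $\bmo$ with norm $O(2^{-\gamma kr})$.

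For the final claim, Part 1 applied with higher-order Bernstein gives $\|\partial_{\eta}^{\alpha}a^{\sharp}_{1/2}(\cdot,\eta)\|_{C^{2+s}_{-}}\lesssim\lb\eta\rb^{m-|\alpha|+s/2}$ for every $s\geq 0$, since $\ph(2^{-k/2}D)$ produces arbitrarily smooth output and $(2^{k/2})^{s}\eqsim\lb\eta\rb^{s/2}$; this places $a^{\sharp}_{1/2}$ in $\A^{2}S^{m}_{1,1/2}$. Ellipticity transfers from $a$ to $a^{\sharp}_{1/2}$ because their difference $a^{\flat}_{1/2}$ has one lower order by Part 2. For asymptotic homogeneity, one takes $a$ itself as the limit: the condition $a-a^{\sharp}_{1/2}=a^{\flat}_{1/2}\in C^{2}_{-}S^{m-1}_{1,1/2}$ follows by repeating the Part 2 argument with the stronger characterization of $C^{2}_{-}=C^{1,1}$, namely $\|(1-\ph)(\eps D)f\|_{L^{\infty}}\lesssim \eps^{2}\|f\|_{C^{1,1}}$, and the relation $(\eta\cdot\partial_{\eta}-m)a^{\sharp}_{1/2}\in S^{m-1}_{1,1/2}$ is a direct computation using the homogeneity of $a$ for $|\eta|\geq 1$ combined with the fact that the Littlewood--Paley cutoffs $\psi_{k}$ contribute a telescoping error of one lower order. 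The main obstacle is the $C^{2}_{-}$ bound in this last step, since $C^{2}_{-}$ is strictly smaller than $C^{2}_{*}$ for this integer index and the Besov-type Littlewood--Paley argument must be replaced by a Lipschitz estimate on first derivatives.
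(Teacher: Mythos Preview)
The paper does not give its own proof of this lemma; it simply cites \cite[Lemma 3.4]{Rozendaal22} and \cite[Lemma 4.6]{Hassell-Rozendaal23}. Your sketch is the standard symbol-smoothing argument (Bernstein for $a^{\sharp}$, Besov high-frequency decay for $a^{\flat}$, telescoping for asymptotic homogeneity --- note that the paper explicitly records the assumption $\psi_{j}(\xi)=\psi_{0}(2^{-j}\xi)-\psi_{0}(2^{1-j}\xi)$ precisely for this telescoping step), and it is essentially correct.

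One small correction in your $\HT^{r,\infty}$ paragraph: the justification you give for $\|(1-\ph)(2^{-\gamma k}D)f\|_{L^{\infty}}\lesssim 2^{-\gamma kr}\|f\|_{\HT^{r,\infty}}$ is wrong as stated, since $\lb D\rb^{r}(1-\ph)(2^{-\gamma k}D)\lb D\rb^{-r}=(1-\ph)(2^{-\gamma k}D)$ has $\bmo$ operator norm $O(1)$, not $O(2^{-\gamma kr})$. The estimate you want is true, but for the trivial reason that $\HT^{r,\infty}(\Rn)\subseteq C^{r}_{*}(\Rn)$ (see \eqref{eq:HardyZygmund}), so the $L^{\infty}$ decay is inherited from the $C^{r}_{*}$ case you already handled. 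What is genuinely new in the $\HT^{r,\infty}$ variant is the claim $\|\partial_{\eta}^{\alpha}a^{\flat}_{\gamma}(\cdot,\eta)\|_{\HT^{r,\infty}}\lesssim\lb\eta\rb^{m-(\gamma-\delta)r-|\alpha|+\gamma r}$, and this follows because the multipliers $(1-\ph)(2^{-\gamma k}\cdot)$ are uniformly Mihlin and hence uniformly bounded on $\HT^{r,\infty}(\Rn)$.
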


We conclude with a lemma, proved in \cite[Proposition 4.9]{Hassell-Rozendaal23}, that will be used in Section \ref{sec:roughwaves} to move from a second-order equation to a first-order one.

\begin{lemma}\label{lem:squareroot}
Let $A\in \Crtwo S^{2}_{1,0}$ be non-negative, elliptic and homogeneous of degree $2$ for $|\eta|\geq1$. Then there exist a real-valued elliptic $b\in \A^{2}S^{1}_{1,1/2}$ and an $e\in S^{1}_{1,1/2}$ with the following properties:
\begin{enumerate}
\item\label{it:squareroot1} $A^{\sharp}_{1/2}(x,D)=b(x,D)^{2}+e(x,D)$.
\item\label{it:squareroot2} $b$ is asymptotically homogeneous of degree $1$ with real-valued limit $a\in \Crtwo S^{1}_{1,0}$ such that $a(x,\eta)=\sqrt{A(x,\eta)}$ for all $x,\eta\in\Rn$ with $|\eta|\geq 1$.
\end{enumerate}
\end{lemma}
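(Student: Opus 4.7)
The plan is to define $a := \sqrt{A}$, set $b := a^{\sharp}_{1/2}$, and verify that $b(x,D)^{2}$ agrees with $A^{\sharp}_{1/2}(x,D)$ modulo an error in $S^{1}_{1,1/2}$. First I would set $a(x,\eta) := \sqrt{A(x,\eta)}$ for $|\eta|\geq 1$ and extend smoothly to $|\eta|<1$. Ellipticity and non-negativity of $A$ keep the pointwise square root bounded away from zero, so by the chain rule $a \in \Crtwo S^{1}_{1,0}$ is real-valued, elliptic, and homogeneous of degree $1$ for $|\eta|\geq 1$. Setting $b := a^{\sharp}_{1/2}$ and appealing to Lemma \ref{lem:smoothing} immediately yields part (2): $b \in \A^{2}S^{1}_{1,1/2}$ is real-valued, elliptic, and asymptotically homogeneous of degree $1$ with limit $a$. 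The remainder $a^{\flat} := a - b \in \Crtwo S^{0}_{1,1/2}$ will be crucial.

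Next I would invoke the asymptotic calculus for the composition of $S^{m}_{1,1/2}$ symbols to write
\[
b \# b = b^{2} + \frac{1}{i}\sum_{j} \partial_{\eta_{j}} b \cdot \partial_{x_{j}} b + \ldots
\]
The feature that makes $\A^{2}$ useful here is that $\partial_{x}^{\beta}b$ carries no $\delta$-loss for $|\beta|\leq 2$. Consequently the leading correction $\partial_{\eta_{j}}b\cdot\partial_{x_{j}}b$ lies in $S^{0}_{1,1/2}\cdot S^{1}_{1,1/2}=S^{1}_{1,1/2}$ rather than in the generic $S^{3/2}_{1,1/2}$, and similarly for higher-order terms in the expansion. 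Truncating at sufficiently high order and estimating the remainder in the standard way gives $b\#b - b^{2}\in S^{1}_{1,1/2}$.

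Then I would relate $b^{2}$ to $A^{\sharp}_{1/2}$. Squaring $a = b + a^{\flat}$ and using $A^{\sharp}_{1/2} = A - A^{\flat}_{1/2}$ yields
\[
A^{\sharp}_{1/2} - b^{2} = 2\,b\,a^{\flat} + (a^{\flat})^{2} - A^{\flat}_{1/2}.
\]
Each of the three terms on the right satisfies the $S^{1}_{1,1/2}$ size and $\partial_{\eta}$ bounds by the information provided by Lemma \ref{lem:smoothing}, and for $x$-derivatives up to order $2$ this can be checked directly via Leibniz, using the $\Crtwo$ regularity. For higher $x$-derivatives, the sum on the right is actually $C^{\infty}$ jointly, since so are $b^{2}$ and $A^{\sharp}_{1/2}$; a localized paradifferential argument comparing $\phi(2^{-k/2}D)(a(\cdot,\eta)^{2})$ with $(\phi(2^{-k/2}D)a(\cdot,\eta))^{2}$ on each dyadic block $|\eta|\sim 2^{k}$, together with the fact that the commutator $[\phi(2^{-k/2}D),a(\cdot,\eta)]$ gains a factor $2^{-k/2}$ thanks to the $C^{2}$ regularity of $a(\cdot,\eta)$, gives the remaining Hörmander-type estimates. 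Setting $e := b\#b - A^{\sharp}_{1/2}$, the last two steps combine to give $e\in S^{1}_{1,1/2}$ and $b(x,D)^{2}=A^{\sharp}_{1/2}(x,D)+e(x,D)$, as claimed.

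The main obstacle lies in the third step: upgrading the naive term-by-term estimates (each summand is only $\Crtwo$ in $x$) to joint Hörmander $S^{1}_{1,1/2}$ estimates of all orders for the \emph{difference} $A^{\sharp}_{1/2}-b^{2}$, which requires exploiting cancellation between $\phi(2^{-k/2}D)$ and pointwise squaring on each dyadic frequency block. The first two steps, by contrast, are reasonably direct consequences of Lemma \ref{lem:smoothing} and the symbol calculus.
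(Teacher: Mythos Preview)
The paper does not give its own proof of this lemma; it simply cites \cite[Proposition 4.9]{Hassell-Rozendaal23}. So there is no in-text argument to compare against, but let me comment on your strategy versus the more standard route.

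Your choice $b:=(\sqrt{A})^{\sharp}_{1/2}$ works, but it creates the very obstacle you identify. The cleaner option is to reverse the order of operations: take $b:=\sqrt{A^{\sharp}_{1/2}}$ (suitably modified for $|\eta|$ small). Since $A^{\sharp}_{1/2}\in\A^{2}S^{2}_{1,1/2}$ is smooth and elliptic by Lemma~\ref{lem:smoothing}, a Fa\`a di Bruno computation shows $\sqrt{A^{\sharp}_{1/2}}\in\A^{2}S^{1}_{1,1/2}$; asymptotic homogeneity with limit $\sqrt{A}$ follows from $A-A^{\sharp}_{1/2}=A^{\flat}_{1/2}\in C^{2}_{-}S^{1}_{1,1/2}$ via $\sqrt{A}-\sqrt{A^{\sharp}_{1/2}}=A^{\flat}_{1/2}/(\sqrt{A}+\sqrt{A^{\sharp}_{1/2}})$. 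With this choice one has $b^{2}=A^{\sharp}_{1/2}$ \emph{exactly} as symbols for large $\eta$, so your third step disappears entirely and only the composition estimate $b\#b-b^{2}\in S^{1}_{1,1/2}$ survives, which you already handled correctly using the $\A^{2}$ property.

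Regarding your sketch of the third step: the dyadic commutator idea is sound for the size and $\partial_{x}^{\beta}$ bounds, but you have not addressed the $\partial_{\eta}^{\alpha}$ derivatives. These interact with the $\eta$-dependence of the smoothing $\phi(2^{-k/2}D)$ through the Littlewood--Paley cutoffs $\psi_{k}(\eta)$, producing extra terms from $\partial_{\eta}\psi_{k}$ that must be tracked. This can be done, but it adds bookkeeping that the alternative choice of $b$ avoids altogether.
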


\subsection{Preliminary results}\label{subsec:prelimpseudo}

In this subsection we collect a few results, mostly from previous work, that will play a role in the proof of the main result of this section.

Firstly, the following lemma will be used to deal with the smooth term that arises from the symbol smoothing procedure from the previous subsection. 

\begin{lemma}\label{lem:pseudosmooth}
Let $a\in S^{m}_{1,1/2}$, $p\in(0,\infty]$ and $s\in\R$. Then $a(x,D):\HT^{s+m,p}_{FIO}(\Rn)\to \Hps$ and $a(x,D)^{*}:\Hps\to \HT^{s-m,p}_{FIO}(\Rn)$ are bounded. Moreover, if $a$ is real-valued and elliptic, then there exists a $c>0$ such that $a(x,D)+ic:\HT^{s+m,p}_{FIO}(\Rn)\to\Hps$ is invertible.
\end{lemma}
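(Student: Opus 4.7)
For the boundedness of $a(x,D)$, the plan is to invoke Theorem \ref{thm:FIObdd}. Since $S^m_{1,1/2}\subseteq S^m_{1/2,1/2,1}$ and $a(x,D)$ is a pseudodifferential operator, i.e., a Fourier integral operator with linear phase $\Phi(x,\eta)=x\cdot\eta$, a standard composition of pseudodifferential operators shows that $\lb D\rb^{s}a(x,D)\lb D\rb^{-s-m}\in \operatorname{Op} S^0_{1,1/2}\subseteq \operatorname{Op} S^0_{1/2,1/2,1}$. Case \eqref{it:FIObdd3} of Theorem \ref{thm:FIObdd} then yields boundedness on $\Hp$, and conjugating back gives $a(x,D):\HT^{s+m,p}_{FIO}(\Rn)\to\Hps$; the case $p=\infty$ is handled by Remark \ref{rem:extensioninfty}. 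For $a(x,D)^*$, Lemma \ref{lem:Aduality} produces $\tilde a\in S^m_{1,1/2}$ with $a(x,D)^*=\tilde a(x,D)$, and applying the first part (with $s$ replaced by $s-m$) yields $a(x,D)^*:\Hps\to\HT^{s-m,p}_{FIO}(\Rn)$.

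For the invertibility, assume $a$ is real-valued and elliptic. The plan is to first prove that $T_c:=a(x,D)+ic:H^m(\Rn)\to L^2(\Rn)$ is a topological isomorphism for $c$ sufficiently large, then identify $T_c^{-1}$ as a pseudodifferential operator of order $-m$, and finally conclude via the first part of the lemma. Since $a$ is real, Lemma \ref{lem:Aduality} gives $a(x,D)-a(x,D)^*\in\operatorname{Op} S^{m-1}_{1,1/2}$, and expanding
\[
\|T_c f\|_{L^2}^2 = \|a(x,D)f\|_{L^2}^2 + c^2\|f\|_{L^2}^2 - 2c\operatorname{Im}\langle a(x,D)f,f\rangle
\]
and bounding the cross term (trivially for $m\leq 1$, via Ehrling's inequality combined with interpolation between $L^2(\Rn)$ and $H^m(\Rn)$ for $m>1$) yields the coercive bound $\|T_c f\|_{L^2}\geq \delta\|f\|_{H^m}$ for $c$ sufficiently large. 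The same estimate for $T_c^*$ gives surjectivity, hence $L^2$-invertibility.

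To identify $T_c^{-1}\in\operatorname{Op} S^{-m}_{1,1/2}$, the reciprocal $b_c(x,\eta):=(a(x,\eta)+ic)^{-1}$ lies in $S^{-m}_{1,1/2}$ (using the ellipticity of $a$ at infinity together with $|a+ic|\geq c$), and the $S_{1,1/2}$ calculus delivers $b_c(x,D)T_c=I+r_c(x,D)$ with $r_c\in S^{-1/2}_{1,1/2}$. Iterating as $B_{k+1}:=B_k-B_kr_k(x,D)$, starting from $B_0:=b_c(x,D)$ and noting $r_{k+1}=-r_k^2$, produces parametrices $B_N\in\operatorname{Op} S^{-m}_{1,1/2}$ with remainders $R_N\in\operatorname{Op} S^{-M}_{1,1/2}$ for arbitrarily large $M$. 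The identity $T_c^{-1}=B_N-T_c^{-1}R_N$, combined with the $L^2$ invertibility, the first part of the lemma applied to $B_N$, and a bootstrapping argument that uses Theorem \ref{thm:Sobolev} to transfer between the Hardy-for-FIO and classical Hardy scales, shows that $T_c^{-1}:\Hps\to\HT^{s+m,p}_{FIO}(\Rn)$ is bounded once $M$ is chosen large enough.

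The main obstacle is the $L^2$ coercive estimate when $m>1$, where the skew-Hermitian part of $a(x,D)$ has positive order and cannot be bounded directly on $L^2$; the Ehrling inequality $\|f\|_{H^{m-1}}\leq \epsilon\|f\|_{H^m}+C_\epsilon\|f\|_{L^2}$ must be used to absorb the middle-order term into the two-sided inequality, forcing the constant $c$ to depend on the seminorms of $a$ and on $m$. The bootstrapping step that lifts invertibility from $L^2$ to $\Hps$ is technically delicate but follows the sharpness of the parametrix together with standard Sobolev-type arguments; allowing $c$ to depend on $(s,p)$ is harmless since the lemma only asserts existence.
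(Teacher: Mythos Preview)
Your approach is correct and close to the paper's. For the boundedness of $a(x,D)$ you both invoke Theorem~\ref{thm:FIObdd}\,\eqref{it:FIObdd3}. For $a(x,D)^{*}$ you take a slightly different route: you use Lemma~\ref{lem:Aduality} to write $a(x,D)^{*}=\tilde a(x,D)$ with $\tilde a\in S^{m}_{1,1/2}$ and then reapply the first part, whereas the paper observes directly that the Schwartz kernel of $a(x,D)^{*}$ obeys the same off-diagonal bounds as that of $a(x,D)$ and appeals to Proposition~\ref{prop:offsing}. Your argument is cleaner here; the paper's has the advantage of not relying on the $S^{m}_{1,\delta}$ adjoint calculus and of generalizing verbatim to genuine FIOs. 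For the invertibility you spell out in detail what the paper summarizes as ``standard microlocal techniques and the first statement'' (citing \cite[Lemma 4.10]{Hassell-Rozendaal23}): parametrix plus $L^{2}$ coercivity plus bootstrap.

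One inaccuracy: your appeal to Lemma~\ref{lem:Aduality} for $a(x,D)-a(x,D)^{*}\in\operatorname{Op}S^{m-1}_{1,1/2}$ is not justified. That conclusion in Lemma~\ref{lem:Aduality} is stated only under the extra hypothesis $a\in\A^{r}S^{m}_{1,1/2}$ with $r\geq 1$; for a generic $a\in S^{m}_{1,1/2}$ the asymptotic expansion of the adjoint only gives $\tilde a-\overline a\in S^{m-1/2}_{1,1/2}$, since each $x$-derivative costs $\lb\eta\rb^{1/2}$. This does not break your coercivity argument---the cross term is then controlled by $\|f\|_{H^{(m-1/2)/2}}^{2}$, and interpolation between $L^{2}$ and $H^{m}$ still absorbs it because the interpolation exponent $\theta=\tfrac{1}{2}-\tfrac{1}{4m}<\tfrac{1}{2}$ keeps the $L^{2}$ coefficient growing like $c^{2}$ against a lower power of $c$---but the order should be corrected.
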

\begin{proof}
The first statement follows from the invariance of $\Hps$ under Fourier integral operators of order zero in standard form, using the additional condition that the phase function of $a(x,D)$ is linear in the fiber variable (see \cite{LiRoSo25b}). The second statement is proved in the same manner. Indeed, although $a(x,D)^{*}$ is not directly expressed as a Fourier integral operator in standard form, its Schwartz kernel satisfies the same type of bounds as that of $a(x,D)$ (see \cite[Proposition A.1]{Hassell-Rozendaal23} or the proof of \cite[Theorem 5.1]{HaPoRo20}), allowing for an application of \cite[Proposition 2.16]{LiRoSo25b}. 

The final statement is contained in \cite[Lemma 4.10]{Hassell-Rozendaal23} for $p\geq1$. The proof for $p<1$ is identical, using standard microlocal techniques and the first statement.
\end{proof}

Next, the following proposition, about rough pseudodifferential operators on classical function spaces, extends part of the main result of \cite{Marschall88} (see also \cite{Bourdaud82,Taylor91}).

\begin{proposition}\label{prop:pseudoclass}
Let $r>0$, $m,s\in\R$, $\delta\in[0,1]$ and $p\in(0,\infty]$ be such that $(2-\delta)r>n(\frac{1}{p}-1)$. Then there exist $l\in\N$ and $C\geq0$ such that the following statements hold for each $a\in C^{r}_{*}S^{m,l}_{1,\delta}$.
\begin{enumerate} 
\item\label{it:pseudoclass1} If $n\max(0,\frac{1}{p}-1)-(1-\delta)r<s<r$, then 
\begin{equation}\label{eq:pseudoclass1}
a(x,D):\HT^{s+m,p}(\Rn)\to\HT^{s,p}(\Rn)
\end{equation}
and $\|a(x,D)\|_{\La(\HT^{s+m,p}(\Rn),\HT^{s,p}(\Rn))}\leq C\|a\|_{C^{r}_{*}S^{m,l}_{1,\delta}}$.
\item\label{it:pseudoclass2} If $n\max(0,\frac{1}{p}-1)-r<s<(1-\delta)r$, then
\begin{equation}\label{eq:pseudoclass2}
a(x,D)^{*}:\HT^{s,p}(\Rn)\to\HT^{s-m,p}(\Rn)
\end{equation}
and $\|a(x,D)^{*}\|_{\La(\HT^{s,p}(\Rn),\HT^{s-m,p}(\Rn))}\leq C\|a\|_{C^{r}_{*}S^{m,l}_{1,\delta}}$.
\item\label{it:pseudoclass3} If $\delta<1$ and $a\in \Hrinf S^{m}_{1,\delta}$, then \eqref{eq:pseudoclass1} also holds for $s=r$. 
If, additionally, $p\geq1$, then \eqref{eq:pseudoclass2} also holds for $s=-r$. 
\item\label{it:pseudoclass4} If $\delta<1$ and $a=b^{\flat}_{\delta}$ for some $b\in\Hrinf (\Rn)$, and if $p\geq1$, then \eqref{eq:pseudoclass1} holds for all $-(1-\delta)r\leq s\leq r$, with $m=-\delta r$.
\end{enumerate}
\end{proposition}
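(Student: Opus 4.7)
The plan is to follow the strategy of \cite{Marschall88}, relying on the symbol decomposition in Lemma \ref{lem:symdecomp}. After factoring out the Fourier multiplier $\lb D\rb^{m}$, we may reduce to $m=0$. Applying Lemma \ref{lem:symdecomp} to $a\in C^{r}_{*}S^{0,l}_{1,\delta}$ (with $l$ chosen sufficiently large in terms of $p$) yields
\[
a(x,\eta)=\sum_{\beta\in\Z^{n}}\lambda_{\beta}\sum_{k=0}^{\infty}a_{k,\beta}(x)\chi_{k,\beta}(\eta),
\]
with $(\lambda_{\beta})_{\beta}\in\ell^{p}(\Z^{n})$ controlled by $\|a\|_{C^{r}_{*}S^{0,l}_{1,\delta}}$, each $\chi_{k,\beta}$ a Littlewood--Paley-type cutoff supported at frequency scale $2^{k}$, and $\|a_{k,\beta}\|_{C^{r}_{*}(\Rn)}\lesssim 2^{k\delta r}$ uniformly in $\beta$. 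It thus suffices to bound, uniformly in $\beta$, the paraproduct-type operator
\[
T_{\beta}f(x):=\sum_{k=0}^{\infty}a_{k,\beta}(x)\chi_{k,\beta}(D)f(x)
\]
on $\HT^{s,p}(\Rn)$ in the appropriate range of $s$, and then to assemble using the $\ell^{p}$ quasi-triangle inequality.

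For \eqref{it:pseudoclass1} I would bound $T_{\beta}$ via a paraproduct decomposition, splitting each summand $a_{k,\beta}\cdot \chi_{k,\beta}(D)f$ into pieces based on the relative frequencies of the two factors. The upper bound $s<r$ controls the interactions where $a_{k,\beta}$ is frequency-higher, using its $C^{r}_{*}(\Rn)$ regularity; the lower bound $s>n\max(0,\tfrac{1}{p}-1)-(1-\delta)r$ controls the converse interactions, where the Littlewood--Paley sum must absorb the growth $2^{k\delta r}$ of $\|a_{k,\beta}\|_{C^{r}_{*}}$; and the term $n(\tfrac{1}{p}-1)$ for $p\leq 1$ reflects the atomic decomposition of $\HT^{p}(\Rn)$, which requires additional vanishing moments below the critical index. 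Part \eqref{it:pseudoclass2} follows either by applying the same paraproduct machinery to the adjoint symbol (obtained via a classical asymptotic expansion as in the proof of Lemma \ref{lem:Aduality}), or, for $p\in[1,\infty]$, by duality from \eqref{it:pseudoclass1} with the Sobolev indices shifted accordingly.

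For \eqref{it:pseudoclass3} and \eqref{it:pseudoclass4}, the improvement comes from replacing $C^{r}_{*}(\Rn)$ by the strictly smaller space $\HT^{r,\infty}(\Rn)$. Multiplication by an element of $\HT^{r,\infty}(\Rn)$ is bounded on $\HT^{r,p}(\Rn)$, whereas multiplication by a generic $C^{r}_{*}(\Rn)$ function is only bounded on $\HT^{s,p}(\Rn)$ for $s<r$; this gain propagates through the symbol decomposition to yield the endpoint $s=r$ in \eqref{it:pseudoclass3}, with the dual endpoint $s=-r$ following (for $p\geq 1$) by the duality argument of \eqref{it:pseudoclass2}. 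For \eqref{it:pseudoclass4} the Fourier support of $a=b^{\flat}_{\delta}$ lies in $\{|\xi|\geq c|\eta|^{\delta}\}$, and a variant of Lemma \ref{lem:symdecomp} localizes each $a_{k,\beta}$ at frequency scale $\eqsim 2^{k\delta}$. Combined with the bound $\|\psi_{j}(D)b\|_{L^{\infty}(\Rn)}\lesssim 2^{-jr}\|b\|_{\HT^{r,\infty}(\Rn)}$, this extra localization supplies the factor $2^{-k\delta r}$ that precisely matches the symbol order $m=-\delta r$, yielding the full range $-(1-\delta)r\leq s\leq r$ (with duality again handling the lower endpoint when $p\geq 1$).

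The main obstacle will be tracking the structural refinements required for \eqref{it:pseudoclass3} and \eqref{it:pseudoclass4}: one must verify that the $\HT^{r,\infty}(\Rn)$ regularity of $a$ passes uniformly to each atom $a_{k,\beta}$ (this should be inspected directly from the proof of Lemma \ref{lem:symdecomp}, which involves only convolution with translates of a fixed smooth bump), and that the Fourier support in \eqref{eq:Fouriersupport}, or its $\delta$-analogue, propagates cleanly through the decomposition. Once this is in place, assembling the pieces via standard Littlewood--Paley characterizations of $\HT^{s,p}(\Rn)$ and summing in $\beta$ with the $\ell^{p}$ quasi-triangle inequality completes the proof.
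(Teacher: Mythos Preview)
Your approach is essentially the method of \cite{Marschall88}, and as a from-scratch argument it is largely on the right track. However, the paper's own proof takes a much shorter route: parts \eqref{it:pseudoclass1} and \eqref{it:pseudoclass2} for $0<p<\infty$ are simply quoted from \cite[Theorems 2.3 and 2.4]{Marschall88} (with $p=\infty$ by duality), the first statement of \eqref{it:pseudoclass3} from \cite[Theorem 2.2]{Marschall88}, and \eqref{it:pseudoclass4} from \cite[Lemma 3.1]{Rozendaal23a}. The only case actually argued in the paper is the second statement of \eqref{it:pseudoclass3} for $p=1$, and that is a three-line duality argument: since $\HT^{-r,1}(\Rn)^{*}=\HT^{r,\infty}(\Rn)$, one tests $a(x,D)^{*}f$ against $g\in\HT^{r,\infty}(\Rn)$ and invokes the already-established boundedness $a(x,D):\HT^{r,\infty}(\Rn)\to\HT^{r,\infty}(\Rn)$.

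One point in your sketch does need correction. For part \eqref{it:pseudoclass2} you propose to obtain the adjoint symbol ``via a classical asymptotic expansion as in the proof of Lemma \ref{lem:Aduality}''. That lemma applies only to \emph{smooth} symbols in $S^{m}_{1,\delta}$; for $a\in C^{r}_{*}S^{m,l}_{1,\delta}$ there is no asymptotic expansion for $a(x,D)^{*}$, since differentiating $a$ in $x$ costs regularity. The way \cite{Marschall88} (and the later paraproduct literature) handles the adjoint is to write it directly in the elementary-symbol form
\[
a(x,D)^{*}f=\sum_{\beta}\lambda_{\beta}\sum_{k}\overline{\chi_{k,\beta}}(D)\big(\overline{a_{k,\beta}}\cdot\widetilde{\psi}_{k}(D)f\big),
\]
and analyze this expression by the same Littlewood--Paley/paraproduct machinery. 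Your duality alternative works for $p\geq 1$, but for $p<1$ the direct argument is needed, and it does not pass through Lemma \ref{lem:Aduality}.
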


The assumption $n(\frac{1}{p}-1)< (2-\delta)r$ ensures that the conditions on $s$ in \eqref{it:pseudoclass1} and \eqref{it:pseudoclass2} can be satisfied. 
\begin{proof}
For $0<p<\infty$, apart from the statements regarding the quasi-norm bounds, \eqref{it:pseudoclass1} is \cite[Theorem 2.3]{Marschall88}, and \eqref{it:pseudoclass2} is \cite[Theorem 2.4]{Marschall88}. The quasi-norm bounds can be obtained 
from the proof. 
Then \eqref{it:pseudoclass1} and \eqref{it:pseudoclass2} follow for $p=\infty$ by duality. Moreover, the first statement of \eqref{it:pseudoclass3} is contained in \cite[Theorem 2.2]{Marschall88}, which implies the second statement for $1<p\leq \infty$. Finally, \eqref{it:pseudoclass4} is contained in \cite[Lemma 3.1]{Rozendaal23a}. Hence it remains to prove the second statement in \eqref{it:pseudoclass3} for $p=1$. 

We may suppose that $m=0$. As already noted, \eqref{it:pseudoclass2} is not void, which implies that $a(x,D)^{*}f\in \HT^{-r,1}(\Rn)$ for all $f\in\Sw(\Rn)$. We may then apply the first statement in \eqref{it:pseudoclass3} to write
\begin{equation}\label{eq:dualnotsmallp}
\begin{aligned}
\|a(x,D)^{*}f\|_{\HT^{-r,1}(\Rn)}&\eqsim\sup |\lb a(x,D)^{*}f,g\rb|=\sup |\lb f,a(x,D)g\rb|\\
&\leq \|f\|_{\HT^{-r,1}(\Rn)}\|a(x,D)g\|_{\HT^{r,\infty}(\Rn)}\lesssim \|f\|_{\HT^{-r,1}(\Rn)}, 
\end{aligned}
\end{equation} 
where the suprema are taken over all $g\in\HT^{r,\infty}(\Rn)$ with $\|g\|_{\HT^{r,\infty}(\Rn)}\leq 1$. This suffices, since the Schwartz functions are dense in $\HT^{-r,1}(\Rn)$. 
\end{proof}

\begin{remark}\label{rem:pseudopinfty}
Although $a(x,D)f$ was defined in \eqref{eq:pseudodef} only for $f\in\Sw(\Rn)$, in the proof of Proposition \ref{prop:pseudoclass} we implicitly extended the definition to $f\in\HT^{s,\infty}(\Rn)$, by adjoint action. The same will apply to the action of $a(x,D)$ on $\HT^{s,\infty}_{FIO}(\Rn)$. 

Note also that the analogue of the first equivalence in \eqref{eq:dualnotsmallp} fails for $\HT^{-r,p}(\Rn)$ if $p<1$, because then $\HT^{-r,p}(\Rn)$ is not a Banach space. More concretely, 
\[
(\HT^{-r,p}(\Rn))^{*}=C^{n(\frac{1}{p}-1)+r}_{*}(\Rn)=B^{n(\frac{1}{p}-1)+r}_{\infty,\infty}(\Rn)=(B_{1,1}^{-n(\frac{1}{p}-1)-r}(\Rn))^{*},
\]
by \cite[Theorems 2.11.2 and 2.11.3]{Triebel10}. Since $B_{1,1}^{-n(\frac{1}{p}-1)-r}(\Rn)$ is a Banach space, 
\[
\sup\{|\lb a(x,D)^{*}f,g\rb|\mid \|g\|_{(\HT^{-r,p}(\Rn))^{*}}\leq 1\}\eqsim \|a(x,D)^{*}f\|_{B_{1,1}^{-n(1/p-1)-r}(\Rn)}
\] 
for all $f\in \Sw(\Rn)$. 
\end{remark}

By combining Proposition \ref{prop:pseudoclass} with \eqref{eq:Sobolev}, one obtains the following extension of \cite[Proposition 3.3]{Rozendaal23a} and \cite[Proposition 4.5]{Rozendaal22}. 

\begin{corollary}\label{cor:pseudoclassFIO}
Let $r>0$, $m,s\in\R$, $\delta\in[0,1]$ and $p\in(0,\infty]$ be such that $(2-\delta)r>n(\frac{1}{p}-1)$. Then there exist $l\in\N$ and $C\geq0$ such that the following statements hold for each $a\in C^{r}_{*}S^{m,l}_{1,\delta}$.
\begin{enumerate} 
\item\label{it:pseudoclassFIO1} If $n\max(0,\frac{1}{p}-1)-(1-\delta)r-s(p)<s<r-s(p)$, then
\begin{equation}\label{eq:pseudoclassFIO1}
a(x,D):\HT^{s+2s(p)+m,p}_{FIO}(\Rn)\to\Hps
\end{equation}
and $\|a(x,D)\|_{\La(\HT^{s+2s(p)+m,p}_{FIO}(\Rn),\HT^{s,p}(\Rn))}\leq C\|a\|_{C^{r}_{*}S^{m,l}_{1,\delta}}$.
\item\label{it:pseudoclassFIO2} If $n\max(0,\frac{1}{p}-1)-r+s(p)<s<(1-\delta)r+s(p)$, then
\begin{equation}\label{eq:pseudoclassFIO2}
a(x,D)^{*}:\HT^{s,p}_{FIO}(\Rn)\to\HT^{s-2s(p)-m,p}_{FIO}(\Rn)
\end{equation}
and $\|a(x,D)^{*}\|_{\La(\HT^{s,p}_{FIO}(\Rn),\HT^{s-2s(p)-m,p}_{FIO}(\Rn))}\leq C\|a\|_{C^{r}_{*}S^{m,l}_{1,\delta}}$.
\item\label{it:pseudoclassFIO3} If $\delta<1$ and $a\in \Hrinf S^{m}_{1,\delta}$, then \eqref{eq:pseudoclassFIO1} also holds for $s=r-s(p)$. If, additionally, $p\geq1$, then \eqref{eq:pseudoclassFIO2} also holds for $s=n\max(0,\frac{1}{p}-1)-r+s(p)$. 
\item\label{it:pseudoclassFIO4} If $\delta<1$ and $a=b^{\flat}_{\delta}$ for some $b\in\Hrinf (\Rn)$, and if $p\geq1$, then \eqref{eq:pseudoclassFIO1} holds for all $-(1-\delta)r-s(p)\leq s\leq r-s(p)$, with $m=-\delta r$.
\end{enumerate}
\end{corollary}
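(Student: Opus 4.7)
The strategy is clear from the way the statement packages together Proposition \ref{prop:pseudoclass} and Theorem \ref{thm:Sobolev}: one sandwiches the action of $a(x,D)$ on Hardy spaces for FIOs between two applications of the Sobolev embeddings, using the classical boundedness of rough pseudodifferential operators on the $\HT^{s,p}(\Rn)$ scale as the engine. Concretely, for item \eqref{it:pseudoclassFIO1} I would, given $f\in\HT^{s+2s(p)+m,p}_{FIO}(\Rn)$, first apply the second embedding in Theorem \ref{thm:Sobolev} with parameter $s+2s(p)+m$ to conclude $f\in\HT^{s+s(p)+m,p}(\Rn)$. I would then invoke Proposition \ref{prop:pseudoclass}\eqref{it:pseudoclass1} with the shifted smoothness parameter $s'=s+s(p)$: the required hypothesis $n\max(0,\frac{1}{p}-1)-(1-\delta)r<s'<r$ is exactly a rewriting of the assumed range $n\max(0,\frac{1}{p}-1)-(1-\delta)r-s(p)<s<r-s(p)$, so it gives $a(x,D)f\in\HT^{s+s(p),p}(\Rn)$ with the quantitative bound. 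Finally, I would apply the first embedding of Theorem \ref{thm:Sobolev} to land in $\Hps$, and chain the three operator norm bounds to obtain the claimed quasi-norm estimate with constant proportional to $\|a\|_{C^{r}_{*}S^{m,l}_{1,\delta}}$.

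For item \eqref{it:pseudoclassFIO2} I would follow the same sandwich, but with the roles of the two Sobolev embeddings reversed, since now the output is an FIO Hardy space. Starting from $f\in\HT^{s,p}_{FIO}(\Rn)\subseteq\HT^{s-s(p),p}(\Rn)$, Proposition \ref{prop:pseudoclass}\eqref{it:pseudoclass2} with the shifted index $s'=s-s(p)$ requires $n\max(0,\frac{1}{p}-1)-r<s-s(p)<(1-\delta)r$, again matching the stated range after a shift by $s(p)$. This yields $a(x,D)^{*}f\in\HT^{s-s(p)-m,p}(\Rn)$, after which the first embedding of Theorem \ref{thm:Sobolev} (with parameter $s-2s(p)-m$) places the result in $\HT^{s-2s(p)-m,p}_{FIO}(\Rn)$, with the correct norm inequality.

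Items \eqref{it:pseudoclassFIO3} and \eqref{it:pseudoclassFIO4} are immediate once \eqref{it:pseudoclassFIO1} and \eqref{it:pseudoclassFIO2} are set up: one simply repeats the sandwich arguments above but feeds in the corresponding endpoint cases of Proposition \ref{prop:pseudoclass}\eqref{it:pseudoclass3} and \eqref{it:pseudoclass4}. The only bookkeeping is to verify that the translated endpoint conditions on $s$ match those in the corollary, which again amounts to the shift by $s(p)$ at each end of the interval.

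I do not expect any real obstacle in this proof: the content is entirely contained in the two auxiliary results being invoked. The only delicate point is the translation of the strict inequalities on $s$ under the $s(p)$-shift, and ensuring that in item \eqref{it:pseudoclassFIO2} the constraint $(2-\delta)r>n(\frac{1}{p}-1)$ guarantees a nonempty range; since this is inherited verbatim from Proposition \ref{prop:pseudoclass}, no extra work is needed. A short final remark should note that, by Remark \ref{rem:pseudopinfty}, the definition of $a(x,D)$ extends by adjoint action to the spaces $\HT^{s,\infty}_{FIO}(\Rn)$ in the case $p=\infty$, so the statements remain meaningful throughout $p\in(0,\infty]$.
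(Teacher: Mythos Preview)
Your proposal is correct and is exactly the approach the paper takes: the corollary is stated as a direct combination of Proposition \ref{prop:pseudoclass} with the Sobolev embeddings of Theorem \ref{thm:Sobolev}, and your sandwich argument with the $s(p)$-shift is precisely how those two results fit together. Your write-up is in fact more detailed than the paper's, which simply cites the two ingredients without spelling out the shift.
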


\begin{remark}\label{rem:flatflat}
We will also use that the conclusion of Corollary \ref{cor:pseudoclassFIO} \eqref{it:pseudoclassFIO4} holds if $a=((b^{\flat}_{\delta'})^{\flat}_{\delta'})^{\flat}_{\delta}$, or if $a=(((b^{\flat}_{\delta''})^{\flat}_{\delta'})^{\flat}_{\delta'})^{\flat}_{\delta}$, for some $b\in\Hrinf(\Rn)$ and $0\leq \delta''\leq \delta'\leq \delta<1$. The statement in the former case was already mentioned in \cite[Remark 3.4]{Rozendaal23a} and \cite[Remark 4.6]{Rozendaal22}, and the argument for the latter case is analogous.
\end{remark}

Finally, we state a proposition concerning the main result of \cite{Rozendaal22}.

\begin{proposition}\label{prop:pseudolargep}
Let $r,c>0$, $m,s\in\R$ and $p\in(1,\infty)$. Set
\[
\sigma:=\begin{cases}
0&\text{if }r>4s(p),\\
2s(p)-\frac{r}{2}+\veps&\text{if }r\leq 4s(p),
\end{cases}
\]
for $\veps\in(0,r/2]$, and $\gamma:=\frac{1}{2}+\frac{2s(p)-\sigma}{r}$. Then there exist $l\in\N$ and $C\geq 0$ such that the following statements hold for each $a\in C^{r}_{*}S^{m,l}_{1,1/2}$ satisfying
\[
\supp(\F a(\cdot,\eta))\subseteq\{\xi\in\Rn\mid c|\eta|^{1/2}\leq|\xi|\leq \tfrac{1}{16}(1+|\eta|)^{\gamma}\}
\]
for all $\eta\in\Rn$.
\begin{enumerate}
\item\label{it:pseudolargep1} One has 
\[
a(x,D):\HT^{s+\sigma+m,p}_{FIO}(\Rn)\to \Hps
\]
and $\|a(x,D)\|_{\La(\HT^{s+\sigma+m,p}_{FIO}(\Rn),\Hps)}\leq C\|a\|_{C^{r}_{*}S^{m,l}_{1,1/2}}$.
\item\label{it:pseudolargep2}
One has 
\[
a(x,D)^{*}:\HT^{s,p}_{FIO}(\Rn)\to\HT^{s-\sigma-m,p}_{FIO}(\Rn)
\]
and $\|a(x,D)^{*}\|_{\La(\HT^{s,p}_{FIO}(\Rn),\HT^{s-\sigma-m,p}_{FIO}(\Rn))}\leq C\|a\|_{C^{r}_{*}S^{m,l}_{1,1/2}}$.
\end{enumerate}
\end{proposition}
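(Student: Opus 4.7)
The plan is to treat this as essentially the main theorem of \cite{Rozendaal22}, and to reduce it to two endpoint statements joined by complex interpolation. The decisive structural feature is the spectral gap $\supp\F a(\cdot,\eta)\subseteq\{c|\eta|^{1/2}\leq|\xi|\leq(1+|\eta|)^\gamma/16\}$: the lower bound ensures that differentiating in $x$ costs at most $|\eta|^{1/2}$ (so $a$ sits inside $S^m_{1,1/2}$ as far as $x$-derivatives are concerned), while the upper bound controls how far the second-microlocal support spreads off the graph of the identity in phase space, which is what makes the wave-packet kernel decay with the correct rate.

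First I would apply the dyadic decomposition of Lemma \ref{lem:symdecomp}, together with Remark \ref{rem:decompadditional}, to write $a(x,\eta)=\sum_\beta\lambda_\beta\sum_k a_{k,\beta}(x)\chi_{k,\beta}(\eta)$ with each $a_{k,\beta}$ spatially Fourier-localised in the annulus $\{c2^{(k-2)/2}\leq|\xi|\leq 2^{k\gamma-3}\}$ and $\chi_{k,\beta}$ a Littlewood--Paley-type cutoff at frequency $2^k$. For the smooth endpoint of the interpolation I would use that if $a\in S^m_{1,\gamma}$ then $a\in S^m_{1/2,1/2,1}$, so Theorem \ref{thm:FIObdd} gives boundedness on $\Hps$ with no loss. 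For the rough endpoint, I would conjugate $a_{k,\beta}(x,D)\chi_{k,\beta}(D)$ with the wave packet transforms $W,V$ and verify that the resulting kernel on $\Spp$ satisfies the off-diagonal decay required by Proposition \ref{prop:offsing}, uniformly in $k$ and $\beta$, after paying $\sigma$ derivatives. The support bounds on $a_{k,\beta}$ enter through a Bernstein-type argument that trades spatial regularity for decay along the anisotropic metric of Section \ref{subsec:metric}.

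Next, with these two endpoints in hand, I would apply Lemma \ref{lem:symbolinter} to the analytic family $a_z$ with parameters $\kappa$ and $\lambda$ chosen so that $a_0=a$ and $a_1$ lies in a sufficiently smooth subclass handled by the first step. Stein's interpolation theorem, combined with Proposition \ref{prop:HpFIOint}, then produces the $\Hps$-bound at the intermediate value of $p$, with the derivative loss exactly equal to the stated $\sigma$; the relation $\gamma=\tfrac{1}{2}+(2s(p)-\sigma)/r$ is precisely the balance that makes this identification work, and the distinction between $r>4s(p)$ and $r\leq 4s(p)$ corresponds to whether the smooth endpoint dominates (so $\sigma=0$) or one must pay the arbitrarily small excess $\veps$ to keep the interpolated derivative loss strictly below $r/2$. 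Part \eqref{it:pseudolargep2} for $a(x,D)^*$ then follows from part \eqref{it:pseudolargep1} by dualising against $\HT^{-s,p'}_{FIO}(\Rn)$ via Proposition \ref{prop:HpFIOdual}, after using Lemma \ref{lem:Aduality} to replace $a(x,D)^*$ by a pseudodifferential operator whose symbol lies in the same rough class modulo lower-order terms handled by Proposition \ref{prop:pseudoclass}.

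The hard part will be the rough-endpoint kernel estimate: one needs to quantify how the spectral gap translates into off-diagonal decay of $W\,a_{k,\beta}(x,D)\chi_{k,\beta}(D)\,V$, uniformly in the dyadic index, and to keep track of the precise power of $\|a\|_{C^r_*S^{m,l}_{1,1/2}}$ that appears. The complex interpolation itself is then mostly bookkeeping, provided one carries the exponential weights $e^{(\kappa z+\lambda)^2}$ in $a_z$ so that the family depends admissibly on $z$ in the strip \eqref{eq:St} and the endpoint operator norms grow at most polynomially on its boundary.
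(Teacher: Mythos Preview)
The paper's proof is essentially a two-line citation: part \eqref{it:pseudolargep2} follows from part \eqref{it:pseudolargep1} by Banach-space duality via Proposition \ref{prop:HpFIOdual} (noting that $s(p)=s(p')$, so $\sigma$ is unchanged when $p$ is replaced by $p'$), and part \eqref{it:pseudolargep1} is identified with the core step in the proof of \cite[Theorem 5.1]{Rozendaal22}. Your proposal attempts to reconstruct what lies behind that citation, and you correctly identify the analytic family of Lemma \ref{lem:symbolinter}, the decomposition of Lemma \ref{lem:symdecomp}, and complex interpolation as the main tools.

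However, two of your specific steps do not work. For part \eqref{it:pseudolargep2} you invoke Lemma \ref{lem:Aduality}, but that lemma applies only to smooth symbols in $S^{m}_{1,\delta}$ or $\A^{r}S^{m}_{1,\delta}$; for rough $a\in C^{r}_{*}S^{m,l}_{1,1/2}$ the asymptotic expansion for $a(x,D)^{*}$ involves arbitrarily many $x$-derivatives of $a$ and is not available. Plain duality, using $(\HT^{s,p}_{FIO}(\Rn))^{*}=\HT^{-s,p'}_{FIO}(\Rn)$ for $1<p<\infty$, already gives \eqref{it:pseudolargep2} from \eqref{it:pseudolargep1} applied at the conjugate exponent, with no symbolic calculus for the adjoint required. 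Second, your description of the endpoints for \eqref{it:pseudolargep1} is not correct. The inclusion $S^{m}_{1,\gamma}\subseteq S^{m}_{1/2,1/2,1}$ fails whenever $\gamma>1/2$ (which is the case here unless $p=2$), since an $x$-derivative in $S^{m}_{1,\gamma}$ costs $\lb\eta\rb^{\gamma}$ whereas $S^{m}_{1/2,1/2,1}$ tolerates only $\lb\eta\rb^{1/2}$; hence neither Theorem \ref{thm:FIObdd} nor Lemma \ref{lem:pseudosmooth} applies at your smooth endpoint. And Proposition \ref{prop:offsing} yields \emph{lossless} bounds on $\Hps$ once its kernel hypothesis is met, so it cannot supply a rough endpoint that loses a positive number of derivatives. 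In \cite{Rozendaal22} the endpoints are obtained instead through the characterization of $\Hps$ in terms of the parabolic localizations $\ph_{\w}(D)f$ (as in Theorem \ref{thm:equivchar}) together with direct control of the angular spreading that the spectral gap affords; the interpolation then moves the regularity parameter at fixed $p$ rather than passing between a phase-space kernel estimate and a smooth FIO bound.
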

\begin{proof}
By duality, cf.~\eqref{eq:dualHpFIO}, \eqref{it:pseudolargep2} follows from \eqref{it:pseudolargep1}. On the other hand, without the parameter $l$ and the norm bounds, \eqref{it:pseudolargep1} is the core of the proof of \cite[Theorem 5.1]{Rozendaal22}. In fact, the first step of the proof of that theorem reduces matters to the setting of this proposition. As is already stated before the theorem, one can then indeed add the parameter $l$ and the norm bounds.
\end{proof}


\subsection{A preliminary bound on $\Hp$ for $p\leq 1$}

In this subsection we obtain a first bound for rough pseudodifferential operators acting on $\Hp$ for $p\leq 1$. Although this estimate is weaker than that in our main result, our proof of the stronger bound relies crucially on the estimates that we will obtain here.

\begin{theorem}\label{thm:pseudosmallp}
Let $r,c>0$, $m,s\in\R$, $p\in(0,1]$ and $\gamma\in[1/2,1]$. Set
\begin{equation}\label{eq:tau}
\tau:=\begin{cases}
0&\text{if }r>\frac{2n}{p},\\
(\frac{2n}{p}-r)(\gamma-\frac{1}{2})+\veps&\text{if }r\leq \frac{2n}{p},
\end{cases}
\end{equation} 
for $\veps>0$. Then there exist $l\in\N$ and $C\geq0$ such that the following statements hold for each $a\in C^{r}_{*}S^{m,l}_{1,1/2}$ satisfying
\[
\supp(\F a(\cdot,\eta))\subseteq\{\xi\in\Rn\mid c|\eta|^{1/2}\leq|\xi|\leq \tfrac{1}{16}(1+|\eta|)^{\gamma}\}
\]
for all $\eta\in\Rn$.
\begin{enumerate}
\item\label{it:pseudosmallp1} One has 
\[
a(x,D):\HT^{s+\tau+m,p}_{FIO}(\Rn)\to \Hps
\]
and $\|a(x,D)\|_{\La(\HT^{s+\tau+m,p}_{FIO}(\Rn),\Hps)}\leq C\|a\|_{C^{r}_{*}S^{m,l}_{1,1/2}}$. 
\item\label{it:pseudosmallp2} One has
\[
a(x,D)^{*}:\HT^{s,p}_{FIO}(\Rn)\to\HT^{s-\tau-m,p}_{FIO}(\Rn)
\]
and $\|a(x,D)^{*}\|_{\La(\HT^{s,p}_{FIO}(\Rn),\HT^{s-\tau-m,p}_{FIO}(\Rn))}\leq C\|a\|_{C^{r}_{*}S^{m,l}_{1,1/2}}$.
\end{enumerate}
\end{theorem}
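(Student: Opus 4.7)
The plan is to reduce first to $m=0$ and $s=0$ by noting that $\lb D\rb^{s}$ and $\lb\eta\rb^{-m}$ act as smooth Fourier multipliers whose effect is controlled by Proposition \ref{prop:pseudoclass} and Theorem \ref{thm:Sobolev}, and then to apply the Coifman--Meyer style symbol decomposition of Lemma \ref{lem:symdecomp}. Under the given Fourier-support hypothesis on $a$, this yields
\[
a(x,\eta) = \sum_{\beta\in\Z^{n}}\lambda_{\beta}\sum_{k=0}^{\infty}a_{k,\beta}(x)\chi_{k,\beta}(\eta),
\]
with $(\lambda_{\beta})_{\beta}\in\ell^{p}(\Z^{n})$, $\chi_{k,\beta}$ frequency-localized near $2^{k}$, and $\supp(\F a_{k,\beta})\subseteq\{c 2^{(k-2)/2}\leq |\xi|\leq 2^{k\gamma-3}\}$, all with uniform bounds (Remark \ref{rem:decompadditional}). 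Since $p\leq 1$, the $p$-subadditivity of $\|\cdot\|_{\Hp}^{p}$ reduces matters to a uniform-in-$\beta$ bound for the operator $T_{\beta}f:=\sum_{k=0}^{\infty}a_{k,\beta}\,\chi_{k,\beta}(D)f$ of the form $\|T_{\beta}f\|_{\Hp}\lesssim\|f\|_{\HT^{\tau,p}_{FIO}(\Rn)}$, and analogously for $T_{\beta}^{*}$.

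To bound $T_{\beta}$, I would use the square-function characterization of $\Hp$ from Remark \ref{rem:equivnorm}. The low-frequency contribution $q(D)T_{\beta}f$ is handled by Proposition \ref{prop:pseudoclass} together with Theorem \ref{thm:Sobolev}. For the high-frequency square function $S_{h}(T_{\beta}f)$, the Fourier-support properties of $\chi_{k,\beta}$ and $a_{k,\beta}$ ensure that $\theta_{\nu,\sigma}(D)(T_{\beta}f)(x)$ receives contributions only from those $k$ with $2^{-k}\sim\sigma$. For such $k$, multiplication by $a_{k,\beta}$, whose Fourier support is of radial extent $\lesssim 2^{k\gamma}$, spreads the angular localization $|\hat{\xi}-\nu|\leq 2\sqrt{\sigma}$ of $\theta_{\nu,\sigma}$ into an enlarged angular band of width $\sim 2^{-k(1-\gamma)}$, which overlaps roughly $2^{k(n-1)(\gamma-1/2)}$ neighbouring parabolic cells when $\gamma>1/2$.

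I would then pass to pointwise estimates. Working with a suitable modified wave packet $\wt\theta_{\mu,\sigma}$ supported on the enlarged angular region, a rapidly decreasing pointwise bound on the kernel of $\theta_{\nu,\sigma}(D)\circ(\text{mult.~by~}a_{k,\beta})\circ\chi_{k,\beta}(D)$, combined with Proposition \ref{prop:maxineq} and Corollary \ref{cor:maxineq} (applicable through Remark \ref{rem:maxineq}), produces
\[
|\theta_{\nu,\sigma}(D)(a_{k,\beta}\,\chi_{k,\beta}(D)f)(x)|\lesssim \|a_{k,\beta}\|_{L^{\infty}}\,\Ma_{\lambda}(\wt W_{\sigma}f)(x,\nu)
\]
for any $\lambda\in(0,p)$, where $\wt W_{\sigma}f(y,\mu):=\wt\theta_{\mu,\sigma}(D)f(y)$. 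Taking the $L^{2}(\ud\sigma/\sigma)$-norm and then the $L^{p}(\Sp)$-norm, and invoking the Fefferman--Stein vector-valued maximal inequality for $\Ma$ on $L^{p/\lambda}(\Sp;L^{2/\lambda}(0,\infty))$, converts this into the desired $L^{p}$-bound. Balancing the angular overlap factor $2^{k(n-1)(\gamma-1/2)}$ against the H\"{o}lder regularity cost $\|a_{k,\beta}\|_{C^{r}_{*}}\lesssim 2^{kr/2}$, and accounting for the homogeneous dimension $2n$ of $\Sp$ in the small-ball regime of Lemma \ref{lem:doubling}, yields a net loss of $2^{k\tau}$ at frequency $2^{k}$: this is absorbed when $r>2n/p$ (giving $\tau=0$), and otherwise enters the source norm via $\lb D\rb^{\tau}f\in\HT^{p}_{FIO}(\Rn)$.

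The main obstacle will be the careful construction of the modified wave packet $\wt\theta_{\mu,\sigma}$ adapted to the enlarged angular region so that Proposition \ref{prop:maxineq} applies uniformly, together with a sharp tracking of the interplay between the $L^{p}$-summation over angular cells and the H\"{o}lder loss $2^{kr/2}$ that recovers exactly the threshold \eqref{eq:tau} for $\tau$. For statement (2), the same scheme applies to the adjoint $a(x,D)^{*}$: its Schwartz kernel satisfies bounds symmetric to those of $a(x,D)$ under the Fourier-support hypothesis, and the wave-packet/maximal-function argument transfers directly, with the role of target and source norms interchanged at the level of the bilinear pairing $\lb f,a(x,D)g\rb=\lb a(x,D)^{*}f,g\rb$.
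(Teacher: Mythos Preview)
Your overall strategy---the Coifman--Meyer decomposition via Lemma~\ref{lem:symdecomp}, reduction to a uniform-in-$\beta$ bound for $T_\beta$, and maximal-function control---matches the paper's. However, there is a gap at the core pointwise step that prevents you from recovering the sharp $\tau$ in~\eqref{eq:tau}. Your estimate $|\theta_{\nu,\sigma}(D)(a_{k,\beta}\chi_{k,\beta}(D)f)(x)|\lesssim \|a_{k,\beta}\|_{L^{\infty}}\,\Ma_{\lambda}(\wt W_{\sigma}f)(x,\nu)$ uses only $\|a_{k,\beta}\|_{L^{\infty}}\lesssim 1$, so the H\"older parameter $r$ never enters. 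The phrase ``H\"older regularity cost $\|a_{k,\beta}\|_{C^{r}_{*}}\lesssim 2^{kr/2}$'' is a growth bound, not a gain, and has no role in the accounting you describe. If you push your sketch through, the metric comparison for $|\mu-\nu|\lesssim 2^{-k(1-\gamma)}$ costs a factor $2^{k(2\gamma-1)N}$ with $N>n/p$, giving a loss $\approx(2n/p)(\gamma-\tfrac12)$ per dyadic shell---off from $\tau$ by exactly $r(\gamma-\tfrac12)$, and in particular \emph{not} yielding $\tau=0$ when $r>2n/p$ and $\gamma>\tfrac12$.

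The missing idea is a \emph{further} Littlewood--Paley decomposition of each $a_{k}$ in the spatial variable: $a_{k}=\sum_{j=\lceil k/2\rceil-M}^{\lceil k\gamma\rceil-2}a_{kj}$ with $a_{kj}:=\psi_{j}(D)a_{k}$. This is where $r$ enters, via $\|a_{kj}\|_{L^{\infty}}\lesssim 2^{(k/2-j)r}$. For each $j$ the angular spreading caused by $a_{kj}$ is only $\sim 2^{j-k}$, and for $\nu$ in this narrower set one has $1+2^{k}d((x,\w),(y,\w))^{2}\gtrsim 2^{k-2j}(1+2^{k}d((x,\w),(y,\nu))^{2})$, so raising to the power $-N$ costs $2^{(2j-k)N}$. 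The product $2^{(k/2-j)(r-2N)}$ summed over $j$ gives precisely $2^{k\tau}$: bounded when one can choose $n/\lambda<N<r/2$ (i.e.\ $r>2n/p$), and dominated by the top term $j\sim k\gamma$ otherwise. Your ``modified wave packets'' with angular width $\sigma^{1-\gamma}$ cannot substitute for this: Remark~\ref{rem:maxineq} only covers packets obeying the parabolic bounds of Lemma~\ref{lem:packetbounds}, not ones whose angular scale differs from $\sqrt{\sigma}$.

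A secondary technical point: the paper runs the argument through the characterisation of Theorem~\ref{thm:equivchar} (in terms of $\|\ph_{\w}(D)f\|_{\HT^{s,p}(\Rn)}$) rather than the square function $S_{h}$ of Remark~\ref{rem:equivnorm}. This lets the output land directly back on the $\HT^{s+\tau,p}_{FIO}$ norm after identifying $\sum_{k}2^{-k(n-1)/4}m(D)\chi_{k}(D)$ as a bounded Fourier multiplier on $\HT^{s+\tau,p}(\Rn)$.
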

\begin{proof}
The structure of the proof is similar to that of \cite[Theorem 4.1]{Rozendaal23a}. We choose $l>|s|+n+4+4n/p$, but this bound can be improved.

\subsubsection{Reduction steps}

We may suppose that $m=0$ and that $\|a\|_{C^{r}_{*}S^{0,l}_{1,1/2}}=1$. We may also suppose that
\[
a(x,\eta)=\sum_{k=0}^{\infty}a_{k}(x)\chi_{k}(\eta)
\]
for all $x,\eta\in\Rn$, where $(a_{k})_{k=0}^{\infty}\subseteq C^{r}_{*}(\Rn)$ and $(\chi_{k})_{k=0}^{\infty}\subseteq C^{\infty}_{c}(\Rn)$ are as in Lemma \ref{lem:symdecomp}, for a fixed $\beta$. Finally, since $\Sw(\Rn)$ lies dense in $\Hps$, we may fix $f\in\Sw(\Rn)$ and obtain suitable quasi-norm bounds for $a(x,D)f$ and $a(x,D)^{*}f$. 

Let $q\in C^{\infty}_{c}(\Rn)$ be the low-frequency cutoff from before, satisfying $q(\eta)=1$ for $|\eta|\leq 2$. Then there exists an $N\in\N$, dependent only on the support of $q$, such that
\[
a(x,D)q(D)f(x)=\sum_{k=0}^{N}a_{k}(x)\chi_{k}(D)q(D)f(x)
\]
for all $x\in\Rn$. Hence \eqref{eq:Sobolev}, Remark \ref{rem:decompadditional} and Corollary \ref{cor:pseudoclassFIO} combine to show that $a(x,D)q(D):\HT^{s+\tau,p}_{FIO}(\Rn)\to \Hps$ is bounded. 
Due to the conditions on the Fourier support of the $a_{k}$, the same argument shows that $a(x,D)^{*}q(D):\HT^{s+\tau,p}_{FIO}(\Rn)\to\Hps$. Moreover, $1-q(D)$ acts boundedly on $\HT^{s+\tau,p}_{FIO}(\Rn)$, by \cite{LiRoSo25b}. Hence it suffices to derive the required bounds with $f$ replaced by $(1-q(D))f$. For simplicity of notation, we will continue working with $f$ and we merely assume in addition that $\wh{f}(\xi)=0$ for $|\xi|\leq 2$. In particular, then $\chi_{0}(D)f=0$.

The same reasoning, combined with another application of \eqref{eq:Sobolev}, shows that $q(D)a(x,D)$ and $q(D)a(x,D)^{*}$ map $\HT^{s+\tau,p}_{FIO}(\Rn)$ to $\HT^{s,p}(\Rn)$, and thus also to $L^{p}(\Rn)$. Hence, by \eqref{eq:equivchar}, it suffices to prove that
\begin{equation}\label{eq:toprovepseudosmallp1}
\Big(\int_{S^{n-1}}\|\ph_{\w}(D)a(x,D)f\|_{\HT^{s,p}(\Rn)}^{p}\ud\w\Big)^{1/p}\lesssim \|f\|_{\HT^{s+\tau,p}_{FIO}(\Rn)}
\end{equation}
and
\begin{equation}\label{eq:toprovepseudosmallp2}
\Big(\int_{S^{n-1}}\|\ph_{\w}(D)a(x,D)^{*}f\|_{\HT^{s,p}(\Rn)}^{p}\ud\w\Big)^{1/p}\lesssim \|f\|_{\HT^{s+\tau,p}_{FIO}(\Rn)}.
\end{equation}

\subsubsection{Proof of \eqref{eq:toprovepseudosmallp1}}

Let $M\geq3$ be such that $2^{M}\leq c/2$, and let $\wt{\psi}\in C^{\infty}_{c}(\Rn\setminus\{0\})$ be such that $\wt{\psi}(\xi)=1$ if $|\xi|\in[1/4,4]$. Set $\wt{\psi}_{k}(\xi):=\wt{\psi}(2^{-k+1}\xi)$, $f_{k}:=\chi_{k}(D)f$ and $a_{kj}:=\psi_{j}(D)a_{k}$ for $k\in\N$, $j\in\Z_{+}$ and $\xi\in\Rn$, and write $a_{kj}:=0$ for $j<0$. Then, by the properties of the $a_{k}$ from Lemma \ref{lem:symdecomp} and by the assumption on $f$, one has 
\begin{equation}\label{eq:touse0}
\|a_{kj}\|_{L^{\infty}(\Rn)}\leq 2^{-jr}\|a_{k}\|_{C^{r}_{*}(\Rn)}\lesssim 2^{(k/2-j)r}\|a\|_{C^{r}_{*}S^{0,1+\lceil n/p\rceil}_{1,1/2}}
\end{equation}
for all $k,j\in\Z_{+}$, and
\begin{equation}\label{eq:touse1}
a(x,D)f=\sum_{k=1}^{\infty}a_{k}f_{k}=\sum_{k=1}^{\infty}\wt{\psi}_{k}(D)\Big(\sum_{j=\lceil k/2\rceil-M}^{\lceil k\gamma\rceil-2}a_{kj}f_{k}\Big).
\end{equation}
Next, for $\w\in S^{n-1}$ and $k\in\Z_{+}$ write $\wt{\psi}_{k,\w}:=\wt{\psi}_{k}\ph_{\w}$. Then, just as in Lemma \ref{lem:packetbounds}, using also \eqref{eq:equivmetric}, for every $N\geq0$ one has
\begin{equation}\label{eq:touse2}
|\F^{-1}(\wt{\psi}_{k,\w})(x-y)|\lesssim 2^{k\frac{3n+1}{4}}(1+2^{k}d((x,\w),(y,\w))^{2})^{-N}
\end{equation}
for all $\w\in S^{n-1}$, $k\in\Z_{+}$ and $x,y\in\Rn$. Finally, let $m(D)$ be as in \eqref{eq:phiproperties3}. Then we can write
\begin{equation}\label{eq:touse3}
f_{k}=2^{k\frac{n-1}{4}}\int_{S^{n-1}}f_{k,\nu}\,\ud\nu,
\end{equation}
where
\[
f_{k,\nu}(y):=2^{-k\frac{n-1}{4}}m(D)\ph_{\nu}(D)\chi_{k}(D)f(y)
\]
for $\nu\in S^{n-1}$.

Now, by \eqref{eq:touse1}, \eqref{eq:touse3} and a straightforward calculation (see also \cite[equation (4.17)]{Rozendaal23a}), one has
\begin{equation}\label{eq:angulardecomp}
\ph_{\w}(D)a(x,D)f=\sum_{k=1}^{\infty}2^{k\frac{n-1}{4}}\wt{\psi}_{k,\w}(D)\Big(\sum_{j=\lceil k/2\rceil-M}^{\lceil k\gamma\rceil-2}a_{kj}\int_{F_{\w,k,j}}f_{k,\nu}\,\ud\nu\Big)
\end{equation}
for all $\w\in S^{n-1}$, where $F_{\w,k,j}:=\{\nu\in S^{n-1}\mid |\nu-\w|\leq 2^{3+M+j-k}\}$ for $k,j\in\Z_{+}$. Moreover, by \eqref{eq:equivmetric},
\begin{align*}
1+2^{k}d((x,\w),(y,\omega))^{2}&\simeq 1+2^{k}|x-y|^{2}+2^{k}|(x-y)\cdot\w|\\
&\simeq 1+2^{k}|x-y|^{2}+2^{k}| (x-y)\cdot\w|+2^{2k-2j}|\nu-\omega|^{2}\\
&\gtrsim 2^{k-2j}(1+2^{k}d((x,\w),(y,\nu))^{2})
\end{align*}
for all $\nu\in F_{\w,k,j}$ and $x,y\in\Rn$. For given $\lambda>0$ and $N>n/\la$, to be chosen later, we can combine this with \eqref{eq:touse2}, \eqref{eq:touse0}, Lemma \ref{lem:maxineq} and Remark \ref{rem:maxineq}: 
\begin{align*}
&\Big|2^{k\frac{n-1}{4}}\wt{\psi}_{k,\w}(D)\Big(\sum_{j=\lceil k/2\rceil-M}^{\lceil k\gamma\rceil-2}a_{kj}\int_{F_{\w,k,j}}f_{k,\nu}\,\ud\nu\Big)(x)\Big|\\
&\lesssim 2^{kn}\int_{\Rn}(1+2^{k}d((x,\w),(y,\w))^{2})^{-N}\sum_{j=\lceil k/2\rceil-M}^{\lceil k\gamma\rceil-2}|a_{kj}(y)|\int_{F_{\w,k,j}}|f_{k,\nu}(y)|\ud\nu\ud y\\
&\lesssim \sum_{j=\lceil k/2\rceil-M}^{\lceil k\gamma\rceil-2}2^{(k/2-j)r}2^{kn}\int_{\Rn}\int_{F_{\w,k,j}}(1+2^{k}d((x,\w),(y,\w))^{2})^{-N}|f_{k,\nu}(y)|\ud\nu\ud y\\
&\lesssim \sum_{j=\lceil k/2\rceil-M}^{\lceil k\gamma\rceil-2}2^{(k/2-j)(r-2N)}2^{kn}\int_{\Rn}\int_{F_{\w,k,j}}(1+2^{k}d((x,\w),(y,\nu))^{2})^{-N}|f_{k,\nu}(y)|\ud\nu\ud y\\
&\lesssim \sum_{j=\lceil k/2\rceil-M}^{\lceil k\gamma\rceil-2}2^{(k/2-j)(r-2N)}\Ma_{\la}(g_{k})(x,\w),
\end{align*}
where $g_{k}(y,\nu):=f_{k,\nu}(y)$ for $(y,\nu)\in\Sp$. 

If $r>2n/p$, then we can choose $\lambda\in(0,p)$ and $N>n/\la$ such that $r>2N$. On the other hand, if $r\leq 2n/p$, then for $\delta>0$ one can set $N=n/p+\delta$ and also find a $\lambda\in(0,p)$ such that $N>n/\la$. In either case, by definition of $\tau$ in \eqref{eq:tau} and for $\delta$ small enough, we have
\[
\sum_{j=\lceil k/2\rceil-M}^{\lceil k\gamma\rceil-2}2^{(k/2-j)(r-2N)}\lesssim 2^{k\tau}.
\]
Hence the considerations above imply that, for all $(x,\w)\in \Sp$, one has
\begin{align*}
&\sum_{k=1}^{\infty}2^{2ks}\Big|2^{k\frac{n-1}{4}}\wt{\psi}_{k,\w}(D)\Big(\sum_{j=\lceil k/2\rceil-M}^{\lceil k\gamma\rceil-2}a_{kj}\int_{F_{\w,k,j}}f_{k,\nu}\,\ud\nu\Big)(x)\Big|^{2}\\
&\lesssim \sum_{k=1}^{\infty}2^{2ks}\Big|\sum_{j=\lceil k/2\rceil-M}^{\lceil k\gamma\rceil-2}2^{(k/2-j)(r-2N)}\Ma_{\la}(g_{k})(x,\w)\Big|^{2}\\
&\lesssim \sum_{k=1}^{\infty}2^{2k(s+\tau)}(\Ma_{\la}(g_{k})(x,\w))^{2}.
\end{align*}
Taking into account \eqref{eq:angulardecomp}, we can now use a standard square function estimate from Littlewood--Paley theory (see e.g.~\cite[Section 2.5.2]{Triebel10}), and the boundedness of the Hardy-Littlewood maximal operator $\Ma$ on $L^{p/\la}(\Sp;\ell^{2/\la})$, to write
\begin{align*}
&\int_{S^{n-1}}\|\ph_{\w}(D)a(x,D)f\|_{\HT^{s,p}(\Rn)}^{p}\ud\w\\
&\eqsim \int_{\Sp}\Big(\sum_{k=1}^{\infty}2^{2ks}\Big|2^{k\frac{n-1}{4}}\wt{\psi}_{k,\w}(D)\Big(\sum_{j=\lceil k/2\rceil-M}^{\lceil k\gamma\rceil-2}a_{kj}\int_{F_{\w,k,j}}f_{k,\nu}\,\ud\nu\Big)(x)\Big|^{2}\Big)^{p/2}\ud x\ud\w\\
&\lesssim \int_{\Sp}\Big(\sum_{k=1}^{\infty}2^{2k(s+\tau)}(\Ma(|g_{k}|^{\la})(x,\w))^{2/\la}\Big)^{p/2}\ud x\ud\w\\
&\lesssim \int_{\Sp}\Big(\sum_{k=1}^{\infty}2^{2k(s+\tau)}|g_{k}(y,\nu)|^{2}\Big)^{p/2}\ud y\ud\nu\\
&\lesssim \int_{\Sp}\Big(\sum_{k=1}^{\infty}2^{2k(s+\tau-\frac{n-1}{4})}|\chi_{k}(D)m(D)\ph_{\nu}(D)f(y)|^{2}\Big)^{p/2}\ud y\ud\nu\\
&\eqsim \int_{S^{n-1}}\Big\|\sum_{k=1}^{\infty}2^{-k\frac{n-1}{4}}m(D)\chi_{k}(D)\ph_{\nu}(D)f\Big\|_{\HT^{s+\tau,p}(\Rn)}^{p}\ud\nu,
\end{align*}
where the resulting implicit constant does not depend on $a$.

Finally, by the properties of the $\chi_{k}$ from Lemma \ref{lem:symdecomp},  one has
\[
\sup_{\eta\in\Rn}\Big|\lb\xi\rb^{|\alpha|}\partial_{\eta}^{\alpha}\Big(\sum_{k=1}^{\infty}2^{-k\frac{n-1}{4}}m(\eta)\chi_{k}(\eta)\Big)\Big|<\infty
\]
for each $\alpha\in\Z_{+}^{n}$ with $|\alpha|\leq l-1-\lceil n/p\rceil$. Since $l-1-\lceil n/p\rceil>|s|+n+2+3n/p$, the Fourier multiplier theorem in \cite[Section 2.3.7]{Triebel10} yields
\[
\Big\|\sum_{k=1}^{\infty}2^{-k\frac{n-1}{4}}m(D)\chi_{k}(D)\ph_{\nu}(D)f\Big\|_{\HT^{s+\tau,p}(\Rn)}\lesssim \|\ph_{\nu}(D)f\|_{\HT^{s+\tau,p}(\Rn)}
\]
for all $\nu\in S^{n-1}$. By combining everything we have shown with \eqref{eq:equivchar}, we thus find that
\begin{align*}
\Big(\int_{S^{n-1}}\|\ph_{\w}(D)a(x,D)f\|_{\HT^{s,p}(\Rn)}^{p}\ud\w\Big)^{1/p}&\lesssim \Big(\int_{S^{n-1}}\|\ph_{\nu}(D)f\|_{\HT^{s+\tau,p}(\Rn)}^{p}\ud\nu\Big)^{1/p}\\
&\lesssim \|f\|_{\HT^{s+\tau,p}_{FIO}(\Rn)}
\end{align*}
for implicit constants independent of $a$ and $f$, as is required for \eqref{eq:toprovepseudosmallp1}.

\subsubsection{Proof of \eqref{eq:toprovepseudosmallp2}} Due to the condition on the Fourier support of the $a_{k}$, one has
\[
a(x,D)^{*}f=\sum_{k=1}^{\infty}\chi_{k}(D)(a_{k}\wt{f}_{k})=\sum_{k=1}^{\infty}\overline{\chi_{k}}(D)\Big(\sum_{j=\lceil k/2\rceil-M}^{\lceil k\gamma\rceil-2}\overline{a_{kj}}\wt{f}_{k}\Big),
\]
where $\wt{f}_{k}:=\wt{\psi}_{k}(D)f$ for $k\in\N$. This expression is similar to that in  \eqref{eq:touse1}, with the 
main change 
being that the functions involved have slightly modified supports. However, this change has no meaningful impact on the argument from before, and the proof is thus completely analogous to that of \eqref{eq:toprovepseudosmallp1}.
\end{proof}

\subsection{Main result for pseudodifferential operators}\label{subsec:mainpseudo}

In this subsection we prove our main result for rough pseudodifferential operators. To this end, we first collect two lemmas that will be used for the interpolation procedure in the proof.

\begin{lemma}\label{lem:intparameters}
Let $p\in(0,1]$. For $\delta\in(0,p)$, let $\theta\in(0,1)$ be such that $\frac{1}{p}=\frac{1-\theta}{\delta}+\frac{\theta}{1+\delta}$. 
Then the following assertions hold as $\delta\to0$:
\begin{enumerate}
\item\label{it:intparameters1} $\theta\to 1$;
\item\label{it:intparameters2} $\frac{1-\theta}{\delta}\to \frac{1}{p}-1$;
\item\label{it:intparameters3} $s(1+\delta)\to s(1)$;
\item\label{it:intparameters4} $(1-\theta)(\tfrac{2n}{\delta}+\delta)+\theta (n-1)\to 4s(p)+2(\frac{1}{p}-1)$.
\end{enumerate} 
\end{lemma}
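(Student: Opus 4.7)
The plan is to first solve for $\theta$ in closed form from the interpolation identity, and then all four assertions reduce to elementary limits. Multiplying the defining relation by $\delta(1+\delta)$ gives
\[
\frac{\delta(1+\delta)}{p} = (1-\theta)(1+\delta) + \theta\delta = (1+\delta) - \theta,
\]
so
\[
1-\theta = \frac{\delta(1+\delta-p)}{p}, \qquad \theta = 1 - \frac{\delta(1+\delta-p)}{p}.
\]
This explicit formula is the engine of the proof, so I would state and derive it first.

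Next, each of \eqref{it:intparameters1}--\eqref{it:intparameters4} follows by a short computation using this formula and the definition $s(q)=\frac{n-1}{2}|\tfrac{1}{q}-\tfrac{1}{2}|$. For \eqref{it:intparameters1}, since $1-\theta = O(\delta)$ as $\delta\to 0$, we obtain $\theta\to 1$. For \eqref{it:intparameters2}, dividing by $\delta$ gives $\frac{1-\theta}{\delta} = \frac{1+\delta-p}{p} \to \frac{1-p}{p} = \frac{1}{p}-1$. For \eqref{it:intparameters3}, note that for $\delta\in(0,1)$ we have $\frac{1}{1+\delta}>\frac{1}{2}$, hence
\[
s(1+\delta) = \tfrac{n-1}{2}\bigl(\tfrac{1}{1+\delta}-\tfrac{1}{2}\bigr) \longrightarrow \tfrac{n-1}{2}\cdot\tfrac{1}{2} = \tfrac{n-1}{4} = s(1).
\]

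For \eqref{it:intparameters4}, the only nonobvious step is matching the limit with the expression on the right-hand side. Using \eqref{it:intparameters1} and \eqref{it:intparameters2},
\[
(1-\theta)\Bigl(\tfrac{2n}{\delta}+\delta\Bigr) + \theta(n-1) \;\longrightarrow\; 2n\bigl(\tfrac{1}{p}-1\bigr) + (n-1) \;=\; \tfrac{2n}{p} - n - 1.
\]
On the other hand, because $p\leq 1$ implies $\tfrac{1}{p}\geq 1 > \tfrac{1}{2}$, one has $s(p) = \tfrac{n-1}{2}(\tfrac{1}{p}-\tfrac{1}{2})$, and a direct computation gives
\[
4s(p) + 2\bigl(\tfrac{1}{p}-1\bigr) = 2(n-1)\bigl(\tfrac{1}{p}-\tfrac{1}{2}\bigr) + \tfrac{2}{p}-2 = \tfrac{2n}{p}-n-1,
\]
which matches the limit above. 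There is no essential difficulty here; the whole lemma is bookkeeping that confirms the interpolation parameters behave as required in the interpolation argument that follows. The only place where one must be a bit careful is keeping track of the sign in the definition of $s(p)$ for $p\leq 1$ versus $p=1+\delta$, which is why I would record both cases explicitly before combining them.
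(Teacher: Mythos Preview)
Your proof is correct and takes essentially the same approach as the paper: both solve the interpolation identity for $\theta$ (or $1-\theta$) in terms of $\delta$ and then read off the four limits directly, with \eqref{it:intparameters4} reduced to the identity $2n(\tfrac{1}{p}-1)+(n-1)=4s(p)+2(\tfrac{1}{p}-1)$. Your derivation of the closed form $1-\theta=\delta(1+\delta-p)/p$ is in fact a bit cleaner than the paper's manipulation of ratios, but the substance is identical.
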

\begin{proof}
The first statement follows by noting that
\[
\theta=\frac{\frac{1}{p}-\frac{1}{\delta}}{\frac{1}{1+\delta}-\frac{1}{\delta}}=\frac{\frac{\delta}{p}-1}{\frac{\delta}{1+\delta}-1}\to 1.
\]
The second statement can be obtained in the same way:
\begin{align*}
\frac{1-\theta}{\delta}=\frac{1}{\delta}\frac{\frac{\delta}{1+\delta}-1-(\frac{\delta}{p}-1)}{\frac{\delta}{1+\delta}-1}=\frac{\frac{1}{1+\delta}-\frac{1}{p}}{\frac{\delta}{1+\delta}-1}\to \frac{1}{p}-1.
\end{align*}
The third statement in turn is immediate. And finally, \eqref{it:intparameters4} follows from \eqref{it:intparameters1} and \eqref{it:intparameters2}, upon noting that $2n(\tfrac{1}{p}-1)+n-1=4s(p)+2(\frac{1}{p}-1)$.
\end{proof}


\begin{lemma}\label{lem:contanalytic}
Let $r>0$ and $\kappa,\lambda\in\R$. 
Let $a\in C^{r}_{*}S^{0}_{1,1/2}$ be such that $\supp(\F a(\cdot,\eta))\subseteq\{\xi\in\Rn\mid c|\eta|^{1/2}\leq |\xi|\leq \frac{1}{16}(1+|\eta|)\}$ for all $\eta\in\Rn$, and suppose that there exist $b\in C^{r}_{*}(\Rn)$ and $\chi\in C^{\infty}_{c}(\Rn)$ such that  $a(x,\eta)=b(x)\chi(\eta)$ for all $x,\eta\in\Rn$.  For $z\in\C$ and $x,\eta\in\Rn$, set
\[
a_{z}(x,\eta):=e^{(\kappa z+\lambda)^{2}}\lb \eta\rb^{-(\kappa z+\lambda)/2}\big(\lb D\rb^{\kappa z+\lambda}a(\cdot,\eta)\big)(x).
\]
Then, for all $p\in(0,\infty)$ and $s\in\R$, and for each compact interval $I\subseteq\R$, one has
\begin{equation}\label{eq:contanalytic}
\sup_{\Real(z)\in I}\|a_{z}(x,D)\|_{\La(\Hps)}+\|a_{z}(x,D)^{*}\|_{\La(\Hps)}<\infty.
\end{equation}
Moreover, for each $f\in\Hps$, the $\Hps$-valued maps $z\mapsto a_{z}(x,D)f$ and $z\mapsto a_{z}(x,D)^{*}f$ are analytic on $\C$.
\end{lemma}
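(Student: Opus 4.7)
The plan exploits the factored structure of $a_z$. Since $a(x,\eta) = b(x)\chi(\eta)$, the symbol decomposes as $a_z(x,\eta) = B_z(x)\psi_z(\eta)$, where
\[
B_z(x) := e^{(\kappa z+\lambda)^{2}}\bigl(\lb D\rb^{\kappa z+\lambda}b\bigr)(x), \qquad \psi_z(\eta) := \lb\eta\rb^{-(\kappa z+\lambda)/2}\chi(\eta),
\]
so that $a_z(x,D)f = B_z\cdot\psi_z(D)f$ as an ordinary product. The Fourier-support hypothesis on $a$, evaluated at every $\eta\in\supp(\chi)$, forces $\supp(\wh b)$ to lie in the intersection of the annuli $\{\xi\in\Rn\mid c|\eta|^{1/2}\leq|\xi|\leq\tfrac{1}{16}(1+|\eta|)\}$ as $\eta$ ranges over $\supp(\chi)$, which is a fixed compact set because $\chi$ is compactly supported. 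Consequently $B_z$ has Fourier transform supported in that same fixed compact set for every $z\in\C$, and $a_z(x,D)f$ always has Fourier transform supported in the fixed compact set $K:=\supp(\wh b)+\supp(\chi)$, independent of $z$ and of $f$.

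The estimate \eqref{eq:contanalytic} then reduces to an $L^p$ bound, using that on the closed subspace of tempered distributions with Fourier support in any fixed compact set, the quasi-norms $\|\cdot\|_{\Hps}$, $\|\cdot\|_{\HT^{s,p}(\Rn)}$, and $\|\cdot\|_{L^{p}(\Rn)}$ are all equivalent, with constants depending only on $K,s,p$; this follows from Theorem~\ref{thm:Sobolev} together with Bernstein-type inequalities, in the spirit of \cite[Proposition 1.5.3]{Triebel95}, and parallels the proof of Lemma~\ref{lem:Wlow}. I would then bound
\[
\|a_z(x,D)f\|_{\Hps}\lesssim \|a_z(x,D)f\|_{L^p}\leq \|B_z\|_{L^{\infty}(\Rn)}\|\psi_z(D)f\|_{L^p(\Rn)},
\]
and control $\|\psi_z(D)f\|_{L^p}\lesssim \|f\|_{\Hps}$ via Theorem~\ref{thm:FIObdd}\eqref{it:FIObdd3} applied to the smooth compactly supported symbol $\psi_z$ (whose Schwartz seminorms grow only polynomially in $|z|$), followed by the same low-frequency equivalence. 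For $\|B_z\|_{L^{\infty}}$, one fixes $\rho\in C^\infty_c(\Rn)$ with $\rho\equiv 1$ on $\supp(\wh b)$ and writes $\lb D\rb^{\kappa z+\lambda}b = G_z*b$ with $G_z := \F^{-1}(\lb\cdot\rb^{\kappa z+\lambda}\rho)$ a Schwartz function whose $L^1$ norm grows only polynomially in $|z|$; combined with $|e^{(\kappa z+\lambda)^{2}}|\leq C_I e^{-\kappa^2(\Imag z)^{2}}$ on $\Real(z)\in I$, this yields the uniform bound (trivial for $\kappa=0$, and with exponential decay absorbing any polynomial growth when $\kappa\neq 0$). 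The adjoint factors as $a_z(x,D)^{*}f = \overline{\psi_z}(D)(\overline{B_z}\cdot f)$, whose output has Fourier support in $\supp(\chi)$, so exactly the same scheme applies.

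For analyticity, $z\mapsto B_z(x)$ and $z\mapsto \psi_z(\eta)$ are entire pointwise, with their $z$-derivatives obeying polynomial-times-Gaussian bounds of the same type. Hence the difference quotient of $a_z(x,D)f$ converges pointwise to $(\partial_z a_z)(x,D)f$ while remaining Fourier-supported in the fixed compact set $K$, and dominated convergence in $L^p(\Rn)$ combined with the low-frequency equivalence upgrades this to convergence in $\Hps$; the same argument handles the adjoint. The main obstacle to track is the case $p<1$, where H\"{o}lder's inequality is unavailable in its standard form, but on functions with Fourier support in the fixed compact set $K$, $L^p$ is a quasi-algebra, so the reduction succeeds throughout the full range $0<p<\infty$.
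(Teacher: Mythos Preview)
Your approach is essentially correct and overlaps with the paper's, but there are two differences worth noting and one genuine point that needs more care.

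\textbf{The uniform bound.} The paper handles \eqref{eq:contanalytic} in one line: since $\widehat b$ is compactly supported, $b\in C^{t}_{*}(\Rn)$ for every $t>0$, and since $\chi$ is compactly supported, $a\in C^{t}_{*}S^{m}_{1,1/2}$ for every $m\in\R$. Lemma~\ref{lem:symbolinter} then gives $a_z\in C^{t'}_{*}S^{0}_{1,1/2}$ with uniformly controlled norms, and Corollary~\ref{cor:pseudoclassFIO} (choosing $t'$ large and $m$ negative enough to absorb the $2s(p)$ loss) yields the bound for both $a_z(x,D)$ and $a_z(x,D)^{*}$ simultaneously. Your factorization argument reaches the same conclusion but by hand; it is more self-contained but longer. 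For the adjoint you should make explicit that only the frequencies of $f$ in $\supp(\chi)+\supp(\widehat b)$ contribute, so one may insert a fixed cut-off $\tilde q(D)$ before multiplying by $\overline{B_z}$; without this step $\overline{B_z}f$ need not have compact Fourier support and the ``same scheme'' does not literally apply.

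\textbf{Analyticity.} Here there is a genuine gap for $p<1$. The notion of analyticity used in the paper (see \eqref{eq:analytic} in Appendix~\ref{sec:inter}) is convergence of a local power series, not complex differentiability; in general quasi-Banach spaces these are inequivalent. Your difference-quotient argument establishes only complex differentiability in $L^{p}(\Rn)$. The paper closes this gap by first reducing to $f\in\Sw(\Rn)$ via density and \cite[Proposition~3.2]{Kalton-Mitrea98} (which needs the uniform bound \eqref{eq:contanalytic}), and then proving directly that the power series for $z\mapsto a_z(x,D)f$ converges in $L^{p}(\Rn)$: one expands $e^{(\kappa z+\lambda)^{2}}\langle\eta\rangle^{-(\kappa z+\lambda)/2}\chi(\eta)$ and $\langle D\rangle^{\kappa z+\lambda}b$ separately in power series, using the compact supports to obtain $L^\infty$ convergence of each factor, and then combines them with a weight $\langle x\rangle^{-2N}$ (with $2Np>n$) to pass to $L^{p}$. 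Your scheme can be fixed by the same route.
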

Analyticity of a function with values in a quasi-Banach space is defined in terms of absolutely convergent power series (see \cite[Appendix A]{LiRoSo25b}).
\begin{proof}
We may suppose that $\chi\neq 0$. Then, by assumption, $b$ has compact Fourier support. Hence $b\in C^{t}_{*}(\Rn)$ for all $t>0$, and since $\chi$ has compact support we in fact have $a\in C^{t}_{*}S^{m}_{1,1/2}$ for all $m\in\R$. Lemma \ref{lem:symbolinter} and Corollary \ref{cor:pseudoclassFIO} then yield \eqref{eq:contanalytic}. Due to the density of $\Sw(\Rn)$ in $\Hps$, one can in turn apply \cite[Proposition 3.2]{Kalton-Mitrea98} to see that we may prove analyticity for $f\in\Sw(\Rn)$. 

The support assumptions on $\wh{b}$ and $\chi$ also imply that $a_{z}(x,D)f$ and $a_{z}(x,D)^{*}f$ have compact Fourier support, independent of $z\in\C$. Hence \eqref{eq:Sobolev} and \cite[Theorem 1.4.1]{Triebel10} yield
\begin{align*}
\|a_{z}(x,D)f\|_{\Hps}&\eqsim \|a_{z}(x,D)f\|_{L^{p}(\Rn)},\\
\|a_{z}(x,D)^{*}f\|_{\Hps}&\eqsim \|a_{z}(x,D)^{*}f\|_{L^{p}(\Rn)}.
\end{align*}
It therefore suffices to prove analyticity in the $L^{p}(\Rn)$ quasi-norm. 
We consider $a_{z}(x,D)f$; an analogous argument works for $a_{z}(x,D)^{*}f$.

Let $N\in\N$ be such that $2Np>n$. Then, for each $\eta\in\Rn$, the power series for $e^{(\kappa z+\lambda)^{2}}(1-\Delta)^{N}(\lb \eta\rb^{-(\kappa z+\lambda)/2}\chi(\eta))$ converges, locally uniformly in $z\in\C$. Since $\wh{f}\in\Sw(\Rn)$, the dominated convergence theorem implies that the power series for
\begin{align*}
&\lb x\rb^{2N}e^{(\kappa z+\lambda)^{2}}\lb D\rb^{-(\ka z+\la)/2}\chi(D)f(x)\\
&=\frac{1}{(2\pi)^{n}}\int_{\Rn}e^{ix\cdot\eta}e^{(\kappa z+\lambda)^{2}}(1-\Delta)^{N}\big(\lb \eta\rb^{-(\ka z+\la)/2}\chi(\eta)\wh{f}(\eta)\big)\ud\eta
\end{align*}
also converges, for each $x\in\Rn$. In fact, the series converges in $L^{\infty}(\Rn)$. Moreover, since $b$ has compact Fourier support, there exists a $\psi\in C^{\infty}_{c}(\Rn)$ such that 
\begin{align*}
\lb D\rb^{\ka z+\la}b(x)&=\lb D\rb^{\ka z+\la}\psi(D)b(x)=\int_{\Rn}\F^{-1}(\psi_{z})(x-y)b(y)\ud y\\
&=\frac{1}{(2\pi)^{n}}\int_{\Rn}\int_{\Rn}e^{-iy\xi}(1-\Delta)^{n}\big(e^{ix\xi}\psi_{z}(\xi)\big)\ud\xi \lb y\rb^{-2n}b(y)\ud y
\end{align*}
for each $x\in\Rn$, where $\psi_{z}(\xi):=\lb \xi\rb^{\ka z+\la}\psi(\xi)$ for $\xi\in\Rn$ and $z\in\C$. Since $\psi\in C^{\infty}_{c}(\Rn)$ and $b$ is bounded, one can apply the dominated convergence theorem, twice, to see that the power series for $\lb D\rb^{\ka z+\la}b(x)$ converges, again in $L^{\infty}(\Rn)$. By the choice of $N$, another application of the dominated convergence theorem shows that the power series for $a_{z}(x,D)f$ converges in $L^{p}(\Rn)$, as required.
\end{proof}

We are now ready to prove the main result of this section, an extension of \cite[Theorem 5.1]{Rozendaal22} from $1<p<\infty$ to $0<p\leq\infty$. Write
\[
r(p):=4s(p)+2\max(0,\tfrac{1}{p}-1),
\]
so that $r(p)=4s(p)$ if $p\geq1$, and $r(p)=4s(p)+2(\tfrac{1}{p}-1)$ if $p<1$.

\begin{theorem}\label{thm:mainpseudo}
Let $r>0$, $m,s\in\R$ and $p\in(0,\infty]$. 
Set
\begin{equation}\label{eq:sigma}
\sigma:=\begin{cases}
0&\text{if }r>r(p),\\
\frac{r(p)-r}{2}+\veps&\text{if }2n\max(0,\frac{1}{p}-1)<r\leq r(p),
\end{cases}
\end{equation}
for $\veps\in(0,2s(p)-\frac{r(p)-r}{2}]$. 
 Then the following statements hold for each $a\in C^{r}_{*}S^{m}_{1,1/2}$.
\begin{enumerate}
\item\label{it:mainpseudo1}
If $n\max(0,\tfrac{1}{p}-1)-\tfrac{r}{2}+s(p)-\sigma<s<r-s(p)$, then
\begin{equation}\label{eq:mainpseudo1}
a(x,D):\HT^{s+\sigma+m,p}_{FIO}(\Rn)\to \Hps.
\end{equation}
\item\label{it:mainpseudo2}
If $n\max(0,\tfrac{1}{p}-1)-r+s(p)<s<\frac{r}{2}-s(p)+\sigma$, then
\begin{equation}\label{eq:mainpseudo2}
a(x,D)^{*}:\HT^{s,p}_{FIO}(\Rn)\to \HT^{s-\sigma-m,p}_{FIO}(\Rn).
\end{equation}
\item\label{it:mainpseudo3}If $a\in\Hrinf S^{m}_{1,1/2}$, then \eqref{eq:mainpseudo1} also holds for $s=r-s(p)$. If, additionally, $p\geq1$, then \eqref{eq:mainpseudo2} also holds for $s=n\max(0,\tfrac{1}{p}-1)-r+s(p)$.
\item\label{it:mainpseudo4} If $a=b^{\flat}_{1/2}$ for some $b\in \Hrinf (\Rn)$, and if $p\geq 1$, then \eqref{eq:mainpseudo1} holds for all  $-\tfrac{r}{2}+s(p)-\sigma\leq s\leq r-s(p)$, with $m=-r/2$.
\end{enumerate}
\end{theorem}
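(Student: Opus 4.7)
The plan is to reduce to symbols with special Fourier support via Lemmas \ref{lem:smoothing} and \ref{lem:symdecomp}, and then to interpolate an analytic family of symbols between the weak small-$p$ estimate from Theorem \ref{thm:pseudosmallp} and the sharp $p>1$ estimate from Proposition \ref{prop:pseudolargep}. This adapts the strategy of \cite[Theorem 5.1]{Rozendaal22} (restated here as Proposition \ref{prop:pseudolargep}), but now the small-$p$ endpoint is supplied by the new Theorem \ref{thm:pseudosmallp}, which lets us reach the entire range $p\in(0,1]$ and, by duality, $p=\infty$.

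First I would decompose $a=a^{\sharp}_{1/2}+a^{\flat}_{1/2}$ via Lemma \ref{lem:smoothing}. The smooth part $a^{\sharp}_{1/2}\in S^{m}_{1,1/2}$ and its adjoint map with no loss by Lemma \ref{lem:pseudosmooth}, so everything reduces to the rough part $a^{\flat}_{1/2}\in C^{r}_{*}S^{m-r/2}_{1,1/2}$ (or $\Hrinf S^{m-r/2}_{1,1/2}$ under the hypothesis of part (3)). Applying the dyadic decomposition of Lemma \ref{lem:symdecomp} rewrites $a^{\flat}_{1/2}$ as a sum of tensor products $a_{k,\beta}(x)\chi_{k,\beta}(\eta)$ whose spatial factors are supported in precisely the annular region required by Theorem \ref{thm:pseudosmallp} and Proposition \ref{prop:pseudolargep}. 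For $p\in(1,\infty)$, part (1) then follows directly from Proposition \ref{prop:pseudolargep}, and part (2) from its adjoint clause (invoking Lemma \ref{lem:Aduality} to stay within the same symbol class).

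For $p\in(0,1]$, the heart of the argument is complex interpolation of an analytic symbol family. For each small $\delta>0$, set $p_{0}:=\delta$ and $p_{1}:=1+\delta$, and let $\theta\in(0,1)$ satisfy $\tfrac{1}{p}=\tfrac{1-\theta}{p_{0}}+\tfrac{\theta}{p_{1}}$. Using Lemma \ref{lem:symbolinter}, I would construct an analytic family $\{a_{z}\}_{z\in\overline{\St}}$ with parameters $\ka,\la$ chosen so that at $\Real(z)=0$ the symbol $a_{z}$ sits in a class where Theorem \ref{thm:pseudosmallp} applies at $p=p_{0}$ with $\gamma=1$ (yielding the weak loss $\tau_{0}\sim 2n/\delta$), while at $\Real(z)=1$ it sits in a class where Proposition \ref{prop:pseudolargep} applies at $p=p_{1}>1$ (yielding the sharp loss $\sigma_{1}\to 2s(1)-r/2$ as $\delta\to 0$). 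Complex interpolation of analytic families on the quasi-Banach scale $\Hps$, justified by Proposition \ref{prop:HpFIOint}, Lemma \ref{lem:contanalytic}, and the analytic convexity material in Appendix \ref{sec:inter}, then transfers the endpoint bounds to $z=\theta$. Taking $\delta\to 0$, Lemma \ref{lem:intparameters} shows that the convex combination $(1-\theta)\tau_{0}+\theta\sigma_{1}$ converges to $r(p)-r/2$, so the interpolated loss is exactly the target $\sigma=(r(p)-r)/2+\veps$. The case $p=\infty$ in parts (1) and (2) now follows by duality from the case $p=1$ via Proposition \ref{prop:HpFIOdual}, and the endpoint refinements in parts (3) and (4) are obtained by swapping Corollary \ref{cor:pseudoclassFIO} parts (3)--(4) for Proposition \ref{prop:pseudolargep} at the upper endpoint of the interpolation, thereby including $s=r-s(p)$ (respectively the full Sobolev range for $a=b^{\flat}_{1/2}$).

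The main obstacle is the interpolation step itself: complex interpolation of analytic families is delicate for quasi-Banach spaces, but Proposition \ref{prop:HpFIOint}, Lemma \ref{lem:contanalytic}, and the analytic convexity lemmas of Appendix \ref{sec:inter} together provide the required machinery. A secondary difficulty is the bookkeeping around Lemma \ref{lem:symbolinter}: the parameters $\ka,\la$, and the choice $\gamma=1$ on the $p_{0}$ side, must be balanced so that both endpoint hypotheses---in particular the annular Fourier support conditions---hold simultaneously for the same analytic family $\{a_{z}\}$, and so that the computed endpoint losses combine via Lemma \ref{lem:intparameters} to yield the sharp exponent $\sigma$.
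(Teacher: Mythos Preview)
Your overall strategy is correct and matches the paper's: reduce via symbol smoothing, then interpolate an analytic family between Theorem \ref{thm:pseudosmallp} (small $p$) and Proposition \ref{prop:pseudolargep} ($p>1$), using Lemmas \ref{lem:symbolinter}, \ref{lem:intparameters}, \ref{lem:contanalytic} and the machinery of Appendix \ref{sec:inter}. However, there is a genuine gap in your reduction step, and a related confusion about where the $s$-range and the endpoint cases \eqref{it:mainpseudo3}--\eqref{it:mainpseudo4} come from.

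The gap is that one level of smoothing is not enough. After writing $a=a^{\sharp}_{1/2}+a^{\flat}_{1/2}$, the rough part $a^{\flat}_{1/2}$ only carries the \emph{lower} bound $|\xi|\gtrsim|\eta|^{1/2}$ on its spatial Fourier support; there is no upper bound, so neither Theorem \ref{thm:pseudosmallp} nor Proposition \ref{prop:pseudolargep} applies, and the ``Moreover'' clause of Lemma \ref{lem:symdecomp} is not triggered. The paper fixes this with a \emph{second} smoothing at a carefully chosen level $\gamma=\tfrac{1}{2}+\tfrac{2s(p)-\sigma}{r}$, writing $a^{\flat}_{1/2}=(a^{\flat}_{1/2})^{\sharp}_{\gamma}+(a^{\flat}_{1/2})^{\flat}_{\gamma}$. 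The middle piece $(a^{\flat}_{1/2})^{\sharp}_{\gamma}$ then has the annular support \eqref{eq:supportconditiona} and is the one fed into the interpolation (which yields \eqref{eq:mainpseudo1}--\eqref{eq:mainpseudo2} for \emph{all} $s\in\R$, with loss exactly $\sigma$). The remainder $(a^{\flat}_{1/2})^{\flat}_{\gamma}\in C^{r}_{*}S^{-(\gamma-1/2)r}_{1,\gamma}$ is handled by Corollary \ref{cor:pseudoclassFIO}, and this is precisely where the stated $s$-interval and the endpoint statements \eqref{it:mainpseudo3}--\eqref{it:mainpseudo4} originate. Your plan to obtain \eqref{it:mainpseudo3}--\eqref{it:mainpseudo4} by modifying the $p_{1}$-endpoint of the interpolation therefore cannot work: those refinements live in the remainder term, not in the interpolated piece.

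A secondary difference: at the small-$p$ endpoint you propose keeping the symbol regularity fixed, taking $\gamma=1$, and accepting a loss $\tau_{0}\sim n/\delta$. The paper instead \emph{boosts} the regularity of $a_{z}$ at $\Real(z)=0$ to $r_{0}=\tfrac{2n}{\delta}+\delta>\tfrac{2n}{\delta}$, which lands in the first case of Theorem \ref{thm:pseudosmallp} and gives $\tau=0$; all of the loss is then concentrated at the $p_{1}=1+\delta$ endpoint via Proposition \ref{prop:pseudolargep}. Your limiting arithmetic happens to give the right exponent $(r(p)-r)/2$ (you wrote $r(p)-r/2$, presumably a slip), but without the second smoothing you have not actually placed yourself in the hypotheses of either endpoint result, and your choice $\gamma=1$ would, once the second smoothing is inserted, yield a strictly smaller $s$-interval than the theorem claims. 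The calibrated choice $\gamma=\tfrac{1}{2}+\tfrac{2s(p)-\sigma}{r}$ is exactly what makes the remainder's Corollary \ref{cor:pseudoclassFIO} bound match the target loss $\sigma$ and the target $s$-range.
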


{In each of the cases in \eqref{eq:sigma}, the intervals for $s$ in \eqref{it:mainpseudo1} and \eqref{it:mainpseudo2} are not empty. On the other hand, the statements are void for $r\leq 2n\max(0,\frac{1}{p}-1)$.}

\begin{proof}
The strategy of the proof is similar to that of \cite[Theorem 5.1]{Rozendaal22}.  

\subsubsection{Reduction steps}


{We may consider $r>2n(\frac{1}{p}-1)$. After replacing $a(x,D)$ by $a(x,D)\lb D\rb^{-m}$, we may 
also suppose that $m=0$.} 
Let 
\begin{equation}\label{eq:gamma}
\gamma:=\frac{1}{2}+\frac{2s(p)-\sigma}{r}.
\end{equation}
and note that $\gamma\in[1/2,1)$. We claim that it suffices to prove the following statement. If $a\in C^{r}_{*}S^{0}_{1,1/2}$ is such that, for some $c>0$ and all $\eta\in\Rn$, one has
\begin{equation}\label{eq:supportconditiona}
\supp(\F a(\cdot,\eta))\subseteq\{\xi\in\Rn\mid c|\eta|^{1/2}\leq |\xi|\leq  \tfrac{1}{16}(1+|\eta|)^{\gamma}\},
\end{equation} 
then \eqref{eq:mainpseudo1} and \eqref{eq:mainpseudo2} hold for all $s\in\R$.

To prove this claim, apply the symbol smoothing procedure from Lemma \ref{lem:smoothing} to a general $a\in C^{r}_{*}S^{0}_{1,1/2}$, twice, to write 
\begin{equation}\label{eq:adecomp}
a=a_{1/2}^{\sharp}+a^{\flat}_{1/2}=a_{1/2}^{\sharp}+(a^{\flat}_{1/2})^{\sharp}_{\gamma}+(a^{\flat}_{1/2})^{\flat}_{\gamma}.
\end{equation}
By Lemmas \ref{lem:smoothing} and \ref{lem:pseudosmooth}, one has
\[
a^{\sharp}_{1/2}(x,D):\HT^{s+\sigma,p}_{FIO}(\Rn)\subseteq\Hps\to\Hps
\]
and 
\[
a^{\sharp}_{1/2}(x,D)^{*}:\Hps\to\Hps\subseteq\HT^{s-\sigma,p}_{FIO}(\Rn).
\]
Moreover, $(a^{\flat}_{1/2})^{\flat}_{\gamma}\in C^{r}_{*}S^{-(\gamma-1/2)r}_{1,\gamma}$, by Lemma \ref{lem:smoothing}. Since $2s(p)-(\gamma-1/2)r=\sigma$, Corollary \ref{cor:pseudoclassFIO} \eqref{it:pseudoclassFIO1} thus implies that 
\begin{equation}\label{eq:flathigh1}
(a^{\flat}_{1/2})^{\flat}_{\gamma}(x,D):\HT^{s+\sigma,p}_{FIO}(\Rn)\to\Hps
\end{equation}
for $n\max(0,\frac{1}{p}-1)-\frac{r}{2}+s(p)-\sigma<s<r-s(p)$, and Corollary \ref{cor:pseudoclassFIO} \eqref{it:pseudoclassFIO2} that
\begin{equation}\label{eq:flathigh2}
(a^{\flat}_{1/2})^{\flat}_{\gamma}(x,D)^{*}:\Hps\to\HT^{s-\sigma,p}_{FIO}(\Rn)
\end{equation}
for $n\max(0,\frac{1}{p}-1)-r+s(p)<s<\frac{r}{2}-s(p)+\sigma$. By Lemma \ref{lem:smoothing} and Corollary \ref{cor:pseudoclassFIO} \eqref{it:pseudoclassFIO3}, if $a\in \HT^{r,\infty}S^{0}_{1,1/2}$, then \eqref{eq:flathigh1} also holds for $s=r-s(p)$. If, additionally, $p\geq1$, then \eqref{eq:flathigh2} also holds for $s=n\max(0,\frac{1}{p}-1)-r+s(p)$. Finally, if $a(x,D)=b_{1/2}^{\flat}(x,D)\lb D\rb^{r/2}$ for some $b\in \Hrinf (\Rn)$, and if $p\geq1$, then Remark \ref{rem:flatflat} shows that \eqref{eq:flathigh1} also holds for $s=-\frac{r}{2}+s(p)-\sigma$. 

Next, note that $a^{\flat}_{1/2}\in C^{r}_{*}S^{0}_{1,1/2}$ and $(a^{\flat}_{1/2})^{\flat}_{\gamma}\in C^{r}_{*}S^{-(\gamma-1/2)r}_{1,\gamma}\subseteq C^{r}_{*}S^{0}_{1,1/2}$, by Lemma \ref{lem:smoothing}. Then $(a^{\flat}_{1/2})^{\sharp}_{\gamma}\in C^{r}_{*}S^{0}_{1,1/2}$ as well, by \eqref{eq:adecomp}. Moreover, for $\ph$ with sufficiently small support (independent of $a$), \eqref{eq:supportconditiona} holds with $a$ replaced by $(a^{\flat}_{1/2})^{\sharp}_{\gamma}$. 
The claim thus follows by replacing $a$ by $(a^{\flat}_{1/2})^{\sharp}_{\gamma}$.

The rest of the proof is dedicated to showing that \eqref{eq:mainpseudo1} and \eqref{eq:mainpseudo2} hold for all $s\in\R$ and $a\in C^{r}_{*}S^{0}_{1,1/2}$ satisfying \eqref{eq:supportconditiona}. For $p\in(1,\infty)$, this immediately follows from Proposition \ref{prop:pseudolargep}. Moreover, for $p=\infty$, one may rely on \eqref{eq:dualHpFIO} after dealing with the case where $p=1$. Hence, in the remainder, we will consider $p\leq 1$.


\subsubsection{Symbol decomposition}

The key tool in the rest of the proof will be interpolation, between Proposition \ref{prop:pseudolargep} and Theorem \ref{thm:pseudosmallp}. To this end, it will be convenient to prove an a priori slightly stronger statement: for each $s\in\R$ there exists an $l\in\Z_{+}$ such that \eqref{eq:mainpseudo1} and \eqref{eq:mainpseudo2} hold for all $a\in C^{r}_{*}S^{0}_{1,1/2}$ satisfying \eqref{eq:supportconditiona}, with operator quasi-norms bounded by a constant multiple of $\|a\|_{C^{r}_{*}S^{0,l}_{1,1/2}}$. In fact, by Lemma \ref{lem:symdecomp} it suffices to prove this stronger statement in the case where $a(x,\eta)=\sum_{k=0}^{\infty}a_{k}(x)\chi_{k}(\eta)$ for all $x,\eta\in\Rn$, with $(a_{k})_{k=0}^{\infty}\subseteq C^{r}_{*}(\Rn)$ and $(\chi_{k})_{k=0}^{\infty}\subseteq C^{\infty}_{c}(\Rn)$ as in Lemma \ref{lem:symdecomp}. Moreover, we may show that 
\begin{equation}\label{eq:mainpseudo11}
\|a(x,D)f\|_{\Hps}\lesssim \|a\|_{C^{r}_{*}S^{0,l}_{1,1/2}}\|f\|_{\HT^{s+\sigma,p}_{FIO}(\Rn)}
\end{equation}
and
\begin{equation}\label{eq:mainpseudo21}
\|a(x,D)^{*}f\|_{\HT^{s,p}_{FIO}(\Rn)}\lesssim \|a\|_{C^{r}_{*}S^{0,l}_{1,1/2}}\|f\|_{\HT^{s+\sigma,p}_{FIO}(\Rn)},
\end{equation}
for all $f\in\Sw(\Rn)$ with compact Fourier support, given that the latter class lies dense in $\HT^{s+m+\sigma,p}_{FIO}(\Rn)$ and $\Hps$. Note that we have replaced $s$ by $s+\sigma$ to arrive at \eqref{eq:mainpseudo21}, which we may do without loss of generality and which will simplify notation somewhat later on.

Now, using the conditions on the Fourier support of $f$ and the $a_{k}$, as well as the fact that $\gamma<1$, to prove \eqref{eq:mainpseudo11} and \eqref{eq:mainpseudo21} we may then additionally suppose that there exists a $K\in\Z_{+}$ such that $a_{k}=0$ for all $k>K$. Of course, we will obtain implicit constants in \eqref{eq:mainpseudo11} and \eqref{eq:mainpseudo21} that do not depend on $K$, but working with a finite sum $a=\sum_{k=0}^{K}a_{k}\chi_{k}$ allows us to apply Lemma \ref{lem:contanalytic} to deal with subtleties regarding analyticity.

\subsubsection{Interpolation}

Let $\delta\in(0,\min(p,2n/r))$ and $\theta\in(0,1)$ be such that $\frac{1}{p}=\frac{1-\theta}{\delta}+\frac{\theta}{1+\delta}$. 
 Set $r_{0}:=\frac{2n}{\delta}+\delta$, and let $r_{1}\in\R$ be such that $r=(1-\theta)r_{0}+\theta r_{1}$. We will let $\delta\to0$, and by Lemma \ref{lem:intparameters} we may thus suppose that
\[
r_{1}=\theta^{-1}\left[r-(1-\theta)\big(\tfrac{2n}{\delta}+\delta\big)\right]\in(0,r),
\]
where we also used that $r>2n(\tfrac{1}{p}-1)$.

  As in Lemma \ref{lem:symbolinter}, set 
\begin{equation}\label{eq:formaz}
\begin{aligned}
a_{z}(x,\eta):=&\,e^{(\kappa z+\lambda)^{2}}\lb \eta\rb^{-(\kappa z+\lambda)/2}\big(\lb D\rb^{\kappa z+\lambda}a(\cdot,\eta)\big)(x)\\
=&\,\sum_{k=0}^{K}e^{(\kappa z+\lambda)^{2}}\big(\lb D\rb^{\kappa z+\lambda}a_{k}\big)(x)\lb \eta\rb^{- (\kappa z+\lambda)/2}\chi_{k}(\eta)
\end{aligned}
\end{equation}
for $z\in \overline{\St}$ and $x,\eta\in\Rn$, where $\ka:=r_{0}-r_{1}$ and $\la:=r-r_{0}$. Then $a_{\theta}=a$, and for every $l\in\Z_{+}$ there exists a $C_{l}\geq0$ such that $a_{z}\in C^{r-\Real(\kappa z+\lambda)}_{*}S^{0,l}_{1,1/2}$ for each $z\in\overline{\St}$, with
\begin{equation}\label{eq:aznorm}
\sup\{\|a_{z}\|_{C^{r-\Real(\kappa z+\lambda)}_{*}S^{0,l}_{1,1/2}}\mid z\in\overline{\St}\}\leq C_{l}\|a\|_{C^{r}_{*}S^{0,l}_{1,1/2}} <\infty.
\end{equation}
Also, it follows from \eqref{eq:supportconditiona} that
\begin{equation}\label{eq:supportconditionaz}
\supp(\F a_{z}(\cdot,\eta))\subseteq\{\xi\in\Rn\mid c|\eta|^{1/2}\leq |\xi|\leq  \tfrac{1}{16}(1+|\eta|)^{\gamma}\}
\end{equation} 
for all $z\in\overline{\St}$ and $\eta\in\Rn$. 

Now set $X_{0}:=\HT^{s,\delta}_{FIO}(\Rn)$, $Y_{0}:=\HT^{s,\delta}_{FIO}(\Rn)$, $Y_{1}:=\HT^{s,1+\delta}_{FIO}(\Rn)$ and $X_{1}:=\HT^{s+\rho,1+\delta}_{FIO}(\Rn)$, where 
\begin{equation}\label{eq:rho}
\rho:=\begin{cases}
0&\text{if }r_{1}>4s(1+\delta),\\
2s(1+\delta)-\frac{r_{1}}{2}+\veps'&\text{if }r_{1}\leq 4s(1+\delta),
\end{cases}
\end{equation}
for some $\veps'\in(0,r_{1}/2]$, to be chosen later. Then \eqref{eq:aznorm}, \eqref{eq:supportconditionaz} and Theorem \ref{thm:pseudosmallp} yield an $l\in\Z_{+}$ such that $a_{it}(x,D):X_{0}\to Y_{0}$ and $a_{it}(x,D)^{*}:X_{0}\to Y_{0}$ for all $t\in\R$, with
\begin{equation}\label{eq:stripbounds1}
\sup_{t\in\R}\|a_{it}(x,D)\|_{\La(X_{0},Y_{0})}+\sup_{t\in\R}\|a_{it}(x,D)^{*}\|_{\La(X_{0},Y_{0})}\lesssim \|a\|_{C^{r}_{*}S^{0,l}_{1,1/2}}.
\end{equation}
Similarly, after possibly enlarging $l$, it follows from \eqref{eq:aznorm}, \eqref{eq:supportconditionaz} and Proposition \ref{prop:pseudolargep} that $a_{1+it}(x,D):X_{1}\to Y_{1}$ and $a_{1+it}(x,D)^{*}:X_{1}\to Y_{1}$ for all $t\in\R$, with
\begin{equation}\label{eq:stripbounds2}
\sup_{t\in\R}\|a_{1+it}(x,D)\|_{\La(X_{1},Y_{1})}+\sup_{t\in\R}\|a_{1+it}(x,D)^{*}\|_{\La(X_{1},Y_{1})}\lesssim \|a\|_{C^{r}_{*}S^{0,l}_{1,1/2}}.
\end{equation}
Here we used that $r_{1}>0$. Note that $l$ and the implicit constants do not depend on $a$ and $K$. 

Finally, by \eqref{eq:formaz} and Lemma \ref{lem:contanalytic},
\[
\sup_{z\in\overline{\St}}\|a_{z}(x,D)\|_{\La(X_{0}+X_{1},Y_{0}+Y_{1})}+\sup_{z\in\overline{\St}}\|a_{z}(x,D)^{*}\|_{\La(X_{0}+X_{1},Y_{0}+Y_{1})}<\infty.
\]
Of course, a priori the supremum depends on $K$ here, but this will not be a problem. Note also that, by Lemma \ref{lem:contanalytic}, the $Y_{0}+Y_{1}$-valued maps $z\mapsto a_{z}(x,D)f$ and $z\mapsto a_{z}(x,D)^{*}f$ are analytic on $\St$ for each $f\in X_{0}+X_{1}$, and continuous and bounded on $\overline{\St}$. Moreover, for $j\in\{0,1\}$ and $f\in X_{j}$, the $Y_{j}$-valued maps $t\mapsto a_{j+it}(x,D)f$ and $t\mapsto a_{j+it}(x,D)^{*}f$ are continuous and bounded. 


Now we can combine a version in quasi-Banach spaces of the standard result on interpolation of analytic families of operators (see \cite[Remark 3.4 and Lemma A.1]{LiRoSo25b}) to \eqref{eq:stripbounds1} and \eqref{eq:stripbounds2}, to obtain
\begin{equation}\label{eq:conclusion1}
\|a(x,D)\|_{\La([X_{0},X_{1}]_{\theta},[Y_{0},Y_{1}]_{\theta})}=\|a_{\theta}(x,D)\|_{\La([X_{0},X_{1}]_{\theta},[Y_{0},Y_{1}]_{\theta})}\lesssim \|a\|_{C^{r}_{*}S^{0,l}_{1,1/2}}
\end{equation}
and 
\begin{equation}\label{eq:conclusion2}
\|a(x,D)^{*}\|_{\La([X_{0},X_{1}]_{\theta},[Y_{0},Y_{1}]_{\theta})}=\|a_{\theta}(x,D)^{*}\|_{\La([X_{0},X_{1}]_{\theta},[Y_{0},Y_{1}]_{\theta})}\lesssim \|a\|_{C^{r}_{*}S^{0,l}_{1,1/2}}.
\end{equation}

\subsubsection{Conclusion}

It remains to unwrap what we have proved. By \eqref{eq:intHpFIO} and the choice of $\theta$, $[X_{0},X_{1}]_{\theta}=\HT^{s+\theta \rho,p}_{FIO}(\Rn)$ and $[Y_{0},Y_{1}]_{\theta}=\Hps$. 

If $r>r(p)=4s(p)+2(\frac{1}{p}-1)$, then Lemma \ref{lem:intparameters} \eqref{it:intparameters4} implies that $r_{1}\geq n-1>4s(1+\delta)$ for $\delta$ sufficiently small. Hence \eqref{eq:rho}, \eqref{eq:conclusion1}, \eqref{eq:conclusion2} yield the required statement in the first case of \eqref{eq:sigma}.

On the other hand, if $r\leq r(p)$, then we can use \eqref{eq:rho} and parts \eqref{it:intparameters1}, \eqref{it:intparameters2} and \eqref{it:intparameters3} of Lemma \ref{lem:intparameters} to see that
\begin{align*}
\theta\rho&\leq \theta(2s(1+\delta)-\tfrac{r_{1}}{2}+\veps')=2\theta s(1+\delta)+\tfrac{(1-\theta)r_{0}-r}{2}+\theta\veps'\\
&=2\theta s(1+\delta)+\tfrac{n(1-\theta)}{\delta}+\tfrac{(1-\theta)\delta}{2}-\tfrac{r}{2}+\theta\veps'\to 2s(1)+n(\tfrac{1}{p}-1)-\tfrac{r}{2}+\veps'\\
&=\tfrac{n-1}{2}+(n-1)(\tfrac{1}{p}-1)+\tfrac{1}{p}-1-\tfrac{r}{2}+\veps'=\tfrac{r(p)-r}{2}+\veps'.
\end{align*}
Hence, by choosing $\delta,\veps'>0$ sufficiently small, \eqref{eq:conclusion1} and \eqref{eq:conclusion2} yield the required conclusion in the second case of \eqref{eq:sigma}.
\end{proof}

\begin{remark}\label{rem:constantpseudo}
One can weaken the assumptions for \eqref{it:mainpseudo1} and \eqref{it:mainpseudo2} to $a\in C^{r}_{*}S^{m,l}_{1,1/2}$ for some $l\in\Z_{+}$ large enough, and obtain quasi-norm bounds for $a(x,D)$ and $a(x,D)^{*}$ in terms of $\|a\|_{C^{r}_{*}S^{m,l}_{1,1/2}}$. An analogous statement applies to \eqref{it:mainpseudo3} and \eqref{it:mainpseudo4}.
\end{remark}

\begin{remark}\label{rem:flatflat2}
The conclusion of Theorem \ref{thm:mainpseudo} \eqref{it:mainpseudo4} also holds if $a=(b^{\flat}_{\delta})_{1/2}^{\flat}$ for some $b\in\Hrinf(\Rn)$ and $\delta\in[0,1/2]$. This follows from the same proof, relying on the second statement in Remark \ref{rem:flatflat} when dealing with \eqref{eq:flathigh1}.
\end{remark}

\begin{remark}\label{rem:suboptimalgamma}
One can also derive mapping properties as in \eqref{eq:mainpseudo1} and \eqref{eq:mainpseudo2} for certain $r\leq 2n(\frac{1}{p}-1)$, if $0<p<1$. For example, for all $r>\frac{2}{3}n(\frac{1}{p}-1)$ one can rely on Corollary \ref{cor:pseudoclassFIO}. Stronger bounds can 
be obtained by interpolating between Proposition \ref{prop:pseudolargep} and Theorem \ref{thm:pseudosmallp}, using different choices of $r_{0}$, $r_{1}$ and $\gamma$ than before.

Also note that, for all $0<p\leq \infty$, versions of \eqref{eq:mainpseudo1} and \eqref{eq:mainpseudo2} hold for values of $s$ that are not treated in Theorem \ref{thm:mainpseudo}. 
Again, one example is given by Corollary \ref{cor:pseudoclassFIO}, but other bounds can be obtained by varying the choice of $\gamma$ in \eqref{eq:gamma}. 

To avoid additional 
technicalities, we leave the details 
to the interested reader.
\end{remark}

\begin{corollary}\label{cor:mainpseudo}
Let $r>0$, $m,s\in\R$, $\delta\in[0,1/2]$ and $p\in(0,\infty]$. 
Set
\[
\rho:=\begin{cases}
0&\text{if }r>\frac{r(p)}{2(1-\delta)}\text{ and }r>2n\max(0,\frac{1}{p}-1),\\
\frac{r(p)-2(1-\delta)r}{2}+\veps&\text{if }2n\max(0,\frac{1}{p}-1)<r\leq \frac{r(p)}{2(1-\delta)},
\end{cases}
\]
for $\veps\in(0,2s(p)-\frac{r(p)-r}{2}]$, and let $\sigma$ be as in Theorem \ref{thm:mainpseudo}. 
 Then the following statements hold for each $a\in C^{r}_{*}S^{m}_{1,\delta}$.
\begin{enumerate}
\item\label{it:corpseudo1}
If $n\max(0,\tfrac{1}{p}-1)-\tfrac{r}{2}+s(p)-\sigma<s<r-s(p)$, then
\begin{equation}\label{eq:corpseudo1}
a(x,D):\HT^{s+\rho+m,p}_{FIO}(\Rn)\to \Hps.
\end{equation}
\item\label{it:corpseudo2}
If $n\max(0,\tfrac{1}{p}-1)-r+s(p)<s<\frac{r}{2}-s(p)+\sigma$, then
\begin{equation}\label{eq:corpseudo2}
a(x,D)^{*}:\HT^{s,p}_{FIO}(\Rn)\to \HT^{s-\rho-m,p}_{FIO}(\Rn).
\end{equation}
\item\label{it:corpseudo3}
If $a\in\Hrinf S^{m}_{1,\delta}$, then \eqref{eq:corpseudo1} also holds for $s=r-s(p)$. If, additionally, $p\geq1$, then \eqref{eq:corpseudo2} also holds for $s=n\max(0,\tfrac{1}{p}-1)-r+s(p)$.
\item\label{it:corpseudo4} If $a=b^{\flat}_{\delta}$ for some $b\in \Hrinf (\Rn)$, and if $p\geq 1$, then \eqref{eq:corpseudo1} holds for all $-\tfrac{r}{2}+s(p)-\sigma\leq s\leq r-s(p)$, with $m=-\delta r$.
\end{enumerate}
\end{corollary}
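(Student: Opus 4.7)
The plan is to combine the symbol smoothing procedure from Lemma \ref{lem:smoothing} with the main result for symbols of type $(1,1/2)$ in Theorem \ref{thm:mainpseudo}, which is the case $\delta = 1/2$ already proved. Given $a \in C^{r}_{*}S^{m}_{1,\delta}$ with $\delta \in [0,1/2]$, I would apply Lemma \ref{lem:smoothing} with $\gamma = 1/2$ to decompose
\[
a = a^{\sharp}_{1/2} + a^{\flat}_{1/2},
\]
where $a^{\sharp}_{1/2} \in S^{m}_{1,1/2}$ and $a^{\flat}_{1/2} \in C^{r}_{*}S^{m-(1/2-\delta)r}_{1,1/2}$; moreover $a^{\flat}_{1/2} \in \HT^{r,\infty}S^{m-(1/2-\delta)r}_{1,1/2}$ when $a \in \HT^{r,\infty}S^{m}_{1,\delta}$.

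For the smooth piece $a^{\sharp}_{1/2}$, Lemma \ref{lem:pseudosmooth} immediately gives $a^{\sharp}_{1/2}(x,D):\HT^{s+m,p}_{FIO}(\Rn)\to\Hps$, as well as the dual statement, for \emph{all} $s\in\R$. Since $\rho\geq0$ in every case of the corollary, these bounds are already stronger than needed. For the rough piece $a^{\flat}_{1/2}$, the regularity is still $r$ but the order has improved to $m':= m-(1/2-\delta)r$, so Theorem \ref{thm:mainpseudo}\,\eqref{it:mainpseudo1} yields
\[
a^{\flat}_{1/2}(x,D):\HT^{s+\sigma+m',p}_{FIO}(\Rn)\to\Hps
\]
on the stated interval of $s$, with $\sigma$ exactly the quantity in the corollary. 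The claim of \eqref{it:corpseudo1} then follows from the embedding $\HT^{s+\rho+m,p}_{FIO}(\Rn)\hookrightarrow \HT^{s+\sigma+m',p}_{FIO}(\Rn)$ (a trivial consequence of the definition of the $\HT^{s,p}_{FIO}$ scale) provided that $\sigma\leq \rho + (1/2-\delta)r$.

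The main technical point is thus the arithmetic comparison of $\sigma$ and $\rho$ through the piecewise definitions, and I would treat the two cases separately. When $r>r(p)/(2(1-\delta))$ and $r>2n\max(0,\tfrac{1}{p}-1)$, so $\rho=0$: if $r>r(p)$ the theorem gives $\sigma=0$ and $0\leq (1/2-\delta)r$ is trivial; otherwise $\sigma=(r(p)-r)/2+\veps$ and the inequality $\sigma\leq (1/2-\delta)r$ reduces to $\veps\leq (2(1-\delta)r-r(p))/2$, whose right-hand side is strictly positive by hypothesis, so a small enough admissible $\veps$ works. When $2n\max(0,\tfrac{1}{p}-1)<r\leq r(p)/(2(1-\delta))$, one has $\sigma=(r(p)-r)/2+\veps$ and a direct computation yields $\sigma-(1/2-\delta)r=(r(p)-2(1-\delta)r)/2+\veps=\rho$ with the same $\veps$. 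This is the one place the bookkeeping matters, and it is the step I would expect to be the most delicate.

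Part \eqref{it:corpseudo2} follows identically using the adjoint bounds in Lemma \ref{lem:pseudosmooth} and Theorem \ref{thm:mainpseudo}\,\eqref{it:mainpseudo2}. Part \eqref{it:corpseudo3} exploits the stronger conclusion of Lemma \ref{lem:smoothing} for $\HT^{r,\infty}$ symbols to apply Theorem \ref{thm:mainpseudo}\,\eqref{it:mainpseudo3} to the rough part at the endpoint $s=r-s(p)$ (and the corresponding adjoint endpoint when $p\geq 1$). For \eqref{it:corpseudo4}, I would decompose $a=b^{\flat}_{\delta}=(b^{\flat}_{\delta})^{\sharp}_{1/2}+(b^{\flat}_{\delta})^{\flat}_{1/2}$: Lemma \ref{lem:pseudosmooth} handles the smooth piece, while Remark \ref{rem:flatflat2} supplies exactly the conclusion of Theorem \ref{thm:mainpseudo}\,\eqref{it:mainpseudo4} for $(b^{\flat}_{\delta})^{\flat}_{1/2}$, including the full closed interval of $s$. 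The same identity $\sigma\leq \rho+(1/2-\delta)r$ then transfers the source space from $\HT^{s+\sigma-r/2,p}_{FIO}$ to the desired $\HT^{s+\rho-\delta r,p}_{FIO}$, completing the argument.
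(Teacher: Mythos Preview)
Your proposal is correct and follows essentially the same route as the paper: decompose $a=a^{\sharp}_{1/2}+a^{\flat}_{1/2}$ via Lemma~\ref{lem:smoothing}, handle the smooth piece with Lemma~\ref{lem:pseudosmooth}, apply Theorem~\ref{thm:mainpseudo} (and Remark~\ref{rem:flatflat2} for part~\eqref{it:corpseudo4}) to the rough piece, and use the embedding $\HT^{s+\rho+m,p}_{FIO}\subseteq\HT^{s+\sigma+m-(1/2-\delta)r,p}_{FIO}$. The paper records the needed arithmetic as the single identity $\rho=\max(0,\sigma-(1/2-\delta)r)$, whereas you verify the inequality $\sigma\leq\rho+(1/2-\delta)r$ case by case; the content is the same.
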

Note that $\rho=\max(0,\sigma-(1/2-\delta)r)$.
\begin{proof}
Write $a=a^{\sharp}_{1/2}+a^{\flat}_{1/2}$. Then $a^{\sharp}_{1/2}\in S^{m}_{1,1/2}$ and $a^{\flat}_{1/2}\in C^{r}_{*}S^{m-(1/2-\delta)r}_{1,1/2}$, by Lemma \ref{lem:smoothing}. Also, if $a\in \Hrinf S^{m}_{1,\delta}$ then $a^{\flat}_{1/2}\in \Hrinf S^{m-(1/2-\delta)r}_{1,1/2}$. By Lemma \ref{lem:pseudosmooth},
\[
a^{\sharp}_{1/2}(x,D):\HT^{s+\rho+m,p}_{FIO}(\Rn)\subseteq \HT^{s+m,p}_{FIO}(\Rn)\to\Hps
\]
for all $s\in\R$, and similarly for $a^{\sharp}_{1/2}(x,D)^{*}$. By Theorem \ref{thm:mainpseudo} and Remark \ref{rem:flatflat2},
\[
a^{\flat}_{1/2}(x,D):\HT^{s+\rho+m,p}_{FIO}(\Rn)\subseteq \HT^{s+\sigma+m-(1/2-\delta)r,p}_{FIO}(\Rn)\to\Hps
\]
for $s$ as in the statement of the corollary, and similarly for $a_{1/2}^{\flat}(x,D)^{*}$.
\end{proof}

\begin{remark}\label{rem:smallpproblem}
{Unlike in Theorem \ref{thm:mainpseudo}, where $\delta=1/2$, in Corollary \ref{cor:mainpseudo} it may happen that $\frac{r(p)}{2(1-\delta)}\leq 2n\max(0,\frac{1}{p}-1)$, for $p<1$ and $\delta<1/2$ small. In this case Corollary \ref{cor:mainpseudo} provides no information, but one can still obtain versions of \eqref{eq:corpseudo1} and \eqref{eq:corpseudo2}, using different methods. For example, one can appeal to Corollary \ref{cor:pseudoclassFIO} instead of Theorem \ref{thm:mainpseudo} in the proof above, or use more involved reasoning along the lines of Remark \ref{rem:suboptimalgamma}. The same strategy can be used to deal with certain values of $s$ that are not treated in Corollary \ref{cor:mainpseudo}.}
\end{remark}

\begin{remark}\label{rem:multiplication}
Corollary \ref{cor:mainpseudo} applies to multiplication with $C^{r}_{*}(\Rn)$ or $\Hrinf(\Rn)$ functions. In particular, if $r>\frac{r(p)}{2}$ and $r>2n\max(0,\frac{1}{p}-1)$, the multiplication operator with symbol $a\in C^{r}_{*}(\Rn)$ is bounded on $\Hps$ for $n\max(0,\frac{1}{p}-1)-r+s(p)<s<r-s(p)$. If $a\in  \Hrinf (\Rn)$ then one may also let 
$s=r-s(p)$. If, additionally, $p\geq1$, then $s=n\max(0,\frac{1}{p}-1)-r+s(p)$ is also allowed. This follows by applying \eqref{eq:corpseudo1} to $a$, and \eqref{eq:corpseudo2} to $\overline{a}$.
\end{remark}

\section{Wave equations with rough coefficients}\label{sec:roughwaves}

In this section we prove our main results for wave equations, and in particular Theorem \ref{thm:mainintro}. From now on we will restrict to $p\geq1$, in part due to issues related to integration in quasi-Banach spaces (see Remark \ref{rem:smallpnotwork}).

\subsection{Smooth first-order equations}\label{subsec:firstorder}

In this subsection we prove our main result for smooth first-order pseudodifferential equations.

To this end, we will rely on a parametrix construction from \cite{Hassell-Rozendaal23}. 
For $r>0$, $m\in\R$ and $\delta\in[0,1]$, recall the definition of the symbol class $\A^{r}S^{m}_{1,\delta}$ from Definition \ref{def:symbolsmooth}, as well as the notion of asymptotic homogeneity from Definition \ref{def:asymphom}. Then the relevant statement about a parametrix for first-order equations, contained in \cite[Theorem 7.1]{Hassell-Rozendaal23}, is as follows.

\begin{theorem}\label{thm:parametrix}
Let $b\in \A^{2}S^{1}_{1,1/2}$ be real-valued, elliptic and asymptotically homogeneous of degree $1$ with real-valued limit $a\in C^2_{-}S^1_{1,0}$, and let $p\in[1,\infty)$ and $s\in\R$. 
Then there exists a collection $(E_{t})_{t\in\R}$ of operators on $\Hps$ such that, for all $t_{0}>0$, there exists a $C\geq 0$ such that the following properties hold for all $f\in\Hps$:
\begin{enumerate}
\item\label{it:par1} $[t\mapsto E_{t}f]\in C^{k}(\R;\HT^{s-k,p}_{FIO}(\Rn))$ for $k\in\{0,1\}$;
\item\label{it:par2} $\|\partial_{t}^{k}E_{t}f\|_{\HT^{s-k,p}_{FIO}(\Rn)}\leq C\|f\|_{\Hps}$ for $k\in\{0,1\}$ and $t\in[-t_{0},t_{0}]$;
\item\label{it:par3} $E_{0}f=f$, 
\[
\|(D_{t}-b(x,D))E_{t}f\|_{\Hps}\leq C\|f\|_{\Hps}
\]
for all $t\in[-t_{0},t_{0}]$, and $[t\mapsto(D_{t}-b(x,D))E_{t}f]\in C(\R;\Hps)$.
\end{enumerate}
\end{theorem}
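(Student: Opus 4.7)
The plan is to recycle the parametrix construction from \cite[Theorem 7.1]{Hassell-Rozendaal23} essentially verbatim, and to verify that each of its ingredients now extends to the full range $0<p<\infty$ using the machinery developed in Section \ref{sec:HpFIO}. Concretely, since $b\in \A^{2}S^{1}_{1,1/2}$ is real-valued, elliptic and asymptotically homogeneous of degree $1$ with smooth homogeneous limit $a\in C^{2}_{-}S^{1}_{1,0}$, one can integrate the Hamiltonian flow of $a$ on $T^{*}\Rn\setminus o$. This flow descends to a one-parameter family of bi-Lipschitz contact transformations $\hat{\chi}_{t}:\Sp\to\Sp$ of the cosphere bundle, depending smoothly on $t$. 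Following \cite{Hassell-Rozendaal23}, define
\[
E_{t}f:=V\big(e^{it\mu_{\w,\sigma}(D)}\,\hat{\chi}_{t}^{*}(Wf)\big)
\]
(or the more refined variant used there, including the phase correction needed so that $E_{0}=\Id$ and so that $(D_{t}-b(x,D))E_{t}$ only loses one derivative). The first step is to show, exactly as in \cite[Section 7]{Hassell-Rozendaal23}, that the Schwartz kernel of the phase-space operator $WE_{t}V$ admits an off-diagonal decay of the form appearing in Proposition \ref{prop:offsing}, uniformly in $t$ on compact intervals, with associated bi-Lipschitz map $\hat{\chi}_{t}$; a similar estimate holds for $W\partial_{t}E_{t}V$ and $W(D_{t}-b(x,D))E_{t}V$ (the latter with an additional factor of $\sigma$ producing the one-derivative gain).

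Once these kernel estimates are in hand, the proof proceeds in three steps. First, Proposition \ref{prop:offsing} — which in Section \ref{subsec:operatorphase} was extended from $p\geq1$ to all $p\in(0,\infty]$ — yields boundedness of $WE_{t}V$, $W\partial_{t}E_{t}V$ and $W(D_{t}-b(x,D))E_{t}V$ on the weighted tent spaces $T^{p}_{s}(\Sp)$, with norms locally uniformly bounded in $t$. Combining this with Proposition \ref{prop:HpFIOtent} and \eqref{eq:repro} immediately transfers the bounds to the Hardy spaces, giving \eqref{it:par2} and the analogous estimate for $D_{t}-b(x,D)$ in \eqref{it:par3}. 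Second, the continuity statements in \eqref{it:par1} and \eqref{it:par3} follow by the same kernel-continuity argument as in \cite{Hassell-Rozendaal23}: the difference $W(E_{t}-E_{t'})V$ has a kernel that tends to zero pointwise and is dominated by a kernel satisfying the hypotheses of Proposition \ref{prop:offsing}, so a dominated-convergence argument on $T^{p}_{s}(\Sp)$ (which works for all $p\in(0,\infty)$ thanks to the density of $\J(\Spp)$ from Lemma \ref{lem:distributions}) yields convergence in $\HT^{s-k,p}_{FIO}(\Rn)$. Third, $E_{0}=\Id$ is built into the construction.

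The main obstacle, and the only place where a genuinely new ingredient beyond \cite{Hassell-Rozendaal23} enters, is the range $p<1$: one must be sure that each step above is allowed in the quasi-Banach setting. This is precisely what Section \ref{sec:HpFIO} provides. The boundedness of $WE_{t}V$ on $T^{p}_{s}(\Sp)$ for $p<1$ rests on the atomic decomposition of $T^{p}_{s}(\Sp)$ (Proposition \ref{prop:atomictent}) and on the extension of Proposition \ref{prop:offsing} to $p<1$ via that decomposition; the identification $\Hps=VT^{p}_{s}(\Sp)/\ker(V)$ from Proposition \ref{prop:HpFIOtent} is what converts these tent-space bounds into bounds on the Hardy spaces for Fourier integral operators. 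No quantitative change in the $t$-dependence or in the structure of the parametrix is required; one inherits the same kernel estimates from \cite{Hassell-Rozendaal23} and only replaces the ambient functional-analytic framework. Density of Schwartz functions in $\Hps$ for $p<\infty$ (established in Section \ref{subsec:defHpFIO}) allows one to define $E_{t}$ and verify its properties on a dense class and then extend by continuity.
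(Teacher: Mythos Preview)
Your approach is essentially the paper's: both defer to the parametrix of \cite[Theorem 7.1]{Hassell-Rozendaal23} and observe that the only change needed for $p<1$ is that Proposition~\ref{prop:offsing} and Proposition~\ref{prop:HpFIOtent} are now available in the full range.

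Two points where your description deviates from the paper's, which you should tighten. First, the parametrix in \cite{Hassell-Rozendaal23} (and hence here) is $E_{t}=\wt{V}\F_{t}\wt{W}$, where $\F_{t}$ is pullback by the bicharacteristic flow of $\chi b$ with $\chi$ a high-frequency cutoff, not the flow of the homogeneous limit $a$; your formula with an extra phase factor $e^{it\mu_{\w,\sigma}(D)}$ is not the construction used. Second, for \eqref{it:par1} and \eqref{it:par2} the paper does not go through kernel bounds and Proposition~\ref{prop:offsing}: it argues that $\F_{t}$ itself preserves $T^{p}_{s}(\Sp)$, by comparing the flow of $\chi b$ to the homogeneous flow of $a^{\mathrm{hom}}$ (using $b-a^{\mathrm{hom}}\in C^{2}_{-}S^{0,1}_{1,1/2}$) and showing the latter preserves $T^{p}_{s}(\Sp)$ by a change-of-variables argument. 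Proposition~\ref{prop:offsing} is invoked only for \eqref{it:par3}, applied to the kernel bounds of \cite[Proposition~9.1]{Hassell-Rozendaal23}, after using that $b(x,D)-(\chi b)(x,D)$ is a smoothing operator on $\Hps$ to reduce to $\chi b$.
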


\begin{remark}\label{rem:parametrix}
The collection $(E_{t})_{t\in\R}$ in Theorem \ref{thm:parametrix} is independent of the choice of $p$ and $s$. In fact, one has $E_{t}:=\wt{V}\F_{t}\wt{W}$ 
for all $t\in\R$, where $\wt{W}$ and $\wt{V}$ are wave packet transforms similar to $W$, from \eqref{eq:defW}, and its adjoint $V$. Moreover, $\F_{t}$ is pullback via the bicharacteristic flow associated with the symbol $(x,\eta)\mapsto \chi(\eta)b(x,\eta)$, where $\chi\in C^{\infty}(\Rn)$ satisfies $\chi(\eta)=0$ for $|\eta|\leq 8$, and $\chi(\eta)=1$ for $|\eta|\geq 16$. Note that $\F_{t}$ acts on functions on $T^{*}(\Rn)\setminus o$, and the latter coincides with $\Sp\times(0,\infty)$ after the change of variables $(x,\xi)\mapsto (x,\hat{\xi},|\xi|^{-1})$.
\end{remark}

\vanish{\begin{proof}
The proof is essentially the same as to that of \cite[Theorem 7.1]{Hassell-Rozendaal23}, and we merely sketch the steps. One sets 
\begin{equation}\label{eq:defU}
E_{t}:=\wt{V}\F_{t}\wt{W}
\end{equation}
for $t\in\R$, where $\wt{W}$ and $\wt{V}$ are wave packet transforms similar to $W$, from \eqref{eq:defW}, and its adjoint $V$. Moreover, $\F_{t}$ is pullback via the bicharacteristic flow associated with the symbol $(x,\eta)\mapsto \chi(\eta)b(x,\eta)$, where $\chi\in C^{\infty}(\Rn)$ satisfies $\chi(\eta)=0$ for $|\eta|\leq 8$, and $\chi(\eta)=1$ for $|\eta|\geq 16$. That is, $\F_{t}$ acts on functions on $T^{*}(\Rn)\setminus o$, and the latter coincides with $\Sp\times(0,\infty)$ after the change of variables $(x,\xi)\mapsto (x,\hat{\xi},|\xi|^{-1})$.  Hence, by using that $\wt{W}:\Hps\to T^{p}_{s}(\Sp)$ is an isomorphism onto a closed subspace of the weighted tent space $T^{p}_{s}(\Sp)$, and analogously for $\wt{V}:T^{p}_{s}(\Sp)\to \Hps$, for \eqref{it:par1} and \eqref{it:par2} it suffices to study the action of the flow maps $\F_{t}$ on $T^{p}_{s}(\Sp)$. 

To this end, as in \cite[Section 8]{Hassell-Rozendaal23}, one can proceed by comparing the flow generated by $\chi b$ to the flow generated by the homogeneous extension $a^{\mathrm{hom}}$, to all of $T^{*}(\Rn)\setminus o$, of a limit $a$ of $b$, with the latter being uniquely defined and homogeneous of degree $1$ for $|\eta|\geq 1$. Note that one may choose $a$ to be real-valued, cf.~\cite[Remark 4.4]{Hassell-Rozendaal23}. Moreover, by Definition \ref{def:asymphom}, one has 
\begin{equation}\label{eq:bminahom}
b-a^{\mathrm{hom}}=b-a+a-a^{\mathrm{hom}}\in C^{2}_{-}S^{0,1}_{1,1/2}.
\end{equation}
It is not too difficult to show that the flow map 
associated with $a^{\mathrm{\hom}}$ leaves $T^{p}_{s}(\Sp)$ invariant, and one can use \eqref{eq:bminahom} to deduce that $\F_{t}$ then does so as well.

For property \eqref{it:par3}, one can rely on bounds from \cite[Section 9]{Hassell-Rozendaal23}. By Lemma \ref{lem:pseudosmooth}, 
\begin{equation}\label{eq:lowfreqb}
b(x,D)-(\chi b)(x,D)=b(x,D)(1-\chi(D)):\Hps\to\Hps,
\end{equation}
so that one may show \eqref{it:par3} with $b$ replaced by $\chi b$. Then  \eqref{eq:repro} reduces matters to studying the action of
\[
\wt{W}(D_{t}-(\chi b)(x,D))\wt{V}\F_{t}=-i\wt{W}(\wt{V}H-i(\chi b)(x,D)\wt{V})\F_{t}
\]
on $T^{p}_{s}(\Sp)$. Here $H$ is the Hamiltonian vector field associated with $\chi b$. Moreover, by part \eqref{it:par2}, we may let $t=0$. As in the proof of \cite[Corollary 2.17]{LiRoSo25b}, one can then apply \cite[Proposition 2.16]{LiRoSo25b} to the kernel bounds from \cite[Proposition 9.1]{Hassell-Rozendaal23}.
\end{proof}
}

We can now prove our main result regarding smooth first-order equations.

\begin{theorem}\label{thm:firstorder}
Let $b_{1}\in \A^{2}S^{1}_{1,1/2}$ be real-valued, elliptic and asymptotically homogeneous of degree one with real-valued limit $a_1\in C^2_{-}S^1_{1,0}$, let $b_{2}\in S^{0}_{1,1/2}$, 
and set $b:=b_{1}+b_{2}$. Then there exists a unique collection $(\ex_{t})_{t\in\R}$ such that, for all $p\in[1,\infty)$, $s\in\R$, $k\in\Z_{+}$ and $t_{0}>0$, there exists a $C\geq 0$ such that the following properties hold for all $f\in\Hps$:
\begin{enumerate}
\item\label{it:firstorder0} $\ex_{t}:\Hps\to\Hps$ is a bounded operator for all $t\in\R$;
\item\label{it:firstorder1} $[t\mapsto \ex_{t}f]\in C^{k}(\R;\HT^{s-k,p}_{FIO}(\Rn))$;
\item\label{it:firstorder2} $\|\partial_{t}^{k}\ex_{t}f\|_{\HT^{s-k,p}_{FIO}(\Rn)}\leq C\|f\|_{\Hps}$ for all $t\in[-t_{0},t_{0}]$;
\item\label{it:firstorder3} $\ex_{0}f=f$, and $D_{t}\ex_{t}f=b(x,D)\ex_{t}f
$ for all $t\in\R$.
\end{enumerate}
\end{theorem}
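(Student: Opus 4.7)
The plan is to build $(\ex_t)_{t\in\R}$ by Duhamel iteration around the parametrix for the principal part $b_1$. First, I would invoke Theorem~\ref{thm:parametrix} to obtain a family $(E_t)_{t\in\R}$ of operators on each $\Hps$ satisfying $E_0 = \id$, the required $C^1$ regularity in $t$, and the parametrix identity $(D_t - b_1(x,D))E_t = R_t$ with $R_t$ uniformly bounded on $\Hps$ for $t$ in compact intervals. Since $b_2\in S^0_{1,1/2}$, Lemma~\ref{lem:pseudosmooth} yields $b_2(x,D)\in\La(\Hps)$ for every $p\in(0,\infty)$ and $s\in\R$. Set $Q_t := b_2(x,D)E_t - R_t$, which is then uniformly bounded on $\Hps$ on any compact time interval.

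Next, I would look for $\ex_t$ in the form $\ex_t = E_t + V_t$, where $V_0 = 0$. A short computation shows that the requirement $(D_t - b(x,D))\ex_t f = 0$ is equivalent to $(D_t - b_1(x,D))V_t f = b_2(x,D) V_t f + Q_t f$. Applying the parametrix again to this inhomogeneous equation, one arrives at the Volterra-type fixed-point equation
\begin{equation*}
V_t f = i\!\int_0^t E_{t-s}\big(b_2(x,D) V_s f + Q_s f\big)\,ds + \mathcal{E}_t f,
\end{equation*}
where $\mathcal{E}_t$ is an error operator built from compositions of $R_{t-s}$ with $E_{\cdot}$ and $b_2(x,D)$, again uniformly bounded on $\Hps$ on compact intervals. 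On a sufficiently short symmetric interval $[-t_1,t_1]$, the associated integral operator in $V$ is a contraction on $C([-t_1,t_1];\La(\Hps))$ (using the quasi-Banach version of the Banach fixed-point theorem for $p<1$, which only requires $p$-subadditivity of the quasi-norm), giving a unique fixed point. Translating in time and composing solution operators on overlapping short intervals produces $\ex_t$ for every $t\in\R$. The construction is performed on $\Sw(\Rn)$ and then extended by continuity; since $(E_t, R_t, b_2(x,D))$ do not depend on $(p,s)$, the resulting $\ex_t$ is independent of the space used in the construction.

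Having $\ex_t$ in hand, properties \eqref{it:firstorder0} and \eqref{it:firstorder2} follow from the uniform bounds produced by the contraction. Property \eqref{it:firstorder3} for $k=0$ is immediate from $V_0 = 0$, and the PDE follows by differentiating the integral equation at $t$: the boundary term from the Duhamel integral cancels the parametrix error $R_t$, while $b_2(x,D)$ is inserted exactly as required. The $C^k$ assertion in \eqref{it:firstorder1} and \eqref{it:firstorder2} for $k\geq 1$ is then obtained by bootstrap: $D_t\ex_t f = b(x,D)\ex_t f$ maps $\Hps$ into $\HT^{s-1,p}_{FIO}(\Rn)$ by Lemma~\ref{lem:pseudosmooth} and Theorem~\ref{thm:mainpseudo}, and iterating gives $\partial_t^k \ex_t f \in C(\R;\HT^{s-k,p}_{FIO}(\Rn))$. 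Uniqueness is handled by noting that the difference $w := \ex_t^{(1)} f - \ex_t^{(2)} f$ of two candidate solutions satisfies the same equation with zero initial data; by Theorem~\ref{thm:Sobolev} we may pass to $L^2(\Rn) = \HT^{0,2}_{FIO}(\Rn)$ after a frequency localization, where an energy estimate using Lemma~\ref{lem:Aduality} (the real-valuedness of $b_1$ forces $b_1(x,D) - b_1(x,D)^* \in S^0_{1,1/2}$, hence bounded on $L^2$) together with Gronwall's inequality yields $w \equiv 0$.

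The main obstacle will be running the contraction in the quasi-Banach range $p<1$: one cannot freely sum Neumann series using the triangle inequality, and the standard fixed-point argument must be re-examined via the $p$-subadditive quasi-norm on $\Hps$, which forces the contraction constant to be chosen small in a $p$-dependent way. A secondary technical point is ensuring the $\ex_t$ produced for different $(p,s)$ agree: this amounts to showing that the fixed-point constructed on Schwartz functions is the same regardless of the ambient $\Hps$, which follows because the iteration kernels $E_{t-s}$, $b_2(x,D)$ and $R_s$ act coherently on $\Sw(\Rn)$.
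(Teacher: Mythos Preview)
Your overall strategy is sound, but it diverges from the paper in two places, and one of them is worth flagging.

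\emph{Existence.} The paper also builds $\ex_t$ from the parametrix $E_t$ for $b_1$, but instead of a short-time contraction followed by composition, it sums the Volterra iteration directly: setting $V_0 f(t):=-i(D_t-b(x,D))E_t f$ and $V_{j+1}f(t):=-i\int_0^t(D_t-b(x,D))E_{t-\tau}V_j f(\tau)\,d\tau$, one gets $\|V_j f(t)\|_{\Hps}\le C_0^{j+1}|t|^j/j!\,\|f\|_{\Hps}$, so $V:=\sum_j V_j$ converges and $\ex_t f:=E_t f+\int_0^t E_{t-\tau}Vf(\tau)\,d\tau$. The point is that factorial decay makes $\sum_j(C_0^{j+1}|t|^j/j!)^p<\infty$ for every $p>0$, so the quasi-Banach issue you single out as ``the main obstacle'' simply does not arise; no short-time argument or gluing is needed. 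Your fixed-point formulation with the unspecified error $\mathcal{E}_t$ is also awkward: the parametrix error $R_{t-s}$ hits the unknown $V$ again, so $\mathcal{E}_t$ cannot be a fixed operator independent of $V$ as you suggest. The direct iteration avoids this bookkeeping. (Also, for the $k\ge 1$ bootstrap you only need Lemma~\ref{lem:pseudosmooth}; Theorem~\ref{thm:mainpseudo} is for rough symbols and is irrelevant here.)

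\emph{Uniqueness.} Here your argument has a genuine gap as written. Theorem~\ref{thm:Sobolev} does not let you ``pass to $L^2$ after a frequency localization'' from an arbitrary $\Hps$; there is no such embedding. What does work is that the theorem quantifies over \emph{all} $p,s$, so both candidate collections act on $\HT^{1,2}_{FIO}(\Rn)=W^{1,2}(\Rn)$; for Schwartz $f$ the difference $w$ lies in $C(\R;W^{1,2})\cap C^1(\R;L^2)$, and then your $L^2$ energy estimate (using $b_1(x,D)-b_1(x,D)^*\in S^0_{1,1/2}$ from Lemma~\ref{lem:Aduality}) plus Gronwall gives $w\equiv 0$, with density finishing. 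The paper instead proves the stronger statement that uniqueness holds for each fixed $(p,s)$: given $u\in C(\R;\Hps)\cap C^1(\R;\Hpsm)$ with $u(0)=0$, it constructs, via the existence part applied to the adjoint symbol $\tilde b$ from Lemma~\ref{lem:Aduality}, a solution $w$ to $(D_t-\tilde b(x,D))w=F$ with the regularity needed (via Proposition~\ref{prop:fracintHpFIO}) to pair against $u$ and integrate by parts. This duality route is what lets the paper avoid any appeal to $p=2$.
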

The collection $(\ex_{t})_{t\in\R}$ is independent of the choice of $p$ and $s$, cf.~\eqref{eq:defe}, \eqref{eq:defVj} and Remark \ref{rem:parametrix}. 
\begin{proof}
In the case where 
$b_{2}=0$, the statement is \cite[Theorem 4.7]{Hassell-Rozendaal23}. However, the proof given there also works for general $b_{2}\in S^{0}_{1,1/2}$, and in fact such generality is required to prove the uniqueness statement (see \eqref{eq:adjointdecomp}). We will sketch the relevant steps, referring to the proof of \cite[Theorem 4.7]{Hassell-Rozendaal23} for additional details.



\subsubsection{Existence}

Let $(E_{t})_{t\in\R}$ be the parametrix from Theorem \ref{thm:parametrix}, associated with $b_{1}$. Set $V_{0}f(t):=-i(D_{t}-b(x,D))E_{t}f$ and, recursively,
\begin{equation}\label{eq:defVj}
V_{j+1}f(t):=-i\int_{0}^{t}(D_{t}-b(x,D))E_{t-\tau}V_{j}f(\tau)\ud \tau,
\end{equation}
for $f\in\Hps$, $j\geq0$ and $t\in\R$. Due to Theorem \ref{thm:parametrix}, 
there exists a $C_{0}\geq0$ such that $V_{j}f\in C(\R;\Hps)$ and
\begin{equation}\label{eq:iterationbound}
\|V_{j}f(t)\|_{\Hps}\leq \frac{C_{0}^{j+1}t^{j}}{j!}\|f\|_{\Hps}.
\end{equation}
Hence $V:=\sum_{k=0}^{\infty}V_{k}$ defines a bounded operator
\begin{equation}\label{eq:Vbound}
V:\Hps\to C([-t_{0},t_{0}];\Hps)\subseteq L^{1}([-t_{0},t_{0}];\Hps).
\end{equation}
In particular, 
$Vf\in C(\R;\Hps)\subseteq L^{1}_{\loc}(\R;\Hps)$ for all $f\in\Hps$.

Next, for $f\in\Hps$ and $t\in\R$, set
\begin{equation}\label{eq:defe}
\ex_{t}f:=E_{t}f+\int_{0}^{t}E_{t-\tau}Vf(\tau)\ud \tau.
\end{equation}
For $k=0$, \eqref{it:firstorder1} and \eqref{it:firstorder2} then follow by combining \eqref{eq:Vbound} with parts \eqref{it:par1} and \eqref{it:par2} of Theorem \ref{thm:parametrix}. This implies in particular \eqref{it:firstorder0}. The same reasoning yields
\begin{equation}\label{eq:exder}
\partial_{t}\ex_{t}f=\partial_{t}E_{t}f+Vf(t)+\int_{0}^{t}\partial_{t}E_{t-\tau}Vf(\tau)\ud\tau,
\end{equation}
and then that \eqref{it:firstorder1} and \eqref{it:firstorder2} hold for $k=1$. Next, by Theorem \ref{thm:parametrix} \eqref{it:par3} one has $\ex_{0}f=U_{0}f=f$, and \eqref{eq:exder} and the definition of $V$ imply that $(D_{t}-b(x,D))\ex_{t}f=0$, thereby proving \eqref{it:firstorder3}. Finally, by Lemma \ref{lem:pseudosmooth}, $b(x,D)^{k}:\Hps\to\HT^{s-k,p}_{FIO}(\Rn)$ for all $k\geq1$, so that \eqref{it:firstorder3} immediately yields \eqref{it:firstorder1} and \eqref{it:firstorder2} for $k\geq2$ as well. 

\subsubsection{Uniqueness} 

We will prove that, if
\[
u\in C(\R;\Hps)\cap C^{1}(\R;\Hpsm)
\]
is such that $u(0)=0$ and $(D_{t}-b(x,D))u(t)=0$ for all $t\in\R$, then $u\equiv 0$. Write $u_{+}(t):=\ind_{[0,\infty)}(t)u(t)$ and $u_{-}(t):=u(t)-u_{+}(t)$. Then $u=u_{+}+u_{-}$,
\[
u_{+},u_{-}\in C(\R;\Hps)\cap W^{1,1}_{\loc}(\R;\Hpsm),
\]
and $(D_{t}-b(x,D))u_{+}(t)=(D_{t}-b(x,D))u_{-}(t)=0$ for almost all $t\in\R$. The latter identity and Lemma \ref{lem:pseudosmooth} imply that in fact $u_{+},u_{-}\in C^{1}(\R;\Hpsm)$. By symmetry, it suffices to show that $\int_{\R}\lb u_{+}(t),G(t)\rb\ud t=0$ for all $G\in C^{\infty}_{c}(\R;C^{\infty}_{c}(\Rn))$. 

By Lemma \ref{lem:Aduality}, there exist $\tilde{b}_{1}\in \A^{2}S^{1}_{1,1/2}$, real-valued, elliptic and asymptotically homogeneous of degree $1$, and $\tilde{b}_{2}\in S^{0}_{1,1/2}$, such that 
\begin{equation}\label{eq:adjointdecomp}
b(x,D)^{*}=\tilde{b}_{1}(x,D)+\tilde{b}_{2}(x,D).
\end{equation}
Let $(\tilde{\ex}_{t})_{t\in\R}$ be as in the previous part of the proof, with $b$ replaced by $\tilde{b}:=\tilde{b}_{1}+\tilde{b}_{2}$, and let $t_{0}>0$ be such that $G(t)=0$ for $t\geq t_{0}$. Set $w(t):=-i\int_{t}^{t_{0}}\tilde{\ex}_{t-\tau}G(\tau)\ud\tau$ for $t\in\R$. Then $w\in C^{k}(\R;\HT^{\sigma,q}_{FIO}(\Rn))$ for all $k\geq0$, $\sigma\in\R$ and $q\in[1,\infty)$, with $w(t)=0$ for $t\geq t_{0}$ and $(D_{t}-\tilde{b}(x,D))w(t)=G(t)$ for all $t\in\R$. By \eqref{eq:fracintHpFIO1} 
 and Lemma \ref{lem:pseudosmooth}, 
\[
w\in C(\R;\HT^{1-s,p'}_{FIO}(\Rn))\cap C^{1}(\R;\HT^{-s,p'}_{FIO}(\Rn))
\]
and $\tilde{b}(x,D)w\in C(\R;\HT^{-s,p'}_{FIO}(\Rn))$. Hence
\begin{align*}
\int_{\R}\lb u_{+}(t),G(t)\rb\ud t&=\int_{\R}\lb u_{+}(t),(D_{t}-\tilde{b}(x,D))w(t)\rb\ud t\\
&=\int_{\R}\lb (D_{t}-b(x,D))u_{+}(t),w(t)\rb\ud t=0,
\end{align*}
where we used \eqref{eq:dualHpFIO} and the regularity and support conditions of $u_{+}$ and $w$.
\end{proof}

\begin{remark}\label{rem:commute}
It follows from Theorem \ref{thm:firstorder} and basic semigroup theory (see e.g.~\cite[Theorem II.6.7]{Engel-Nagel00}) that $(\ex_{t})_{t\in\R}$ is a strongly continuous group on $\Hps$, for all $p\in[1,\infty)$ and $s\in\R$, with generator $ib(x,D)$. 
In particular, $b(x,D)\ex_{t}f=\ex_{t}b(x,D)f$ for all $f\in\Hps$.  
\end{remark}

\begin{remark}\label{rem:smallpnotwork}
In \eqref{eq:iterationbound}, we used that $\Hps$ is a Banach space for $p\geq1$, when implicitly applying the triangle inequality. In fact, there are various fundamental issues concerning integration of functions with values in a quasi-Banach space (see e.g.~\cite{Albiac-Ansorena13}). Hence, for simplicity, we only consider $p\geq1$ in this section.
\end{remark}

\subsection{Rough second-order equations}\label{subsec:secondorder}

In this subsection we state and prove our main result for rough second-order equations. 

We consider the following differential operator: 
\[
Lf(x):=\sum_{i,j=1}^{n}D_{i}(a_{ij}D_{j}f)(x)+\sum_{j=1}^{n}a_{j}(x)D_{j}f(x)+a_{0}(x)f(x).
\]
Here $a_{ij}:\Rn\to\R$ is bounded and real-valued for all $1\leq i,j\leq n$, and there exists a $\kappa_{0}>0$ such that
\[
\sum_{i,j=1}^{n}a_{ij}(x)\eta_{i}\eta_{j}\geq \kappa_{0}|\eta|^{2}
\]
for all $x,\eta\in\Rn$. Moreover, $a_{j}:\Rn\to\C$ is bounded for all $0\leq j\leq n$. Crucially, we suppose that $(a_{ij})_{i,j=1}^{n}\subseteq {C^{2}_{-}(\Rn)\cap C^{r}_{*}(\Rn)}$ and $(a_{j})_{j=0}^{n}\subseteq C^{r}_{*}(\Rn)$ for some $r\geq 2$. We sometimes strengthen this assumption slightly to $(a_{ij})_{i,j=1}^{n}\subseteq C^{2}_{-}(\Rn)\cap \HT^{r,\infty}(\Rn)$ and $(a_{j})_{j=0}^{n}\subseteq \HT^{r,\infty}(\Rn)$, leading to stronger results. 

We will prove existence and uniqueness of solutions to the following Cauchy problem:
\begin{equation}\label{eq:divwave}
\begin{aligned}
(D_{t}^{2}-L)u(t,x)&=F(t,x),\\
u(0,x)&=u_{0}(x),\\
\partial_{t}u(0,x)&=u_{1}(x),
\end{aligned}
\end{equation}
for $u_{0}$, $u_{1}$ and $F(t,\cdot)$ in suitable Hardy spaces for Fourier integral operators.

Our proof makes crucial use of the symbol smoothing procedure from Section \ref{subsec:symbols}. As in Lemma \ref{lem:smoothing}, for $1\leq i,j\leq n$ we write $a_{ij}=(a_{ij})^{\sharp}_{1/2}+(a_{ij})^{\flat}_{1/2}$ with $(a_{ij})^{\sharp}_{1/2}\in \A^{2}S^{0}_{1,1/2}$ and $(a_{ij})^{\flat}_{1/2}\in C^{r}_{*} S^{-r/2}_{1,1/2}$, and $(a_{ij})^{\flat}_{1/2}\in \HT^{r,\infty} S^{-r/2}_{1,1/2}$ if $a_{ij}\in\HT^{r,\infty}(\Rn)$. Also write $L=L_{1,1}+L_{2,1}+L_{3}$, where 
\begin{equation}\label{eq:Ldecomp}
\begin{aligned}
L_{1,1}:=&\sum_{i,j=1}^{n}D_{i}(a_{ij})^{\sharp}_{1/2}(x,D)D_{j},\\
L_{2,1}:=&\sum_{i,j=1}^{n}D_{i}(a_{ij})^{\flat}_{1/2}(x,D)D_{j},\\
L_{3}:=&\sum_{j=1}^{n}a_{j}D_{j}+a_{0}. 
\end{aligned}
\end{equation}
Note that $(L_{1,1}+L_{2,1})^{*}=L_{1,2}+L_{2,2}$, where
\begin{equation}\label{eq:Ldecompextra}
\begin{aligned}
L_{1,2}:=&\sum_{i,j=1}^{n}D_{j}(a_{ij})^{\sharp}_{1/2}(x,D)D_{i},\\
L_{2,2}:=&\sum_{i,j=1}^{n}D_{j}(a_{ij})^{\flat}_{1/2}(x,D)D_{i}. 
\end{aligned}
\end{equation}
Finally, for $x,\eta\in\Rn$, set
\[
A(x,\eta):=\sum_{i,j=1}^{n}a_{ij}(x)\eta_{i}\eta_{j}.
\]
Then $A\in \Crtwo S^{2}_{1,0}$ is non-negative, elliptic and homogeneous of degree $2$. 

The following proposition, an extension of \cite[Proposition 5.1]{Hassell-Rozendaal23}, records some important properties of these operators. 

\begin{proposition}\label{prop:Lproperties}
Let $p\in[1,\infty]$ be such that $2s(p)+1<r$. Then the following statements hold for each $k\in\{1,2\}$.
\begin{enumerate}
\item\label{it:Lprop1} There exist a real-valued elliptic $b\in\A^{2}S^{1}_{1,1/2}$, and $e_{1,k},e_{2,k}\in S^{1}_{1,1/2}$, such that 
\begin{equation}\label{eq:Asharpdecomp}
L_{1,k}=b(x,D)^{2}+e_{1,k}(x,D)\text{ and }L_{1,k}^{*}=b(x,D)^{2}+e_{2,k}(x,D).
\end{equation}
One may choose $b$ to be independent of $k$ and asymptotically homogeneous of degree $1$ with limit given by $\sqrt{A(x,\eta)}$ for $x,\eta\in\Rn$ with $|\eta|\geq1$.
\item\label{it:Lprop2} There exists a $\delta>0$ such that
\begin{equation}\label{eq:Aflatbound1}
L_{2,k}:\Hps\to\Hpsm
\end{equation}
for all $r-s(p)-\delta<  s< r-s(p)$, and
\begin{equation}\label{eq:Aflatbound2}
L_{2,k}^{*}:\Hps\to\Hpsm
\end{equation}
for all $-r+s(p)+1<s<-r+s(p)+1+\delta$. One may also suppose that
\begin{equation}\label{eq:delta}
-r+s(p)+1<r-s(p)-\delta.
\end{equation}
If $(a_{ij})_{i,j=1}^{n}\subseteq \HT^{r,\infty}(\Rn)$, then \eqref{eq:Aflatbound1} also holds for $s=r-s(p)$, and \eqref{eq:Aflatbound2} also holds for $s=-r+s(p)+1$.
\item\label{it:Lprop3} One has 
\begin{equation}\label{eq:L3bound1}
L_{3}:\Hps\to\Hpsm
\end{equation}
for all $-r+s(p)+1<s<r-s(p)+1$, and
\begin{equation}\label{eq:L3bound2}
L_{3}^{*}:\Hps\to\Hpsm
\end{equation}
for all $-r+s(p)<s<r-s(p)$. If $(a_{j})_{j=0}^{n}\subseteq \HT^{r,\infty}(\Rn)$, then \eqref{eq:L3bound1} holds for all $-r+s(p)+1\leq s\leq r-s(p)+1$, and \eqref{eq:L3bound2} holds for all $-r+s(p)\leq s\leq r-s(p)$. 
\item\label{it:Lprop4} One has  
\begin{equation}\label{eq:Lmap}
L:\Hps\to\HT^{s-2,p}_{FIO}(\Rn)\text{ and }L^{*}:\Hps\to\HT^{s-2,p}_{FIO}(\Rn)
\end{equation}
for all $-r+s(p)+1< s< r-s(p)+1$. If $(a_{ij})_{i,j=1}^{n},(a_{j})_{j=0}^{n}\subseteq \HT^{r,\infty}(\Rn)$, then \eqref{eq:Lmap} holds for all $-r+s(p)+1\leq  s\leq  r-s(p)+1$. 
\end{enumerate}
\end{proposition}
\begin{proof}
\eqref{it:Lprop1}: The symbol of $L_{1,1}$ is given by
\[
(x,\eta)\mapsto\sum_{i,j=1}^{n}(D_{i}(a_{ij})^{^{\sharp}}_{1/2})(x,\eta)\eta_{j}+\sum_{i,j=1}^{n}(a_{ij})^{\sharp}_{1/2}(x,\eta)\eta_{i}\eta_{j}.
\]
The first term is an element of $\A^{1}S^{1}_{1,1/2}\subseteq S^{1}_{1,1/2}$, and the second term is $A^{\sharp}_{1/2}(x,\xi)$. 
Lemma \ref{lem:squareroot} yields a $b\in \A^{2}S^{1}_{1,1/2}$ with the stated properties, and an $e_{1}\in S^{1}_{1,1/2}$, such that $A^{\sharp}_{1/2}(x,D)=b(x,D)^{2}+e_{1}(x,D)$. This suffices for the first part of \eqref{eq:Asharpdecomp} if $k=1$. The argument for $k=2$ is identical, noting that the highest-order term in the symbol of $L_{1,2}$ is again $A^{\sharp}_{1/2}$. 

For the second part of \eqref{eq:Asharpdecomp}, one can additionally use Lemma \ref{lem:Aduality}, to write
\[
A^{\sharp}_{1/2}(x,D)^{*}=\overline{A^{\sharp}_{1/2}}(x,D)+e_{2}(x,D)=A^{\sharp}_{1/2}(x,D)+e_{2}(x,D)
\]
for some $e_{2}\in S^{1}_{1,1/2}$.

\eqref{it:Lprop2}: By Lemma \ref{lem:smoothing}, $(a_{ij})^{\flat}_{1/2}\in C^{r}_{*} S^{-r/2}_{1,1/2}$ for all $1\leq i,j\leq n$. 
Let $\sigma$ and $\veps$ be as in Theorem \ref{thm:mainpseudo}. Then $s-1\geq s-r/2+\sigma$ for $\veps$ sufficiently small, by the condition on $p$. Hence Theorem \ref{thm:mainpseudo} yields
\[
(a_{ij})^{\flat}_{1/2}(x,D):\Hpsm\subseteq \HT^{s-r/2+\sigma,p}_{FIO}(\Rn) \to\Hps
\]
for $-\frac{r}{2}+s(p)-\sigma<s<r-s(p)$. If $a_{ij}\in\HT^{r,\infty}(\Rn)$, then $(a_{ij})^{\flat}_{1/2}\in\HT^{r,\infty}S^{-r/2}_{1,1/2}$ and one may also let $s=r-s(p)$. This suffices for \eqref{eq:Aflatbound1}, since $-\frac{r}{2}+s(p)-\sigma<r-s(p)$ and $D_{i},D_{j}:\Hps\to \Hpsm$. The argument for \eqref{eq:Aflatbound2} is analogous, using \eqref{eq:mainpseudo2}. Finally, \eqref{eq:delta} follows from the fact that $-r+s(p)+1<r-s(p)$.

\eqref{it:Lprop3}: By Remark \ref{rem:multiplication}, $a_{j}$ and $\overline{a_{j}}$ act boundedly on $\Hps$ for all $0\leq j\leq n$ and $-r+s(p)<s<r-s(p)$, and for all $-r+s(p)\leq s\leq r-s(p)$ if $a_{j}\in\HT^{r,\infty}(\Rn)$. This suffices, since $L_{3}^{*}f=\sum_{j=1}^{n}D_{j}(\overline{a_{j}}f)+\overline{a_{0}}f$.

\eqref{it:Lprop4}:  Lemma \ref{lem:pseudosmooth} and \eqref{it:Lprop1} imply that $L_{1,1}$ and $L_{1,2}^{*}$ map $\Hps$ to $\HT^{s-2,p}_{FIO}(\Rn)$, for all $s\in\R$. Now write 
\[
L=L_{1,1}+L_{2,1}+L_{3}=L_{1,2}^{*}+L_{2,2}^{*}+L_{3},
\]
and apply \eqref{it:Lprop2} and \eqref{eq:L3bound1} to obtain the first part of \eqref{eq:Lmap} for $r-s(p)-\delta<s<r-s(p)$ and $-r+s(p)+1< s<-r+s(p)+1+\delta$. 
Then \eqref{eq:intHpFIO} in turn yields the first part of \eqref{eq:Lmap} for all $-r+s(p)+1< s<r-s(p)$. For $r-s(p)<s< r-s(p)+1$, use \eqref{eq:Aflatbound1}:
\[
L_{2,1}:\Hps\subseteq\HT^{s-1,p}_{FIO}(\Rn)\to\HT^{s-2,p}_{FIO}(\Rn),
\]
and combine this with \eqref{eq:L3bound1}. 

For the second part of \eqref{eq:Lmap}, write
\[
L^{*}=L_{1,1}^{*}+L_{2,1}^{*}+L_{3}^{*}=L_{1,2}+L_{2,2}+L_{3}^{*}.
\]
As before, one may then combine \eqref{it:Lprop1}, \eqref{it:Lprop2} and \eqref{eq:L3bound2} with \eqref{eq:intHpFIO} to obtain the second part of \eqref{eq:Lmap} for all $-r+s(p)+1< s<r-s(p)$. To also deal with $r-s(p)<s< r-s(p)+1$, one has to use that
\begin{align*}
L_{2,2}&:\Hps\subseteq\HT^{s-1,p}_{FIO}(\Rn)\to\HT^{s-2,p}_{FIO}(\Rn),\\
L_{3}^{*}&:\Hps\subseteq\HT^{s-1,p}_{FIO}(\Rn)\to\HT^{s-2,p}_{FIO}(\Rn),
\end{align*}
where we again applied \eqref{eq:Aflatbound1} and \eqref{eq:L3bound2}.

Finally, if $(a_{ij})_{i,j=1}^{n},(a_{j})_{j=0}^{n}\subseteq\HT^{r,\infty}(\Rn)$ then the same arguments let one include 
$s=-r+s(p)+1$ and $s=r-s(p)+1$. 
\end{proof}

We are now ready to prove our main result on the well-posedness of \eqref{eq:divwave}. 

\begin{theorem}\label{thm:mainwave}
There exist unique collections $(U_{0}(t))_{t\in\R},(U_{1}(t))_{t\in\R}$ such that, for all $p\in [1,\infty)$ and $s\in\R$ with $2s(p)+1<r$ and $-r+s(p)+1< s< r-s(p)$, and for all $u_{0}\in\Hps$, $u_{1}\in \Hpsm$, $F\in L^{1}_{\loc}(\R;\Hpsm)$ and $t_{0}>0$, the following properties hold:
\begin{enumerate}
\item\label{it:mainwave1} $U_{k}(t):\HT^{s-k,p}_{FIO}(\Rn)\to\Hps$ is bounded for all $t\in\R$ and $k\in\{0,1\}$, and $\sup_{|t|\leq t_{0}}\|U_{k}(t)\|_{\La(\HT^{s-k,p}_{FIO}(\Rn),\Hps)}<\infty$; 
\item\label{it:mainwave3} $[t\mapsto U_{0}(t)u_{0}],[t\mapsto U_{1}(t)u_{1}]\in C^{k}(\R;\HT^{s-k,p}_{FIO}(\Rn))$ for $k\in\{0,1,2\}$;
\item\label{it:mainwave4} Set $u(t):=U_{0}(t)u_{0}+U_{1}(t)u_{1}-\int_{0}^{t}U_{1}(t-s)F(s)\ud s$ for $t\in\R$. Then 
\begin{equation}\label{eq:regularityu}
u\in C(\R;\Hps)\cap C^{1}(\R;\Hpsm)\cap W^{2,1}_{\loc}(\R;\HT^{s-2,p}_{FIO}(\Rn)),
\end{equation}
$u(0)=u_{0}$, $\partial_{t}u(0)=u_{1}$, and
\begin{equation}\label{eq:eqdistr}
(D_{t}^{2}-L)u(t)=F(t)
\end{equation}
in $\HT^{s-2,p}_{FIO}(\Rn)$ for almost all $t\in \R$.
\end{enumerate}
If 
$(a_{ij})_{i,j=1}^{n},(a_{j})_{j=0}^{n}\subseteq \HT^{r,\infty}(\Rn)$, then this statement also holds for $s=-r+s(p)+1$ and $s=r-s(p)$. 
\end{theorem}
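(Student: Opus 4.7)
My plan follows the template of the paper's earlier work on smoother equations: reduce the rough second-order equation to a smooth first-order one using the approximate square root of Proposition \ref{prop:Lproperties}\eqref{it:Lprop1}, solve the associated half-wave equations via Theorem \ref{thm:firstorder}, and eliminate the remaining rough perturbation through a Duhamel iteration whose convergence follows from the mapping bounds in Proposition \ref{prop:Lproperties}. Uniqueness is then obtained by a duality argument against the adjoint equation, using Lemma \ref{lem:Aduality} to guarantee that $L^{*}$ has the same structure as $L$.

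Concretely, I would start by writing $L_{1,1}=b(x,D)^{2}+e_{1}(x,D)$ as in Proposition \ref{prop:Lproperties}\eqref{it:Lprop1}. Shifting $b$ by $ic$ for sufficiently large real $c$ (a zeroth-order perturbation that preserves membership in $\A^{2}S^{1}_{1,1/2}$ and asymptotic homogeneity) makes $b(x,D)\colon \HT^{s+1,p}_{FIO}(\Rn)\to \Hps$ invertible by Lemma \ref{lem:pseudosmooth}; the extra first-order commutator terms can be absorbed into $e_{1}\in S^{1}_{1,1/2}$. Applying Theorem \ref{thm:firstorder} to the equations $D_{t}u=\pm b(x,D)u$ produces solution families $\ex_{t}^{\pm}$, and I would set
$$U_{0}^{(0)}(t):=\tfrac{1}{2}\bigl(\ex_{t}^{+}+\ex_{t}^{-}\bigr),\qquad U_{1}^{(0)}(t):=-\tfrac{i}{2}\bigl(\ex_{t}^{+}-\ex_{t}^{-}\bigr)b(x,D)^{-1}.$$
These satisfy $(D_{t}^{2}-b(x,D)^{2})U_{k}^{(0)}(t)=0$ with the correct Cauchy data, and they inherit from Theorem \ref{thm:firstorder} the bounds $U_{k}^{(0)}(t)\colon \HT^{s-k,p}_{FIO}(\Rn)\to \Hps$ uniformly on $[-t_{0},t_{0}]$ together with the required time-continuity.

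Next I would set $R:=L-b(x,D)^{2}=e_{1}(x,D)+L_{2,1}+L_{3}$. Lemma \ref{lem:pseudosmooth} and parts \eqref{it:Lprop2}--\eqref{it:Lprop3} of Proposition \ref{prop:Lproperties} give $R\colon \Hps\to \Hpsm$ for all the stated $(p,s)$. Since $U_{1}^{(0)}(t-s)\colon \Hpsm\to \Hps$ uniformly in $t,s\in[-t_{0},t_{0}]$, the Volterra operator
$$Tw(t):=-\int_{0}^{t}U_{1}^{(0)}(t-s)Rw(s)\,ds$$
satisfies $\|T^{j}\|_{\La(C([-t_{0},t_{0}];\Hps))}\leq (Ct_{0})^{j}/j!$, so $V_{k}:=\sum_{j\geq 1}T^{j}U_{k}^{(0)}$ converges in $C([-t_{0},t_{0}];\Hps)$, and the Duhamel identity $(D_{t}^{2}-b(x,D)^{2})(Tw)(t)=Rw(t)$ (coming from $U_{1}^{(0)}(0)=0$, $\partial_{t}U_{1}^{(0)}(0)=I$) yields $(D_{t}^{2}-L)V_{k}=RU_{k}^{(0)}$ with zero data. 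Setting $U_{k}(t):=U_{k}^{(0)}(t)+V_{k}(t)$ then produces $(D_{t}^{2}-L)U_{k}=0$ with the prescribed Cauchy data. Differentiation under the integral together with Proposition \ref{prop:Lproperties}\eqref{it:Lprop4} delivers \eqref{it:mainwave1}--\eqref{it:mainwave3}, and \eqref{it:mainwave4} reduces to another application of Duhamel to the inhomogeneity $F$.

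For uniqueness I would adapt the proof of Theorem \ref{thm:firstorder}: given $u$ with zero data in the regularity class \eqref{eq:regularityu} solving $(D_{t}^{2}-L)u=0$, I pair $u$ against $w(t):=\int_{t}^{t_{0}}\wt U_{1}(t-\tau)F(\tau)\,d\tau$, where $\wt U_{1}$ is the analogue of $U_{1}$ for $L^{*}$ (well-defined by Lemma \ref{lem:Aduality}, Proposition \ref{prop:Lproperties} and the existence part just proved, applied to $L^{*}$ on the dual range of $s$), and $F\in C_{c}^{\infty}(\R;C_{c}^{\infty}(\Rn))$ is supported in $(-\infty,t_{0})\times\Rn$. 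Two integrations by parts in $t$, justified by the $W^{2,1}_{\loc}(\R;\HT^{s-2,p}_{FIO}(\Rn))$ regularity of $u$ and the $C^{2}$-in-time regularity of $w$ in a dual scale, give $\int_{\R}\lb u(t),F(t)\rb\,dt=0$, forcing $u\equiv 0$. The hardest step will be securing convergence of the Duhamel iteration across the full $(p,s)$ range---especially at the endpoints $s=r-s(p)$ in the $\HT^{r,\infty}$ setting and $s=n\max(0,\tfrac{1}{p}-1)-r+s(p)+1$ when $p\geq 1$---which ultimately rests on the sharp rough-pseudodifferential estimates of Theorem \ref{thm:mainpseudo} encoded in Proposition \ref{prop:Lproperties}. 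A secondary subtlety for $p<1$ is that $(\Hps)^{*}$ is not itself an FIO Hardy space by Proposition \ref{prop:HpFIOdual}, so in the uniqueness step I would work directly with the adjoint solution operators in the Hardy scale for $L^{*}$ rather than with literal duals.
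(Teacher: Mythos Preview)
Your existence argument follows the paper's approach, but there is a real gap in your claim that ``parts \eqref{it:Lprop2}--\eqref{it:Lprop3} of Proposition \ref{prop:Lproperties} give $R\colon \Hps\to \Hpsm$ for all the stated $(p,s)$.'' Proposition \ref{prop:Lproperties}\eqref{it:Lprop2} only yields $L_{2,1}\colon\Hps\to\Hpsm$ on a \emph{narrow} interval $r-s(p)-\delta<s<r-s(p)$, not on the full range. The paper fills this in by observing that the remainder can also be written via the \emph{second} representation $R=e_{2,2}(x,D)+L_{2,2}^{*}+L_{3}-2icb(x,D)+c^{2}$, coming from $L=L_{1,2}^{*}+L_{2,2}^{*}+L_{3}$ and Proposition \ref{prop:Lproperties}\eqref{it:Lprop1}; this second form gives the mapping near the lower endpoint via \eqref{eq:Aflatbound2}, and Proposition \ref{prop:HpFIOint} interpolates between the two. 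Without this step your Volterra iteration only converges for $s$ near the upper endpoint.

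Your uniqueness argument has a more serious gap at $p\leq 1$. You propose to pair $u$ against a solution $w$ of $(D_{t}^{2}-L^{*})w=F$ built from $\tilde{U}_{1}$, relying on ``the $C^{2}$-in-time regularity of $w$ in a dual scale.'' But the existence part only provides $\tilde{U}_{1}$ on $\HT^{\sigma,q}_{FIO}(\Rn)$ for $q<\infty$, and for $p\leq 1$ the needed space $(\HT^{s-2,p}_{FIO}(\Rn))^{*}$ is not such a space (it is $\HT^{2-s,\infty}_{FIO}(\Rn)$ for $p=1$, and for $p<1$ is described via $T^{\infty}_{2-s,1/p-1}(\Sp)$ by Proposition \ref{prop:HpFIOdual}); an attempt to reach it by the embeddings of Proposition \ref{prop:fracintHpFIO} fails for general $s$ in the stated range. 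The paper handles this by constructing the adjoint object $w$ directly in the weak-star topology on $(\Hpsm)^{*}$ via the bilinear form $\langle w(t),g\rangle=\int_{t}^{t_{0}}\langle \tilde v(\tau),\sn_{t-\tau}g\rangle\,d\tau$, which is one order \emph{too low} to pair against $\partial_{t}^{2}u\in\HT^{s-2,p}_{FIO}(\Rn)$. It then inserts a frequency cutoff $\ph(\veps D)$ to produce $w^{\veps}$ with the required extra order of dual regularity, integrates by parts against $w^{\veps}$, and lets $\veps\to 0$ using dominated convergence. Your remedy of ``working directly with the adjoint solution operators in the Hardy scale for $L^{*}$'' does not supply this missing regularity.
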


The operators $(U_{0}(t))_{t\in\R}$ and $(U_{1}(t))_{t\in\R}$ do not depend on the choice of $r$; only their mapping properties do. Moreover, although the operators do not depend on $p$ or $s$, it follows from the proof that the uniqueness statement does hold for all $p$ and $s$ separately. Note also that \eqref{eq:eqdistr} is well defined, by Proposition \ref{prop:Lproperties} \eqref{it:Lprop4}. 

\begin{proof} 
The proof is similar to that of \cite[Theorem 5.2]{Hassell-Rozendaal23}, but there are various twists. 

\subsubsection{Existence} 


This part of the proof involves similar arguments as used for \cite[Theorem 5.2]{Hassell-Rozendaal23}. However, we immediately obtain existence for the whole Sobolev range, and we explicitly construct the solution operators $U_{0}$ and $U_{1}$, instead of the solution to \eqref{eq:eqdistr}. We will indicate the main steps, referring to \cite{Hassell-Rozendaal23} for additional details.

We first do some preliminary work. Write 
\[
L=L_{1,1}+L_{2,1}+L_{3}=b(x,D)^{2}+e_{1,1}(x,D)+L_{2,1}+L_{3},
\]
as in \eqref{eq:Ldecomp} and Proposition \ref{prop:Lproperties} \eqref{it:Lprop1}. 
Let $(\ex_{t})_{t\in\R}$ be the collection of operators from Theorem \ref{thm:firstorder}, satisfying $D_{t}\ex_{t}f=b(x,D)\ex_{t}f$ for all $f\in\Hps$ and $t\in\R$. As in Lemma \ref{lem:pseudosmooth}, let $c>0$ be such that 
\[
\tilde{b}(x,D):=b(x,D)+ic:\Hps\to\Hpsm
\]
is invertible. Set $\tilde{\ex}_{t}:=e^{-ct}\ex_{t}$ and
\[
\tilde{L}:=\tilde{b}(x,D)^{2}-L=2icb(x,D)-c^{2}-e_{1,1}(x,D)-L_{2,1}-L_{3}.
\]
Then $\tilde{\ex}_{0}f=f$,
\[
D_{t}\tilde{\ex}_{t}f=(b(x,D)+ic)\tilde{\ex}_{t}f=\tilde{b}(x,D)\tilde{\ex}_{t}f
\]
and
\begin{equation}\label{eq:factorA}
\big(D_{t}^{2}-L\big)\tilde{\ex}_{t}f=(\tilde{b}(x,D)^{2}-L)\tilde{\ex}_{t}f=\tilde{L}\tilde{\ex}_{t}f.
\end{equation}
Moreover,
\begin{equation}\label{eq:factorA2}
\big(D_{t}^{2}-L\big)\tilde{\ex}_{-t}f=(\tilde{b}(x,D)^{2}-L)\tilde{\ex}_{-t}f
=\tilde{L}\tilde{\ex}_{-t}f
\end{equation}
as well.

Now, by \eqref{eq:Aflatbound1}, \eqref{eq:delta} and \eqref{eq:L3bound1}, $L_{2,1}$ and $L_{3}$ map $\Hps$ to $\Hpsm$ for $r-s(p)-\delta<s< r-s(p)$. 
Hence Lemma \ref{lem:pseudosmooth} implies that
\begin{equation}\label{eq:mappingLtilde}
\tilde{L}:\Hps\to\Hpsm
\end{equation}
for such $s$. If $(a_{ij})_{i,j=1}^{n},(a_{j})_{j=0}^{n}\subseteq\HT^{r,\infty}(\Rn)$, then \eqref{eq:mappingLtilde} also holds for $s=r-s(p)$. On the other hand, using the decomposition from \eqref{eq:Ldecompextra}, Proposition \ref{prop:Lproperties} \eqref{it:Lprop1} yields
\begin{equation}\label{eq:Ldecomp2}
L=L_{1,2}^{*}+L_{2,2}^{*}+L_{3}=b(x,D)^{2}+e_{2,2}(x,D)+L_{2,2}^{*}+L_{3}.
\end{equation}
In particular, 
\[
e_{2,2}(x,D)+L_{2,2}^{*}+L_{3}=L-b(x,D)^{2}=e_{1,1}(x,D)+L_{2,1}+L_{3},
\]
and therefore
\begin{equation}\label{eq:tildeL2}
\tilde{L}=2icb(x,D)-c^{2}-e_{2,2}(x,D)-L_{2,2}^{*}-L_{3}.
\end{equation}
By combining this with \eqref{eq:Aflatbound2} and \eqref{eq:L3bound1}, as well as Lemma \ref{lem:pseudosmooth}, one sees that \eqref{eq:mappingLtilde} also holds for $-r+s(p)+1<s<-r+s(p)+1+\delta$, and for $s=-r+s(p)+1$ if $(a_{ij})_{i,j=1}^{n},(a_{j})_{j=0}^{n}\subseteq\HT^{r,\infty}(\Rn)$. 
Finally, one can use \eqref{eq:intHpFIO} and interpolate to see that \eqref{eq:mappingLtilde} holds for all $-r+s(p)+1<s<r-s(p)$, with the endpoints included if $(a_{ij})_{i,j=1}^{n},(a_{j})_{j=0}^{n}\subseteq\HT^{r,\infty}(\Rn)$. 

 Next, set
\begin{equation}\label{eq:defcossin}
\cs_{t}:=\frac{\tilde{\ex}_{t}+\tilde{\ex}_{-t}}{2}\quad\text{and}\quad\sn_{t}:=\frac{\tilde{\ex}_{t}-\tilde{\ex}_{-t}}{2i}\tilde{b}(x,D)^{-1}
\end{equation}
for $t\in\R$. Then, partly due to Remark \ref{rem:commute},
\begin{equation}\label{eq:cossinprop}
\cs_{0}u_{0}=u_{0},\quad \partial_{t}\cs_{t}u_{0}|_{t=0}=0,\quad \sn_{0}u_{1}=0,\quad \partial_{t}\sn_{t}u_{1}|_{t=0}=u_{1}.
\end{equation}
Moreover, by \eqref{eq:factorA} and \eqref{eq:factorA2}, for all $t\in\R$ one has
\begin{equation}\label{eq:dercossin}
(D_{t}^{2}-L)\cs_{t}u_{0}=\tilde{L}\cs_{t}u_{0}\quad\text{and}\quad(D_{t}^{2}-L)\sn_{t}u_{1}=\tilde{L}\sn_{t}u_{1}.
\end{equation}
Finally, by Theorem \ref{thm:firstorder}, for all $t_{0}>0$ and $k\geq0$ one has
\begin{equation}\label{eq:cossinbounds}
\sup_{|t|\leq t_{0}}\|\partial_{t}^{k}\cs_{t}\|_{\La(\Hps,\HT^{s-k,p}_{FIO}(\Rn))}+\|\partial_{t}^{k}\sn_{t}\|_{\La(\Hpsm,\HT^{s-k,p}_{FIO}(\Rn))}<\infty,
\end{equation}
where the derivatives are taken in the strong operator topology. 

For $t\in\R$, set $v_{0,0}(t):=\tilde{L}\cs_{t}u_{0}$ and $v_{1,0}(t):=\tilde{L}\sn_{t}u_{1}$. Recursively, let
\[
v_{j,k+1}(t):=\int_{0}^{t}\tilde{L}\sn_{t-\tau}v_{j,k}(\tau)\ud \tau
\] 
for $j\in\{0,1\}$ and $k\geq0$. Using \eqref{eq:mappingLtilde} and \eqref{eq:cossinbounds}, one can show that $v_{j}:=\sum_{k=0}^{\infty}v_{j,k}$ is well defined in $C(\R;\Hpsm)\subseteq L^{1}_{\loc}(\R;\Hpsm)$. Moreover, 
\begin{equation}\label{eq:boundv}
\sup_{|t|\leq t_{0}}\|v_{j}(t)\|_{\Hpsm}
\lesssim \|u_{j}\|_{\HT^{s-j,p}_{FIO}(\Rn)}
\end{equation}
for each $t_{0}>0$, where the implicit constant depends on $t_{0}$ but not on $u_{j}$.

We can now define our solution operators. For $t\in\R$, set
\begin{equation}\label{eq:defU0}
U_{0}(t)u_{0}:=\cs_{t}u_{0}+\int_{0}^{t}\sn_{t-\tau}v_{0}(\tau)\ud \tau
\end{equation}
and
\begin{equation}\label{eq:defU1}
U_{1}(t)u_{1}:=\sn_{t}u_{1}+\int_{0}^{t}\sn_{t-\tau}v_{1}(\tau)\ud \tau.
\end{equation}
Note that $v_{j}$ depends on $u_{j}$, but $U_{0}$ and $U_{1}$ do not depend on the choice of $p$ and $s$. By \eqref{eq:cossinbounds} and \eqref{eq:boundv},
\[
\sup_{|t|\leq t_{0}}\|U_{j}(t)u_{j}\|_{\Hps}\lesssim \|u_{j}\|_{\HT^{s-j,p}_{FIO}(\Rn)}
\]
for $j\in\{0,1\}$ and $t_{0}>0$, where the implicit constant depends on $t_{0}$ but not on $u_{j}$. This proves \eqref{it:mainwave1}.

Next, the dominated convergence theorem, \eqref{eq:boundv}, \eqref{eq:cossinbounds} and \eqref{eq:cossinprop} imply that
\[
[t\mapsto U_{j}(t)u_{j}]\in C(\R;\Hps)\cap C^{1}(\R;\Hpsm)\cap C^{2}(\R;\Hpsmm),
\]
with 
\begin{equation}\label{eq:firstderUj}
\begin{aligned}
\partial_{t}U_{0}(t)u_{0}&=\partial_{t}\cs_{t}u_{0}+\int_{0}^{t}\partial_{t}\sn_{t-\tau}v_{0}(\tau)\ud \tau,\\
\partial_{t}U_{1}(t)u_{1}&=\partial_{t}\sn_{t}u_{1}+\int_{0}^{t}\partial_{t}\sn_{t-\tau}v_{1}(\tau)\ud \tau,
\end{aligned}
\end{equation}
in $\Hpsm$, and
\begin{equation}\label{eq:secondderUj}
\begin{aligned}
\partial_{t}^{2}U_{0}(t)u_{0}&=\partial_{t}^{2}\cs_{t}u_{0}+v_{0}(t)+\int_{0}^{t}\partial_{t}^{2}\sn_{t-\tau}v_{0}(\tau)\ud \tau,\\
\partial_{t}^{2}U_{1}(t)u_{1}&=\partial_{t}^{2}\sn_{t}u_{1}+v_{1}(t)+\int_{0}^{t}\partial_{t}^{2}\sn_{t-\tau}v_{1}(\tau)\ud \tau,
\end{aligned}
\end{equation}
in $\Hpsmm$. This proves \eqref{it:mainwave3}. Moreover, combined with \eqref{eq:dercossin}, it implies that 
\begin{equation}\label{eq:solveUj}
\begin{aligned}
(D_{t}^{2}-L)U_{0}(t)u_{0}&=\tilde{L}\cs_{t}u_{0}-v_{0}(t)+\int_{0}^{t}\tilde{L}\sn_{t-\tau}v_{0}(\tau)\ud\tau=0,\\
(D_{t}^{2}-L)U_{1}(t)u_{1}&=\tilde{L}\sn_{t}u_{1}-v_{1}(t)+\int_{0}^{t}\tilde{L}\sn_{t-\tau}v_{1}(\tau)\ud\tau=0,
\end{aligned}
\end{equation}
where we also used the definition of $v_{j}$.

Finally, let $u$ be as in \eqref{it:mainwave4}. Then, by \eqref{eq:cossinprop}, \eqref{eq:defU0}, \eqref{eq:defU1} and \eqref{eq:firstderUj}, one has $u(0)=u_{0}$ and $\partial_{t} u(0)=u_{1}$. Moreover,  \eqref{eq:solveUj}, \eqref{eq:defU1}, \eqref{eq:firstderUj} and \eqref{eq:cossinprop}  yield
\begin{align*}
&(D_{t}^{2}-L)u(t)=F(t)-\int_{0}^{t}(D_{t}^{2}-L)U_{1}(t-\tau)F(\tau)\ud\tau=F(t)
\end{align*}
for almost all $t\in\R$. Thus $u$ solves \eqref{eq:eqdistr}. In particular, since $D_{t}^{2}u=Lu+F$, Proposition \ref{prop:Lproperties} \eqref{it:Lprop4} implies that $u$ has the required regularity.

\subsubsection{Uniqueness} 

We will show that any solution $u$ as in \eqref{eq:regularityu} to \eqref{eq:eqdistr} is unique. To this end, as in the proof of Theorem \ref{thm:firstorder}, we solve an adjoint problem. However, due to the low regularity of the coefficients, additional difficulties arise. Moreover, because the spaces that we consider are not necessarily reflexive, we cannot reason as in the proof of \cite[Theorem 5.2]{Hassell-Rozendaal23}. Hence we modify an idea from \cite[Section 5]{Smith98b}.

Let $-r+s(p)+1<s<r-s(p)$, where the endpoints may be included if $(a_{ij})_{i,j=1}^{n},(a_{j})_{j=0}^{n}\subseteq\HT^{r,\infty}(\Rn)$. It suffices to prove that, if
\[
u\in C(\R;\Hps)\cap C^{1}(\R;\Hpsm)\cap W^{2,1}_{\loc}(\R;\Hpsmm)
\]
satisfies $u(0)=\partial_{t}u(0)=0$ and $(D_{t}^{2}-L)u(t)=0$ in $\Hpsmm$ for almost all $t\in\R$, then $u\equiv0$. Write $u_{+}(t):=\ind_{[0,\infty)}(t)u(t)$ and $u_{-}(t):=\ind_{(-\infty,0)}(t)u(t)$. Then 
\[
u_{+},u_{-}\in C(\R;\Hps)\cap C^{1}(\R;\Hpsm)\cap W^{2,1}_{\loc}(\R;\Hpsmm),
\]
and $(D_{t}^{2}-L)u_{+}(t)=(D_{t}^{2}-L)u_{-}(t)=0$ in $\Hpsmm$ for almost all $t\in\R$. The latter identity and Proposition \ref{prop:Lproperties} \eqref{it:Lprop4} imply that $u_{+},u_{-}\in C^{2}(\R;\Hpsmm)$. 

By symmetry, we only need to show that 
\begin{equation}\label{eq:toshowF}
\int_{\R}\lb G(t),u_{+}(t)\rb\ud t=0
\end{equation}
for each $G\in C^{\infty}_{c}(\R;C^{\infty}_{c}(\Rn))$. 
Let $t_{0}\in\R$ be such that $G(t)=0$ for $t\geq t_{0}$. We will use that, by \eqref{eq:dualHpFIO},
\begin{equation}\label{eq:touseF}
\int_{\R}\lb (D_{t}^{2}-L^{*})\tilde{w}(t),u_{+}(t)\rb\ud t=\int_{\R}\lb \tilde{w}(t),(D_{t}^{2}-L)u_{+}(t)\rb\ud t=0
\end{equation}
for all 
\[
\tilde{w}\in C(\R;\HT^{2-s,p'}_{FIO}(\Rn))\cap C^{1}(\R;\HT^{1-s,p'}_{FIO}(\Rn))\cap C^{2}(\R;\HT^{-s,p'}_{FIO}(\Rn))
\]
with $\tilde{w}(t)=0$ for $t\geq t_{0}$. Here smoothness is considered in the weak-star topology. In fact, to ensure that each of the terms in \eqref{eq:touseF} is well defined, one also requires norm bounds for $\tilde{w}$ and its weak-star derivatives, locally uniformly in time. These bounds will be implicit in the construction below. 

Let $\tilde{L}$ and $(\sn_{t})_{t\in\R}$ be as before. For $t\in\R$, set $\tilde{v}_{0}(t):=G(t)$ and, recursively,
\[
\lb \tilde{v}_{k+1}(t),g\rb:=-\int_{t}^{t_{0}}\lb \tilde{v}_{k}(\tau),\sn_{t-\tau}\tilde{L}g\rb\ud \tau
\] 
for $k\geq0$ and $g\in\Hps$. Then, using Theorem \ref{thm:firstorder} and arguing as in the previous part of the proof, one sees that $\tilde{v}:=\sum_{k=0}^{\infty}\tilde{v}_{k}\in L^{1}_{\loc}(\R;\HT^{-s,p'}_{FIO}(\Rn))$ and $\tilde{v}(t)=0$ for $t\geq t_0$, where we consider integrability in the weak-star topology. Moreover, setting 
\[
\lb w(t),g\rb_{\Rn}:=\int^{t_{0}}_{t}\lb \tilde{v}(\tau),\sn_{t-\tau}g\rb\ud\tau
\]
for $t\in\R$ and $g\in\Hpsm$, we obtain
\[
w\in C(\R;\HT^{1-s,p'}_{FIO}(\Rn))\cap C^{1}(\R;\HT^{-s,p'}_{FIO}(\Rn))\cap C^{2}(\R;\HT^{-1-s,p'}_{FIO}(\Rn))
\]
in the weak-star topology, and $w(t)=0$ for $t\geq t_{0}$. Using also Remark \ref{rem:commute}, \eqref{eq:Ldecomp2} and \eqref{eq:tildeL2}, one sees that $(D_{t}^{2}-L^{*})w(t)=G(t)$ as distributions, for almost all $t\in\R$. However, the regularity of $w$ is insufficient to rely on \eqref{eq:touseF}. 
On the other hand, one does have
\begin{equation}\label{eq:touseF2}
\lb G(t),g\rb=\lb \tilde{v}(t),g\rb+\int_{t}^{t_{0}}\lb \tilde{v}(t),\sn_{t-\tau}\tilde{L}g\rb\ud\tau
\end{equation}
for almost all $t\in\R$ and all $g\in\Hps$, as follows by approximation, using the regularity of $\tilde{v}$ to see that both sides are well defined for such $g$.

So let $\veps>0$, and recall that $\ph\in C^{\infty}_{c}(\Rn)$ satisfies $\ph\equiv 1$ in a neighborhood of zero. For $t\in\R$, set $\lb \tilde{v}^{\veps}(t),g\rb:=\lb \tilde{v}(t),\ph(\veps D)g\rb$ and
\[
\lb w^{\veps}(t),g\rb:=\int^{t_{0}}_{t}\lb \tilde{v}^{\veps}(\tau),\sn_{t-\tau}g\rb\ud\tau.
\]
This is in fact well defined for all $g\in \HT^{s-2,p}_{FIO}(\Rn)$, and 
\[
w^{\veps}\in C(\R;\HT^{2-s,p'}_{FIO}(\Rn))\cap C^{1}(\R;\HT^{1-s,p'}_{FIO}(\Rn))\cap C^{2}(\R;\HT^{-s,p'}_{FIO}(\Rn))
\]
in the weak-star topology, with $w^{\veps}(t)=0$ for $t\geq t_{0}$. Moreover, 
\[
\|\tilde{v}^{\veps}(t)\|_{\HT^{-s,p'}_{FIO}(\Rn)}\lesssim \|\tilde{v}(t)\|_{\HT^{-s,p'}_{FIO}(\Rn)}
\]
for an implicit constant independent of $\veps$ and $t$, and $\tilde{v}^{\veps}(t)\to \tilde{v}(t)$ in the weak-star topology on $\HT^{-s,p'}_{FIO}(\Rn)$, as $\veps\to 0$. Finally, just as in \eqref{eq:touseF2}, 
\begin{equation}\label{eq:touseF3}
\lb (D_{t}^{2}-L^{*})w^{\veps}(t),g\rb=\lb \tilde{v}^{\veps}(t),g\rb+\int_{t}^{t_{0}}\lb \tilde{v}^{\veps}(t),\sn_{t-\tau}\tilde{L}g\rb\ud\tau
\end{equation} 
for almost all $t\in\R$ and all $g\in\Hps$.

To conclude, we can now combine \eqref{eq:touseF}, \eqref{eq:touseF2}, \eqref{eq:touseF3}, the dominated convergence theorem and the support properties of $\tilde{v}$ and $u_{+}$:
\begin{align*}
&\int_{\R}\lb G(t),u_{+}(t)\rb\ud t=\int_{\R}\lb G(t)-(D_{t}^{2}-L^{*})w^{\veps}(t),u_{+}(t)\rb\ud t\\
&=\int_{\R}\Big(\lb \tilde{v}(t)-\tilde{v}^{\veps}(t),u_{+}(t)\rb+\int_{t}^{t_{0}}\lb \tilde{v}(t)-\tilde{v}^{\veps}(t),\sn_{t-\tau}\tilde{L}u_{+}(t)\rb\ud\tau\Big)\ud t\to 0
\end{align*}
as $\veps\to0$. This in turn means that \eqref{eq:toshowF} holds, as required.
\end{proof}


\begin{remark}\label{rem:lowerreg}
One can lower the regularity of the lower-order terms of $L$, at the cost of shrinking the Sobolev interval for $s$ in Theorem \ref{thm:mainwave}. Indeed, the assumption $(a_{j})_{j=0}^{n}\subseteq C^{r}_{*}(\Rn)$ is only used for \eqref{eq:L3bound1}, for which one in turn needs that $a_{j}:\Hpsm\to\Hpsm$ for $1\leq j\leq n$, and that $a_{0}:\Hps\to\Hpsm$. The former holds if $(a_{j})_{j=1}^{n}\subseteq C^{r_{1}}_{*}(\Rn)$ for some $r_{1}>2s(p)$, by Remark \ref{rem:multiplication}, and the latter if e.g.~$2s(p)\leq 1$ and $a_{0}\in C^{r_{2}}_{*}(\Rn)$ for some $r_{2}>0$, by Corollary \ref{cor:pseudoclassFIO} (see also Remarks \ref{rem:suboptimalgamma} and \ref{rem:smallpproblem} for another approach). In this case, using also duality, the condition on $s$ in Theorem \ref{thm:mainwave} becomes 
\[
-\min(r,r_{1},r_{2})+s(p)+1<s<\min(r,r_{1}+1,r_{2}+1)-s(p),
\]
and one has to guarantee that this interval is not empty. At the very least, if $2s(p)\leq 1$ then the choices $r_{1}=r-1$ and $r_{2}>s(p)$ are allowed for $s$ in an open interval containing $1$. 

If $(a_{j})_{j=1}^{n}\subseteq \HT^{r_{1},\infty}(\Rn)$ and $a_{0}\in\HT^{r_{2},\infty}(\Rn)$ then one may include the endpoints of the Sobolev interval, and one may choose $r_{2}=s(p)$ for $s=1$ as well.

%
\end{remark}

\begin{remark}\label{rem:boundedder}
By \eqref{eq:cossinbounds}, $\partial_{t}^{k}U_{1}(t):\Hpsm\to\HT^{s-k,p}_{FIO}(\Rn)$ is a bounded operator for all $k\in\{0,1,2\}$ and $t\in\R$, with $\sup_{|t|\leq t_{0}}\|\partial_{t}^{k}U_{1}(t)\|_{\La(\Hpsm,\HT^{s-k,p}_{FIO}(\Rn))}<\infty$ for each $t_{0}>0$, and similarly for $\partial_{t}^{k}U_{0}(t)$. Here the restriction $k\leq 2$ arises from the term $v_{j}$ in \eqref{eq:secondderUj}, the smoothness of which depends on the range of $s$ for which \eqref{eq:mappingLtilde} holds. For larger $r$, one may weaken this restriction on $k$ by using the identity $\partial_{t}^{2}U_{j}(t)=-LU_{j}(t)$, and it can be removed for smooth coefficients.
\end{remark}


\begin{remark}\label{rem:group}
It follows from Theorem \ref{thm:mainwave} and \cite[Theorem II.6.7]{Engel-Nagel00} that
\[
S(t)\left(\!\begin{array}{c}
\!u_{0}\!\\
\!u_{1}\!
\end{array}\!\right):=\left(\!\begin{array}{c}
\!U_{0}(t)u_{0}+U_{1}(t)u_{1}\!\\
\!\partial_{t}U_{0}(t)u_{0}+\partial_{t}U_{1}(t)u_{1}\!
\end{array}\!\right)=\left(\!\begin{array}{cc}
\!U_{0}(t)&U_{1}(t)\!\\
\!\partial_{t}U_{0}(t)&\partial_{t}U_{1}(t)\!
\end{array}\!\right)\left(\!\begin{array}{c}
\!u_{0}\!\\
\!u_{1}\!
\end{array}\!\right)
\]
defines a strongly continuous group on $\Hps\times \Hpsm$, for all $p\in[1,\infty)$ and $s$ as in Theorem \ref{thm:mainwave}. Its generator is
\begin{equation}\label{eq:generator}
\mathcal{A}:=\left(\!\begin{array}{cc}
\!0&I\!\\
\!-L&0\!
\end{array}\!\right). 
\end{equation}
Basic semigroup theory then implies that $\A S(t)=S(t)\A$ for all $t\in\R$. In particular, $LU_{0}(t)u_{0}=U_{0}(t)Lu_{0}$ and $LU_{1}(t)u_{1}=U_{1}(t)Lu_{1}$ for all $(u_{0},u_{1})\in D(\A)$.
%
\end{remark}

\begin{remark}\label{rem:waveadjoint}
The statement of Theorem \ref{thm:mainwave} also holds if one replaces $L$ by $L^{*}$ in \eqref{eq:eqdistr}, albeit using different solution operators $(\tilde{U}_{0}(t))_{t\in\R}, (\tilde{U}_{1}(t))_{t\in\R}$. This follows from analogous reasoning. More precisely, to show existence, one can use the same formulas for the solution operators, cf.~\eqref{eq:defU0} and \eqref{eq:defU1}, but one has to modify the definition of the error terms $v_{j}$, by replacing $\tilde{L}$ by 
\begin{equation}\label{eq:Ltildenew}
\bar{L}:=2icb(x,D)-c^{2}-e_{1,2}(x,D)-L_{2,2}-L_{3}^{*}.
\end{equation}
This is allowed because $\bar{L}$ and $\tilde{L}$ 
have the same mapping properties, by Proposition \ref{prop:Lproperties} \eqref{it:Lprop3}. The rest of the proof of existence is then identical. For the proof of uniqueness, one can also use the same arguments, again with $\tilde{L}$ replaced by $\bar{L}$.
\end{remark}

\vanish{
\begin{remark}\label{rem:waveadjoint}
Let $(U_{0}(t))_{t\in\R},(U_{1}(t))_{t\in\R}$ be as in Theorem \ref{thm:mainwave}. Then $(U_{0}(t)^{*})_{t\in\R}$ and $(U_{1}(t)^{*})_{t\in\R}$ are the unique collections for which the analogue of Theorem \ref{thm:mainwave} holds, with $L$ replaced by $L^{*}$ in \eqref{eq:eqdistr}.

, and the solution operators are $(U_{0}(t)^{*})_{t\in\R}$ and $(U_{1}(t)^{*})_{t\in\R}$. To see this, one can first derive an analogue of Theorem \ref{thm:mainwave}, in terms of unspecified solution operators. To show existence, one can use the same formulas for the solution operators, cf.~\eqref{eq:defU0} and \eqref{eq:defU1}, but one has to modify the definition of the error terms $v_{j}$, by replacing $\tilde{L}$ by $\tilde{L}-L_{3}+L_{3}^{*}$. 
This is allowed because $\tilde{L}$ and $\tilde{L}-L_{3}+L_{3}^{*}$ have the same mapping properties, by Proposition \ref{prop:Lproperties} \eqref{it:Lprop3}. The rest of the proof of existence is then identical. For the proof of uniqueness, one can also use the same arguments, again with $\tilde{L}$ replaced by $\tilde{L}-L_{3}+L_{3}^{*}$. 

It then remains to identify the solution operators. To this end, one can use Remark \ref{rem:group} to check that $D_{t}U_{0}(t)^{*}=L^{*}U_{0}(t)^{*}$ and $D_{t}U_{1}(t)^{*}=L^{*}U_{1}(t)^{*}$, for all $t\in\R$. Moreover, by duality, $(U_{0}(t)^{*})_{t\in\R}$ and $(U_{1}(t)^{*})_{t\in\R}$ have the same mapping properties as $(U_{0}(t))_{t\in\R}$ and $(U_{1}(t))_{t\in\R}$ on $L^{2}(\Rn)$. By uniqueness, these collections are then the unspecified solution operators obtained using the arguments above.
\end{remark}
}

\begin{remark}\label{rem:parametrixwave}
Given that the solution operators $U_{0}(t)$ and $U_{1}(t)$ are defined in terms of the operators $\ex_{t}$ from Theorem \ref{thm:firstorder}, 
and given that the latter operators can in turn be expressed using the parametrix from Theorem \ref{thm:parametrix}, the proof of Theorem \ref{thm:mainwave} also yields a parametrix for \eqref{eq:divwave}. In fact, as in \cite{Smith98b}, one could express the solution to \eqref{eq:divwave} more directly using the parametrix from Theorem \ref{thm:parametrix}, without first going through Theorem \ref{thm:firstorder}, but this would have led to additional technicalities in the proof. 
Also, there is intrinsic interest in solving the first-order problem.
\end{remark}

\subsection{Corollaries}\label{subsec:corollaries}

In this subsection we deduce a few corollaries from our main result. Versions of Remarks \ref{rem:lowerreg} and \ref{rem:boundedder} apply to several of the results in this subsection.


\vanish{
\begin{proposition}\label{prop:waveadjoint}
Let $(U_{0}(t))_{t\in\R},(U_{1}(t))_{t\in\R}$ be as in Theorem \ref{thm:mainwave}. Then $(U_{0}(t)^{*})_{t\in\R}$ and $(U_{1}(t)^{*})_{t\in\R}$ are the unique collections such that, for all $p\in(0,\infty)$ and $s\in\R$ with $2s(p)+\max(1,\frac{1}{p})<r$ and $n\max(0,\frac{1}{p}-1)-r+s(p)+1< s< r-s(p)$, and for all $u_{0}\in\Hps$, $u_{1}\in \Hpsm$, $F\in L^{1}_{\loc}(\R;\Hpsm)$ and $t_{0}>0$, the following properties hold:
\begin{enumerate}
\item\label{it:mainwave1} $U_{0}^{*}(t):\Hps\to\HT^{s,p}_{FIO}(\Rn)$ is a bounded operator for all $t\in\R$, and $\sup_{|t|\leq t_{0}}\|U_{0}^{*}(t)\|_{\La(\Hps)}<\infty$; 
\item\label{it:mainwave2} $U_{1}(t)^{*}:\Hpsm\to\HT^{s,p}_{FIO}(\Rn)$ is a bounded operator for all $t\in\R$, and $\sup_{|t|\leq t_{0}}\|U_{1}(t)^{*}\|_{\La(\Hpsm,\HT^{s,p}_{FIO}(\Rn))}<\infty$; 
\item\label{it:mainwave3} $[t\mapsto U_{0}(t)^{*}u_{0}],[t\mapsto U_{1}(t)^{*}u_{1}]\in C^{k}(\R;\Hps)$ for $k\in\{0,1,2\}$;
\item\label{it:mainwave4} Set $u(t):=U_{0}(t)^{*
u_{0}+U_{1}(t)^{*}u_{1}-\int_{0}^{t}U_{1}(t-s)F(s)\ud s$ for $t\in\R$. Then \begin{equation}\label{eq:regularityu}
u\in C(\R;\Hps)\cap C^{1}(\R;\Hpsm)\cap W^{2,1}_{\loc}(\R;\HT^{s-2,p}_{FIO}(\Rn)),
\end{equation}
$u(0)=u_{0}$, $\partial_{t}u(0)=u_{1}$, and
\begin{equation}\label{eq:eqdistr}
(D_{t}^{2}-L)u(t)=F(t)
\end{equation}
in $\HT^{s-2,p}_{FIO}(\Rn)$ for almost all $t\in \R$.
\end{enumerate}
If, additionally, $(a_{ij})_{i,j=1}^{n},(a_{j})_{j=0}^{n}\subseteq \HT^{r,\infty}(\Rn)$, then this statement holds for all $n\max(0,\frac{1}{p}-1)-r+s(p)+1\leq s\leq r-s(p)$.

Let $p\in(0,\infty)$ be such that $2s(p)+\max(1,\frac{1}{p})<r$, and let $n\max(0,\frac{1}{p}-1)-r+s(p)+1< s< r-s(p)$. 
Then, for all $u_{0}\in\Hps$, $u_{1}\in \Hpsm$ and $F\in L^{1}_{\loc}(\R;\Hpsm)$, there exists a unique
\begin{equation}\label{eq:regularityu}
u\in C(\R;\Hps)\cap C^{1}(\R;\Hpsm)\cap W^{2,1}_{\loc}(\R;\HT^{s-2,p}_{FIO}(\Rn))
\end{equation}
such that $u(0)=u_{0}$, $\partial_{t}u(0)=u_{1}$, and
\begin{equation}\label{eq:eqdistradjoint}
(D_{t}^{2}-L^{*})u(t)=F(t)
\end{equation}
in $\HT^{s-2,p}_{FIO}(\Rn)$ for almost all $t\in \R$. If, additionally, $(a_{ij})_{i,j=1}^{n},(a_{j})_{j=0}^{n}\subseteq \HT^{r,\infty}(\Rn)$, then this statement holds for all $n\max(0,\frac{1}{p}-1)-r+s(p)+1\leq s\leq r-s(p)$.
\end{proposition}
Note that \eqref{eq:eqdistradjoint} is well defined in $\Hpsmm$, by Proposition \ref{prop:Lproperties} \eqref{it:Lprop4}.
\begin{proof}
The proof is analogous to that of Theorem \ref{thm:mainwave}. In fact, to show existence one can use the same formula for the solution from \eqref{eq:defu}, in terms of the operators in \eqref{eq:defcossin}, but one has to modify the definition of the error term. That is, one sets $v:=\sum_{k=0}^{\infty}v_{k}$, where this time $v_{0}(t):=(\tilde{L}-L_{3}+L_{3}^{*})(\cs_{t}u_{0}+\sn_{t}u_{1})-F(t)$ and
\[
v_{k+1}(t):=\int_{0}^{t}(\tilde{L}-L_{3}+L_{3}^{*})\sn_{t-\tau}v_{k}(\tau)\ud \tau,
\] 
for $k\geq0$ and $t\in\R$. By Proposition \ref{prop:Lproperties} \eqref{it:Lprop3}, $\tilde{L}-L_{3}+L_{3}^{*}$ has the same mapping properties as $\tilde{L}$. It then follows in the same way as before that $u$ has the required regularity, and one can check that \eqref{eq:eqdistradjoint} holds, using that $L^{*}=L-L_{3}+L_{3}^{*}$.

The proof of uniqueness is also completely analogous, upon replacing $\tilde{L}$ by $\tilde{L}-L_{3}+L_{3}^{*}$.
\end{proof}
}}

In the following extension of Theorem \ref{thm:mainwave} to $p=\infty$, we use the notation $C_{w}^{k}(\R;\HT^{s,\infty}_{FIO}(\Rn))$, for $k\in\Z_{+}$ and $s\in\R$, to denote the space of $\HT^{s,\infty}_{FIO}(\Rn)$ valued functions that are $k$ times continuously differentiable in the weak-star sense. 

\begin{corollary}\label{cor:waveinfty}
Let $(U_{0}(t))_{t\in\R}$ and $(U_{1}(t))_{t\in\R}$ be as in Theorem \ref{thm:mainwave}, and suppose that $r>\frac{n+1}{2}$. Let $s\in\R$ be such that $-r+\frac{n-1}{4}+1< s< r-\frac{n-1}{4}$. Then, for all $u_{0}\in\HT^{s,\infty}_{FIO}(\Rn)$, $u_{1}\in \HT^{s-1,\infty}_{FIO}(\Rn)$, $F\in L^{1}_{\loc}(\R;\HT^{s-1,\infty}_{FIO}(\Rn))$ 
and $t_{0}>0$, the following properties hold:
\begin{enumerate}
\item\label{it:waveinfty1} {$U_{k}(t):\HT^{s-k,\infty}_{FIO}(\Rn)\to\HT^{s,\infty}_{FIO}(\Rn)$ is bounded for all $t\in\R$ and $k\in\{0,1\}$, and $\sup_{|t|\leq t_{0}}\|U_{k}(t)\|_{\La(\HT^{s-k,\infty}_{FIO}(\Rn),\HT^{s,\infty}_{FIO}(\Rn))}<\infty$;} 
\item\label{it:waveinfty3} $[t\mapsto U_{0}(t)u_{0}],[t\mapsto U_{1}(t)u_{1}]\in C^{k}_{w}(\R;\HT^{s-k,\infty}_{FIO}(\Rn))$ for $k\in\{0,1,2\}$;
\item\label{it:waveinfty4} Set $u(t):=U_{0}(t)u_{0}+U_{1}(t)u_{1}-\int_{0}^{t}U_{1}(t-s)F(s)\ud s$ for $t\in\R$. Then 
\[
u\in C_{w}(\R;\HT^{s,\infty}_{FIO}(\Rn))\cap C^{1}_{w}(\R;\HT^{s-1,\infty}_{FIO}(\Rn))\cap W^{2,1}_{\loc,w}(\R;\HT^{s-2,\infty}_{FIO}(\Rn)),
\]
$u(0)=u_{0}$, $\partial_{t}u(0)=u_{1}$ 
and 
$(D_{t}^{2}-L)u(t)=F(t)$ 
for almost all $t\in \R$.
\end{enumerate}
Moreover, $(U_{0}(t))_{t\in\R}$ and $(U_{1}(t))_{t\in\R}$ are the unique collections with these properties.

If, additionally, $(a_{ij})_{i,j=1}^{n},(a_{j})_{j=0}^{n}\subseteq \HT^{r,\infty}(\Rn)$, then these statements hold for all $-r+\frac{n-1}{4}+1\leq s\leq r-\frac{n-1}{4}$.
%
%
%
\end{corollary}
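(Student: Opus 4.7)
The plan is to deduce the $p=\infty$ case from the $p=1$ case of Theorem \ref{thm:mainwave} applied to the adjoint equation, via duality. The hypothesis $r>(n+1)/2$ is precisely $2s(1)+1<r$, and the range $-r+(n-1)/4+1<s<r-(n-1)/4$ in the corollary coincides with the Sobolev range of Theorem \ref{thm:mainwave} at $p=1$. As a first step I would invoke Remark \ref{rem:waveadjoint} with $p=1$ to produce unique solution operators $\tilde U_0(t),\tilde U_1(t)$ for $(D_t^2-L^{*})\tilde u=\tilde F$ satisfying all the conclusions of Theorem \ref{thm:mainwave} (and, under the stronger assumption $(a_{ij}),(a_j)\subseteq \HT^{r,\infty}(\Rn)$, the endpoint versions).

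Next I would derive operator duality identities relating $U_k(t)$ and $\tilde U_k(-t)$. For Schwartz data $u_0,u_1$ and $v_0,v_1$, set $u(t):=U_0(t)u_0+U_1(t)u_1$ and $v(t):=\tilde U_0(t)v_0+\tilde U_1(t)v_1$. Using $D_t^2=-\partial_t^2$ and the respective wave equations, the bilinear form
\[
E(t):=\langle \partial_t u(t),v(t)\rangle-\langle u(t),\partial_t v(t)\rangle
\]
satisfies $E'(t)=-\langle Lu,v\rangle+\langle u,L^{*}v\rangle=0$, so $E(t)\equiv \langle u_1,v_0\rangle-\langle u_0,v_1\rangle$ for all $t$. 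Specializing the four choices of vanishing initial data, and using density of Schwartz functions in the $p=1$ spaces, then yields the operator identities
\[
U_1(t)^{*}=-\tilde U_1(-t),\qquad U_0(t)^{*}=\partial_t \tilde U_1(-t),
\]
together with the analogous identities for $[\partial_t U_k(t)]^{*}$, valid as bounded operators on the appropriate $\HT^{\sigma,1}_{FIO}(\Rn)$ spaces in the $p=1$ Sobolev range.

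Finally, I would use Proposition \ref{prop:HpFIOdual} to identify $\HT^{s-k,\infty}_{FIO}(\Rn)=(\HT^{-s+k,1}_{FIO}(\Rn))^{*}$ and \emph{define} $U_k(t)$ on $\HT^{s-k,\infty}_{FIO}(\Rn)$ by adjoint action through the identities above. The mapping property \eqref{it:waveinfty1}, with uniform bounds on $[-t_0,t_0]$, then follows directly from the corresponding bounds on $\tilde U_1$ and $\partial_t \tilde U_1$ at $p=1$ (the latter via Remark \ref{rem:boundedder}). Weak-star $C^k$ regularity in \eqref{it:waveinfty3} is inherited from the strong $C^k$ regularity of the pre-dual orbits $t\mapsto \tilde U_j(-t)g$ delivered by Theorem \ref{thm:mainwave} at $p=1$, combined with the uniform-in-$t$ boundedness. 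For \eqref{it:waveinfty4}, the Duhamel integral $\int_{0}^{t}U_1(t-s)F(s)\ud s$ is defined in the weak-star sense (separability of the pre-dual makes this unambiguous, given the weak-star integrability hypothesis on $F$), and the PDE $(D_t^2-L)u(t)=F(t)$ in $\HT^{s-2,\infty}_{FIO}(\Rn)$ is verified by pairing against any $g\in\Sw(\Rn)\subseteq \HT^{-s+2,1}_{FIO}(\Rn)$, differentiating under the dual pairing, and using Proposition \ref{prop:Lproperties} \eqref{it:Lprop4} to interpret $Lu(t)\in\HT^{s-2,\infty}_{FIO}(\Rn)$ and the pairing $\langle Lu(t),g\rangle=\langle u(t),L^{*}g\rangle$. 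The principal technical obstacle is verifying that weak-star time differentiation commutes with the dual pairing at each step (so that the adjoint-of-adjoint action genuinely recovers the PDE weakly in $t$), and handling weak-star measurability of $F$ in the Duhamel term; both reduce to careful bookkeeping using the four duality identities and the uniform bounds from the $p=1$ adjoint theorem.
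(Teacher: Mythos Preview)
Your approach is essentially the paper's: obtain the $p=\infty$ case by duality from the $p=1$ theory for the adjoint equation supplied by Remark~\ref{rem:waveadjoint}. The paper derives the key identities $U_1(t)^*=\tilde U_1(t)$ and $U_0(t)^*=\partial_t\tilde U_1(t)$ by semigroup methods on $L^2\times W^{-1,2}$ (identifying the generator of the adjoint group and invoking uniqueness), whereas you use a Lagrange-form argument; either route is valid.

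Two points need more care. First, with $v(\tau)=\tilde U_0(\tau)v_0+\tilde U_1(\tau)v_1$ the conservation law $E(\tau)\equiv\langle u_1,v_0\rangle-\langle u_0,v_1\rangle$ does not isolate $U_k(t)^*$ by merely setting pairs of initial data to zero: the resulting relation still couples $\partial_t U_k(t)$ with $\tilde U_j(t)$ at the same time. One should instead take $v$ with initial data prescribed at $\tau=t$, i.e.\ $v(\tau)=\tilde U_0(\tau-t)v_0+\tilde U_1(\tau-t)v_1$, and then equate $E(0)=E(t)$; your stated identities $U_1(t)^*=-\tilde U_1(-t)$ and $U_0(t)^*=\partial_t\tilde U_1(-t)$ then follow cleanly. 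Second, since $U_0(t)^*$ involves $\partial_t\tilde U_1$, weak-star $C^2$ regularity of $t\mapsto U_0(t)u_0$ in \eqref{it:waveinfty3} would naively require $C^3$ information on $\tilde U_1$, which Theorem~\ref{thm:mainwave} does not provide directly. The paper handles this explicitly by writing $\partial_t^2 U_0(t)^*=\partial_t^3\tilde U_1(t)=-\partial_t\tilde U_1(t)\,L^*$, using Remark~\ref{rem:group} to commute and Proposition~\ref{prop:Lproperties}~\eqref{it:Lprop4} for the mapping property of $L^*$. Your alternative---using the companion identity for $(\partial_t U_0(t))^*$ in terms of $\tilde U_0$, which only needs $C^2$ of $\tilde U_0$---also works, but you should spell this step out rather than absorb it into ``strong $C^k$ regularity of the pre-dual orbits $t\mapsto\tilde U_j(-t)g$''.
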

In \eqref{it:waveinfty4}, the identities involving integration or differentiation are to be interpreted in the natural weak-star sense. In dimension $n=2$, it is relevant  for this corollary to recall that we always consider $r\geq 2$. 
\begin{proof}
We first rely on Theorem \ref{thm:mainwave} to see that the required collections exist, and that these are in fact the same solution operators as in Theorem \ref{thm:mainwave}. We then prove uniqueness using a similar argument as in the proof of Theorem \ref{thm:mainwave}.

\subsubsection{Existence} Let $(S(t))_{t\in\R}$ be the strongly continuous group  from Remark \ref{rem:group}, with generator $\A$ as in \eqref{eq:generator}. 
Let $(\tilde{U}_{0}(t))_{t\in\R},(\tilde{U}_{1}(t))_{t\in\R}$ be the solution operators from Remark \ref{rem:waveadjoint}, having the same properties as in Theorem \ref{thm:mainwave} but with $L$ replaced by $L^{*}$. Let $(\tilde{S}(t))_{t\in\R}$ be the corresponding strongly continuous group 
from Remark \ref{rem:group}, with generator $\tilde{\A}$ given by \eqref{eq:generator} but again with $L$ replaced by $L^{*}$. 

Considering $(S(t))_{t\in\R}$ as acting on $L^{2}(\Rn)\times W^{-1,2}(\Rn)$ and after momentarily identifying $L^{2}(\Rn)\times W^{1,2}(\Rn)$ with $W^{1,2}(\Rn)\times L^{2}(\Rn)$, the adjoint operators are
\[
\left(\!\begin{array}{cc}
\!\partial_{t}U_{1}(t)^{*}&U_{1}(t)^{*}\!\\
\!\partial_{t}U_{0}(t)^{*}&U_{0}(t)^{*}\!
\end{array}\!\right)
\]
for $t\in\R$, and the generator of this group is equal to $\tilde{A}$. Since a generator determines the associated group uniquely, we can undo the identification of $L^{2}(\Rn)\times W^{1,2}(\Rn)$ with $W^{1,2}(\Rn)\times L^{2}(\Rn)$ to see that 
\[
\left(\!\begin{array}{cc}
\!U_{0}(t)^{*}&\partial_{t}U_{0}(t)^{*}\!\\
\!U_{1}(t)^{*}&\partial_{t}U_{1}(t)^{*}\!
\end{array}\!\right)=S(t)^{*}=\left(\!\begin{array}{cc}
\!\partial_{t}\tilde{U}_{1}(t)&\partial_{t}\tilde{U}_{0}(t)\!\\
\!\tilde{U}_{1}(t)&\tilde{U}_{0}(t)\!
\end{array}\!\right)
\]
as operators on $L^{2}(\Rn)\times W^{1,2}(\Rn)$. Hence $U_{0}(t)^{*}=\partial_{t}\tilde{U}_{1}(t)$ and $U_{1}(t)^{*}=\tilde{U}_{1}(t)$. 

Now we can use Remark \ref{rem:boundedder} to see that $U_{0}(t)^{*}:\HT^{s-1,1}_{FIO}(\Rn)\to\HT^{s-1,1}_{FIO}(\Rn)$ and $U_{1}(t)^{*}:\HT^{s-1,1}_{FIO}(\Rn)\to\HT^{s,1}_{FIO}(\Rn)$. Taking adjoints again, and replacing $s$ by $1-s$, we obtain \eqref{it:waveinfty1}. 
In the same way, Remark \ref{rem:waveadjoint} yields \eqref{it:waveinfty3} for $k=0$ and $k=1$, as well as $k=2$ for $U_{1}(t)$. To deal with the case $k=2$ for $U_{0}(t)$, note that 
\begin{equation}\label{eq:extrader}
\partial_{t}^{2}U_{0}(t)^{*}=\partial_{t}^{3}\tilde{U}_{1}(t)=-\partial_{t}\tilde{U}_{1}(t)L^{*},
\end{equation}
where we used Remark \ref{rem:group} to commute $L^{*}$ and $\tilde{U}_{1}(t)$. By Proposition \ref{prop:Lproperties} \eqref{it:Lprop4} and the analogue of Theorem \ref{thm:mainwave} \eqref{it:mainwave3} for $\tilde{U}_{1}(t)$, the right-most term in \eqref{eq:extrader} is bounded from $\HT^{s+1,1}_{FIO}(\Rn)$ to $\HT^{s-1,1}_{FIO}(\Rn)$, and strongly continuous as a function of $t$. Again, by taking adjoints and replacing $s$ by $1-s$, one now obtains \eqref{it:waveinfty3} for $U_{0}(t)$ for $k=2$.

Finally, \eqref{it:waveinfty4} follows by duality from the corresponding statement for $(\tilde{U}_{0}(t))_{t\in\R}$ and $(\tilde{U}_{1}(t))_{t\in\R}$ from Theorem \ref{thm:mainwave} \eqref{it:mainwave4} and Remark \ref{rem:waveadjoint}. 

\subsubsection{Uniqueness}
The argument is almost completely analogous to that in the proof of Theorem \ref{thm:mainwave}. We will only indicate where changes have to be made. Let $-r+\frac{n-1}{4}+1<s<r-\frac{n-1}{4}$, with the endpoints included if $(a_{ij})_{i,j=1}^{n},(a_{j})_{j=0}^{n}\subseteq\HT^{r,\infty}(\Rn)$.

It suffices to show that, if 
\[
u\in C_{w}(\R;\HT^{s,\infty}_{FIO}(\Rn))\cap C^{1}_{w}(\R;\HT^{s-1,\infty}_{FIO}(\Rn))\cap W^{2,1}_{\loc,w}(\R;\HT^{s-2,\infty}_{FIO}(\Rn))
\]
satisfies $u(0)=\partial_{t}u(0)=0$ and $(D_{t}^{2}-L)u(t)=0$ in $\HT^{s-2,\infty}_{FIO}(\Rn)$ for almost all $t\in\R$, then $u\equiv0$. Write $u_{+}(t):=\ind_{[0,\infty)}(t)u(t)$ and $u_{-}(t):=\ind_{(-\infty,0)}(t)u(t)$. Then, again by Proposition \ref{prop:Lproperties} \eqref{it:Lprop4},
\[
u_{+},u_{-}\in C_{w}(\R;\HT^{s,\infty}_{FIO}(\Rn))\cap C^{1}_{w}(\R;\HT^{s-1,\infty}_{FIO}(\Rn))\cap C^{2}_{w}(\R;\HT^{s-2,\infty}_{FIO}(\Rn)).
\]
We want to prove that 
$\int_{\R}\lb G(t),u_{+}(t)\rb\ud t=0$ 
for each $G\in C^{\infty}_{c}(\R;C^{\infty}_{c}(\Rn))$. 

Let $t_{0}\in\R$ be such that $G(t)=0$ for $t\geq t_{0}$, 
let $(\sn_{t})_{t\in\R}$ be as in \eqref{eq:defcossin}, and let $\bar{L}$ be as in \eqref{eq:Ltildenew}. For $t\in\R$, set $\tilde{v}_{0}(t):=G(t)$ and, recursively,
\[
\tilde{v}_{k+1}(t):=-\int_{t}^{t_{0}}\bar{L}\sn_{t-\tau}\tilde{v}_{k}(\tau)\ud \tau
\] 
for $k\geq0$. Then, as in the proof of Theorem \ref{thm:mainwave} (see also Remark \ref{rem:waveadjoint}), one sees that $\tilde{v}:=\sum_{k=0}^{\infty}\tilde{v}_{k}\in L^{1}_{\loc}(\R;\HT^{-s,1}_{FIO}(\Rn))$ and $\tilde{v}(t)=0$ for $t\geq t_0$. Note that, unlike in the proof of Theorem \ref{thm:mainwave}, here we consider integrability in the norm topology. 

Next, set $w(t):=\int^{t_{0}}_{t}\sn_{t-\tau}\tilde{v}(\tau)\ud\tau$ for $t\in\R$. Then
\[
w\in C(\R;\HT^{1-s,1}_{FIO}(\Rn))\cap C^{1}(\R;\HT^{-s,1}_{FIO}(\Rn))\cap C^{2}(\R;\HT^{-1-s,1}_{FIO}(\Rn)),
\]
and $w(t)=0$ for $t\geq t_{0}$. 
Moreover, $G(t)=\tilde{v}(t)+\int_{t}^{t_{0}}\bar{L}\sn_{t-\tau}\tilde{v}(t)\ud\tau$ for almost all $t\in\R$. 

Finally, for $\veps>0$ and $t\in\R$, set $\tilde{v}^{\veps}(t):=\ph(\veps D)\tilde{v}(t)$ and $w^{\veps}(t):=\int^{t_{0}}_{t}\sn_{t-\tau}\tilde{v}^{\veps}(\tau)\ud\tau$. Then one can argue just as in the proof of Theorem \ref{thm:mainwave} to see that
\begin{align*}
&\int_{\R}\lb G(t),u_{+}(t)\rb\ud t=\int_{\R}\lb G(t)-(D_{t}^{2}-L^{*})w^{\veps}(t),u_{+}(t)\rb\ud t\\
&=\int_{\R}\Big(\lb \tilde{v}(t)-\tilde{v}^{\veps}(t),u_{+}(t)\rb+\int_{t}^{t_{0}}\lb \bar{L}\sn_{t-\tau}(\tilde{v}(t)-\tilde{v}^{\veps}(t)),u_{+}(t)\rb\ud\tau\Big)\ud t\to 0
\end{align*}
as $\veps\to0$.
\end{proof}


By combining Theorem \ref{thm:mainwave} with the Sobolev embeddings for $\Hps$, we obtain a corollary for initial data in $\HT^{s,p}(\Rn)$.

\begin{corollary}\label{cor:divLp}
Let $(U_{0}(t))_{t\in\R}$ and $(U_{1}(t))_{t\in\R}$ be as in Theorem \ref{thm:mainwave}. Then, for all $p\in [1,\infty]$ and $s\in\R$ such that $2s(p)+1<r$ and $-r+s(p)+1< s< r-s(p)$, and for all $u_{0}\in\HT^{s+s(p),p}(\Rn)$, $u_{1}\in \HT^{s+s(p)-1,p}(\Rn)$, $F\in L^{1}_{\loc}(\R;\HT^{s+s(p)-1,p}(\Rn))$ and $t_{0}>0$, the following properties hold:
\begin{enumerate}
\item\label{it:divLp1} {$U_{k}(t):\HT^{s+s(p)-k,p}(\Rn)\to\HT^{s-s(p),p}(\Rn)$ is bounded for all $t\in\R$ and $k\in\{0,1\}$, and $\sup_{|t|\leq t_{0}}\|U_{k}(t)\|_{\La(\HT^{s+s(p)-k,p}(\Rn),\HT^{s-s(p),p}(\Rn))}<\infty$};
\item\label{it:divLp3} If $p<\infty$, then $[t\mapsto U_{0}(t)u_{0}],[t\mapsto U_{1}(t)u_{1}]\in C^{k}(\R;\HT^{s-s(p)-k,p}(\Rn))$ for $k\in\{0,1,2\}$;
\item\label{it:divLp4} Set $u(t):=U_{0}(t)u_{0}+U_{1}(t)u_{1}-\int_{0}^{t}U_{1}(t-s)F(s)\ud s$ for $t\in\R$.  If $p<\infty$, then
\[
u\in C(\R;\HT^{s-s(p),p}(\Rn))\cap C^{1}(\R;\HT^{s-s(p)-1,p}(\Rn))\cap W^{2,1}_{\loc}(\R;\HT^{s-s(p)-2,p}(\Rn)),
\]
$u(0)=u_{0}$, $\partial_{t}u(0)=u_{1}$, and $(D_{t}^{2}-L)u(t)=F(t)$ in $\HT^{s-s(p)-2,p}(\Rn)$ for almost all $t\in \R$.
\end{enumerate}
If $p<\infty$ and $s>-r+3s(p)+1$, then $(U_{0}(t))_{t\in\R}$ and $(U_{1}(t))_{t\in\R}$ are the unique collections with these properties.

Moreover, if $(a_{ij})_{i,j=1}^{n},(a_{j})_{j=0}^{n}\subseteq \HT^{r,\infty}(\Rn)$, then these statements also hold for $s=-r+s(p)+1$ and $s=r-s(p)$. 
\end{corollary}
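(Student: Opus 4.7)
The strategy is to transfer the result from Theorem \ref{thm:mainwave}, which lives on the Hardy spaces for Fourier integral operators $\HT^{s,p}_{FIO}(\Rn)$, to the classical Hardy/Sobolev scale $\HT^{s,p}(\Rn)$ by sandwiching it between the two Sobolev embeddings of Theorem \ref{thm:Sobolev}:
\[
\HT^{t+s(p),p}(\Rn)\subseteq \HT^{t,p}_{FIO}(\Rn)\subseteq \HT^{t-s(p),p}(\Rn)\qquad(t\in\R).
\]
Applied with $t=s$, this shows that the initial data $u_{0}\in \HT^{s+s(p),p}(\Rn)$, $u_{1}\in \HT^{s+s(p)-1,p}(\Rn)$ and the inhomogeneity $F(t)\in \HT^{s+s(p)-1,p}(\Rn)$ embed into $\Hps$ and $\Hpsm$ respectively, and that the $\Hps$ valued functions produced by Theorem \ref{thm:mainwave} embed back into $\HT^{s-s(p),p}(\Rn)$. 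Thus the operator bounds in \eqref{it:divLp1} are a direct consequence of Theorem \ref{thm:mainwave} \eqref{it:mainwave1} composed with these embeddings, and the continuity statements in \eqref{it:divLp3} are obtained by composing the Sobolev embeddings with Theorem \ref{thm:mainwave} \eqref{it:mainwave3}. The inhomogeneous formula in \eqref{it:divLp4} and the identities $u(0)=u_{0}$, $\partial_{t}u(0)=u_{1}$, $(D_{t}^{2}-L)u(t)=F(t)$ transfer verbatim, once one notes that Proposition \ref{prop:Lproperties} \eqref{it:Lprop4} together with Theorem \ref{thm:Sobolev} yields the mapping property $L:\HT^{s-s(p),p}(\Rn)\to \HT^{s-s(p)-2,p}(\Rn)$ for $s$ in the given range, so that the equation is well posed in the claimed ambient space.

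The main substantive point is the uniqueness statement, which is where the stronger assumption $s>n\max(0,\tfrac{1}{p}-1)-r+3s(p)+1$ enters. Suppose $u$ satisfies \eqref{it:divLp4} with $u_{0}=u_{1}=0$ and $F\equiv0$. Applying Theorem \ref{thm:Sobolev} again, the classical Sobolev regularity of $u$ upgrades to FIO-Sobolev regularity with a shift of $s(p)$: setting $s':=s-2s(p)$, one has
\[
u\in C(\R;\HT^{s',p}_{FIO}(\Rn))\cap C^{1}(\R;\HT^{s'-1,p}_{FIO}(\Rn))\cap W^{2,1}_{\loc}(\R;\HT^{s'-2,p}_{FIO}(\Rn)).
\]
The hypothesis $s>n\max(0,\tfrac{1}{p}-1)-r+3s(p)+1$ is exactly what is needed to ensure that $s'$ lies in the uniqueness range $n\max(0,\tfrac{1}{p}-1)-r+s(p)+1<s'<r-s(p)$ of Theorem \ref{thm:mainwave} (the upper bound $s'<r-s(p)$ being automatic from $s<r-s(p)$). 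Hence Theorem \ref{thm:mainwave} forces $u\equiv0$, which gives uniqueness of the pair $(U_{0}(t),U_{1}(t))_{t\in\R}$ on the classical scale.

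The endpoint statements, where $(a_{ij}),(a_{j})\subseteq \HT^{r,\infty}(\Rn)$ and one is allowed to take $s=r-s(p)$ (respectively $s=n\max(0,\tfrac{1}{p}-1)-r+s(p)+1$ if $p\ge 1$), follow in exactly the same way from the corresponding endpoint statements in Theorem \ref{thm:mainwave}, since the Sobolev embeddings of Theorem \ref{thm:Sobolev} are valid for all real $s$ and all $p\in(0,\infty]$. The main obstacle in all this is a bookkeeping one, namely keeping careful track of the $s(p)$-shifts so that the Sobolev intervals on both sides match up and so that the uniqueness hypothesis of Theorem \ref{thm:mainwave} is available; no new analytic input is required beyond Theorem \ref{thm:mainwave}, Theorem \ref{thm:Sobolev}, and Proposition \ref{prop:Lproperties} \eqref{it:Lprop4}.
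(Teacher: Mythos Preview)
Your proposal is correct and follows essentially the same approach as the paper: transfer Theorem \ref{thm:mainwave} to the classical scale via the Sobolev embeddings of Theorem \ref{thm:Sobolev}, and for uniqueness shift to $s'=s-2s(p)$ and check that $s'$ falls in the admissible range. One small imprecision: the uniqueness part of Theorem \ref{thm:mainwave} is stated for the operators, so strictly speaking you are invoking the solution-uniqueness argument from its \emph{proof} (as the paper does), and the mapping $L:\HT^{s-s(p),p}(\Rn)\to\HT^{s-s(p)-2,p}(\Rn)$ is not what Proposition \ref{prop:Lproperties} \eqref{it:Lprop4} plus Theorem \ref{thm:Sobolev} literally gives---but neither point affects the argument, since the equation already holds in $\HT^{s-2,p}_{FIO}(\Rn)\subseteq\HT^{s-s(p)-2,p}(\Rn)$.
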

\begin{proof}
The properties of $(U_{0}(t))_{t\in\R}$ and $(U_{1}(t))_{t\in\R}$ are a direct consequence of Theorem \ref{thm:mainwave}, Corollary \ref{cor:waveinfty} and \eqref{eq:Sobolev}. The uniqueness statement follows in the same manner, since such a $u$ satisfies
\[
u\in  C(\R;\HT^{s-2s(p),p}_{FIO}(\Rn))\cap C^{1}(\R;\HT^{s-1-2s(p),p}_{FIO}(\Rn))\cap W^{2,1}_{\loc}(\R;\HT^{s-2-2s(p),p}_{FIO}(\Rn)).
\]
For $-r+s(p)+1<s-2s(p)$, one now obtains the uniqueness from the proof of Theorem \ref{thm:mainwave}, and similarly if $(a_{ij})_{i,j=1}^{n},(a_{j})_{j=0}^{n}\subseteq \HT^{r,\infty}(\Rn)$.
\end{proof}

\begin{remark}\label{rem:Lpinfty}
By Corollary \ref{cor:waveinfty}, if $r>\frac{n+1}{2}$ then \eqref{it:divLp3} and \eqref{it:divLp4} of Corollary \ref{cor:divLp}, as well as the uniqueness statement, hold for $p=\infty$ in a weak-star sense.

Note 
that the collections $(U_{0}(t))_{t\in\R},(U_{1}(t))_{t\in\R}$ from Theorem \ref{thm:mainwave} have stronger regularity properties than those listed in Corollary \ref{cor:divLp}, and collections with such stronger properties are unique. The condition $s>-r+3s(p)+1$ is only imposed to ensure that collections with the weaker properties in Corollary \ref{cor:divLp} are also unique. We will not explore whether this condition is necessary.
\end{remark}

Theorem \ref{thm:mainwave} yields the following corollary about the boundedness of certain operators that arise in spectral calculus. 

\begin{corollary}\label{cor:spectral}
Suppose that $a_{ij}=a_{ji}$ and $a_{j}=0$ for all $1\leq i,j\leq n$, and that $a_{0}(x)\geq0$ for all $x\in\Rn$. Let $p\in [1,\infty]$ and $s\in\R$ be such that $2s(p)+1<r$ and $-r+s(p)+1< s<r-s(p)$. Then
\begin{equation}\label{eq:cosbounded}
\cos(t\sqrt{L}):\Hps\to\Hps
\end{equation}
and
\[
\sin(t\sqrt{L})L^{-1/2}:\Hpsm\to\Hps
\]
are bounded, locally uniformly in $t\in\R$. 
If $(a_{ij})_{i,j=1}^{n}\subseteq \HT^{r,\infty}(\Rn)$ and $a_{0}\in\HT^{r,\infty}(\Rn)$, then this statement 
also holds for $s=-r+s(p)+1$ and $s=r-s(p)$. 
\end{corollary}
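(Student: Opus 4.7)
The plan is to reduce the corollary to Theorem \ref{thm:mainwave} and Corollary \ref{cor:waveinfty}, by identifying the solution operators $U_{0}(t)$ and $U_{1}(t)$ constructed there with the spectral multipliers $\cos(t\sqrt{L})$ and $\sin(t\sqrt{L})L^{-1/2}$. The symmetry hypothesis $a_{ij}=a_{ji}$, the absence of first-order terms, and the nonnegativity of $a_{0}$ imply that the sesquilinear form $\mathfrak{a}(f,g):=\sum_{i,j}\int a_{ij}D_{j}f\,\overline{D_{i}g}\,\ud x+\int a_{0}f\overline{g}\,\ud x$ is symmetric, closed, densely defined, and nonnegative on $W^{1,2}(\Rn)$, so by standard form theory $L$ corresponds to a nonnegative self-adjoint operator on $L^{2}(\Rn)$. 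Its Borel functional calculus then defines $\cos(t\sqrt{L})\in\La(L^{2}(\Rn))$ and $\sin(t\sqrt{L})L^{-1/2}\in\La(W^{-1,2}(\Rn),L^{2}(\Rn))$, locally uniformly in $t$.

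The key identification step is the following: for $f\in\Sw(\Rn)$, set $v_{0}(t):=\cos(t\sqrt{L})f$ and $v_{1}(t):=\sin(t\sqrt{L})L^{-1/2}f$. Since $f\in D(L^{k})$ for all $k$, the spectral theorem shows that $v_{0},v_{1}\in C^{\infty}(\R;D(L^{k}))$ for every $k$, with $v_{0}(0)=f$, $\partial_{t}v_{0}(0)=0$, $v_{1}(0)=0$, $\partial_{t}v_{1}(0)=f$, and $(D_{t}^{2}-L)v_{0}(t)=(D_{t}^{2}-L)v_{1}(t)=0$ for all $t$. Taking $p=2$ and $s=0$ in Theorem \ref{thm:mainwave} (which is in the admissible range since $r\geq 2$ and $s(2)=0$), both $v_{j}$ and $U_{j}(\cdot)f$ satisfy the regularity conditions of \eqref{it:mainwave4}, so uniqueness forces $U_{0}(t)f=\cos(t\sqrt{L})f$ and $U_{1}(t)f=\sin(t\sqrt{L})L^{-1/2}f$ on $\Sw(\Rn)$. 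Because the collections $(U_{k}(t))_{t\in\R}$ in Theorem \ref{thm:mainwave} are independent of $p$ and $s$, this identification is valid in every admissible regime.

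Given this identification, \eqref{eq:cosbounded} and the corresponding statement for $\sin(t\sqrt{L})L^{-1/2}$ are immediate from part \eqref{it:mainwave1} of Theorem \ref{thm:mainwave} for $p<\infty$, and the endpoint improvements under the $\HT^{r,\infty}$ assumption follow from the final sentence of that theorem. The $p=\infty$ case is handled in exactly the same way using Corollary \ref{cor:waveinfty}: the Schwartz functions are contained in $\HT^{s,\infty}_{FIO}(\Rn)$, the identification on $\Sw(\Rn)$ is what we have just proved, and the corresponding weak-star bounds from Corollary \ref{cor:waveinfty} give the required operator bounds (extended to all of $\HT^{s,\infty}_{FIO}(\Rn)$ by pre-composing with the $U_{k}(t)$ there, which agree with the spectral multipliers on the dense-in-weak-star subset $\Sw(\Rn)$).

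The only real obstacle is the identification step, and even there the substance is just that Theorem \ref{thm:mainwave} provides a uniqueness statement strong enough to pin down $U_{k}(t)$ from their spectral-calculus counterparts on a single dense subspace. The self-adjointness of $L$ and the smoothness of $v_{0}(t),v_{1}(t)$ as $L^{2}$-valued functions make the hypotheses of Theorem \ref{thm:mainwave} \eqref{it:mainwave4} trivially satisfied for Schwartz initial data, so no further analysis of the spectral calculus on $\Hps$ is needed beyond this comparison argument.
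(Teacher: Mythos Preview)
Your approach is essentially the same as the paper's: identify the spectral multipliers with the solution operators $(U_{0}(t))_{t\in\R}$ and $(U_{1}(t))_{t\in\R}$ via the uniqueness statement in Theorem \ref{thm:mainwave} at $p=2$, $s=0$, and then read off the mapping properties for general $p$ and $s$ from that theorem and Corollary \ref{cor:waveinfty}.

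There is one small gap in your justification. You assert that for $f\in\Sw(\Rn)$ one has $f\in D(L^{k})$ for all $k$, and hence $v_{j}\in C^{\infty}(\R;D(L^{k}))$. For coefficients that are merely $C^{1,1}\cap C^{r}_{*}$, this is not clear: $Lf$ is in $L^{2}(\Rn)$ but need not lie in $D(L)$ again, so iterated domains may fail to contain $\Sw(\Rn)$. Fortunately you do not need this much. The regularity required for the uniqueness part of Theorem \ref{thm:mainwave} at $p=2$, $s=0$ is only $v_{j}\in C(\R;L^{2})\cap C^{1}(\R;W^{-1,2})\cap W^{2,1}_{\loc}(\R;W^{-2,2})$, and this follows for any $f\in L^{2}(\Rn)$ directly from the spectral theorem together with $\sqrt{L}:L^{2}(\Rn)\to W^{-1,2}(\Rn)$ (form theory) and $L:L^{2}(\Rn)\to W^{-2,2}(\Rn)$ (Proposition \ref{prop:Lproperties} \eqref{it:Lprop4}). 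This is exactly how the paper argues, and with that correction your proof is complete.
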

The Sobolev embeddings in \eqref{eq:Sobolev} yield the associated sharp $\HT^{s,p}(\Rn)$ regularity for $\cos(t\sqrt{L})$ and $\sin(t\sqrt{L})L^{-1/2}$.
\begin{proof}
By assumption, $L$ is a positive operator on $L^{2}(\Rn)$. Hence $\cos(t\sqrt{L})$ and $\sin(t\sqrt{L})L^{-1/2}$ are well defined and bounded on 
$L^{2}(\Rn)=\HT^{2}_{FIO}(\Rn)$, for all $t\in\R$, through spectral calculus.

It follows from form theory (see \cite[Chapter 6]{Kato95}) that $\sqrt{L}:L^{2}(\Rn)\to W^{-1,2}(\Rn)$. Moreover, $L:L^{2}(\Rn)\to W^{-2,2}(\Rn)$, by 
Proposition \ref{prop:Lproperties} \eqref{it:Lprop4}. Now spectral calculus shows that $U_{0}(t):=\cos(t\sqrt{L})$ and $U_{1}(t):=\sin(t\sqrt{L})L^{-1/2}$ 
have the properties in Theorem \ref{thm:mainwave}, for $p=2$ and $s=0$. By uniqueness, these operators thus have the mapping properties contained in Theorem \ref{thm:mainwave} and Corollary \ref{cor:waveinfty}. 
\vanish{
 , for $f\in \Sw(\Rn)$, the function $t\mapsto u(t):=\cos(t\sqrt{L})f$ satisfies
\[
u\in C(\R;L^{2}(\Rn))\cap C^{1}(\R;W^{-1,2}(\Rn))\cap C^{2}(\R;W^{-2,2}(\Rn)),
\]
$u(0)=f$, $\partial_{t}u(0)=0$ and $(D_{t}^{2}-L)u(t)=0$. For $-r+s(p)+1\leq s\leq r-s(p)$ it thus follows from Remarks \ref{rem:additionalreg} and \ref{rem:formula} that
\[
u\in C(\R;\Hps)\cap C^{1}(\R;\Hpsm)\cap C^{2}(\R;\Hpsmm),
\]
and that for each $t_{0}>0$ there exists a $\con>0$, independent of $f$, such that $\|u(t)\|_{\Hps}\leq \con\|f\|_{\Hps}$ for all $t\in[-t_{0},t_{0}]$. Since $\Sw(\Rn)\subseteq\Hps$ is dense, this yields \eqref{eq:cosbounded} for $-r+s(p)+1< s< r-s(p)$. Then duality, cf.~\eqref{eq:duality}, extends \eqref{eq:cosbounded} to all $-r+s(p)< s< r-s(p)$. 

Applying the same reasoning to the function $t\mapsto v(t):=\sin(t\sqrt{L})L^{-1/2}f$, one obtains \eqref{eq:sinbounded} for all $-r+s(p)+1< s< r-s(p)$. The statements for $r\in\N$ follow in the same manner.}
\end{proof}

\begin{remark}\label{rem:largerrange}
Since $\cos(t\sqrt{L})$ is self adjoint, for $p\in[1,\infty]$ one can use duality to see that \eqref{eq:cosbounded} holds for all $-r+s(p)< s< r-s(p)$, with the endpoints included if $(a_{ij})_{i,j=1}^{n}\subseteq \HT^{r,\infty}(\Rn)$ and $a_{0}\in\HT^{r,\infty}(\Rn)$.
\end{remark}

\vanish{\begin{corollary}\label{cor:halfwave}
Let $p\in(1,\infty)$ and $s\in\R$ be such that $2s(p)+\max(1,\frac{1}{p})<r$ and $-r+s(p)+1< s< r-s(p)-1$. Then there exists a $c\in\R$ such that 
\[
e^{it\sqrt{L+c}}:\Hps\to\Hps
\]
boundedly. If $(a_{ij})_{i,j=1}^{n},(a_{j})_{j=0}^{n}\subseteq \HT^{r,\infty}(\Rn)$, then this statement holds for all $-r+s(p)+1\leq s\leq r-s(p)-1$.
\end{corollary}
As before, the Sobolev embeddings in Theorem \ref{thm:Sobolev} yields the associated sharp $\HT^{s,p}(\Rn)$ regularity for $e^{it\sqrt{L}}$. Note that $c$ is allowed to depend on $p$ and $s$. 
\begin{proof}
First note that $L$ is a closed operator on $\Hps$, by Proposition \ref{prop:Lproperties} \eqref{it:Lprop4}. 
Choose $c$ such that $L+c$ is a sectorial operator on $\Hps$
\end{proof}
}

\section*{Acknowledgments}\label{sec:acknowledge}

 The authors would like to thank Prof.~Lixin Yan for helpful  discussions and valuable support.

\bibliographystyle{plain}
\bibliography{rough}
\end{document}